\documentclass[11pt,reqno]{amsart}
\usepackage{amsmath,amssymb,amsthm,mathtools,calc,verbatim,enumitem,tikz,url,hyperref,mathrsfs,cite,fullpage}
\usepackage{bbm}
\usepackage{pgfplots}
 \pgfplotsset{compat=1.18}
 \usepackage{tikz}
\usepackage{pgfplots}
 \pgfplotsset{compat=1.18}
 \usetikzlibrary{arrows.meta}
\usepackage{todonotes}
\usepackage{stmaryrd}
\usepackage{textcomp}
\usepackage{setspace}
\usepackage{amssymb}
\usepackage{amsthm}
\usepackage{amsmath}
\usepackage{graphicx}
\usetikzlibrary{calc}
\usepackage{marvosym}
\usepackage{empheq}
\usepackage{latexsym}
\usepackage{fontenc}
\usepackage{color}
\usepackage{hyperref}
\usepackage{cleveref}
\usepackage{dsfont}
\usepackage{bbm}

\usepackage[T1]{fontenc}
\usepackage[utf8]{inputenc}

\usepackage{booktabs}
\usepackage{array}

\usepackage{todonotes}

\newtheorem{theorem}{Theorem}[section]
\newtheorem{lemma}[theorem]{Lemma}
\newtheorem{corollary}[theorem]{Corollary}

\newtheorem{observation}[theorem]{Observation}
\newtheorem{proposition}[theorem]{Proposition}

\newtheorem*{claim*}{Claim}
\newtheorem{problem}[theorem]{Problem}
	
\theoremstyle{definition}
\newtheorem{definition}[theorem]{Definition}

\newtheorem*{qu*}{Question}
\theoremstyle{remark}

\newcommand\cB{\mathcal{B}}
\newcommand\cC{\mathcal{C}}
\newcommand\cD{\mathcal{D}}
\newcommand\cF{\mathcal{F}}
\newcommand\cG{\mathcal{G}}
\newcommand\cH{\mathcal{H}}

\newcommand\cK{\mathcal{K}}

\newcommand\cM{\mathcal{M}}

\newcommand\cS{\mathcal{S}}
\newcommand\cX{\mathcal{X}}

\newcommand\cR{\mathcal{R}}

\newcommand\cV{\mathcal{V}}

\renewcommand\leq{\leqslant}
\renewcommand\geq{\geqslant}
\renewcommand\le{\leqslant}
\renewcommand\ge{\geqslant}
\renewcommand\to{\rightarrow}

	\def\t{\theta}

	\def\cF{\mathcal{F}}

	\def\<{\langle }
	\def\>{\rangle }

\newcounter{paperpart}

\begin{document}

\title{Structural Reductions for Monochromatic Matchings and Ramsey Tilings}
\author{Hong Liu \and Maksim Turevskii \and Lanchao Wang \and Zhifei Yan}

\address{ECOPRO, Institute for Basic Science, 55 Expo-ro, Yuseong-gu, Daejeon, 34126, Korea}\email{\{hongliu,zhifeiyan\}@ibs.re.kr} 
\address{Faculty of Mathematics and Computer Science, Saint Petersburg State University, Saint Petersburg, 199178, Russia}\email{turmax20052005@gmail.com} 

\address{School of Mathematics, Nanjing University, Nanjing, China, and ECOPRO, Institute for Basic Science, 55 Expo-ro, Yuseong-gu, Daejeon, 34126, Korea}\email{lanchaowang@foxmail.com}

\begin{abstract}
The Alon--Frankl--Lov\'asz theorem determines the chromatic number of Kneser
hypergraphs; equivalently, it gives the sharp minimum size of a monochromatic
matching in every edge-colouring of a complete uniform hypergraph. Its known
general proofs are topological. We introduce a topology-free structural
framework. It reduces every colouring of a
pseudorandom $t$-graph, with only $o(n)$ loss in the largest monochromatic
matching, to a colouring of $K_n^{(t)}$ whose vertex set has at most $r$ parts
and whose edge colours depend only on intersection profiles. 
Together with a stability analysis at the critical scale, we prove an exact
robust form: there exists $c=c(r,t)>0$ such that, if a $t$-graph satisfies
$\delta(\cG)\ge(1-c)\binom{n-1}{t-1}$, then every $r$-colouring
of $\cG$ contains a monochromatic matching of the exact optimal size for all
sufficiently large $n$. This gives a topology-free proof of the
Alon--Frankl--Lov\'asz theorem for large $n$, a sparse random transference
theorem, and the exact value predicted by Meunier's stable Kneser conjecture
throughout the range covered by our robust AFL theorem.

We further develop the framework for Ramsey graph tilings.
For a graph $H$, let $Rt_r(H;K_n)$ be the minimum, over all
$r$-edge-colourings of $K_n$, of the largest monochromatic $H$-tiling. We
prove
\[
Rt_r(H;K_n)=(\beta_{r,H}+o(1))n,
\]
where $\beta_{r,H}$ is effectively computable from finitely many rational
linear programs depending only on $H$ and $r$. An additional multipartite Ramsey argument is needed to reconstruct a consistent
coloured template. This gives an effective asymptotic solution to the
multicolour Ramsey-tiling problem, extending the classical
two-colour theorem of Burr, Erd\H{o}s and Spencer. We also determine explicit constants for several natural families.
\end{abstract}

\maketitle


\section{Introduction}

A central problem in extremal combinatorics and Ramsey theory is to determine
how much monochromatic structure must occur in an arbitrary colouring. In
this paper we study this question for matchings in uniform hypergraphs and,
more generally, for tilings in graphs. Our starting point is the following
theorem of Alon, Frankl and Lov\'asz~\cite{AFL}, which resolved a conjecture
of Erd\H{o}s~\cite{Erdos} and generalized Kneser's original
conjecture~\cite{Kneser,Lovasz}. We state it in the equivalent language of
edge-coloured complete hypergraphs.

\begin{theorem}[Alon, Frankl and Lov\'asz]\label{thm:AFL}
Let \(r,t\ge2\) and \(n\in\mathbb N\), and let \(\chi\) be an arbitrary
\(r\)-edge-colouring of \(K_n^{(t)}\). Then \(\chi\) contains a
monochromatic matching consisting of
\[
\left\lfloor\frac{n+r-1}{t+r-1}\right\rfloor
\]
vertex-disjoint \(t\)-edges. Moreover, this bound is tight.
\end{theorem}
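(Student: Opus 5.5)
The proof splits into the tightness construction and the lower bound; only the lower bound is hard. Write $m=\lfloor (n+r-1)/(t+r-1)\rfloor$.

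\textbf{Tightness.} Set $k=m+1$, so that $n< k(t+r-1)-r+1=(k-1)(t+r-1)+t$. Choose disjoint sets $S_1,\dots,S_{r-1}$, each of size $k-1$. Note that $(r-1)(k-1)\le n$: otherwise $n<(r-1)(k-1)$, which forces $k$ to be large and contradicts the definition of $m$. Let $R$ be the set of the remaining vertices, so $|R|=n-(r-1)(k-1)<t(k-1)+t$, that is, $|R|\le tk-1$. Colour an edge $e$ with colour $i$ if $i$ is the least index with $e\cap S_i\neq\emptyset$, and with colour $r$ if $e\subseteq R$. Every edge of colour $i<r$ meets $S_i$, so a matching in that colour has at most $k-1$ edges. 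Colour $r$ lives inside $R$, so a matching there has at most $\lfloor |R|/t\rfloor\le k-1$ edges. Hence no colour contains a matching of size $k=m+1$, and the bound is tight.

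\textbf{Lower bound.} Within the structure of this paper, the plan is as follows; the intended argument is a robust version proved via the structural reduction announced in the abstract.
\begin{enumerate}
\item Apply the regularity-type reduction. Its output replaces $\chi$, with only $o(n)$ loss in the largest monochromatic matching, by a colouring of $K_n^{(t)}$ in which the vertex set has at most $r$ parts $V_1,\dots,V_r$ and the colour of an edge $e$ depends only on its intersection profile $(|e\cap V_j|)_j$.
\item Solve the resulting finite optimisation problem. For profile colourings, the size of the largest monochromatic matching is governed by a fractional matching, i.e.\ a linear program over profiles in each colour. One shows that this LP value is at least $n/(t+r-1)$, using an LP duality argument that mirrors the construction above: a colour class with no large matching must be covered by a small vertex cover, and the covers for different colours cannot partition too cheaply.
\item Obtain the exact value by stability. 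If some colouring attains only $m$, then it must be $o(n)$-close to the extremal colouring above, with $r-1$ small ``star'' sets $S_i$ and a leftover clique $R$. In that near-extremal structure one cleans up the exceptional vertices directly and finds the extra edge, giving the exact floor for large $n$.
\end{enumerate}

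The main obstacle is this last stability step. Near the extremal configuration the $o(n)$ error terms have to be removed completely, which requires a careful case analysis of the vertices that are badly coloured relative to the partition $S_1,\dots,S_{r-1},R$. For small $n$ one would simply cite the topological proof of Alon, Frankl and Lov\'asz, so the topology-free argument covers large $n$.
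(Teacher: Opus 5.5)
Your Steps~1 and~2 agree with the paper (Step~2 is just the pigeonhole bound of Lemma~\ref{lem:aaalower}). As in the paper, only large $n$ gets a topology-free argument. The genuine gap is Step~3, which rests on a false stability claim: for $t\ge3$, near-extremal colourings need not be close to the star/clique construction.

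To see this, take any positive integers with $c_1+\cdots+c_r=t+r-1$. Choose disjoint classes $A_i$ with $|A_i|=c_i(m+1)-1$, and give each $t$-set $e$ some colour $i$ with $|e\cap A_i|\ge c_i$; such an $i$ exists since $\sum_i(c_i-1)=t-1$. Every colour-$i$ matching has at most $m$ edges, while $n=(t+r-1)(m+1)-r$ has AFL value exactly $m$. Your construction is only the case $c=(1,\dots,1,t)$, and that is the unique case up to symmetry only when $t=2$. For $r=2$, $t=3$, $c=(2,2)$ one gets the majority colouring of two classes of size $2m+1$. Each colour then holds about half of all triples, versus about $37/64$ and $27/64$ in the star/clique colouring, so no $o(n)$ modification turns one into the other. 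A clean-up tailored to $S_1,\dots,S_{r-1},R$ therefore cannot work. The correct target is the family of Lemma~\ref{lem:critical-rigidity}: colour $i$ occupies the canonical profile with $c_i$ vertices in class $i$ and $c_j-1$ in every other class $j$.

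Two further ideas are missing. First, you do not explain how any structure of $\chi$ is obtained. The strip colouring $\psi$ of Step~1 is built from at most $r$ representative vectors, so $\nu(\psi)\approx m$ says nothing about most clusters of $\chi$. The paper instead returns to the pre-compression dual vectors and repairs them with vanishing error. It then analyses the equality case of the convex-hull compression, which forces exactly $r$ Carath\'eodory points with proportions $c_j/(t+r-1)$ and limits $\frac{t}{c_j}e_j$. This shows almost every cluster vector is close to some $\frac{t}{c_i}e_i$, and regularity then gives that all but $o(m^t)$ canonical-profile sets for colour $i$ are edges of colour $i$ (Lemma~\ref{lem:critical-canonical}).

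Second, the exact step is not ``finding one extra edge'': $o(n)$ exceptional vertices and $o(n^t)$ bad sets must be handled with no loss at all. The paper works on $(t+r-1)m-r+1$ vertices. It attaches to each exceptional vertex an edge with $c_j-1$ further vertices in each class $j$. The defect identity $\sum_i d_i=|X|+r-1$ (where the cleaned classes have sizes $c_im-d_i$) then gives a colour $i$ carried by at least $d_i$ of these absorbing edges. Keeping them leaves enough room for the remaining colour-$i$ canonical-profile edges, which the cyclic switching Lemma~\ref{lem:profile-switching} supplies despite the sparse bad family.

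Finally, a small slip in your tightness argument: $(r-1)m\le n$ can fail for small $n$ (e.g.\ $t=2$, $r=5$, $n=2$ gives $m=1$). In that case use only $\lfloor n/m\rfloor$ star sets.
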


Theorem~\ref{thm:AFL} is a foundational result connecting Ramsey theory,
Kneser-type colouring problems, and topological combinatorics. Known general
proofs of the exact theorem use equivariant topology, through the
Borsuk--Ulam theorem, Tucker-type lemmas, or related topological
obstructions; see, for example,
\cite{AFL,Kriz,Matousek,Ziegler2002,Haviv}. A long-standing challenge is to
find a combinatorial mechanism behind the theorem: one which not only
recovers the numerical bound, but also exposes the structure responsible for
the critical denominator \(t+r-1\) and remains meaningful beyond complete
hypergraphs.

Our main result gives such a mechanism for fixed \(r,t\) and sufficiently
large \(n\). In fact, we prove a locally robust form of the exact theorem.
For a $t$-graph $\mathcal G$, let $\delta(\mathcal G)$ denote its minimum degree. If $\chi$ is an edge-colouring, let $\nu(\chi)$ denote the
maximum size of a monochromatic matching.

\begin{theorem}[Robust AFL theorem]
\label{thm:robust-critical-AFL}
Fix $r\ge1$ and $t\ge2$. There exist $c>0$ and $n_0$ such that,
for every $n\ge n_0$, every $t$-graph $\mathcal G$ on $n$ vertices
satisfying
\[
\delta(\mathcal G)\ge
(1-c)\binom{n-1}{t-1}
\]
has the following property: every $r$-edge-colouring $\chi$ of
$\mathcal G$ satisfies
\[
\nu(\chi)\ge
\left\lfloor\frac{n+r-1}{t+r-1}\right\rfloor.
\]
\end{theorem}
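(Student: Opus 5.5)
The plan has two stages. First an asymptotic structural reduction that loses only $o(n)$, then an exact stability analysis at the critical scale $n/(t+r-1)$. For the reduction we apply the hypergraph regularity lemma, in its weak or Frieze--Kannan form for $t$-graphs, to the $r$ colour classes of $\mathcal G$ simultaneously. By the degree condition, all but a $c'$-fraction of $t$-sets of clusters are dense in $\mathcal G$. Monochromatic matchings in the original colouring correspond, up to $o(n)$, to monochromatic \emph{fractional} matchings in the coloured reduced $t$-graph $R$, where each dense cluster-tuple receives a majority colour. Fractional matchings in $R$ are governed by LP duality: they are small exactly when each colour class has a small fractional vertex cover. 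A Ramsey/compactness argument then cleans $R$ up. We pass to a bounded number $\le r$ of "types" of clusters, grouped by the colours on their link profiles, using hypergraph Ramsey on a blow-up so that colours depend only on intersection profiles. The result is the promised template: a colouring of $K_n^{(t)}$ on at most $r$ parts in which the colour of an edge depends only on how it meets the parts.

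The second step is to solve the template problem. For a profile colouring with parts of sizes $x_1n,\dots,x_kn$, the largest monochromatic matching is a linear program in the $x_i$. We must show its value is at least $n/(t+r-1)-o(n)$, with equality only near the extremal AFL construction. That construction has $r-1$ colours each "owned" by a single vertex class of size about $n/(t+r-1)$, the colour being assigned by the first such vertex an edge meets, and a final colour on the rest. We argue by induction on $r$. Suppose a colour class $i$ has no large matching. Then its fractional cover is concentrated, and removing a small vertex set $S_i$ leaves an $(r-1)$-coloured template on $n-|S_i|$ vertices. Combining $|S_i|\gtrsim \nu_i$ with the inductive bound $\nu\ge (n-|S_i|)/(t+r-2)$ recovers $n/(t+r-1)$.

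The final step is stability to exact. If $\nu(\chi)<\lfloor (n+r-1)/(t+r-1)\rfloor$, the asymptotic analysis forces $\chi$ to be $o(n^t)$-close to the extremal configuration. Concretely, there are disjoint sets $S_1,\dots,S_{r-1}$, each of size $\approx n/(t+r-1)$, such that almost every edge meeting $S_i$ and no earlier $S_j$ gets colour $i$, and almost every edge inside the remainder $W$ (of size $\approx tn/(t+r-1)$) gets colour $r$. We then clean exceptional vertices. A vertex typical in the wrong way has, by the minimum-degree condition with $c$ small, large degree in some colour, so it can be reassigned. Next, counting gives the exact inequality. Colour $r$ restricted to $W'$ has a near-perfect matching of size $\lfloor |W'|/t\rfloor$, obtained by absorbing through the high minimum degree $(1-c)\binom{n-1}{t-1}$. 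Each colour $i<r$ gives a matching of size $|S_i'|$, built greedily using $t-1$ vertices from $W$ per edge. The constraint $\sum|S_i'|+|W'|=n$ then forces $\max(\min_i|S'_i|, \lfloor |W'|/t\rfloor)\ge\lfloor (n+r-1)/(t+r-1)\rfloor$, by the same arithmetic as the tightness construction.

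I expect the stability step to be the main obstacle. We must show that near-optimal template solutions are unique up to small perturbation, and that the sparse defects (the $c$-fraction of missing edges, and the exceptional vertices) cannot be exploited to save even a single edge at the floor boundary. The absorbing argument for the colour-$r$ near-perfect matching in $W'$ also needs care, since $\mathcal G$ is only almost complete. I would first try a robust-expansion/absorber lemma that uses the minimum degree directly.
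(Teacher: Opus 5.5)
\textbf{Gap 1: the stability target is wrong for $t\ge3$.} Near-extremal colourings need not be close to the construction with $r-1$ owned classes of size $\approx n/(t+r-1)$ and one class of size $\approx tn/(t+r-1)$.

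Take $t=3$, $r=2$, $n=4m-2$. Split the vertices into halves $P_1,P_2$ of size $2m-1$, and colour each triple by the half containing at least two of its vertices. Every colour-$i$ edge uses two vertices of $P_i$, so both colours have matching number $m-1=\lfloor (n+1)/4\rfloor$. Yet each colour class has density $\approx 1/2$ rather than $\approx 37/64$ or $27/64$. So this colouring is $\Omega(n^3)$-far from your construction under any relabelling of vertices and colours.

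The correct rigidity statement is the paper's critical rigidity lemma. It shows that equality in the strip bound forces the dual vectors to cluster at the axis points $\frac{t}{c_i}e_i$. Consequently, near-extremal colourings are close to one of the configurations indexed by compositions $c_1+\cdots+c_r=t+r-1$ into positive integers. In such a configuration the classes have sizes $\approx c_im$, and colour $i$ is forced on the canonical profile with $c_i$ vertices in class $i$ and $c_j-1$ in each other class $j$. Your construction is the composition $(1,\dots,1,t)$; the example above is $(2,2)$. For $t=2$ the composition is unique up to order, which is why your picture looks right there.

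The exact step must therefore handle every composition. The paper does this in three moves. It gives each exceptional vertex $x$ a private edge $\{x\}\cup S_x$ with $|S_x\cap V_j|=c_j-1$. It uses the deficit identity $\sum_i d_i=|X|+r-1$ to find, by pigeonhole, a colour $i$ with $k_i\ge d_i$. It then completes the matching in the canonical profile-$p^{(i)}$ hypergraph by a switching lemma. Even for your own configuration, the final count needs $\max_i|S_i'|$, not $\min_i$.

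\textbf{Gap 2: the Stage 2 induction fails.} The same example breaks it. The optimal fractional cover of colour $1$ spreads weight $1/2$ over all of $P_1$, so it is not concentrated. Destroying colour $1$ requires deleting $\approx 2m\approx 2\nu_1$ vertices. Your arithmetic needs $|S_i|\lesssim\nu_i$; the inequality $|S_i|\gtrsim\nu_i$ you state is the unhelpful direction. For $t\ge3$, integral covers can be far larger than matchings. In fact the template bound needs no induction. If there are $q\le r$ parts and $m=\lfloor n/(t+r-1)\rfloor$, then $\sum_j\lfloor |U_j|/m\rfloor\ge t$. So some profile fits $m$ times into the parts, and its complete profile hypergraph is monochromatic.

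\textbf{Gap 3: Stage 1 lacks the mechanism that gives at most $r$ parts.} The elementary bound above uses $q\le r$ essentially. With $r+1$ parts the same count only guarantees $t-1$, and with unboundedly many parts the template problem is the original one. So the real content of Stage 1 is producing at most $r$ parts while bounding the template's matching number \emph{from above} by $\nu(\chi)+o(n)$. Grouping clusters by hypergraph Ramsey on link profiles achieves neither: nothing limits the number of types to $r$, and a bounded homogeneous family of clusters says nothing about the matching number of the blown-up template.

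The paper's mechanism is LP duality, in four steps.
\begin{enumerate}
\item Scaled optimal fractional covers give vectors $S_i\in[0,t]^r$ that cover almost every cluster $t$-set in some coordinate.
\item Rounding, and replacing rare values by frequent ones, makes the covering exact at $o(M)$ cost in $\|\sum_i S_i\|_\infty$.
\item Moving the average in the negative all-ones direction to the boundary of the convex hull and applying Carath\'eodory yields at most $r$ frequent vectors.
\item These vectors set the part proportions and colour each profile by a covering coordinate, so every colour-$\ell$ template edge consumes $t$ units of $\ell$-weight.
\end{enumerate}

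\textbf{A further point: fixed $c$.} Since $c$ is a fixed constant, ``$o(n^t)$-close'' stability cannot be applied directly. The paper first proves the statement for $n=(t+r-1)m-r+1$ under $\Delta(\overline{\mathcal G})=o(m^{t-1})$; other $n$ reduce to this by passing to an induced subgraph. It then obtains a fixed $c$ by a diagonal contradiction argument.
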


Thus, the AFL bound remains valid for sufficiently dense $t$-graphs with
minimum degree close to that of the complete $t$-graph. In
particular, taking $\mathcal G=K_n^{(t)}$ gives a topology-free proof of
the exact AFL formula for all sufficiently large $n$.

The proof has two distinct layers. The first is an asymptotic structural
reduction. An \emph{\(r\)-strip colouring} is an edge-colouring for which the vertex
set is partitioned into at most \(r\) parts and the colour of an edge depends
only on its intersection profile with these parts. Thus, once the part sizes
are specified, the colouring is encoded by finitely many profile colours.

\begin{theorem}[Structural reduction via strip colourings, informal]
\label{thm:informal-hypergraph-structure}
Let \(r,t\ge2\) be fixed, and let \(\mathcal G\) be a sufficiently large
pseudorandom \(t\)-graph on \(n\) vertices. For every \(r\)-edge-colouring
\(\chi\) of \(\mathcal G\), there exists an \(r\)-strip
\(r\)-edge-colouring \(\psi\) of \(K_n^{(t)}\) such that
\[
\nu(\chi)\ge\nu(\psi)-o(n).
\]
\end{theorem}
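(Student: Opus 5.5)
The plan is to go through a fractional relaxation. I would prove the reduction at the level of the \emph{monochromatic fractional matching number} of a reduced weighted hypergraph, and then pick the strip colouring \(\psi\) to be extremal for that fractional quantity.

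\textbf{Step 1 (regularity).} Apply the weak (Frieze--Kannan type) hypergraph regularity lemma simultaneously to the \(r\) colour classes of \(\chi\) inside the pseudorandom host \(\mathcal G\). This gives an equipartition \(V_1,\dots,V_m\), with \(m\) bounded in terms of \(\varepsilon\), and a reduced weighted \(t\)-graph in which every \(t\)-set of clusters \(\{i_1,\dots,i_t\}\) (multisets allowed) carries colour densities \(d_1,\dots,d_r\). Pseudorandomness of \(\mathcal G\) forces \(\sum_c d_c\approx 1\) on almost every \(t\)-set. Give each \(t\)-set the majority colour, or better keep all colours above density \(\varepsilon\). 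Then every colour-\(c\) fractional matching in the reduced graph, supported on \(t\)-sets of density at least \(\varepsilon\), lifts to an actual colour-\(c\) matching in \(\chi\) with loss \(o(n)\). This is the standard ``fractional matching in cluster graph \(\Rightarrow\) almost-perfect matching'' embedding via a greedy/random argument in \(\varepsilon\)-regular tuples. So \(\nu(\chi)\ge \max_c \nu^*_c(R)\, n/m - o(n)\).

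\textbf{Step 2 (LP duality on the reduced coloured structure).} At this point the problem is a finite one: a cluster colouring \(\phi\) of \(\binom{[m]}{t}\) with multisets allowed, that is, an \(r\)-colouring of a blown-up complete \(t\)-graph. By LP duality, \(\nu^*_c\) equals the minimum fractional vertex cover \(w_c:[m]\to[0,1]\) of the colour-\(c\) edges. Every \(t\)-multiset of clusters is covered in its own colour. I then want to show that these covers can be \emph{consolidated}. The key observation is that in any optimal fractional cover, cluster weights can be shifted: clusters with identical cover vectors \((w_1(i),\dots,w_r(i))\) behave identically. The real content is a symmetrization/compression argument. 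Among colourings of the blow-up minimizing \(\max_c\nu^*_c\), there is one where clusters fall into at most \(r\) types, for instance with type \(c\) being the clusters where \(w_c\) is ``large''. On such a colouring the colour of a \(t\)-multiset depends only on how many clusters of each type it contains. That colouring is an \(r\)-strip colouring \(\psi\) of \(K_n^{(t)}\), and by construction \(\nu(\psi)\le \max_c\nu^*_c\cdot n/m+o(n)\).

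\textbf{Step 3 (main obstacle).} The hard step is the compression in Step 2. One has to show that recolouring every edge according to the type profile never raises the colour-\(c\) fractional matching number, and that at most \(r\) types suffice. My first attempt would be a Zykov-type symmetrization. Take two clusters \(i,j\) with different neighbourhood colour patterns, and replace \(j\)'s pattern by \(i\)'s, or vice versa. The maximum of the \(r\) fractional matching numbers is a max of LP values, and these are convex/linear in the cluster weights. So one of the two replacements does not increase it, by the usual linearity-in-the-weight-of-one-vertex argument. Iterating leaves clusters that are pairwise ``twins'', so the number of distinct types is bounded. To get down to \(r\) types, assign each type the colour \(c\) that minimizes its cover-normalized contribution, and argue via complementary slackness that merging all types sharing a colour preserves the covers. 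If this fails, I would accept a bounded number of parts at first and merge afterwards, using the fact that edges inside a single type are forced into one colour.
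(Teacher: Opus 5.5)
\textbf{There is a genuine gap in Steps~2--3, which you rightly identify as the whole content of the statement.} Your Step~1 matches the paper: weak regularity plus lifting a reduced fractional matching gives $\nu(\chi)\ge\frac nM\max_c\nu^*_c-o(n)$.

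\emph{The symmetrization claim is false.} You claim that for two non-twin clusters, one of the two clonings does not increase $\max_c\nu^*_c$. Each single $\nu^*_c$ is concave under a weight transfer between two clusters, so one colour alone could be symmetrized. A maximum of several such functions, however, can be minimized strictly inside the interval, and this balancing is exactly what extremal colourings do. Take $t=r=2$ and $n=3m$. Colour all pairs inside $U_1$ ($|U_1|=2m$) with colour $1$, and all pairs meeting $U_2$ ($|U_2|=m$) with colour $2$; both matching numbers are $m$. Moving a cluster of weight $w$ from $U_2$ to $U_1$ raises colour $1$ to $m+w/2$. Moving one the other way raises colour $2$ to $m+w$. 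So both clonings strictly increase the maximum.

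\emph{The reduced object is not what you assume.} Cloning needs colours on $t$-sets that use a cluster more than once. Weak regularity supplies none of these, so your ``blown-up complete $t$-graph with multisets allowed'' is not determined by $\chi$, and fractional matchings on such multisets do not lift back to $\chi$. You also never handle the $\varepsilon M^t$ exceptional tuples, which are covered in no colour.

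\emph{The reduction to $r$ types is unsupported.} Cutting down to at most $r$ types by ``complementary slackness merging'' is not an argument. A bounded number of types larger than $r$ would not give an $r$-strip colouring.

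\textbf{The missing idea is to build $\psi$ directly from the dual covers and never modify the colouring.} Treat all $r$ covers jointly as vectors $S_i=(tz_i^{(1)},\dots,tz_i^{(r)})\in[0,t]^r$, so that all colours are controlled simultaneously.
\begin{enumerate}
\item[(i)] Repair the vectors: round them up to a grid and replace rare values by frequent ones. Afterwards every $t$-set of clusters is covered in some coordinate, since an uncovered $t$-set of frequent values would yield more than $\varepsilon M^t$ uncovered $t$-sets of the original vectors. Every value also has at least $t$ representatives, and $\|\sum_iS_i\|_\infty$ grows only by $O(\varepsilon^{1/(3tr)}M)$.
\item[(ii)] Move the average $\overline S$ in direction $-(1,\dots,1)$ until it hits the boundary of the convex hull, at a point $J\le\overline S$ coordinatewise. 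By Carath\'eodory, $J=\sum_{j\le q}\lambda_jR_j$ with $q\le r$ repaired values $R_j$; the $\lambda_j$ become the part proportions.
\item[(iii)] Each profile $(a_1,\dots,a_q)$ is witnessed by $t$ \emph{distinct} clusters, $a_j$ of them carrying $R_j$. This replaces your multisets. Colour the profile by a coordinate $\ell$ with $\sum_ja_jR_j^{(\ell)}\ge t$.
\item[(iv)] The $R_j$ now form a fractional cover of every colour class of $\psi$ at once. For every colour-$\ell$ matching $\mathcal M$ of $\psi$,
\[
t|\mathcal M|\le\sum_j|U_j|R_j^{(\ell)}=nJ_\ell+O(1)\le n\overline S_\ell+O(1),
\]
which compares directly with the lifted lower bound on $\nu(\chi)$.
\end{enumerate}
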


The formal statement is Theorem~\ref{thm:hypergraphstructure}. Weak
hypergraph regularity first reduces the original colouring to a bounded
system of clusters. For each colour, the fractional matching problem in the
reduced hypergraph has a dual fractional-cover problem, assigning one weight
to every cluster. Grouping these weights over all \(r\) colours gives vectors
\(S_i\in\mathbb R^r\): the \(\ell_\infty\)-norm of their sum controls the
matching bound, while the dual constraints say that almost every reduced
\(t\)-set is covered in some coordinate.

Two obstacles remain. First, the covering condition fails on the exceptional
tuples produced by regularity. We repair these exceptions by rounding the
vectors and replacing rare rounded values by frequent ones. Second, even
after repair there may be many vector types. We move the average of the
repaired vectors in the negative all-ones direction until it reaches the
boundary of their convex hull. A Carath\'eodory argument then represents
this boundary point using at most \(r\) frequent vectors. These vectors and
their coefficients define the strip parts and the colours of all edge
profiles.

This argument necessarily incurs an \(o(n)\) loss, so it cannot by itself
recover the exact AFL formula. The second layer of the proof addresses the
critical scale directly. We return to the dual vectors before repair and
compression and repeat the repair procedure with an error tending to zero.
This produces a finite exact covering configuration while changing the
average by only $o(1)$. Applying the same convex-hull compression as above,
the strip profile bound forces equality at the critical value. The equality
case is rigid: after permuting the colours, it forces positive integers
$c_1+\cdots+c_r=t+r-1$ and the axis vectors
$\frac{t}{c_1}e_1,\ldots,\frac{t}{c_r}e_r$.
Regularity converts these vectors into canonical edge profiles. A deficit
identity assigns the exceptional vertices to suitable colours, and a cyclic
switching argument finds an exact matching despite the remaining sparse
family of missing or wrongly coloured profile edges. This rigidity,
absorption, and switching mechanism is the essential new ingredient that
upgrades the structural \(o(n)\)-approximation to the exact bound.

\subsection{Hypergraph applications}

The structural theorem applies beyond complete hypergraphs. Our first
application is a sparse transference version of AFL. Many classical dense
results admit analogues in sparse random settings; examples include the work
of R\"odl and Ruci\'nski~\cite{RR} on Ramsey's theorem and the transference
of Tur\'an's theorem due to Conlon and Gowers~\cite{CG}, Balogh, Morris and
Samotij~\cite{BMS}, Saxton and Thomason~\cite{ST}, and
Schacht~\cite{Schacht}. Gishboliner, Glock, Michaeli and
Sgueglia~\cite{GGMS} proved the corresponding random-hypergraph AFL theorem
using the dense AFL theorem as a black box. Our structural reduction gives an
independent topology-free route, which together with the elementary matching bound for strip colourings yields the transference AFL theorem in $G^{(t)}(n,p)$.

\begin{theorem}\label{thm:randomaaa0}
Let $t,r\ge2$ and $\varepsilon>0$, and let
$G\sim G^{(t)}(n,p)$. There exists $C>0$ such that, if
$p\ge Cn^{-t+1}$, then with high probability the following holds. For every
$r$-edge-colouring $\chi$ of $G$, there exists an $r$-strip
$r$-edge-colouring $\psi$ of $K_n^{(t)}$ such that
\[
\nu(\chi)\ge\nu(\psi)-\varepsilon n.
\]
\end{theorem}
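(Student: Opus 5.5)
The plan is to derive Theorem~\ref{thm:randomaaa0} from the formal structural reduction, Theorem~\ref{thm:hypergraphstructure}. That theorem applies to any sufficiently large $t$-graph satisfying a suitable pseudorandomness hypothesis. So the work splits into two parts: show that $G\sim G^{(t)}(n,p)$ with $p\ge Cn^{-t+1}$ satisfies that hypothesis with high probability, and check that the reduction tolerates the sparsity, in the sense that weak regularity is applied relative to density $p$.

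\textbf{Step 1 (pseudorandomness).} The reduction only uses weak (Frieze--Kannan type) regularity on clusters, so I expect the required property to be a discrepancy condition of the following form. For all vertex sets $V_1,\dots,V_t$ with $|V_i|\ge\eta n$, the number of edges of $G$ with one vertex in each $V_i$ is $(1\pm\eta)p\prod|V_i|$.

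To verify it, fix such sets. The relevant count is a sum of independent indicators with mean $\mu=\Theta(p\eta^t n^t)\ge \Theta(C\eta^t n)$. Chernoff gives a failure probability $\exp(-\Omega(\eta^2\mu))=\exp(-\Omega(C\eta^{t+2}n))$. The number of choices of $(V_1,\dots,V_t)$ is at most $2^{tn}$. Choosing $C=C(t,\eta)$ large makes the union bound succeed.

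This is exactly why the threshold is $n^{-t+1}$: the expected degree is $\Theta(C)$, and we need $pn^{t-1}$ to beat the $2^{O(n)}$ entropy of the subsets. Isolated or low-degree vertices are harmless, since they are absorbed into the $\varepsilon n$ loss.

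\textbf{Step 2 (sparse regularity).} I will apply the weak regularity lemma relative to density $p$ to each colour class $\chi^{-1}(j)$. Regularity here is measured against the discrepancy property from Step 1, which acts as the upper-uniformity condition in the sparse weak regularity lemma. This produces a bounded partition into clusters and reduced $t$-graphs $R_j$ with normalized densities.

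Step 1 guarantees that almost every $t$-tuple of clusters carries total relative density close to $1$ across the colours. This is the same input that the dense argument extracts from $K_n^{(t)}$. The remainder of Theorem~\ref{thm:hypergraphstructure} is then used unchanged:
\begin{itemize}
\item fractional matchings in each $R_j$ and their dual covers;
\item the vectors $S_i\in\mathbb R^r$;
\item the rounding and repair of exceptional tuples;
\item the Carath\'eodory compression to at most $r$ vectors.
\end{itemize}
Together these produce an $r$-strip colouring $\psi$ of $K_n^{(t)}$ with $\nu(\psi)\le (t+r-1)^{-1}\ldots$. More precisely, the maximum fractional monochromatic matching in the reduced system is at least $\nu(\psi)/n-\varepsilon/2$ after normalization.

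\textbf{Step 3 (lifting fractional matchings).} This is the main obstacle. A fractional matching in $R_j$ must be converted into an actual matching in $\chi^{-1}(j)$ covering almost all of the prescribed cluster mass, and $G$ has only linearly many edges. I would proceed greedily, cluster-tuple by cluster-tuple. Within a regular tuple $(V_{1},\dots,V_{t})$ of relative density $d\ge\varepsilon$, any subsets $W_i\subseteq V_i$ of size at least $\varepsilon |V_i|$ still span an edge. This follows from weak regularity together with Step 1 applied at scale $\eta\ll\varepsilon/K$, where $K$ is the number of clusters. Repeatedly removing an edge therefore builds a matching until fewer than an $\varepsilon$-fraction of each cluster remains. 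Since Step 1 is uniform over all linear-sized sets, this works even though $p$ is only $Cn^{-t+1}$.

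Summing over tuples, with weights given by the fractional matching, yields $\nu(\chi)\ge\nu(\psi)-\varepsilon n$. The delicate point is ensuring that sparse regularity does not lose the edges needed: when colour densities are measured relative to $p$, the total density must be bounded above by $1+\eta$. Step 1 gives exactly this.
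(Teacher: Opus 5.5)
Your proposal is correct and follows the paper's route. The paper's proof is your Step~1 followed by a black-box application of Theorem~\ref{thm:hypergraphstructure} with $d=1/2$, $D=3/2$; there, Step~1 appears as Lemma~\ref{lem:asdf} and is cited from~\cite{GGMS}, not proved by Chernoff plus a union bound (your argument gives it once you restrict to pairwise disjoint $V_i$). Your Steps~2--3 only re-trace the internals of that theorem (Lemmas~\ref{hreg}, \ref{lem:distribution1_hygr}, \ref{lem:op1}, \ref{lem:direct-profile-compression}) and can be dropped, and the stray fragment ``$\nu(\psi)\le (t+r-1)^{-1}\ldots$'' should be deleted, since Lemma~\ref{lem:aaalower} bounds $\nu(\psi)$ from below, not above.
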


We also apply the robust critical theorem to stable Kneser hypergraphs. The
motivation goes back to Schrijver's theorem~\cite{Schrijver78} in 1978: the subgraph
of the Kneser graph induced by the $2$-stable $t$-sets has the same chromatic
number as the full Kneser graph and is vertex-critical. More generally,
regard $[n]$ as cyclically ordered, and call a $t$-set $s$-stable if every
two of its elements have cyclic distance at least $s$. Let
$\mathrm{KG}^q(n,t)_{s\text{-stab}}$ denote the subhypergraph of
$\mathrm{KG}^q(n,t)$ induced by the $s$-stable $t$-sets.

Ziegler~\cite{Ziegler2002} conjectured that the $q$-stable $q$-uniform
Kneser hypergraph has the same chromatic number as the full Kneser
hypergraph, and Alon, Drewnowski and {\L}uczak~\cite{ADL} made this
conjecture explicit. Meunier~\cite{Meunier} proposed the broader formula
\[
\chi\bigl(\mathrm{KG}^q(n,t)_{s\text{-stab}}\bigr)
=
\left\lceil
\frac{n-\max\{q,s\}(t-1)}{q-1}
\right\rceil
\qquad
\text{whenever }n\ge\max\{q,s\}t.
\]
Partial results were obtained by Meunier~\cite{Meunier},
Frick~\cite{Frick}, and others~\cite{Jonsson,Chen,Jafari,Daneshpajouh}.
We prove the conjectured value throughout the linear matching regime, even
when $s$ is a sufficiently small linear fraction of $q$. In this range the
$t$-graph of $s$-stable sets has minimum degree sufficiently close to that
of the complete $t$-graph, and hence falls within
Theorem~\ref{thm:robust-critical-AFL}.

\begin{theorem}\label{thm:stable}
Fix $t\ge2$ and $\gamma>0$. There exists $c=c(t,\gamma)>0$ such that the
following holds. Let the integer-valued functions $q=q(n)\ge2$ and
$s=s(n)\ge1$ satisfy
$\gamma n\le q, tq\le n,$ and $s\le cq.$ Then, for all sufficiently large $n$,
\[
\chi\bigl(\mathrm{KG}^q(n,t)_{s\text{-stab}}\bigr)
=
\left\lceil\frac{n-q(t-1)}{q-1}\right\rceil.
\]
\end{theorem}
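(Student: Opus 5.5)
The plan is to prove the two inequalities separately, with the lower bound on the chromatic number coming from Theorem~\ref{thm:robust-critical-AFL}. Recall that $\mathrm{KG}^q(n,t)$ has the $t$-subsets of $[n]$ as vertices and $q$-sets of pairwise disjoint $t$-sets as hyperedges. A proper $k$-colouring of $\mathrm{KG}^q(n,t)_{s\text{-stab}}$ is therefore the same thing as a $k$-edge-colouring of the $t$-graph $\mathcal G_s$ of $s$-stable $t$-sets on $[n]$ with no monochromatic matching of size $q$.

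\emph{Upper bound.} I would use the standard AFL colouring on $K_n^{(t)}$, restricted to $\mathcal G_s$. Put $k=\lceil (n-q(t-1))/(q-1)\rceil$. For $i\le k-1$, give a $t$-set colour $i$ if its minimum lies in the $i$-th block of $q-1$ consecutive elements. All remaining $t$-sets lie inside the last $n-(k-1)(q-1)\le qt-1$ elements, so they contain no $q$ disjoint members and may share colour $k$. Restricting to $\mathcal G_s$ preserves properness, so $\chi\le k$.

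\emph{Lower bound.} Suppose for contradiction that $\mathcal G_s$ has an $r$-edge-colouring with $\nu<q$, where $r=k-1$. Since $q\ge\gamma n$ and $qt\le n$, we have $r\le (n-q(t-1))/(q-1)\le n/(q-1)\le 2/\gamma$, so $r$ is bounded in terms of $\gamma$. (If $r=0$ the claim is trivial.) Taking $c(t,\gamma)$ no larger than the minimum of the constants $c(r,t)$ of Theorem~\ref{thm:robust-critical-AFL} over the finitely many $r\le 2/\gamma$, and $n$ large, it remains to check two things.

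The first is the minimum degree of $\mathcal G_s$. For a fixed vertex $v$, the number of $(t-1)$-sets that together with $v$ fail to be $s$-stable is at most $\binom{t}{2}\cdot 2s\binom{n-2}{t-2}$, because some pair has cyclic distance below $s$. This is at most $t^3 (s/n)\binom{n-1}{t-1}\le t^3 c\,\binom{n-1}{t-1}$, using $s\le cq\le cn$. Shrinking $c$ by a factor $t^3$ gives $\delta(\mathcal G_s)\ge(1-c_0)\binom{n-1}{t-1}$ with the required $c_0$.

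The second is the arithmetic. Theorem~\ref{thm:robust-critical-AFL} gives $\nu\ge\lfloor (n+r-1)/(t+r-1)\rfloor$, and I must show this is at least $q$, that is, $n+r-1\ge q(t+r-1)$, or equivalently $n-q(t-1)\ge r(q-1)+ (q-1) - (q-1)$. Indeed, minimality of $k=r+1$ gives $r<(n-q(t-1))/(q-1)$, so $r(q-1)< n-q(t-1)$. Since both sides are integers, $r(q-1)\le n-q(t-1)-1$. Hence $q(t+r-1)=q(t-1)+rq \le n-1-r(q-1)+rq = n+r-1$, as needed. This contradicts $\nu<q$.

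The main obstacle I expect is the uniformity of constants: $r$ and $q$ depend on $n$. This is handled because $r$ ranges over a finite set determined by $\gamma$, and Theorem~\ref{thm:robust-critical-AFL} is applied separately for each such $r$, taking the maximum of the corresponding $n_0$. The case $r=1$ is included, since that theorem allows $r\ge1$.
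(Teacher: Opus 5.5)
Your proposal is correct and follows essentially the same route as the paper. The upper bound uses the block colouring. For the lower bound, $r\le 2/\gamma$ takes only finitely many values, and an $O_t(sn^{t-2})$ bound on the number of non-stable $t$-sets through a vertex lets Theorem~\ref{thm:robust-critical-AFL} apply uniformly; the argument then closes with the same integrality step $n+r-1\ge q(t+r-1)$. The only blemish is the garbled ``equivalently'' line, which should read $n-q(t-1)\ge r(q-1)+1$; your subsequent computation proves exactly this.
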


Indeed, the $t$-graph of $s$-stable sets has minimum degree at least
$
\binom{n-1}{t-1}-O_t(sn^{t-2}).
$
Since $s\le cq\le cn/t$, this is sufficiently close to
$\binom{n-1}{t-1}$ when $c=c(t,\gamma)$ is sufficiently small, so the
lower bound follows from the robust critical theorem. The matching upper bound is given by the standard AFL construction.

\subsection{Graph tilings}

The second part of the paper shows that the LP-duality philosophy is not
limited to matchings. For a host graph $G$ and a fixed graph $H$, define
\[
Rt_r(H;G)
:=
\min_{\chi:E(G)\to[r]}
\max_{\ell\in[r]}
\{\text{maximum number of vertex-disjoint colour-$\ell$ copies of $H$}\},
\]
and write $\nu_H(\chi)$ for the corresponding maximum monochromatic
$H$-tiling size. For an integer $k\ge1$, let $R_r(kH)$ be the least $n$
such that every $r$-edge-colouring of $K_n$ contains a monochromatic copy of
$kH$.

The study of Ramsey numbers for graph tilings was initiated by Burr,
Erd\H{o}s and Spencer~\cite{BES} in 1975. They proved that, for every fixed
graph $H$ without isolated vertices,
\[
Rt_2(H;K_n)
=
\frac{n}{2v(H)-\alpha(H)}+O_H(1),
\]
or equivalently,
$R_2(kH)=\bigl(2v(H)-\alpha(H)\bigr)k+O_H(1)$.
Thus the general two-colour problem is governed by $\alpha(H)$. Subsequent
work refined the additive term and the threshold for exactness~\cite{Burr,BS},
while special multicolour cases were studied by Cockayne and
Lorimer~\cite{CL} and Loo~\cite{Loo}. Related questions for non-complete
hosts have been investigated under minimum-degree, random, and robust
pseudorandomness assumptions~\cite{BFT,ACFMPY,FMT,GKM,KM,NW}. A general
result covering every fixed number of colours and every fixed target graph
was not previously available.

We study this general multicolour problem. A first indication that the
two-colour formula does not extend directly is a sharp distinction between
bipartite and non-bipartite target graphs. For $r\ge3$ and connected non-bipartite $H$,
the elementary greedy lower bound is asymptotically optimal; see
Proposition~\ref{prop:nonbipartite}. For bipartite $H$, larger monochromatic
tilings can be forced, and the answer is governed by a finite optimization
over strip templates.

The graph analogue of Theorem~\ref{thm:informal-hypergraph-structure} is the
following structural theorem. For a graph $H$ with at least one edge, define
\[
m_2^*(H)
:=
\max\left\{
1,\,
\max\left\{
\frac{e(J)-1}{v(J)-2}:
J\subseteq H,\ v(J)\ge3
\right\}
\right\},
\]
where the inner maximum is ignored if no such subgraph $J$ exists.

\begin{theorem}\label{thm:graphstructure}
Let $r\ge2$, $\varepsilon>0$, and let $H$ be a fixed graph with at least one
edge. Then there exists $C>0$ such that, for all sufficiently large $n$, the
following holds. If $G\sim G(n,p)$ with
$p\ge Cn^{-1/m_2^*(H)}$, then with high probability every
$r$-edge-colouring $\chi$ of $G$ admits an $r$-strip
$r$-edge-colouring $\psi$ of $K_n$ satisfying
\[
\nu_H(\chi)\ge \nu_H(\psi)-\varepsilon n.
\]
\end{theorem}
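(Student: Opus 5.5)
We will follow the same route as the hypergraph reduction (Theorem~\ref{thm:informal-hypergraph-structure}): sparse regularity, LP duality for fractional $H$-tilings in the reduced graph, then repairing and compressing the dual vectors.

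\emph{Step 1: regularity.} Since $p\ge Cn^{-1/m_2^*(H)}$, the sparse regularity lemma applies to each colour class $G_\ell$ of $\chi$. With high probability $G(n,p)$ is upper-uniform. We obtain a partition $V_1,\dots,V_k$ with $k$ bounded in terms of $\varepsilon,r,H$, and a reduced coloured multigraph $R$ on $[k]$: a pair $ij$ receives colour $\ell$ when $(V_i,V_j)$ is $(\eta,p)$-regular in $G_\ell$ with relative density at least $d$. Because $G(n,p)$ has relative density about $1$ on almost all pairs, all but $\eta k^2$ pairs receive at least one colour.

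The sparse counting and embedding lemma (KŁR, as proved by Conlon--Gowers--Samotij--Schacht) then holds with high probability at this density threshold; this is exactly why $m_2^*(H)$ appears. Together with a standard blow-up/greedy argument it shows the following. If $R_\ell$ has a fractional $H$-tiling of weight $w$, meaning nonnegative weights on homomorphic copies of $H$ in $R_\ell$ with each cluster's total vertex load at most $1$, then $G_\ell$ contains an integral $H$-tiling of size $(w-\varepsilon/2)n/k$. This uses that sets of size $\Omega(n)$ inside regular pairs still contain copies of $H$ by the sparse counting lemma.

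\emph{Step 2: duality.} LP duality gives, for each colour $\ell$, weights $y^\ell_i\ge0$ on clusters. Their total $\sum_i y^\ell_i$ equals the fractional tiling value. They satisfy $\sum_{v\in V(H)} y^\ell_{\phi(v)}\ge1$ for every homomorphism $\phi:H\to R_\ell$. Set $S_i=(y^1_i,\dots,y^r_i)\in\mathbb R^r$. Then $\nu_H(\chi)\gtrsim \frac nk\|\sum_i S_i\|_\infty$, and the dual constraints say that every homomorphic image of $H$ into $R$ is covered in some colour coordinate.

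\emph{Step 3: repair and compression.} Round the $S_i$ to a finite grid. Discard clusters whose rounded value is rare, reassigning them to frequent values; this costs $o(1)$ in the average. Now apply a multipartite Ramsey argument. Choose one cluster of each frequent type, repeated $v(H)$ times, and take an $r$-colouring of the pairs between types by majority over many representatives. Using Ramsey in the bounded reduced graph on the type blocks, we get a \emph{consistent coloured template}. This is a colouring of the complete graph on types, with loops allowed, meaning pairs within a type, such that every homomorphic copy of $H$ into the template is covered by the dual vectors of its colour. Near-completeness of $R$ plus counting shows that the few uncovered configurations can be avoided after passing to these frequent types.

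Next, translate the average $\bar S$ of the repaired vectors in direction $-\mathbf 1$ until it hits the boundary of their convex hull. By Carathéodory the boundary point is a convex combination of at most $r$ vectors $S^{(1)},\dots,S^{(r)}$ with coefficients $\lambda_j$. Define $\psi$ on $K_n$ by splitting $[n]$ into parts of sizes $\lambda_j n$ and colouring edges inside or between parts by the template colour on the corresponding types. The dual vectors then certify a fractional cover for $\psi$ in each colour. By the fractional–integral comparison for strip colourings, this gives $\nu_H(\psi)\le \frac nk\max_\ell(\sum_i y^\ell_i)+o(n)\le\nu_H(\chi)+\varepsilon n$. The translation only decreases coordinates, so the bound survives.

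\emph{Main obstacle.} The hardest step is the template reconstruction in Step 3. Unlike $t$-sets, copies of $H$ use edges both inside and between types. So the colouring of $\psi$ must assign a single colour to each pair of types, including within a type, consistently with the covering inequalities for every homomorphism of $H$, not just for edges. We would first try a product-Ramsey selection: inside each frequent type pick $v(H)$ clusters whose mutual pair colours are constant, and between types pick sub-blocks with constant cross colours. Then check that any homomorphism of $H$ into the resulting template lifts to a genuine homomorphism into $R$, so that it is covered.
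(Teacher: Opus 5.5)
Your proposal follows essentially the same route as the paper: K{\L}R lifting of fractional reduced $H$-pattern packings, LP duality to cluster vectors, rounding with frequency repair, translation of the average along $-\mathbf 1$ to the boundary followed by Carath\'eodory, and a multipartite Ramsey extraction of a consistent template. The paper applies Carath\'eodory first and runs Ramsey only on the $q\le r$ selected classes; your order needs it on all boundedly many frequent types, which also works. One step in Step~3 fails as written and must be replaced: a majority colour between or within types need not be realised on every pair of any $v(H)$ chosen representatives, so the template colours must instead come from the product-Ramsey selection in your last paragraph. That selection should be run on bounded random sets of clusters whose upward-rounded value equals the type, so each original vector is dominated by its type vector, and chosen to avoid the few exceptional pairs. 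With that, distinct representatives turn every template homomorphism into a genuine covered reduced pattern.
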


The graph proof uses the same rounding-and-frequency repair and the same
face-Carath\'eodory compression as the hypergraph proof. The selected vectors
$R_1,\ldots,R_q$ retain many representative clusters, while the
Carath\'eodory coefficients are used directly as the strip-part proportions.
The additional difficulty is global consistency: the colours assigned to
pairs of strip types must ensure that every monochromatic copy of $H$ in the
reconstructed template corresponds to a genuine reduced $H$-pattern. We
obtain this by a multipartite Ramsey extraction inside and between the
representative classes.

For complete hosts, the structural theorem reduces the asymptotic
Ramsey-tiling problem to finitely many strip templates. Their pattern-packing
linear programs give the following effective solution.

\begin{theorem}\label{thm:finite-optimization}
Let $r\ge2$ and let $H$ be a fixed graph with at least one edge. Then there
exists a constant $\beta_{r,H}>0$, effectively computable from a finite
collection of rational linear programs whose sizes depend only on $r$ and
$H$, such that
\[
Rt_r(H;K_n)=(\beta_{r,H}+o(1))n.
\]
Consequently,
\[
R_r(kH)=\left(\frac1{\beta_{r,H}}+o(1)\right)k
\qquad\text{as }k\to\infty.
\]
\end{theorem}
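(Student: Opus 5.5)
We derive Theorem~\ref{thm:finite-optimization} from Theorem~\ref{thm:graphstructure} applied with $p=1$, i.e.\ $G=K_n$, together with a compactness argument over strip templates. Call an \emph{$r$-strip template} $T$ a pair consisting of a number $q\le r$ of parts and a colouring of the profile classes. For graphs, a profile is an unordered pair $\{i,j\}$ with $i\le j$, and its colour is $c(\{i,j\})\in[r]$. There are finitely many templates, at most $r^{\binom{r+1}{2}}$ up to relabelling. For a template $T$ and a proportion vector $x\in\Delta_q$, let $\psi_{T,x,n}$ denote the strip colouring of $K_n$ with part sizes $\approx x_in$.

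Define $f_T(x)$ to be the value of the following pattern-packing LP. Fix a colour $\ell$. A colour-$\ell$ \emph{pattern} is a map $\phi:V(H)\to[q]$ such that every edge $uv$ of $H$ satisfies $c(\{\phi(u),\phi(v)\})=\ell$. The LP maximises $\sum_\phi w_\phi$ subject to $w\ge 0$ and
\[
\sum_\phi w_\phi\,|\phi^{-1}(i)|\le x_i \quad\text{for every } i\in[q].
\]
Let $f_T(x)$ be the maximum of these values over $\ell\in[r]$.

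\textbf{Step 1: the fractional value is asymptotically the tiling number.} We claim $\nu_H(\psi_{T,x,n})=(f_T(x)+o(1))n$ uniformly in $x$.
\begin{enumerate}
\item Any integral tiling assigns a pattern to each copy, so it gives a feasible LP solution scaled by $n$. This proves the upper bound.
\item Conversely, an optimal LP solution can be rounded down to integer multiplicities $\lfloor w_\phi n\rfloor$, losing only $O(1)$ copies. The part sizes are linear in $n$, so the copies can be embedded disjointly. Each part is a clique of a single colour and pairs of parts are complete, so every pattern is realisable.
\end{enumerate}

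\textbf{Step 2: define $\beta_{r,H}$.} Set
\[
\beta_{r,H}:=\min_T\ \min_{x\in\Delta_q} f_T(x).
\]
For fixed $T$, each LP value is a concave piecewise-linear function of the right-hand side $x$. Hence $f_T$ is a maximum of finitely many concave piecewise-linear functions, and its minimum over the simplex is attained at a vertex of the common refinement of their linearity domains.

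Effective computability comes from parametric LP. The optimal bases are finitely many, and each determines rational linear pieces. Alternatively, write the minimum as a minimum over colour assignments combined with LP duality: minimise $\max_\ell$ of the dual values, which is a finite disjunction of rational LPs in the variables $(x,y^{(\ell)})$ with constraints $y^{(\ell)}\cdot|\phi^{-1}|\ge1$. Concretely, $\min_x\max_\ell\min_{y^{(\ell)}} x\cdot y^{(\ell)}$ is a bilinear min--max. Fixing which colour attains the maximum, and using vertices of the dual polyhedra, which are finitely many and rational, reduces it to finitely many LPs in $x$.

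Positivity of $\beta_{r,H}$ follows because some part has $x_i\ge 1/q$, and the clique on that part is monochromatic. Hence $f_T(x)\ge 1/(qv(H))$.

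\textbf{Step 3: matching bounds.}
\begin{enumerate}
\item \emph{Upper bound.} Take the minimising $(T,x)$ and the colouring $\psi_{T,x,n}$. By Step 1, $Rt_r(H;K_n)\le(\beta_{r,H}+o(1))n$.
\item \emph{Lower bound.} For any $\chi$ on $K_n$, Theorem~\ref{thm:graphstructure} with $p=1$ gives a strip colouring $\psi$ with $\nu_H(\chi)\ge\nu_H(\psi)-\varepsilon n$. By Step 1, $\nu_H(\psi)\ge(\beta_{r,H}-o(1))n$. Letting $\varepsilon\to0$ gives the lower bound.
\end{enumerate}
The uniformity in Step 1 needs care: the rounding loss must be $O_{H,r}(1)$ independently of $x$. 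This holds because the number of patterns is bounded.

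\textbf{Step 4: the Ramsey-number corollary.}
\begin{enumerate}
\item If $n\ge(1/\beta_{r,H}+\eta)k$ with $\eta>0$ fixed and $k$ large, then every colouring contains a monochromatic $H$-tiling with at least $(\beta_{r,H}-o(1))n\ge k$ copies. Hence $R_r(kH)\le(1/\beta_{r,H}+o(1))k$.
\item Conversely, the extremal strip colouring on $n=(1/\beta_{r,H}-\eta)k$ vertices has no monochromatic tiling with $k$ copies. This gives the matching lower bound.
\end{enumerate}

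The main obstacle is making the effective-computability claim precise in Step 2, namely the min over $x$ of a max over colours of LP values. The other steps are routine given Theorem~\ref{thm:graphstructure}. I would handle Step 2 via dual vertex enumeration as sketched above.
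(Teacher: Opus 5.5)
Your proposal is correct and follows the paper's proof essentially step for step. It applies Theorem~\ref{thm:graphstructure} with $p=1$ for the lower bound, uses the pattern-packing LP with $O_{q,H}(1)$ rounding loss in both directions, defines $\beta_{r,H}$ as a minimum over templates with $q\le r$ parts and over $\Delta_q$, and obtains computability from the finitely many rational vertices of the dual covering polytopes.

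One side remark is wrong: the minimum need not sit at a vertex of the common refinement of the linearity domains. On such a cell $\max_\ell\tau_\ell$ is a maximum of linear functions, hence convex, and its minimum typically lies where several colours tie. The dual-vertex route you settle on is sound, and simpler than your ``fix which colour attains the maximum'' phrasing suggests. Let $E_\ell$ be the vertex set of the colour-$\ell$ dual polytope. Since $\max_\ell\min_{z\in E_\ell}x\cdot z=\min_{(z_1,\ldots,z_r)\in E_1\times\cdots\times E_r}\max_\ell x\cdot z_\ell$, each tuple of dual vertices yields a single LP $\min\{y:\ y\ge x\cdot z_\ell\ (\ell\in[r]),\ x\in\Delta_q\}$.
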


The finite optimization does not always yield a transparent closed formula,
but we solve it explicitly in several natural families. We determine the
asymptotic value for connected non-bipartite graphs
(Proposition~\ref{prop:nonbipartite}), three-colour Hall-type bipartite graphs
(Theorem~\ref{thm:biparr=3}), complete bipartite graphs $K_{a,b}$ with four
and five colours (Theorems~\ref{thm:Kab-four} and~\ref{thm:Kab-five}),
balanced Hall-type bipartite graphs for every number of colours
(Theorem~\ref{thm:Kaa}), and the non-Hall double star $D_{2,3}$
(Theorem~\ref{thm:D23}). The last example shows that beyond the Hall-type
setting the answer can depend on finer features of $H$ than its bipartition
sizes.

\subsection{Summary of the results}

The following table summarizes the main structural reductions and their
applications.

\begin{table}[htbp]
  \centering
  \footnotesize
  \renewcommand{\arraystretch}{1.5}
  \begin{tabular}{
    >{\raggedright\arraybackslash}p{0.21\textwidth}
    >{\raggedright\arraybackslash}p{0.19\textwidth}
    >{\raggedright\arraybackslash}p{0.27\textwidth}
    >{\raggedright\arraybackslash}p{0.25\textwidth}
  }
    \toprule
    \textbf{Host setting} &
    \textbf{Target structure} &
    \textbf{Structural input} &
    \textbf{Main consequences} \\
    \midrule

    Complete or locally almost-complete $t$-graphs &
    Monochromatic matchings &
    Pre-compression dual weights, critical rigidity, canonical profiles,
    and exact switching &
    Theorem~\ref{thm:robust-critical-AFL} and Theorem~\ref{thm:stable} \\

    \midrule

    Pseudorandom and random $t$-graphs &
    Monochromatic matchings &
    Theorem~\ref{thm:hypergraphstructure}: LP duality, repair, and direct
    profile compression &
    Theorem~\ref{thm:randomaaa0} and the transference AFL theorem \\

    \midrule

    Random and complete graphs &
    Monochromatic $H$-tilings &
    Theorem~\ref{thm:graphstructure}: reduced $H$-patterns, LP duality,
    direct compression, and multipartite Ramsey extraction &
    Theorem~\ref{thm:finite-optimization}: finite LP computation of
    $\beta_{r,H}$ \\

    \midrule

    Complete graphs, explicit families &
    Non-bipartite and bipartite tilings &
    Strip-template analysis &
    Proposition~\ref{prop:nonbipartite} and
    Theorems~\ref{thm:biparr=3},~\ref{thm:D23},
    ~\ref{thm:Kab-four},~\ref{thm:Kab-five}, and~\ref{thm:Kaa} \\

    \bottomrule
  \end{tabular}
  \vspace{0.2cm}
  \caption{\footnotesize Main structural reductions and applications.}
  \label{tab:main_results}
\end{table}

\noindent\textbf{Additional note.}
Freschi, Martin and Treglown~\cite{FMT} independently obtained related
results on Ramsey $H$-tilings in random and complete graphs. Their random
result covers two and three colours, and for $r\ge4$ the range
$\chi(H)\ge r$; in the complete-host setting they obtain additive
$O(1)$ bounds for all $H$ when $r=3$ and for $\chi(H)\ge r$ when $r\ge4$.
Our structural theorem applies to every fixed $r$ and every fixed graph $H$
with at least one edge, but in general gives an $o(n)$ error term. We believe
that the absorber-and-switching mechanism used for our exact hypergraph
result can be adapted to graph tilings and may lead to exact results in this
setting as well.

\subsection{Organization}

Section~\ref{sec:proofoverview} gives a high-level overview of the structural
arguments and of the exact hypergraph upgrade. The first half concerns
monochromatic matchings in uniform hypergraphs. In Section~\ref{sec:main}, we
prove the strip-colouring reduction, Theorem~\ref{thm:hypergraphstructure}.
In Section~\ref{sec:hypapp}, we prove the robust critical theorem, deduce the
stable Kneser results for the stated regimes, and then derive
the sparse random transference theorem.

The second half concerns Ramsey graph tilings. In
Section~\ref{sec:graphstructure}, we prove the graph-tiling structural theorem,
Theorem~\ref{thm:graphstructure}. In Section~\ref{sec:graphapp}, we derive
the finite LP optimization and determine explicit values for several graph
families. Section~\ref{sec:conclusion} contains concluding remarks and further
directions. The appendices contain the five-colour $K_{a,b}$ calculation, the
double-star calculation, and the additional hypergraph-tiling construction.

\section{Proof overview}\label{sec:proofoverview}

We explain the asymptotic strip reduction (Section~\ref{sec:main}), the exact hypergraph upgrade (Section~\ref{sec:hypapp}), and the additional Ramsey extraction needed for
graph tilings (Section~\ref{sec:graphstructure}). The main structural pipeline is summarized in
Figure~\ref{fig:structural-pipeline}.

\begin{figure}[htbp]
\centering
\begin{tikzpicture}[
 >=Stealth,
 stage/.style={
  draw,
  rounded corners=2pt,
  align=center,
  minimum height=9mm,
  inner xsep=3pt,
  inner ysep=3pt,
  font=\scriptsize
 },
 flow/.style={->, line width=0.85pt}
]
\node[stage, text width=1.65cm] (reduced) at (0,0)
 {Reduced\\structures};
\node[stage, text width=1.65cm] (dual) at (2.55,0)
 {Dual weights\\$S_i$};
\node[stage, text width=1.85cm] (repair) at (5.15,0)
 {Frequent vectors\\$S'_i$};
\node[stage, text width=2.1cm] (profile) at (8.15,0)
 {Selected vectors\\and proportions};
\node[stage, text width=2.55cm] (template) at (11.75,0)
 {Profile colours or\\Ramsey extraction};
\node[stage, text width=1.65cm] (strip) at (14.75,0)
 {Strip colouring\\$\psi$};

\draw[flow] (reduced) -- (dual);
\draw[flow] (dual) -- (repair);
\draw[flow] (repair) -- (profile);
\draw[flow] (profile) -- (template);
\draw[flow] (template) -- (strip);
\end{tikzpicture}
\caption{\footnotesize The asymptotic structural pipeline. Regularity and LP duality
produce the vectors $S_i$; repair and Carath\'eodory compression select
at most $r$ frequent vectors together with the strip-part proportions. In the
matching setting the profiles are coloured directly, whereas graph tilings
require a multipartite Ramsey extraction to obtain a consistent pair
template. The robust exact argument returns to the pre-compression vectors
$S_i$.}
\label{fig:structural-pipeline}
\end{figure}

\subsection{Dual weights from reduced matchings}

Apply the multicolour weak hypergraph regularity lemma to an
$r$-edge-coloured pseudorandom $t$-graph. This gives an equipartition into
$M=O_{r,t,\varepsilon}(1)$ clusters. For each colour $\ell$, let
$\mathcal X_\ell$ be the family of regular cluster $t$-sets whose
colour-$\ell$ density is large. The fractional matching problem in
$\mathcal X_\ell$ has the dual fractional-cover program
\[
\min\sum_{i=1}^M z_i
\quad\text{subject to}\quad
\sum_{i\in X}z_i\ge1\quad(X\in\mathcal X_\ell),
\qquad z_i\ge0.
\]
A feasible reduced fractional matching lifts to an actual monochromatic
matching with only $o(n)$ loss, so the dual optimum controls the matching
size.

Scale an optimal dual solution in every colour by $t$, and group the weights
at cluster $i$ into
$S_i=(s_i^{(1)},\ldots,s_i^{(r)})\in[0,t]^r$. For all but $o(M^t)$ cluster
$t$-sets $X$, some coordinate satisfies
$\sum_{i\in X}s_i^{(\ell)}\ge t$. At the same time,
$\|\sum_iS_i\|_\infty$ records the largest dual objective. Thus the
colouring problem becomes a finite-dimensional covering problem.

\subsection{Repair and direct profile compression}

We round the vectors upwards to a fine grid and replace rare rounded values
by frequent ones. The resulting vectors $S'_1,\ldots,S'_M$ cover every
$t$-set of indices, every value has many representatives, and
$\|\sum_iS'_i\|_\infty\le\|\sum_iS_i\|_\infty+o(M)$.

Let $P$ be their convex hull and let $\overline S$ be their average. Move
from $\overline S$ in the negative all-ones direction until reaching the
boundary of $P$. The boundary point lies in a face of dimension at most
$r-1$, so Carath\'eodory's theorem gives frequent vectors
$R_1,\ldots,R_q$, where $q\le r$, and positive coefficients
$\lambda_1,\ldots,\lambda_q$ summing to one such that the boundary point is
$\sum_j\lambda_jR_j$. We partition the vertex set of $K_n^{(t)}$ into
parts of sizes $\lambda_jn+O(1)$. For each intersection profile, distinct
representatives of the required $R_j$ give a covering coordinate, which is
used as the colour of that profile.

This is a strip colouring. If $\mathcal M$ is a matching of colour $\ell$,
every edge of $\mathcal M$ consumes at least $t$ units of $\ell$-weight.
Hence its size is at most $n/t$ times the $\ell$-coordinate of the boundary
point, up to $O(1)$. Since the boundary point is coordinate-wise at most
$\overline S$, comparison with the lifted dual bound gives
$\nu(\chi)\ge\nu(\psi)-o(n)$.

\subsection{Critical rigidity and the robust exact theorem}

The exact argument follows the same general philosophy as the asymptotic
structural reduction, but the critical scale introduces a strong rigidity
phenomenon. Suppose that $n=(t+r-1)m+O(1)$ and every colour has matching
number at most $m+O(1)$. The corresponding dual vectors have coordinate
averages at most $t/(t+r-1)+o(1)$ and satisfy the covering condition on
almost every $t$-set.

We first apply a vanishing-error version of the repair procedure, obtaining a finite exact covering configuration
without changing the average by more than $o(1)$. We then apply the same
convex-hull compression as before. At the critical value, the strip bound
forces equality throughout the compression argument. This equality is rigid:
after permuting the coordinates, there are positive integers
$c_1,\ldots,c_r$ with $c_1+\cdots+c_r=t+r-1$ such that almost all dual
vectors are close to
\[
\frac{t}{c_1}e_1,\ldots,\frac{t}{c_r}e_r,
\]
where $e_1,\ldots,e_r$ is the standard basis of $\mathbb R^r$, with corresponding proportions $c_i/(t+r-1)$. Thus almost all clusters
split into $r$ canonical classes.

For colour $i$, the associated canonical profile uses $c_i$ vertices from
class $i$ and $c_j-1$ vertices from every other class. The dual constraints
and regularity imply that all but $o(m^t)$ edges of this profile are present
and have colour $i$. We regard the remaining edges as a sparse bad family.

We then absorb the exceptional vertices. After removing vertices of large
bad degree, the resulting class sizes satisfy an exact defect identity at the
critical order. This identity guarantees a colour in which enough exceptional
vertices can be covered greedily while leaving precisely the capacity needed
for the remaining canonical profile.

Finally, a switching argument eliminates the sparse bad family and produces
the required exact monochromatic matching. This first proves the result when
the host complement has maximum degree $o(n^{t-1})$; a diagonal argument
upgrades it to the fixed positive robustness margin in
Theorem~\ref{thm:robust-critical-AFL}. The same robustness also yields the
stable Kneser theorem throughout the corresponding locally robust range.

\subsection{The additional Ramsey step for graph tilings}

For graph tilings, the reduced linear programs are indexed by maps
$\phi:V(H)\to[M]$ whose edges are sent to dense regular pairs of one colour.
Their duals again give cluster-weight vectors. We apply the same repair and
face-Carath\'eodory argument, obtaining frequent vectors
$R_1,\ldots,R_q$ and positive coefficients
$\alpha_1,\ldots,\alpha_q$. The coefficients are used directly as the
strip-part proportions.

The remaining issue is global consistency. The colours assigned to pairs of
compressed types must ensure that every monochromatic map
$f:V(H)\to[q]$ is witnessed by a genuine reduced $H$-pattern. Each $R_j$
has many representative clusters. From every representative class we choose
a bounded set containing no exceptional pair, and a multipartite Ramsey
extraction makes all pairs inside and between the selected classes
monochromatic. These colours define the strip template $\lambda(i,j)$.
Choosing distinct representatives then turns every monochromatic map $f$
into a reduced $H$-pattern, whose dual constraint gives
$\sum_{x\in V(H)}R_{f(x)}^{(\ell)}\ge1$. Summing this inequality over a
tiling bounds its size by the corresponding coordinate of
$n\sum_j\alpha_jR_j$.

For complete hosts, a fixed strip template and a fixed vector of part
proportions give, in each colour, a finite fractional pattern-packing linear
program. Minimizing the largest of these optima over the finitely many
templates with at most $r$ parts defines $\beta_{r,H}$. Integral blow-ups of
a minimizing template give the upper bound, while the structural theorem
gives the matching lower bound up to $o(n)$; this proves
Theorem~\ref{thm:finite-optimization}.

\section{Structural reduction for hypergraph matchings}\label{sec:main}
A $t$-graph $\cG$ is said to be \emph{$(\eta,p,d,D)$-uniform} if, for every collection of $t$ pairwise disjoint vertex sets $X_1,\dots,X_t$ with $|X_i|\ge \eta n$ for all $i$, the number of edges having one vertex in each $X_i$ lies between
$dp\cdot\prod_{i=1}^t |X_i|$
 and 
$Dp\cdot\prod_{i=1}^t |X_i|.$
Let $\nu(\chi)$ be the maximum monochromatic matching size in $\chi$.

\begin{definition}\label{def:strip}
Let $t\geq2$ and $s\geq1$, and let $\cF$ be a $t$-graph. An edge-colouring $\chi$ of $\cF$ is an \emph{$s$-strip colouring} if there exists a partition
$V(\cF)=V_1\cup\cdots\cup V_s$ such that
\[
\chi(e_1)=\chi(e_2)
\quad\text{whenever}\quad
|e_1\cap V_j|=|e_2\cap V_j|
\ \text{for every }j\in[s].
\]
\end{definition}

\begin{theorem}\label{thm:hypergraphstructure}
Let $r,t\geq2$, $\varepsilon>0$ and $0<d\leq1<D$. There exists a constant $\eta>0$ such that the following holds. Let $n$ be sufficiently large and $n^{-t}\ll p\leq 1$, and let $\cG$ be an $(\eta,p,d,D)$-uniform $t$-graph on $n$ vertices equipped with an $r$-edge-colouring $\chi$. Then there exists an $r$-strip $r$-edge-colouring $\psi$ of $K_n^{(t)}$ such that 
$$\nu(\chi)\geq \nu(\psi) - \varepsilon n.$$
\end{theorem}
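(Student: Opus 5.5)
We follow the pipeline of Section~\ref{sec:proofoverview}. Fix auxiliary constants $1/M_0\gg\varepsilon_{\rm reg}\gg\eta$ and a density threshold $\tau\ll\varepsilon$. First apply the multicolour weak hypergraph regularity lemma for $(\eta,p,d,D)$-uniform $t$-graphs to the $r$ colour classes of $\chi$. This gives an equipartition $V_1\cup\dots\cup V_M$ with $M\le M_0$ in which all but $\varepsilon_{\rm reg}M^t$ cluster $t$-sets are $\varepsilon_{\rm reg}$-regular in every colour. Uniformity with $|V_i|\ge\eta n$ means every cluster $t$-set carries at least $dp\prod|V_i|$ edges. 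So for every regular $t$-set $X$ some colour has relative density at least $d/r\ge\tau$.

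For each colour $\ell$, let $\mathcal X_\ell$ be the regular $t$-sets that are $\tau$-dense in colour $\ell$. A fractional matching $w$ in $\mathcal X_\ell$ lifts to a colour-$\ell$ matching of size at least $(\sum w-\varepsilon/2\cdot M)\,n/M$. To do this, split each cluster into subclusters proportional to the weights $w_X$, and in each regular dense $t$-tuple greedily extract disjoint edges until only an $\varepsilon_{\rm reg}$-fraction is left; regularity keeps the remainder dense. Hence $\nu(\chi)\ge (n/M)\max_\ell\tau^*_\ell-\varepsilon n/2$, where $\tau^*_\ell$ is the fractional matching number of $\mathcal X_\ell$. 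By LP duality, $\tau^*_\ell$ equals the minimum fractional cover $\sum_i z^{(\ell)}_i$. Set $S_i=t(z^{(1)}_i,\dots,z^{(r)}_i)\in[0,t]^r$; we may cap coordinates at $t$ without losing feasibility. Then every regular $t$-set $X$ has a coordinate $\ell$ with $\sum_{i\in X}S_i^{(\ell)}\ge t$, and $\|\sum_iS_i\|_\infty=t\max_\ell\tau^*_\ell$.

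Next comes the \emph{repair} step, which I expect to be the main obstacle. Round each coordinate of each $S_i$ up to the grid $\delta\mathbb Z$; this costs at most $r\delta M$ in the sum. There are at most $K=(t/\delta+1)^r$ possible values. Call a value rare if it has fewer than $\sqrt{\varepsilon_{\rm reg}}M$ representatives. Replace every rare vector by the all-$t$ vector $(t,\dots,t)$. This covers everything, and the cost is at most $tK\sqrt{\varepsilon_{\rm reg}}M$, which is $o(M)$ once $\varepsilon_{\rm reg}\ll\delta$; the all-$t$ vector either becomes frequent or is itself rare with $o(M)$ total contribution. It remains to check that, for any multiset of $t$ frequent values, there exist distinct representatives forming a regular $t$-set. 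Each such choice ranges over at least $(\sqrt{\varepsilon_{\rm reg}}M)^t$ tuples, more than the $\varepsilon_{\rm reg}M^t$ irregular ones. So some representative tuple is covered in some coordinate, and rounding up preserves this. Hence every $t$-multiset of frequent values $v_1,\dots,v_t$ (with multiplicity available) satisfies $\max_\ell\sum_k v_k^{(\ell)}\ge t$.

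Finally, \emph{compression}. Let $P$ be the convex hull of the frequent values and $\overline S$ the average of the repaired vectors, so $\overline S\in P$. Let $\mu\ge0$ be maximal with $\overline S-\mu\mathbf 1\in P$. The point $B=\overline S-\mu\mathbf 1$ lies on a proper face of dimension at most $r-1$, so by Carath\'eodory $B=\sum_{j\le q}\lambda_jR_j$ with $q\le r$ and $\lambda_j>0$. Take parts $U_j$ of sizes $\lambda_jn+O(1)$. An edge with profile $(a_1,\dots,a_q)$ gets a colour $\ell$ for which $\sum_ja_jR_j^{(\ell)}\ge t$; such an $\ell$ exists by the previous paragraph, using distinct representatives. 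This defines an $r$-strip colouring $\psi$. If $\mathcal M$ is a colour-$\ell$ matching in $\psi$, then summing $R_j^{(\ell)}$ over its vertices gives $t|\mathcal M|\le\sum_j|U_j|R_j^{(\ell)}\le nB^{(\ell)}+O(1)\le n\overline S^{(\ell)}+O(1)$. Therefore
\[
\nu(\psi)\le \frac nt\bigl(\|\textstyle\sum_iS_i\|_\infty/M+o(1)\bigr)\le \frac nM\max_\ell\tau^*_\ell+\frac{\varepsilon n}{2},
\]
which combined with the lifting bound gives $\nu(\chi)\ge\nu(\psi)-\varepsilon n$. The Carath\'eodory step matters only for keeping $q\le r$ parts; the delicate bookkeeping is in the repair step, where the hierarchy $\varepsilon_{\rm reg}\ll\delta$ together with $K$ must beat the exceptional regularity tuples.
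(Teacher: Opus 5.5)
Your architecture is the paper's: regularity, LP duality to the vectors $S_i$, rounding with frequency repair, compression along $-\mathbbm{1}_r$ via Carath\'eodory, and colouring each profile by a covering coordinate. However, two quantitative steps fail as you have set them up.

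\emph{The frequency threshold is too small.} Take a $t$-multiset of frequent values with multiplicities $a_j$. Your threshold only guarantees about $\prod_j\binom{\sqrt{\varepsilon_{\mathrm{reg}}}M}{a_j}\approx\varepsilon_{\mathrm{reg}}^{t/2}M^t/\prod_ja_j!$ sets of distinct representatives. For every $t\ge2$ this is at most $\varepsilon_{\mathrm{reg}}M^t$. So the exceptional family may contain every $t$-set inside a class of size just above $\sqrt{\varepsilon_{\mathrm{reg}}}M$. If those clusters carry small dual weights (for instance all zero), the profile made of $t$ copies of that value has no covering colour, and $\psi$ is not even defined. Your claim that every $t$-multiset of frequent values is covered therefore does not follow.

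You need a threshold $\theta M$ with $\theta^t/t!>\varepsilon_{\mathrm{reg}}$. The paper uses $L=t\varepsilon^{1/t}M$, which gives $L^t/\prod_ja_j!\ge(t^t/t!)\varepsilon M^t>\varepsilon M^t$. The rare mass is then of order $K\theta M$ with $K\approx(t/\delta)^r$. This forces $\varepsilon_{\mathrm{reg}}$ to be small compared with a power $\delta^{O(rt)}$, which is why the paper takes $\delta=t\varepsilon^{1/(3tr)}$.

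There is also a smaller point here. If $(t,\dots,t)$ remains rare after your replacement, then $\overline S$ need not lie in $P$. Either replace rare vectors by a frequent value, as the paper does, or compress the average taken over frequent-valued indices only.

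\emph{The hierarchy and the lifting step.} You write $1/M_0\gg\varepsilon_{\mathrm{reg}}$. But $M_0$ is produced by the regularity lemma after $\varepsilon_{\mathrm{reg}}$ is fixed, so necessarily $1/M_0\ll\varepsilon_{\mathrm{reg}}$; the paper has $1/Z\ll 1/m\ll\varepsilon$.

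With the correct order, your lifting loses too much. You reserve a subcluster of size $w_X|V_i|$ for each tuple and must stop once $\varepsilon_{\mathrm{reg}}|V_i|$ vertices remain, since below that regularity says nothing. This costs about $\varepsilon_{\mathrm{reg}}n/M$ edges per tuple. An optimal $w$ can be supported on up to $\binom{M}{t}$ tuples, so the total loss is of order $\varepsilon_{\mathrm{reg}}M^{t-1}n$, which is not small.

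There are two ways to repair this. You can take a basic optimal solution, whose support has at most $M$ tuples, giving total loss at most $\varepsilon_{\mathrm{reg}}n$. Or you can argue as in Lemma~\ref{lem:distribution1_hygr}: scale $w$ by $1-2\varepsilon$ and embed tuple by tuple from \emph{all} currently unused vertices. Then the $2\varepsilon k$ slack is global per cluster, and the only loss is the $O(M^t)=O(1)$ rounding.

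With these two corrections your argument coincides with the paper's proof.
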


The proof is divided into the two steps outlined in Section~\ref{sec:proofoverview}.

\subsection{Preliminaries: Regularity and Embeddings}\label{sec:pre}
We begin with some standard notation. 
Let $\mathcal F=\mathcal F(V_1,\dots,V_t)$ be a $t$-partite $t$-graph. 
Denote by $e(V_1,\dots,V_t)$ the number of edges having exactly one vertex in each $V_i$. 
The \emph{density} of $\mathcal F$ is defined as
\[
d(V_1,\dots,V_t)
:= \frac{e(V_1,\dots,V_t)}{|V_1|\cdots |V_t|}.
\]

\begin{definition}
Let $\varepsilon > 0$ and $p\in[0,1]$. 
We say that a $t$-partite $t$-graph $\mathcal F(V_1,\dots,V_t)$ is \emph{$(\varepsilon,p)$-regular} 
if for every choice of subsets $V_i' \subseteq V_i$ with $|V_i'| \ge \varepsilon |V_i|$ for all $i\in[t]$, we have
\[
\bigl| d(V_1',\dots,V_t') - d(V_1,\dots,V_t) \bigr|
\le \varepsilon p.
\]
\end{definition}

We first present an embedding lemma for hyperedges in regular partitions.

\begin{lemma}\label{lem:distribution1_hygr}
Fix integers $2\le t\le M$ and 
$d>10\varepsilon> 0$. Let $n$ be sufficiently large, $n^{-t}\ll p\leq 1$ and let $k=\lfloor n/M\rfloor$. 
Let $\cF$ be a $t$-graph on $n$ vertices with an equipartition
\[
V(\cF)=V_1\cup\cdots\cup V_M,
\]
where $\bigl||V_i|-|V_j|\bigr|\le1$ for all $i,j\in[M]$. 
Let $\cX$ denote any collection of index sets $X=\{i_1,\ldots,i_t\}\subset [M]$ for which $(V_{i_1},\ldots,V_{i_t})$ is $(\varepsilon,p)$-regular and has density at least $d p$. 
If there exists a collection of non-negative values
$\{w_X\}_{X\in\mathcal X}$
such that for every $i\in[M]$
\[
\sum_{X\in \mathcal X: i\in X} w_X
\le
1-2\varepsilon,
\]
then $\mathcal F$ contains a matching of size
\[
k\cdot\sum_{X\in\mathcal X}w_X-O(1).
\]
\end{lemma}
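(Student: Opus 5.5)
We will build the matching greedily. Each cluster is split into disjoint sub-blocks, one for each $X\in\mathcal X$ containing it, with sizes proportional to the weights $w_X$. Inside each such block we then embed an almost-perfect matching using regularity.

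\emph{Partitioning the clusters.} For every $X\in\mathcal X$ and every $i\in X$, choose pairwise disjoint sets $U_{X,i}\subseteq V_i$ with $|U_{X,i}|=\lfloor w_X k\rfloor$. This is possible because $\sum_{X\ni i}w_X\le 1-2\varepsilon$, so the total demand on $V_i$ is at most $(1-2\varepsilon)k\le |V_i|$. There are at most $\binom Mt=O(1)$ sets $X$, so rounding costs only $O(1)$ in total. Sets $X$ with $w_Xk<\varepsilon k$ would make $U_{X,i}$ too small for regularity. We first discard (set to zero) all $X$ with $w_X<\varepsilon'$ for a tiny $\varepsilon'$ chosen with room. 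This loses at most $\binom Mt\varepsilon' k$. That loss is not $O(1)$, so we must instead handle small blocks differently; this is the main issue, discussed below.

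\emph{Matching within a block.} Fix $X=\{i_1,\dots,i_t\}$ and the blocks $U_{X,i_1},\dots,U_{X,i_t}$, each of size $m=\lfloor w_Xk\rfloor$. We greedily pick edges with one vertex in each block and delete their vertices. Suppose that at some stage the remaining parts $U'_j$ all have size at least $\varepsilon|V_{i_j}|$. By $(\varepsilon,p)$-regularity, the density of $(U'_1,\dots,U'_t)$ is at least $(d-\varepsilon)p>0$, and since $p\gg n^{-t}$ this gives at least one edge. Hence the greedy procedure continues until some part has fewer than $\varepsilon k+1$ vertices, which yields $m-\varepsilon k-O(1)$ edges from $X$. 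Different $X$ use disjoint vertex sets, so the union of these edges is a matching.

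\emph{The main obstacle: the additive error.} Naively the losses add up to $\sum_X \varepsilon k=O(\varepsilon k)$, not $O(1)$, so we need to get rid of them. The slack $2\varepsilon$ in the hypothesis is meant to absorb them. Instead of stopping at $m$, we inflate each block to size $\lfloor w_Xk\rfloor+\lceil\varepsilon k\rceil$-ish, scaled so that the total per cluster stays below $|V_i|$. Concretely, set $|U_{X,i}|=\lfloor w_X k/(1-2\varepsilon)\rfloor$, which keeps the demand per cluster at most $k$. Greedy then yields $|U_{X,i}|-\varepsilon k$ edges. This beats $w_Xk$ only when $w_X\cdot 2\varepsilon/(1-2\varepsilon)\ge\varepsilon$, that is, for large weights. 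The cleaner fix is to pool the leftovers across all $X\ni i$: we run the greedy process simultaneously for all $X$, drawing vertices from the whole cluster rather than from a pre-assigned block. We stop colour-class $X$ only once it has reached $\lfloor w_Xk\rfloor$ edges. At any moment each cluster still has at least $2\varepsilon k-O(1)\ge\varepsilon|V_i|$ unused vertices, because total usage is at most $\sum_{X\ni i}w_Xk\le(1-2\varepsilon)k$. Regularity applied to the unused parts of the clusters in $X$ therefore always provides a new edge of $X$. This simultaneous greedy process avoids all per-block losses. It gives $\sum_X\lfloor w_Xk\rfloor\ge k\sum_Xw_X-O(1)$ edges, which is exactly the bound claimed in Lemma~\ref{lem:distribution1_hygr}. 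The only point to check is that the density bound $(d-\varepsilon)p\prod|U'_j|\ge(d-\varepsilon)p(\varepsilon k)^t\ge1$ holds, which follows from $p\gg n^{-t}$ with $M,\varepsilon$ fixed.
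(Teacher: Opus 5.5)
Your final argument is correct and is essentially the paper's proof. In it, every tuple $X$ draws its edges from all currently unused vertices of its clusters, and the capacity bound $\sum_{X\ni i}\lfloor w_Xk\rfloor\le(1-2\varepsilon)k$ leaves at least $2\varepsilon k\ge\varepsilon|V_i|$ unused vertices in each relevant cluster whenever another edge is needed, so regularity always supplies one. The paper processes the tuples one at a time and restricts to equal-size subsets of the least-used cluster, but it relies on the same slack count. The earlier pre-assigned-block and weight-discarding attempts are unnecessary and should be removed from the write-up.
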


\begin{proof}
For every \(X\in\mathcal X\), set
\(m_X:=\lfloor kw_X\rfloor\), and process the members of
\(\mathcal X\) in an arbitrary order. For \(i\in[M]\), let
$D_i:=\sum_{\substack{X\in\mathcal X\\i\in X}}m_X.$
The capacity assumption gives
\[
D_i
\le
k\sum_{\substack{X\in\mathcal X\\i\in X}}w_X
\le
(1-2\varepsilon)k.
\]

Suppose that \(X=\{i_1,\ldots,i_t\}\) is the next tuple to be processed.
For \(i\in X\), let \(U_i\subseteq V_i\) be the set of currently unused
vertices, and write
$u_i:=|V_i\setminus U_i|.$
Choose \(i_0\in X\) such that
\(s:=|U_{i_0}|=\min_{i\in X}|U_i|\). Since every previously embedded edge
whose reduced tuple contains \(i_0\) uses exactly one vertex from
\(V_{i_0}\), the total remaining quota involving \(i_0\) is
\(D_{i_0}-u_{i_0}\). In particular,
\[
m_X
\le
D_{i_0}-u_{i_0}
\le
(1-2\varepsilon)k-u_{i_0}
\le
s-2\varepsilon k,
\]
where the last inequality uses \(|V_{i_0}|\ge k\).

For every \(i\in X\), choose a subset \(U_i'\subseteq U_i\) of size \(s\).
We greedily select disjoint edges from the \(t\)-partite hypergraph induced
by these sets. Suppose that a maximal such matching has size
\(m<m_X\). After removing its vertices, each class contains a set
\(W_i\subseteq U_i'\) of size
$|W_i|=s-m>2\varepsilon k.$
For sufficiently large \(n\), this is at least
\(\varepsilon|V_i|\). Since the original tuple indexed by \(X\) is
\((\varepsilon,p)\)-regular and has density at least \(dp\), the tuple
\((W_{i_1},\ldots,W_{i_t})\) has density at least
\((d-\varepsilon)p>0\). It therefore contains an edge, contradicting the
maximality of the matching. Hence we can embed all \(m_X\) prescribed
edges. Repeating this for every \(X\in\mathcal X\) produces one matching
in \(\mathcal F\).

Finally,
\[
\sum_{X\in\mathcal X}m_X
\ge
k\sum_{X\in\mathcal X}w_X-|\mathcal X|
\ge
k\sum_{X\in\mathcal X}w_X-\binom Mt.
\]
Since \(M\) and \(t\) are fixed, the last term is
\(O_{M,t}(1)\), as required.
\end{proof}

Next we use a multicolour (weak) sparse regularity lemma, which is essentially Lemma~3.1 in~\cite{GGMS}.

\begin{lemma}[Multicolour sparse regularity lemma]\label{hreg}
Let $0\le d\le D$ and
$\eta\ll 1/Z\ll 1/m\ll\varepsilon\ll 1/D,1/r,1/t$. Then the following holds for $n$ sufficiently large and $n^{-t}\ll p\leq 1$. 

Assume $\cF$ is an $(\eta,p,d,D)$-uniform $t$-graph equipped with an $r$-edge-colouring $\chi$. Then there exists an equipartition $V(\cF)=(V_1,\ldots, V_{M})$, where $m\le M\le Z$, such that for all but at most $\varepsilon M^{t}$
$t$-tuples $X\in\binom{[M]}{t}$, the induced subhypergraph $\cF_\ell(V_X)$ is $(\varepsilon,p)$-regular for every colour $\ell\in[r]$.
\end{lemma}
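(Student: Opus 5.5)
The plan is the standard energy-increment proof of Szemerédi's regularity lemma, run for all $r$ colours at once. Sparseness is handled through the upper bound in $(\eta,p,d,D)$-uniformity, which keeps every normalised density bounded.

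\textbf{Normalised densities and the index.} For a colour $\ell$ and disjoint sets $U_1,\dots,U_t$ with $|U_i|\ge\eta n$, set $d_\ell(U_1,\dots,U_t):=e_{\cF_\ell}(U_1,\dots,U_t)/|U_1|\cdots|U_t|$. Since $\cF_\ell\subseteq\cF$, uniformity gives $d_\ell(U_1,\dots,U_t)/p\le D$.

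For an equipartition $\mathcal P=(V_1,\dots,V_M)$, define the index
\[
\operatorname{ind}(\mathcal P):=\frac{1}{M^t}\sum_{\ell\in[r]}\ \sum_{X\in\binom{[M]}{t}}\Bigl(\frac{d_\ell(V_X)}{p}\Bigr)^2 .
\]
As long as every part involved has size at least $\eta n$, we get $\operatorname{ind}(\mathcal P)\le rD^2$. This is exactly where the hierarchy $\eta\ll 1/Z$ is used: throughout the process all parts, and all witness subsets of size $\varepsilon|V_i|$, have size at least $\varepsilon n/(2Z)\ge\eta n$. So the uniformity bound applies to every set we ever consider, and $p\gg n^{-t}$ guarantees that the relevant edge counts are meaningful.

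\textbf{Index increment.} Start from an arbitrary equipartition into $m$ parts. Suppose more than $\varepsilon M^t$ tuples $X$ are irregular in some colour. For each such pair $(X,\ell)$, pick witness subsets $V_i'\subseteq V_i$ ($i\in X$) with $|V_i'|\ge\varepsilon|V_i|$ and $|d_\ell(V_X')-d_\ell(V_X)|>\varepsilon p$.

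Refine each $V_i$ by the Venn diagram of all witness sets lying in it; this gives at most $2^{rM^{t-1}}$ pieces per part. By the defect form of Cauchy–Schwarz (convexity of $x\mapsto x^2$ applied to the normalised densities), each irregular pair $(X,\ell)$ contributes an extra $\varepsilon^{t}\cdot\varepsilon^2$ to the average squared density over the refinement of $V_X$. Summing over the more than $\varepsilon M^t$ irregular tuples raises the index by at least $\varepsilon^{t+3}$.

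Next, cut each piece into chunks of a common size $\approx n/M'$ and put the small leftovers into a garbage set, which is then redistributed to restore an equipartition. This is the standard cleanup. It changes the index by at most $\varepsilon^{t+3}/2$, because the moved mass is a tiny fraction and the normalised densities are bounded by $D$ on all large sets. This is the step where the sparse setting requires care: the bound on the index change must come from the uniformity upper bound, not from trivial density bounds.

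\textbf{Termination.} The index is bounded by $rD^2$ and grows by at least $\varepsilon^{t+3}/2$ per round, so there are at most $2rD^2\varepsilon^{-t-3}$ refinement rounds. Each round takes $M$ to at most $M2^{rM^{t-1}}\varepsilon^{-O(1)}$, so the final number of parts is bounded by a tower-type function of $m,\varepsilon,r,t,D$. This defines $Z$.

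Finally, choose $\eta\ll 1/Z$ so that every set considered above has size at least $\eta n$; this makes the argument consistent. The main obstacle I expect is precisely this bookkeeping: controlling the index under the equitisation step and under passage to witness subsets using only the upper-uniformity bound. It is resolved by the ordering $\eta\ll1/Z$.
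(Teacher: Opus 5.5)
The paper does not prove this lemma; it imports it as essentially Lemma~3.1 of Gishboliner, Glock, Michaeli and Sgueglia. Your energy-increment scheme is the standard Kohayakawa--R\"odl route behind such statements, and your global bookkeeping is fine: the index of an equipartition into at most $Z$ parts is at most $rD^2$, more than $\varepsilon M^t$ irregular tuples give a gain of order $\varepsilon^{t+3}$ for the Venn refinement, and the tower bound defines $Z$.

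The gap is exactly at the step you flag as the sparse difficulty, and the resolution you give does not work. It is false that ``the uniformity bound applies to every set we ever consider.'' Venn atoms and the leftovers of the chopping can be arbitrarily small, even single vertices. Uniformity only constrains sets of size at least $\eta n$, so on such sets $d_\ell/p$ is not bounded by $D$. Hence the weighted index of the Venn refinement, where you bank the gain $\varepsilon^{t+3}$, is not bounded at all. When $p=O(1/n)$, uniformity does not prevent a vertex from lying in $\Omega(pn^t)$ edges. If that vertex forms a singleton atom, the weighted Venn index is of order $n$. So the claim that chopping and equitising moves this index by at most $\varepsilon^{t+3}/2$ is false in general. ``The moved mass is a tiny fraction'' does not rescue it, because the normalised densities involved live on the small sets.

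Two standard ingredients are missing.

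\emph{Upper-uniformity for small sets.} Pass to disjoint supersets: if $U_1,\dots,U_t$ are disjoint and leave at least $t\eta n$ vertices uncovered, then $e_\ell(U_1,\dots,U_t)\le Dp\prod_i\max\{|U_i|,\eta n\}$. This bounds the edges meeting the garbage without any density bound on the garbage itself.

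\emph{Cauchy--Schwarz on the chunks, not the Venn atoms.} Do not pass through the Venn index at all.
\begin{itemize}
\item Choose the chunk size $b$ so that the garbage $G_i\subseteq V_i$ satisfies $|G_i|\le\gamma|V_i|$ while $\eta n\le\gamma b$. This, not the size of the witness sets, is where $\eta\ll1/Z$ is really needed.
\item Each $V_i'\setminus G_i$ is a union of chunks. The superset bound shows that deleting the garbage moves $d_\ell(V'_X)$ by $O(tD\gamma/\varepsilon)p$ and $d_\ell(V_X)$ by $O(tD\gamma)p$. So the witnesses survive with discrepancy at least $\varepsilon p/2$.
\item Apply the defect Cauchy--Schwarz over the chunk tuples. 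These all have size at least $\eta n$, so this index stays below $rD^2$, and you get a gain of order $\varepsilon^{t+3}$ over the old index.
\item Redistributing the garbage adds at most $2\gamma b$ vertices to each chunk. By the superset bound again, this changes the index by $O(rtD^2\gamma)$.
\end{itemize}
Taking $\gamma$ small in terms of $\varepsilon,r,t,D$ completes each round, and the rest of your argument then goes through.
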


We remark that the embedding Lemma~\ref{lem:distribution1_hygr} will be applied to the $\ell$-coloured $t$-graph $\mathcal G_\ell$, for each colour $\ell\in[r]$. Indeed, we will choose \[
\mathcal X_\ell
:=
\left\{X\in\binom{[M]}{t}:\ \mathcal G_\ell[V_X] \text{ is $(\varepsilon, p)$-regular with density at least $\frac{dp}{r}$}\right\}.
\]
For every non-exceptional $X\in\binom{[M]}{t}$, the clusters $V_i$, $i\in X$, have size at least $\eta n$, and hence the uniformity of $\cG$ gives
$\sum_{\ell=1}^r d_{\cG_\ell}(V_X)=d_{\cG}(V_X)\ge dp.$
Thus $d_{\cG_\ell}(V_X)\ge dp/r$ for some $\ell\in[r]$, and therefore
\[
\left|\binom{[M]}{t}\setminus\bigcup_{\ell=1}^r\mathcal X_\ell\right|
\le \varepsilon M^t.
\]

\subsection{Step I: Dual weights from reduced matchings}

The following lemma bounds the monochromatic matching numbers by the optima
of $r$ reduced linear programs.

\begin{lemma}\label{lem:chitoskeleton_hygr}
Let
$0<\eta\ll1/Z\ll1/m\ll\varepsilon\ll d/r,1/D,1/r,1/t$.
For $n$ sufficiently large and $n^{-t}\ll p\le1$, let $\mathcal G$ be an
$(\eta,p,d,D)$-uniform $t$-graph with an $r$-edge-colouring $\chi$, and let
$V(\cG)=V_1\cup\cdots\cup V_M$ be the equipartition given by
Lemma~\ref{hreg}. Put $k=\lfloor n/M\rfloor$. Then, for every
$\ell\in[r]$,
\[
\nu(\mathcal G_\ell)\ge kZ_\ell-3\varepsilon n,
\]
where $Z_\ell$ is the optimal value of the linear program 
\begin{equation}\label{eq:DL_hygr}
\begin{aligned}
\text{minimize}\quad 
& \sum_{i=1}^M z_i\\
\text{subject to}\quad
& \sum_{i:i\in X} z_i \ge 1
\qquad\text{for every } X\in\mathcal X_\ell,\\
& z_i \ge 0
\qquad\text{for every } i\in[M].
\end{aligned}
\tag{$D_\ell$}
\end{equation}
\end{lemma}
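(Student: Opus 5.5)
The plan is to go through LP duality and then apply Lemma~\ref{lem:distribution1_hygr} to the colour-$\ell$ subgraph $\mathcal G_\ell$. Fix $\ell\in[r]$. The program $(D_\ell)$ is the dual of the fractional matching program
\[
\text{maximize}\ \sum_{X\in\mathcal X_\ell} w_X
\quad\text{subject to}\quad
\sum_{X\in\mathcal X_\ell:\,i\in X} w_X\le 1\ \ (i\in[M]),\qquad w_X\ge 0 .
\]
Both programs are feasible: $w\equiv0$ is feasible for the primal, and $z\equiv1$ is feasible for $(D_\ell)$. Hence strong duality gives an optimal $w^*$ with $\sum_X w^*_X=Z_\ell$. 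If $\mathcal X_\ell=\emptyset$, then $Z_\ell=0$ and there is nothing to prove.

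Next, scale down to create slack for the embedding lemma. Set $w_X:=(1-2\varepsilon)w^*_X$. Every vertex constraint then becomes $\sum_{X\ni i}w_X\le 1-2\varepsilon$, and $\sum_X w_X=(1-2\varepsilon)Z_\ell$.

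Now apply Lemma~\ref{lem:distribution1_hygr} with $\mathcal F=\mathcal G_\ell$, the equipartition from Lemma~\ref{hreg}, $\mathcal X=\mathcal X_\ell$, and density parameter $d/r$ in place of $d$. By definition, every $X\in\mathcal X_\ell$ is $(\varepsilon,p)$-regular in $\mathcal G_\ell$ with density at least $dp/r$. The hypothesis $d/r>10\varepsilon$ holds because $\varepsilon\ll d/r$. The lemma then yields a colour-$\ell$ matching of size at least
\[
k(1-2\varepsilon)Z_\ell-O_{M,t}(1)\ \ge\ kZ_\ell-2\varepsilon kZ_\ell-\binom{Z}{t},
\]
using $M\le Z$.

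It remains to control the error terms. Since $w^*$ is a fractional matching in a $t$-uniform system on $M$ indices, summing the vertex constraints gives $tZ_\ell\le M$, so $kZ_\ell\le kM/t\le n/t$. Therefore $2\varepsilon kZ_\ell\le 2\varepsilon n/t\le\varepsilon n$. The additive $\binom{Z}{t}$ is a constant depending only on $Z$ and $t$, hence at most $\varepsilon n$ once $n$ is sufficiently large. Altogether $\nu(\mathcal G_\ell)\ge kZ_\ell-2\varepsilon n\ge kZ_\ell-3\varepsilon n$.

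There is no real obstacle here. The only points needing care are the parameter matching (the density threshold $d/r$ versus the $10\varepsilon$ requirement in Lemma~\ref{lem:distribution1_hygr}, and $n$ large relative to $Z$) and the scaling that creates the $2\varepsilon$ slack. The scaling costs at most $2\varepsilon kZ_\ell$, and this is absorbed because of the trivial bound $Z_\ell\le M/t$.
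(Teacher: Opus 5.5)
Your proposal is correct and follows the paper's proof: LP duality, scaling an optimal fractional matching by $(1-2\varepsilon)$, and applying Lemma~\ref{lem:distribution1_hygr} to $\mathcal G_\ell$ with density parameter $d/r$. Your bound $Z_\ell\le M/t$ makes explicit how the $2\varepsilon kZ_\ell$ scaling loss is absorbed, a step the paper leaves under ``for sufficiently large $n$''.
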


We may choose an optimal solution with $z_i\le1$ for every $i\in[M]$, since replacing any coordinate larger than $1$ by $1$ preserves all constraints.

\begin{proof}
Consider the dual linear program for fractional matchings:
\begin{equation}\label{eq:PL_hygr}
\begin{aligned}
\text{maximize}\quad 
& \sum_{X\in\mathcal X_\ell} w_X\\
\text{subject to}\quad
& \sum_{\substack{X\in\mathcal X_\ell: i\in X}} w_X \le 1
\qquad\text{for every } i\in[M],\\
& w_X \ge 0
\qquad\text{for every } X\in\mathcal X_\ell .
\end{aligned}
\tag{$P_\ell$}
\end{equation}
Let $\{w_X\}_{X\in\mathcal X_\ell}$ be an 
optimal solution with optimal value $W_\ell$. By strong LP duality, $W_\ell=Z_\ell$.
Apply Lemma~\ref{lem:distribution1_hygr}, with density parameter $d/r$, to the weights $\{(1-2\varepsilon)w_X\}_{X\in\mathcal X_\ell}$. We obtain
\[
\nu(\cG_\ell)
\ge
(1-2\varepsilon)kW_\ell-O(1)
\ge
kW_\ell-3\varepsilon n
=
kZ_\ell-3\varepsilon n,
\]
where the last inequality holds for sufficiently large $n$.
\end{proof}

\subsection{Step II: Repair and profile compression}

For each colour $\ell\in[r]$, let
$\{z_i^{(\ell)}\}_{i\in[M]}$ be an optimal solution given by
Lemma~\ref{lem:chitoskeleton_hygr}, and set
$S_i=(tz_i^{(1)},\ldots,tz_i^{(r)})$.

\begin{observation}\label{lem:initialvalues}
The vectors $S_1,\ldots,S_M\in[0,t]^r$ satisfy the following properties.
\begin{enumerate}
\item[(i)] For all but at most $\varepsilon M^t$ sets
$X\in\binom{[M]}t$, there is a colour $\ell\in[r]$ such that
$$\sum_{i\in X}s_i^{(\ell)}\ge t.$$
\item[(ii)] For every $\ell\in[r]$,
$$\sum_{i=1}^Ms_i^{(\ell)}=tZ_\ell.$$
In particular, writing $Z:=(Z_1,\ldots,Z_r)$, we have
$
\left\|\sum_i S_i\right\|_\infty=t\|Z\|_\infty.
$
\end{enumerate}
\end{observation}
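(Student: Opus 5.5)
The plan is to read both properties directly off the definitions of $S_i$ and of the families $\mathcal X_\ell$, together with the count of exceptional tuples recorded after Lemma~\ref{hreg}. First, the range: by the remark after Lemma~\ref{lem:chitoskeleton_hygr}, we choose each optimal solution of \eqref{eq:DL_hygr} with $0\le z_i^{(\ell)}\le 1$. Hence every coordinate $s_i^{(\ell)}=tz_i^{(\ell)}$ lies in $[0,t]$, so $S_i\in[0,t]^r$.

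For (i), let $X\in\binom{[M]}t$ lie in $\bigcup_{\ell}\mathcal X_\ell$. Fix $\ell$ with $X\in\mathcal X_\ell$. Feasibility of $z^{(\ell)}$ in \eqref{eq:DL_hygr} gives $\sum_{i\in X}z_i^{(\ell)}\ge1$, and multiplying by $t$ gives $\sum_{i\in X}s_i^{(\ell)}\ge t$. The only sets that can fail are those outside $\bigcup_\ell\mathcal X_\ell$. By the estimate following Lemma~\ref{hreg}, there are at most $\varepsilon M^t$ of these. That estimate rests on two facts: every non-exceptional tuple is regular in every colour, and by $(\eta,p,d,D)$-uniformity (clusters have size at least $\eta n$) its total density is at least $dp$, so some colour has density at least $dp/r$. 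This is exactly (i).

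For (ii), optimality of $z^{(\ell)}$ gives $\sum_i z_i^{(\ell)}=Z_\ell$, and multiplying by $t$ gives $\sum_i s_i^{(\ell)}=tZ_\ell$. The truncation of coordinates to $[0,1]$ does not affect this, because truncation never increases the objective, so a truncated optimal solution is still optimal. The $\ell$-th coordinate of $\sum_iS_i$ is therefore $tZ_\ell\ge0$, and taking the maximum over $\ell$ gives $\|\sum_iS_i\|_\infty=t\|Z\|_\infty$.

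There is no real obstacle here. The only point needing care is to justify that the truncated solution is still optimal, so that the sum equals $tZ_\ell$ exactly and not merely at least $tZ_\ell$. This holds because truncation preserves feasibility and does not increase the objective, while the objective of a feasible solution can never fall below the optimum $Z_\ell$.
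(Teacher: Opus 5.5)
Your proof is correct and matches the paper's argument, which the paper leaves implicit by stating this as an Observation. The range and (ii) come from choosing optimal dual solutions with $0\le z_i^{(\ell)}\le1$ and scaling by $t$, and (i) comes from the dual constraints for $X\in\bigcup_\ell\mathcal X_\ell$ together with the bound of at most $\varepsilon M^t$ uncovered tuples recorded after Lemma~\ref{hreg}.
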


The next lemma repairs the exceptional sets and keeps only frequent rounded
values.

\begin{lemma}\label{lem:op1}
Let $1/M \ll \varepsilon\ll1/r,1/t$. Let
$S_1,\ldots,S_M\in[0,t]^r$ satisfy
Observation~\ref{lem:initialvalues}\textnormal{(i)}. There are vectors
$S'_1,\ldots,S'_M\in[0,2t]^r$ such that:
\begin{enumerate}
\item For each $X\in\binom{[M]}{t}$, there exists a colour $\ell\in[r]$ such that 
$\sum_{i\in X} s_i'^{(\ell)}\geq t$.
\item For every $i \in [M]$, there are at least $t$ indices $j \in [M]$ such that $S'_j = S'_i$.
\item
\[
\bigg\|\sum_{i=1}^MS'_i\bigg\|_\infty
\le
\bigg\|\sum_{i=1}^MS_i\bigg\|_\infty
+3Mt\varepsilon^{1/(3tr)}.
\]
\end{enumerate}
\end{lemma}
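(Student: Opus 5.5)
The plan is to round every vector up to a fine grid, observe that after rounding the covering property depends only on the multiset of rounded values, and then overwrite the few vertices whose rounded value is rare by the all-$t$ vector $(t,\ldots,t)$, which covers every $t$-set containing it. Throughout, put $\delta:=\varepsilon^{1/(3tr)}$.

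\emph{Rounding.} For each $i$ and $\ell$, let $\tilde s_i^{(\ell)}$ be the smallest multiple of $\delta$ that is at least $s_i^{(\ell)}$, and let $\tilde S_i$ be the resulting vector.
\begin{itemize}
\item Each coordinate increases by at most $\delta$, so $\tilde S_i\in[0,t+\delta]^r\subseteq[0,2t]^r$.
\item Each coordinate sum increases by at most $M\delta$.
\item Coordinatewise domination means every $t$-set covered by the $S_i$ stays covered by the $\tilde S_i$.
\item The number of possible rounded values is $K\le (t/\delta+2)^r\le (3t)^r\delta^{-r}=(3t)^r\varepsilon^{-1/(3t)}$.
\end{itemize}

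\emph{Key observation.} Whether a $t$-set $X$ is covered by the $\tilde S_i$ depends only on the multiset $\{\tilde S_i:i\in X\}$, since the sums $\sum_{i\in X}\tilde s_i^{(\ell)}$ are determined by it. Call a value $v$ \emph{frequent} if its class $C_v=\{i:\tilde S_i=v\}$ has size at least $T:=t\,\varepsilon^{1/(2t)}M$. Suppose some $t$-set $X$ lies inside the union of the frequent classes but is not covered by the $\tilde S_i$. Then every $t$-set with the same value multiset is also uncovered, and hence uncovered by the original $S_i$. Suppose $X$ meets $C_v$ in $a_v$ elements. The number of such $t$-sets is $\prod_v\binom{|C_v|}{a_v}\ge \prod_v (T/t)^{a_v}=\varepsilon^{1/2}M^t>\varepsilon M^t$, using $|C_v|\ge T\gg t$ and $\binom{N}{a}\ge (N/a)^a$. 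This contradicts Observation~\ref{lem:initialvalues}(i). Therefore every $t$-set of frequent-class indices is covered.

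\emph{Repair of rare vertices.} Let $B$ be the set of indices in non-frequent classes, so $|B|<KT$. Define $S'_i=(t,\ldots,t)$ for $i\in B$ and $S'_i=\tilde S_i$ otherwise. If $0<|B|<t$, also reset $t-|B|$ arbitrary frequent-class indices to $(t,\ldots,t)$; a frequent class still keeps at least $T-t\ge t$ members.
\begin{enumerate}
\item Any $t$-set meeting an overwritten index is covered in every coordinate. Any other $t$-set lies inside the frequent classes and is covered by the previous paragraph. This gives property (1).
\item Every value of $S'$ occurs at least $t$ times, which gives property (2).
\item Each coordinate sum increases by at most $M\delta+t(KT+t)$. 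We have $tKT\le t^2(3t)^r\varepsilon^{1/(2t)-1/(3t)}M=t^2(3t)^r\varepsilon^{1/(6t)}M$. Since $r\ge2$, we have $1/(6t)\ge 1/(3tr)$, so for $\varepsilon$ small this is at most $t\delta M$. Together with $t^2\le \delta M$ (as $1/M\ll\varepsilon$), the total is at most $3Mt\delta$. This gives property (3).
\end{enumerate}

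The only delicate point is the key observation together with the choice of the threshold $T$. $T$ must be large enough that a single uncovered frequent type yields more than $\varepsilon M^t$ uncovered sets. It must also be small enough that $K\cdot T$ rare vertices cost $o(\delta M)$. The exponents $1/(2t)$ and $1/(3tr)$ are chosen precisely to balance these two requirements.
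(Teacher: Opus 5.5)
Your proof of property (3) breaks down at $r=2$, which is squarely within the scope of the lemma (it is applied with $r,t\ge2$). You bound the repair cost by $tKT\le t^2(3t)^r\varepsilon^{1/(6t)}M$. You then claim this is at most $t\delta M=t\varepsilon^{1/(3tr)}M$ for small $\varepsilon$, because $1/(6t)\ge1/(3tr)$. For $r=2$ the two exponents are equal, so the ratio of the two sides is the constant $t(3t)^2=9t^3>1$, and shrinking $\varepsilon$ does nothing. Your estimates then only certify an increase of order $9t^4\varepsilon^{1/(6t)}M$ in each coordinate sum, against the permitted $3t\varepsilon^{1/(6t)}M$. (For $r=1$ the exponent comparison even goes the wrong way.) So your closing claim that the exponents $1/(2t)$ and $1/(3tr)$ balance the two requirements is exactly what fails. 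The threshold $T=t\varepsilon^{1/(2t)}M$ is larger than needed. It buys a margin $\varepsilon^{1/2}M^t\gg\varepsilon M^t$ in the counting step that you never use, and pays for it with too many potentially rare indices.

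The fix is to lower the threshold, e.g.\ to $T=2t\varepsilon^{1/t}M$.
\begin{itemize}
\item Your counting still gives $\prod_v\binom{|C_v|}{a_v}\ge(T/t)^t=2^t\varepsilon M^t>\varepsilon M^t$.
\item The repair cost becomes $tKT\le2t^2(3t)^r\varepsilon^{2/(3t)}M$. Since $2/(3t)>1/(3tr)$ for every $r\ge1$, this is at most $t\delta M$ for $\varepsilon$ small.
\item The condition $T\ge2t$ still holds, since $1/M\ll\varepsilon$.
\end{itemize}
This is essentially the paper's choice $L=t\varepsilon^{1/t}M$. There the sharper asymptotic $\prod_j\binom{\lfloor L\rfloor}{a_j}=(1+o(1))L^t/\prod_ja_j!$, together with $t^t/t!>1$, replaces your factor $2^t$.

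With this change your argument is a correct alternative, and it differs from the paper only in the repair step. The paper overwrites each rare index by an arbitrary frequent value. Every value of $S'$ is then frequent, so (2) is immediate, and covering of every $t$-set follows by counting original indices carrying the same multiset of frequent values. You overwrite by $(t,\ldots,t)$ instead. This makes every $t$-set through a repaired index trivially covered and confines the counting to the frequent classes. The price is a possibly rare new value, which is why you need the padding step. Both versions cost $O(t)$ per repaired index in each coordinate, so neither is cheaper.
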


\begin{proof}
Set $\delta:=t\varepsilon^{1/(3tr)}$ and $L:=t\varepsilon^{1/t}M$. We first replace every coordinate of $S_i$ by the smallest multiple of $\delta$ not smaller than it. For every $i\in[M]$, set the discretized vector $\tilde{S}_i:= \delta\cdot\left\lceil \frac{S_i}{\delta}\right\rceil$.

Note that the vectors $\{\tilde{S}_i\}$ take at most $(2t/\delta)^r$ distinct values. Indeed, the set 
$$C=\{0,\delta,2\delta,\ldots,\lceil t/\delta\rceil\delta\}\subset [0,2t]$$ 
 contains at most $\lceil t/\delta\rceil+1\leq (2 t/\delta)$ values, and $\tilde{S}_i\in C^r$. Moreover,
$\left\|\sum_i\widetilde S_i\right\|_\infty
\le
\left\|\sum_iS_i\right\|_\infty+M\delta.$

We next construct $\{S'_i\}$ by selecting the vectors that appear sufficiently many times among $\{\tilde{S}_i\}$. We keep these frequently occurring vectors and use their values to replace the vectors that appear only a few times. Set
$$\cS:=\bigl\{S\in\{\tilde{S}_i\}_{i\in[M]}:\ |\{i\in[M]: \tilde{S}_i=S\}|\ge L\bigr\}.$$
For sufficiently small $\varepsilon$, we have $(2t/\delta)^rL<M$, so $\cS$ is non-empty. We define $\{S'_i\}$ as follows: For each $i\in [M]$, if $\tilde{S}_i\in \cS$, then set $S'_i:=\tilde{S}_i$; otherwise choose an arbitrary $S\in \cS$ and set $S'_i:=S$. Then \textnormal{(2)} holds immediately for $\{S'_i\}$, since $L>t$, and it remains to verify \textnormal{(1)} and \textnormal{(3)}.

For (1), suppose that some $X\in\binom{[M]}{t}$ satisfies
\[
\sum_{i\in X}s_i'^{(\ell)}<t
\qquad\text{for every }\ell\in[r].
\]
Let $R_1,\ldots,R_h$ be the distinct values in the multiset $\{S_i':i\in X\}$, with multiplicities $a_1,\ldots,a_h$, where $a_1+\cdots+a_h=t$. For each $j\in[h]$, let
$I_j:=|\{u\in[M]:\widetilde S_u=R_j\}|.
$
Since every $R_j$ is frequent, $I_j\ge L$. Every $t$-set whose multiset of rounded values consists of $a_j$ copies of $R_j$ for each $j$ also violates the covering condition for the original vectors, because $S_u\le\widetilde S_u$ coordinatewise. The number of such $t$-sets is
\[
\prod_{j=1}^h\binom{I_j}{a_j}
\ge
\prod_{j=1}^h\binom{\lfloor L\rfloor}{a_j}
=
(1+o(1))\frac{L^t}{\prod_{j=1}^h a_j!}
>
\varepsilon M^t.
\]
Indeed, $L=t\varepsilon^{1/t}M$ and
$t^t/\prod_j a_j!\ge t^t/t!>1$. This contradicts
Observation~\ref{lem:initialvalues}\textnormal{(i)}.

For (3), the number of indices $i\in[M]$ for which $\widetilde S_i\notin\cS$ is at most
$\left(\frac{2t}{\delta}\right)^rL
\le
\varepsilon^{1/(2t)}M,$
provided that $\varepsilon$ is sufficiently small. Hence
\[
\max_{\ell\in[r]}
\sum_{i=1}^M
|s_i'^{(\ell)}-\widetilde s_i^{(\ell)}|
\le
2t\varepsilon^{1/(2t)}M.
\]
Together with the preceding estimate, this gives
\begin{align*}
\left\|\sum_{i=1}^M S'_i\right\|_\infty
\le
\left\|\sum_{i=1}^M\widetilde S_i\right\|_\infty
+
2t\varepsilon^{1/(2t)}M
&\le
\left\|\sum_{i=1}^M S_i\right\|_\infty
+M\delta+2t\varepsilon^{1/(2t)}M\\
&\le
\left\|\sum_{i=1}^M S_i\right\|_\infty
+3Mt\varepsilon^{1/(3tr)}\tag*{\qedhere}
\end{align*}
\end{proof}

We use the following form of Carath\'eodory's theorem; see, for instance,
\cite{ziegler}.

\begin{theorem}[Carath\'eodory's theorem]\label{thm:Cara}
Let \(K\subseteq\mathbb R^r\) be compact and let
\(x\in\partial\operatorname{conv}(K)\). Then \(x\) is a convex
combination of at most \(r\) points of \(K\).
\end{theorem}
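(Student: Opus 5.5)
The plan is to reduce to the classical Carath\'eodory theorem (every point of $\operatorname{conv}(A)$ for $A$ contained in an affine space of dimension $k$ is a convex combination of at most $k+1$ points of $A$) by dropping one dimension via a supporting hyperplane at $x$.

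First, we record that $\operatorname{conv}(K)$ is compact. By the classical theorem in $\mathbb R^r$, it is the image of the compact set $K^{r+1}\times\Delta_r$ under the continuous map $(y_0,\ldots,y_r,\lambda)\mapsto\sum_j\lambda_jy_j$, where $\Delta_r$ denotes the standard simplex. Hence $\operatorname{conv}(K)$ is closed, and the boundary point $x$ actually lies in $\operatorname{conv}(K)$. We then split into two cases.

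If $\operatorname{conv}(K)$ has empty interior, then its affine hull is a proper affine subspace of dimension at most $r-1$. The classical theorem applied inside this subspace writes $x$ as a convex combination of at most $r$ points of $K$.

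Otherwise $\operatorname{conv}(K)$ is a convex body and $x$ lies on its topological boundary. The supporting hyperplane theorem gives a nonzero linear functional $f$ with $f(x)=\max_{y\in\operatorname{conv}(K)}f(y)=:c$. By the classical theorem, write $x=\sum_{j}\lambda_jy_j$ with $y_j\in K$ and all $\lambda_j>0$. Since $c=f(x)=\sum_j\lambda_jf(y_j)$ with every $f(y_j)\le c$ and all $\lambda_j>0$, we get $f(y_j)=c$ for every $j$. Thus $x\in\operatorname{conv}\bigl(K\cap\{f=c\}\bigr)$, and the set $K\cap\{f=c\}$ lies in an affine hyperplane of dimension $r-1$. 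Applying the classical theorem in that hyperplane expresses $x$ as a convex combination of at most $r$ points of $K$.

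The only step needing care is justifying that $x$ belongs to $\operatorname{conv}(K)$ and admits a supporting hyperplane; this is exactly where compactness of $K$ enters, through the closedness argument above. The remaining steps are routine.
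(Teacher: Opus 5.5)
Your proof is correct. Note that the paper does not prove this statement at all: it quotes it as a standard fact with a pointer to \cite{ziegler}, and only ever applies it to a finite set $K$. Your argument is the standard textbook proof, and it runs as follows:
\begin{itemize}
\item compactness of $K$ makes $\operatorname{conv}(K)$ closed, so the boundary point $x$ actually belongs to $\operatorname{conv}(K)$;
\item a supporting hyperplane $\{f=c\}$ at $x$ forces every point in a positive convex representation of $x$ to lie on that hyperplane;
\item the classical $(k+1)$-point Carath\'eodory theorem in dimension $k\le r-1$ then gives at most $r$ points.
\end{itemize}
You also correctly identify compactness as exactly what guarantees $x\in\operatorname{conv}(K)$. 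Without it the statement can fail, for example when $K$ is an open disc.

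One small simplification: the case split is unnecessary. Every non-interior point of a convex set in $\mathbb R^r$ admits a supporting hyperplane even when the interior is empty; you can take any hyperplane containing the affine hull. So your second case already covers the first.
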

\begin{lemma}\label{lem:direct-profile-compression}
Let $S'_1,\ldots,S'_M$ be the vectors obtained from
Lemma~\ref{lem:op1}. Then there exists an $r$-strip
$r$-edge-colouring $\psi$ of $K_n^{(t)}$ such that
\[
\nu(\psi)
\le
\frac{n}{Mt}\left\|\sum_{i=1}^M S'_i\right\|_\infty
+O_{r,t}(1).
\]
\end{lemma}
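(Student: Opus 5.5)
We follow the compression outlined in Section~\ref{sec:proofoverview}. Let $K:=\{S'_1,\ldots,S'_M\}\subseteq[0,2t]^r$ be the finite set of distinct values, $P:=\operatorname{conv}(K)$, and $\overline S:=\frac1M\sum_iS'_i\in P$. Let $\mathbf 1$ be the all-ones vector. Consider the ray $\overline S-\tau\mathbf 1$, $\tau\ge0$. Since $P$ is compact, $\tau^*:=\max\{\tau\ge0:\overline S-\tau\mathbf 1\in P\}$ exists. The point $B:=\overline S-\tau^*\mathbf 1$ lies on $\partial P$: either $\tau^*>0$ and it is the exit point, or $P$ has empty interior and every point of $P$ is a boundary point. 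Theorem~\ref{thm:Cara} gives $q\le r$ values $R_1,\ldots,R_q\in K$ and positive $\lambda_1,\ldots,\lambda_q$ summing to $1$ with $B=\sum_j\lambda_jR_j$. Coordinatewise $B\le\overline S$, so $\|B\|_\infty\le\|\overline S\|_\infty$, assuming all coordinates of $B$ stay nonnegative. This is automatic, since every point of $P\subseteq[0,2t]^r$ is nonnegative.

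Next we build $\psi$. Partition $[n]=V_1\cup\cdots\cup V_q$ with $|V_j|=\lambda_jn+O(1)$; these are at most $r$ parts. Take an edge profile $(a_1,\ldots,a_q)$ with $\sum_j a_j=t$. By Lemma~\ref{lem:op1}(2), each $R_j$ is attained by at least $t$ indices. So we can choose a $t$-set $X\subseteq[M]$ containing exactly $a_j$ indices $i$ with $S'_i=R_j$, for each $j$. Lemma~\ref{lem:op1}(1) supplies a colour $\ell$ with $\sum_ja_jR_j^{(\ell)}\ge t$, and we colour every edge with this profile by $\ell$. The colour depends only on the profile, so $\psi$ is an $r$-strip colouring.

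For the bound, let $\mathcal M$ be a colour-$\ell$ matching in $\psi$ and assign weight $R_j^{(\ell)}$ to every vertex of $V_j$. Each edge of $\mathcal M$ receives total weight at least $t$, and the edges are disjoint. Therefore
\[
t|\mathcal M|\le\sum_j|V_j|R_j^{(\ell)}\le n\sum_j\lambda_jR_j^{(\ell)}+O_{r,t}(1)=nB^{(\ell)}+O_{r,t}(1).
\]
The $O(1)$ term is controlled because $|R_j^{(\ell)}|\le2t$ and $q\le r$. Hence
\[
|\mathcal M|\le\frac nt\|\overline S\|_\infty+O_{r,t}(1)=\frac{n}{Mt}\Bigl\|\sum_iS'_i\Bigr\|_\infty+O_{r,t}(1).
\]

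The only delicate point is the boundary step. We need the Carathéodory bound of $r$ (not $r+1$) points, which requires $B\in\partial P$. This holds by the choice of $\tau^*$, or trivially in the degenerate lower-dimensional case. Strictly speaking, the argument above does not need to move to the boundary: $\overline S$ itself is a convex combination of the distinct values with frequency coefficients. The boundary move serves only to reduce the number of parts to at most $r$.
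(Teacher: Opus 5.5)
Your proof is correct and follows the paper's argument essentially step for step. You move $\overline S$ in the negative all-ones direction to a point of $\partial P$, apply Carath\'eodory to obtain at most $r$ frequent values with proportions, colour each profile by a covering coordinate found via distinct representatives, and bound a colour-$\ell$ matching by $nB^{(\ell)}+O(1)\le n\overline S^{(\ell)}+O(1)$. One small remark on the boundary step: your case split is unnecessary and omits the case $\tau^*=0$ with $P$ full-dimensional. Maximality of $\tau^*$ alone makes $B$ a limit of points outside $P$, so $B\in\partial P$ in every case.
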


\begin{proof}
Let $P:=\operatorname{conv}\{S'_1,\ldots,S'_M\}$ and
$\overline S:=M^{-1}\sum_{i=1}^M S'_i$. Define
\[
\alpha:=\max\{a\ge0:\overline S-a\mathbbm1_r\in P\}
\qquad\text{and}\qquad
J:=\overline S-\alpha\mathbbm1_r.
\]
By the maximality of $\alpha$, we have $J\in\partial P$. Hence, by
Theorem~\ref{thm:Cara}, there are distinct values
$R_1,\ldots,R_q$ occurring among the $S'_i$, where $q\le r$, and positive
numbers $\lambda_1,\ldots,\lambda_q$ summing to one such that
$J=\sum_{j=1}^q\lambda_jR_j$.

Partition $V(K_n^{(t)})$ into sets $U_1,\ldots,U_q$ such that
$|U_j|=\lambda_jn+O(1)$ for every $j\in[q]$. Consider a profile
$\mathbf a=(a_1,\ldots,a_q)\in\mathbb Z_{\ge0}^q$ with
$\sum_{j=1}^q a_j=t$. Since every $R_j$ has at least $t$ representatives, we may choose
$t$ distinct indices consisting of $a_j$ indices carrying the value $R_j$
for each $j\in[q]$. Let $X$ be the set of these indices. Then
Lemma~\ref{lem:op1}\textnormal{(1)}, applied to $X$, gives a colour
$\ell(\mathbf a)\in[r]$ such that
\[
\sum_{j=1}^q a_jR_j^{(\ell(\mathbf a))}\ge t.
\] 
Colour every edge of profile $\mathbf a$ with colour $\ell(\mathbf a)$.
This defines an $r$-strip $r$-edge-colouring $\psi$ of $K_n^{(t)}$.

Let $\mathcal M$ be a matching of colour $\ell$. By the definition of
$\psi$, every $e\in\mathcal M$ satisfies
$\sum_{j=1}^q|e\cap U_j|R_j^{(\ell)}\ge t$. Since the edges of
$\mathcal M$ are pairwise disjoint,
\[
t|\mathcal M|
\le
\sum_{e\in\mathcal M}\sum_{j=1}^q|e\cap U_j|R_j^{(\ell)}
\le
\sum_{j=1}^q|U_j|R_j^{(\ell)}
=
nJ_\ell+O_{r,t}(1).
\]
Since $J\le\overline S$ coordinatewise, it follows that
\[
|\mathcal M|
\le
\frac{n}{t}\overline S_\ell+O_{r,t}(1)
\le
\frac{n}{Mt}\left\|\sum_{i=1}^M S'_i\right\|_\infty
+O_{r,t}(1).
\]
Taking the maximum over all colours proves the result.
\end{proof}

\subsection{Proof of Theorem~\ref{thm:hypergraphstructure}}

\begin{proof}[Proof of Theorem~\ref{thm:hypergraphstructure}]
Fix $\varepsilon>0$. Choose $\rho>0$ such that
$3\rho+3\rho^{1/(3tr)}<\varepsilon/2$, and then choose the remaining
parameters according to
\[
0<\eta\ll1/Z\ll1/m\ll\rho\ll d/r,1/D,1/r,1/t.
\]
We may assume that $\rho^{1/t}m\ge1$. Apply Lemma~\ref{hreg}, with $\rho$
in place of its regularity parameter, to obtain an equipartition into
$M$ clusters, where $m\le M\le Z$, and put $k=\lfloor n/M\rfloor$.

Lemma~\ref{lem:chitoskeleton_hygr}, again with $\rho$ in place of
$\varepsilon$, gives
$\nu(\chi)\ge k\|Z\|_\infty-3\rho n$. Let $S_1,\ldots,S_M$ be the vectors
from Observation~\ref{lem:initialvalues}, and apply Lemma~\ref{lem:op1}.
Every value among the resulting vectors $S'_1,\ldots,S'_M$ occurs at least
$t\rho^{1/t}M\ge t$ times. Lemma~\ref{lem:direct-profile-compression}
therefore gives an $r$-strip colouring $\psi$ with
\[
\begin{aligned}
\nu(\psi)
\le
\frac{n}{Mt}\left\|\sum_iS'_i\right\|_\infty+O(1)
\le
\frac nM\|Z\|_\infty+3\rho^{1/(3tr)}n+O(1)
\le
k\|Z\|_\infty+3\rho^{1/(3tr)}n+O(1).
\end{aligned}
\]
For the last inequality, use $0\le n/M-k<1$ and
$\|Z\|_\infty\le M$. Combining the two estimates and taking $n$
sufficiently large gives $\nu(\chi)\ge\nu(\psi)-\varepsilon n$.
\end{proof}

\section{A robust critical theorem and hypergraph applications}
\label{sec:hypapp}

For a $t$-graph $\cG$, write $\overline{\cG}$ for its complement in the
complete $t$-graph on $V(\cG)$. If $\chi$ is an $r$-edge-colouring, let
$\nu_i(\chi)$ denote the matching number in colour $i$. We first prove
Theorem~\ref{thm:robust-critical-AFL}, and then derive its 
stable Kneser consequences before turning to random transference.

\subsection{Proof of the robust critical theorem}
\label{subsec:robust-critical}
We use only the dual vectors
$S_i=(tz_i^{(1)},\ldots,tz_i^{(r)})$
from Observation~\ref{lem:initialvalues}, before the repair and compression
steps in Section~\ref{sec:main}. For the proof it is more convenient to work with the complement. Since
$d_{\overline{\cG}}(v)=\binom{n-1}{t-1}-d_{\cG}(v)$ and
$\binom{n-1}{t-1}=\Theta_t(n^{t-1})$, after changing the constant $c$
if necessary, Theorem~\ref{thm:robust-critical-AFL} is equivalent to the
following statement: there exist $c>0$ and $n_0$ such that every
$t$-graph $\cG$ on $n\ge n_0$ vertices satisfying
$\Delta(\overline{\cG})\le cn^{t-1}$ satisfies the conclusion of the
theorem. We use this equivalent formulation throughout the proof.

We first record the elementary strip bound
used both here and in the transference argument.

\begin{lemma}\label{lem:aaalower}
Let $q\le r$. Every $r$-edge-coloured $q$-strip colouring $\varphi$ of
$K_n^{(t)}$ satisfies
\[
\nu(\varphi)\ge\left\lfloor\frac{n}{t+r-1}\right\rfloor.
\]
\end{lemma}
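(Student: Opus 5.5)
The plan is to argue by induction on the number $r$ of colours, keeping $t$ fixed and letting $q\le r$ and $n$ be arbitrary. Write $m:=\lfloor n/(t+r-1)\rfloor$. For $r=1$ every $t$-set has the same colour, and a perfect-as-possible matching of $K_n^{(t)}$ has $\lfloor n/t\rfloor=m$ edges. For the induction step, suppose that no colour has a matching of size $m$. I will try to delete a set $D$ of at most $n-(t+r-2)m$ vertices (note this is at least $m$) so that colour $r$ (after relabelling) no longer appears on $V\setminus D$. The restriction of $\varphi$ to $V\setminus D$ is still a strip colouring with at most $q$ parts, now using at most $r-1$ colours. The inductive hypothesis then gives a monochromatic matching of size $\lfloor |V\setminus D|/(t+r-2)\rfloor\ge m$, a contradiction.

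The strip structure is what should make such a cheap deletion possible. Let $U_1,\dots,U_q$ be the parts, with sizes $u_j$. A colour-$\ell$ matching that uses profiles $\mathbf a^{(1)},\mathbf a^{(2)},\dots$ (all coloured $\ell$) exists exactly when the profile counts fit inside the part sizes. So, by integer rounding of the fractional packing, $\nu_\ell(\varphi)<m$ forces the following: for every $\ell$-coloured profile $\mathbf a$ there is a part $j$ with $a_j\ge1$ and $u_j<m\,a_j$, up to the $O(1)$ rounding that the strip case lets me control exactly. Say the part is \emph{$\ell$-small} when this holds. Thus each colour has a family of small parts hitting all of its profiles. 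I would then count. Summing over colours the deficits $m a_j-u_j$ relative to a canonical profile, in the spirit of the AFL extremal example (parts of sizes $c_jm$ with $\sum c_j=t+r-1$), should show that some colour $\ell$ has its blocking parts of total size at most $n-(t+r-2)m$. Deleting those parts, or suitably many vertices from them, removes every $\ell$-coloured profile that can still be realised.

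I expect this counting step to be the main obstacle: choosing which colour to kill and proving that its blocking set is small enough. The first thing I would try is to consider, for each part $j$, the colour $\ell_j$ of the pure profile $t\mathbf e_j$. Since a colour-$\ell_j$ matching of size $\lfloor u_j/t\rfloor$ already lives inside $U_j$, we get $u_j<tm+t$. Next I would look at the profiles with $t-1$ vertices in one part and one in another, which lets me compare the sizes of parts sharing a colour. If these local bounds do not suffice, the fallback is LP duality on the finite profile system: the fractional cover of the $\ell$-coloured profiles by part weights $y_j^{(\ell)}$ satisfies $\sum_j u_j y_j^{(\ell)}<tm$. Averaging these covers over $\ell$ and using that every profile receives some colour, I would derive $n<(t+r-1)m+t+r-1$, which is the claimed bound. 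This final step is essentially the exact-equality analysis described in the overview, done directly on at most $r$ parts.
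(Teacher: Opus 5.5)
\textbf{There is a genuine gap.} The step that carries the whole argument is never supplied: you need to exhibit a colour whose ``blocking parts'' have total size at most $n-(t+r-2)m$, and you flag this yourself as the main obstacle.

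The LP fallback does not close it. The inequality you aim to derive, $n<(t+r-1)m+t+r-1$, holds automatically because $m=\lfloor n/(t+r-1)\rfloor$, so it contradicts nothing; you would need $n<(t+r-1)m$. Passing through fractional covers also costs an additive $O(1)$ when rounding back to integral packings, which an exact floor bound cannot absorb without extra work. Moreover, the equality analysis from the overview that you invoke takes this very lemma as an input in the paper: it is what yields $\|J_h\|_\infty\ge t/L$.

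The induction is also mis-set. After deleting $D$ you have at most $r-1$ colours but possibly still $r$ nonempty parts, while the hypothesis for $r-1$ colours requires at most $r-1$ parts. This works only if $D$ contains an entire part.

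\textbf{The missing idea is that the number of colours plays no role.} In a strip colouring every profile class is monochromatic, so it suffices to find one profile $\mathbf a$ with $a_jm\le u_j$ for every $j$. Your own ``$\ell$-small'' observation, which is exact and involves no rounding, gives this at once if you apply it to all profiles simultaneously rather than colour by colour. Put $c_j:=\lfloor u_j/m\rfloor$ and take $m\ge1$. If no colour had a matching of size $m$, then every profile would have some $a_j>u_j/m$, i.e.\ $a_j\ge c_j+1$. This forces $\sum_jc_j\le t-1$; yet
\[
\sum_{j=1}^q c_j>\sum_{j=1}^q\Bigl(\frac{u_j}{m}-1\Bigr)=\frac nm-q\ge t+r-1-q\ge t-1 .
\]
This is exactly the paper's proof. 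Choose integers $a_j\le c_j$ with $\sum_ja_j=t$, and take $m$ disjoint edges of profile $\mathbf a$; they all have one colour. No induction, deletion or LP duality is needed, and the bound is exact.
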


\begin{proof}
Put $m=\lfloor n/(t+r-1)\rfloor$, and let
$U_1,\ldots,U_q$ be the strip parts. The result is trivial if $m=0$.
Set $c_j=\lfloor |U_j|/m\rfloor$. Then
\[
\sum_{j=1}^qc_j>\frac{n}{m}-q\ge t+r-1-q\ge t-1,
\]
so $\sum_jc_j\ge t$. Choose non-negative integers $a_j\le c_j$ with
$\sum_ja_j=t$. Since $|U_j|\ge a_jm$ for every $j$, the complete profile
hypergraph with profile $(a_1,\ldots,a_q)$ contains $m$ disjoint edges,
all of the same colour.
\end{proof}

Let $e_1,\ldots,e_r$ be the standard basis of $\mathbb R^r$.

\begin{lemma}\label{lem:critical-rigidity}
Let $M_h\to\infty$, and let
$Y_{h,1},\ldots,Y_{h,M_h}\in[0,t]^r$. Suppose that
\[
\bigg|\bigg\{Q\in\binom{[M_h]}t:
\max_{i\in[r]}\sum_{u\in Q}Y_{h,u}^{(i)}<t\bigg\}\bigg|
=o(M_h^t)
\]
and, uniformly in $i\in[r]$,
\[
\frac1{M_h}\sum_{u=1}^{M_h}Y_{h,u}^{(i)}
\le\frac{t}{t+r-1}+o(1).
\]
After passing to a subsequence and permuting the coordinates, there are
fixed positive integers $c_1,\ldots,c_r$ with
$\sum_ic_i=t+r-1$, numbers $\xi_h\downarrow0$, and pairwise disjoint sets
$\cC_{h,1},\ldots,\cC_{h,r}\subseteq[M_h]$ such that
\[
\left|[M_h]\setminus\bigcup_{i=1}^r\cC_{h,i}\right|=o(M_h),
\qquad
|\cC_{h,i}|=\frac{c_i}{t+r-1}M_h+o(M_h),
\]
and
\[
\left\|Y_{h,u}-\frac{t}{c_i}e_i\right\|_\infty\le\xi_h
\qquad (u\in\cC_{h,i}).
\]
\end{lemma}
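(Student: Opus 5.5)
The plan is a compactness argument: pass to a limiting distribution, solve an exact problem for the limit using the strip machinery of Section~\ref{sec:main}, and transfer the answer back to finite $h$.

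\textbf{Limit measure.} Let $\mu_h$ be the empirical measure of $Y_{h,1},\ldots,Y_{h,M_h}$ on the compact cube $[0,t]^r$. After passing to a subsequence, $\mu_h\to\mu$ weakly. Let $K=\operatorname{supp}\mu$. I want the following two properties.
\begin{enumerate}
\item[(a)] Every multiset of $t$ points of $K$, repetitions allowed, is covered: some coordinate of its sum is at least $t$.
\item[(b)] $\int y^{(i)}\,d\mu\le t/(t+r-1)$ for every $i$.
\end{enumerate}
For (a), suppose $y_1,\ldots,y_t\in K$ violate it. The violating set $\{\max_i\sum y_k^{(i)}<t\}$ is open, so there are neighbourhoods $B_k\ni y_k$ of positive $\mu$-mass such that every choice of points from $B_1,\ldots,B_t$ is still uncovered. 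Each $B_k$ then contains $\Omega(M_h)$ indices for large $h$. Choosing distinct indices gives $\Omega(M_h^t)$ uncovered $t$-sets, contradicting the $o(M_h^t)$ hypothesis. Property (b) is immediate from weak convergence.

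\textbf{Exact mean and existence of the axis points.} I would rerun the proof of Lemma~\ref{lem:direct-profile-compression} with $K$ in place of the finite set of frequent vectors. Set $\overline S=\int y\,d\mu$ and move in the direction $-\mathbbm 1_r$ to the boundary point $J=\overline S-\alpha\mathbbm1_r$ of $\operatorname{conv}K$. Theorem~\ref{thm:Cara} gives $R_1,\ldots,R_q\in K$ with $q\le r$ and weights $\lambda_j>0$ summing to one such that $J=\sum_j\lambda_jR_j$.

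By (a), each profile $\mathbf a$ receives a covering colour, which defines a $q$-strip colouring of $K_n^{(t)}$ with part sizes $\lambda_jn$. The argument of Lemma~\ref{lem:direct-profile-compression} bounds its matching number by $\frac nt J_\ell+O(1)\le\frac n{t+r-1}-\frac{\alpha n}{t}+O(1)$. Comparing this for $n\to\infty$ with Lemma~\ref{lem:aaalower} forces $\alpha=0$ and $\overline S=\frac t{t+r-1}\mathbbm1_r$.

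Next put $c_j=\lambda_j(t+r-1)$. If every profile $\mathbf a$ with $\sum_ja_j=t$ has some $a_j$ with $a_j<c_j$ for all such $j$, the profile gives a monochromatic matching of size $\min_j\lambda_jn/a_j>n/(t+r-1)$, which is a contradiction. The profiles with $a_j\le\lceil c_j\rceil-1$ for every $j$ reach total $t$ unless $\sum_j(\lceil c_j\rceil-1)\le t-1$. This forces $q=r$ and each $c_j$ to be an integer, so $\sum_jc_j=t+r-1$.

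Now take the profile $\mathbf a^{(i)}$ with $a_i=c_i$ and $a_j=c_j-1$ for $j\ne i$; its entries sum to $t$. Let $\ell$ be its covering colour. Then $\sum_j a_jR_j^{(\ell)}\ge t\ge\sum_jc_jR_j^{(\ell)}$, where the second inequality is $(t+r-1)J_\ell\le t$. Hence $R_j^{(\ell)}=0$ for every $j\ne i$ and $c_iR_i^{(\ell)}\ge t$. Consequently $i\mapsto\ell(i)$ is injective, hence a permutation. Permuting colours, I expect $R_i=\frac t{c_i}e_i$ with the inequalities tight.

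\textbf{Every support point is an axis point.} I would not try to prove face rigidity; instead I would use the axis points directly. Take any $y\in K$ and the $t$-multiset consisting of $y$ together with $c_j-1$ copies of $\frac t{c_j}e_j$ for each $j$. By (a), some colour $\ell$ covers it, so $y^{(\ell)}+(c_\ell-1)\frac t{c_\ell}\ge t$, that is $y^{(\ell)}\ge t/c_\ell$. Set $A_\ell=\{y: y^{(\ell)}\ge t/c_\ell\}$. Then $\sum_\ell\mu(A_\ell)\ge1$. On the other hand,
\[
\frac t{t+r-1}=\int y^{(\ell)}\,d\mu\ge\frac t{c_\ell}\,\mu(A_\ell),
\]
so $\mu(A_\ell)\le c_\ell/(t+r-1)$, and these bounds sum to $1$. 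Equality throughout forces three things: the sets $A_\ell$ are $\mu$-a.e.\ disjoint, $y^{(\ell)}=t/c_\ell$ on $A_\ell$, and $y^{(\ell)}=0$ off $A_\ell$. Thus $\mu=\sum_i\frac{c_i}{t+r-1}\delta_{(t/c_i)e_i}$.

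\textbf{Back to finite $h$.} Weak convergence to this atomic measure gives $\xi_h\downarrow0$ such that the sets $\cC_{h,i}=\{u:\|Y_{h,u}-\frac t{c_i}e_i\|_\infty\le\xi_h\}$ have the required sizes and cover all but $o(M_h)$ indices. For large $h$ they are pairwise disjoint, since the atoms are distinct. The main obstacle is the second step: making the strip comparison with Lemma~\ref{lem:aaalower} precise enough in the limit to extract integrality of the $c_j$ and the exact axis form of the $R_i$. Some care is needed there with $O(1)$ rounding as $n\to\infty$, and with the fact that $K$ is infinite, although Carath\'eodory only needs $K$ compact.
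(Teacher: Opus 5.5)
Your proposal is correct, and it reaches the rigidity by a genuinely different route from the paper.

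\textbf{The paper's route.} The paper stays finitary throughout. It reruns the rounding-and-frequency repair of Lemma~\ref{lem:op1} with a vanishing error $\varepsilon_h$. This gives, for each $h$, an array $Y'_{h,u}$ that covers \emph{every} $t$-set and in which every value is frequent. It then performs the convex-hull compression at each finite $h$. Next it passes to a subsequence along which the Carath\'eodory points and weights converge and the finitely many covering colours stabilise, and it identifies the limit configuration as $\frac t{c_i}e_i$ with weights $c_i/(t+r-1)$. Finally it returns to the individual vectors through $g_{h,u}=\sum_ic_iY_{h,u}'^{(i)}$, which is at least $t-o(1)$ for every $u$ and averages $t+o(1)$. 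At the very end it discards the replaced indices $\cR_h$.

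\textbf{Your route.} You take a weak limit $\mu$ of the empirical measures. Positive mass near every support point plays the role of ``frequent values''. So exact covering of all $t$-multisets from $\operatorname{supp}\mu$ follows directly from the $o(M_h^t)$ hypothesis. The profile and canonical-profile analysis is then done once, on an exact object. The step ``almost every vector is near an axis point'' becomes an exact equality case: $\mu(A_\ell)\le c_\ell/(t+r-1)$, and these bounds sum to $1$. This is the same averaging as the paper's $g_{h,u}$. The return to finite $h$ is just the portmanteau theorem with a slowly decreasing $\xi_h$.

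\textbf{Comparison.} Your version is shorter and removes all the $\delta_h,F_h,\cR_h$ bookkeeping and the $o(1)$ error tracking, at the cost of using weak compactness. The paper's version reuses exactly the repair and compression lemmas of Section~\ref{sec:main}. This keeps the exact argument visibly parallel to the asymptotic reduction and closer to something quantitative.

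\textbf{Minor points of exposition.}
\begin{itemize}
\item The sentence introducing $c_j$ is garbled. What you need, and what your argument gives, is this: if some profile has $a_j<c_j$ for every $j$ with $a_j>0$, then its edges carry a monochromatic matching of size $\min_{j:a_j>0}\lambda_jn/a_j-O(1)$. This exceeds $n/(t+r-1)+O(1)$ by a linear amount.
\item The comparison with Lemma~\ref{lem:aaalower} only yields $\alpha=0$ and $\|\overline S\|_\infty=t/(t+r-1)$, not yet $\overline S=\frac t{t+r-1}\mathbbm1_r$. The latter follows once $R_i=\frac t{c_i}e_i$ and $\lambda_i=c_i/(t+r-1)$ are known. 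In any case, your last step only needs the inequality (b).
\item The axis form is fully forced, not merely expected. Once $\ell(i)=i$, every $R_j$ vanishes off coordinate $j$, and $t\le c_iR_i^{(i)}=\sum_jc_jR_j^{(i)}\le t$.
\item The ``obstacle'' you flag about $O(1)$ rounding does not arise. In the limit every inequality is exact, and you simply let $n\to\infty$.
\end{itemize}
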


\begin{proof}
Put $L=t+r-1$. We first repeat the repair procedure from
Lemma~\ref{lem:op1}, now with an error parameter tending to zero. Set
\[
\beta_h:=
\frac1{M_h^t}
\bigg|\bigg\{Q\in\binom{[M_h]}t:
\max_{i\in[r]}\sum_{u\in Q}Y_{h,u}^{(i)}<t\bigg\}\bigg|.
\]
Thus $\beta_h\to0$. After discarding finitely many terms, define
$\varepsilon_h:=\max\{\beta_h^{1/2},M_h^{-t/2}\}$. Then
$\varepsilon_h\to0$, $\beta_h\le\varepsilon_h$, and
$\varepsilon_h^{1/t}M_h\to\infty$.

Set $\delta_h:=t\varepsilon_h^{1/(3tr)}$ and
$F_h:=t\varepsilon_h^{1/t}M_h$. Round every coordinate of $Y_{h,u}$
upwards to the nearest multiple of $\delta_h$, and denote the resulting
vector by
\[
\widetilde Y_{h,u}
:=
\delta_h\left\lceil\frac{Y_{h,u}}{\delta_h}\right\rceil.
\]
The vectors $\widetilde Y_{h,u}$ lie in $[0,2t]^r$ and take at most
$N_h:=(2t/\delta_h)^r$ distinct values. Call a rounded value frequent if
it occurs at least $F_h$ times. Since
\[
N_hF_h
\le O_{r,t}\bigl(\varepsilon_h^{2/(3t)}M_h\bigr)
=o(M_h),
\]
at least one frequent value exists for all sufficiently large $h$.

Define $Y'_{h,u}$ as follows. If $\widetilde Y_{h,u}$ is frequent, set
$Y'_{h,u}:=\widetilde Y_{h,u}$; otherwise replace
$\widetilde Y_{h,u}$ by an arbitrary frequent rounded value. Let
$\cR_h\subseteq[M_h]$ be the set of replaced indices. Then
$|\cR_h|\le N_hF_h=o(M_h)$, and every value occurring among the
$Y'_{h,u}$ occurs at least $F_h$ times.

We claim that the repaired vectors cover every $t$-set:
\[
\max_{\ell\in[r]}\sum_{u\in Q}Y_{h,u}'^{(\ell)}\ge t
\qquad\text{for every }Q\in\binom{[M_h]}t.
\]
Suppose otherwise, and let $R_1,\ldots,R_s$ be the distinct values in the
multiset $\{Y'_{h,u}:u\in Q\}$, with multiplicities
$a_1,\ldots,a_s$, where $\sum_ja_j=t$. Let
$I_j:=|\{u\in[M_h]:\widetilde Y_{h,u}=R_j\}|$. Since every $R_j$ is
frequent, $I_j\ge F_h$. Every $t$-set whose multiset of rounded values
contains $a_j$ copies of $R_j$ for every $j$ violates the original
covering condition, because $Y_{h,u}\le\widetilde Y_{h,u}$
coordinatewise. Hence the number of violating $t$-sets is at least
\[
\prod_{j=1}^s\binom{I_j}{a_j}
\ge
\prod_{j=1}^s\binom{\lfloor F_h\rfloor}{a_j}
=
(1+o(1))\frac{F_h^t}{\prod_{j=1}^sa_j!}
>
\varepsilon_hM_h^t.
\]
Here we used $F_h^t=t^t\varepsilon_hM_h^t$ and
$t^t/\prod_ja_j!\ge t^t/t!>1$. This contradicts
$\beta_h\le\varepsilon_h$ and proves the claim.

The repair changes the original array by $o(1)$ on average. Indeed,
$Y'_{h,u}=\widetilde Y_{h,u}$ for $u\notin\cR_h$, and hence
\[
\frac1{M_h}\sum_{u=1}^{M_h}
\|Y'_{h,u}-Y_{h,u}\|_\infty
\le
\delta_h+\frac{2t|\cR_h|}{M_h}
=o(1).
\]
Let $\overline Y'_h:=M_h^{-1}\sum_uY'_{h,u}$. It follows that, uniformly
in $i\in[r]$,
\[
\overline Y_h'^{(i)}
\le\frac{t}{L}+o(1).
\]

We now apply the same finite profile compression as in
Lemma~\ref{lem:direct-profile-compression}. Let
\[
\cV_h:=\{Y'_{h,u}:u\in[M_h]\},
\qquad
P_h:=\operatorname{conv}(\cV_h),
\]
and define
\[
\alpha_h:=\max\{a\ge0:\overline Y'_h-a\mathbbm1_r\in P_h\},
\qquad
J_h:=\overline Y'_h-\alpha_h\mathbbm1_r.
\]
By maximality, $J_h\in\partial P_h$. Theorem~\ref{thm:Cara} gives
$q_h\le r$, distinct values $W_{h,1},\ldots,W_{h,q_h}\in\cV_h$, and
positive coefficients $\lambda_{h,1},\ldots,\lambda_{h,q_h}$ summing to
one such that
\[
J_h=\sum_{j=1}^{q_h}\lambda_{h,j}W_{h,j}.
\]

Consider an integer $t$-profile
$a=(a_1,\ldots,a_{q_h})$ with $\sum_ja_j=t$. Since every value in
$\cV_h$ occurs at least $F_h\ge t$ times, we may choose $t$ distinct
indices consisting of $a_j$ indices carrying $W_{h,j}$ for every $j$.
The exact covering property of the repaired array therefore gives a colour
$\ell_h(a)\in[r]$ such that
\[
\sum_{j=1}^{q_h}a_jW_{h,j}^{(\ell_h(a))}\ge t.
\]

For each $N$, partition $V(K_N^{(t)})$ into sets
$U_1,\ldots,U_{q_h}$ with
$|U_j|=\lambda_{h,j}N+O(1)$, and colour every edge of profile $a$ by
$\ell_h(a)$. This is a $q_h$-strip $r$-edge-colouring. If $\cM$ is a
matching of colour $i$, then
\[
t|\cM|
\le
\sum_{e\in\cM}\sum_{j=1}^{q_h}|e\cap U_j|W_{h,j}^{(i)}
\le
\sum_{j=1}^{q_h}|U_j|W_{h,j}^{(i)}
=
NJ_h^{(i)}+O_{r,t}(1).
\]
Since $q_h\le r$, the elementary strip bound above gives a monochromatic
matching of size $\lfloor N/L\rfloor$. Letting $N\to\infty$ gives
$\|J_h\|_\infty\ge t/L$. Since
$\|\overline Y'_h\|_\infty\le t/L+o(1)$ and
$\|J_h\|_\infty=\|\overline Y'_h\|_\infty-\alpha_h$, we obtain
\[
\alpha_h=o(1),
\qquad
\|J_h\|_\infty=\frac{t}{L}+o(1),
\qquad
\|\overline Y'_h-J_h\|_\infty=o(1).
\]

Pass to a subsequence on which $q_h=q_0$ is constant and, after relabelling,
\[
W_{h,j}\to W_j,
\qquad
\lambda_{h,j}\to\lambda_j
\qquad (j\in[q_0]).
\]
Discard the indices for which $\lambda_j=0$, and relabel the remaining
indices as $[q]$. Thus $\lambda_j>0$, $\sum_{j=1}^q\lambda_j=1$, and
\[
J:=\lim_{h\to\infty}J_h
=
\sum_{j=1}^q\lambda_jW_j.
\]
Moreover, $J_i\le t/L$ for every $i\in[r]$ and
$\|J\|_\infty=t/L$.

There are only finitely many integer $t$-profiles on $q$ parts. After one
further subsequence, the covering colour $\ell_h(a)$ is independent of $h$
for every such profile. Consequently, every integer $t$-profile
$a=(a_1,\ldots,a_q)$ is covered by some coordinate $\ell(a)\in[r]$:
\[
\sum_{j=1}^qa_jW_j^{(\ell(a))}\ge t.
\]

We now determine this finite limiting configuration. Set
$d_j:=L\lambda_j$. We claim that every integer $t$-profile $a$ satisfies
$a_j\ge d_j$ for some $j$. Otherwise
$\rho:=\min_{j:a_j>0}\lambda_j/a_j>1/L$, and hence
\[
J_{\ell(a)}
=
\sum_{j=1}^q\lambda_jW_j^{(\ell(a))}
\ge
\rho\sum_{j=1}^qa_jW_j^{(\ell(a))}
\ge
\rho t
>
\frac{t}{L},
\]
contradicting $J_{\ell(a)}\le t/L$.

Let $b_j:=\lceil d_j\rceil-1$. If $q<r$, then
$\sum_jb_j\ge L-q\ge t$, so there is an integer $t$-profile satisfying
$a_j\le b_j<d_j$ for every $j$, a contradiction. Thus $q=r$. If some
$d_j$ is non-integral, then
$\sum_j(\lceil d_j\rceil-d_j)$ is a positive integer, and therefore
$\sum_jb_j\ge t$. This again gives a $t$-profile with $a_j<d_j$ for every
$j$. Hence
\[
d_j=c_j\in\mathbb N,
\qquad
c_1+\cdots+c_r=L,
\qquad
\lambda_j=\frac{c_j}{L}.
\]

For $i\in[r]$, define the profile $p^{(i)}$ by
\begin{equation}\label{eq:canonical-profile}
p_i^{(i)}=c_i,
\qquad
p_j^{(i)}=c_j-1\quad(j\ne i).
\end{equation}
Let $\ell_i$ cover this profile. Since
$\sum_jc_jW_j^{(\ell_i)}=LJ_{\ell_i}\le t$, we have
\[
t
\le
\sum_{j=1}^rp_j^{(i)}W_j^{(\ell_i)}
=
\sum_{j=1}^rc_jW_j^{(\ell_i)}
-
\sum_{j\ne i}W_j^{(\ell_i)}
\le
t.
\]
It follows that $W_j^{(\ell_i)}=0$ for $j\ne i$ and
$W_i^{(\ell_i)}=t/c_i$. The colours
$\ell_1,\ldots,\ell_r$ are distinct. After permuting the coordinates,
\[
W_i=\frac{t}{c_i}e_i\quad(i\in[r]),
\qquad
J=\frac{t}{L}\mathbbm1_r.
\]

Since $q=r$ and $q_h\le r$, the preceding limiting representation shows
that, after relabelling, $q_h=r$ for all sufficiently large $h$, and
\[
W_{h,i}\to\frac{t}{c_i}e_i,
\qquad
\lambda_{h,i}\to\frac{c_i}{L}
\qquad(i\in[r]).
\]
Set
\[
\zeta_h:=
\max_{i\in[r]}
\left\|W_{h,i}-\frac{t}{c_i}e_i\right\|_\infty,
\]
so $\zeta_h\to0$.

We next show that almost every repaired vector is close to one of these
$r$ axis vectors. Fix $u\in[M_h]$. Since every value in $\cV_h$ occurs
at least $F_h\ge t$ times, we may choose distinct representatives
consisting of $Y'_{h,u}$ together with $c_i-1$ copies of $W_{h,i}$ for
every $i\in[r]$. This is a collection of
$1+\sum_i(c_i-1)=t$ repaired vectors. The exact covering property gives
some $k\in[r]$ such that
\[
Y_{h,u}'^{(k)}
+
\sum_{i=1}^r(c_i-1)W_{h,i}^{(k)}
\ge t.
\]
Since $W_{h,i}\to te_i/c_i$, uniformly in $k$,
\[
Y_{h,u}'^{(k)}
\ge
\frac{t}{c_k}-o(1).
\]

Put $g_{h,u}:=\sum_{i=1}^rc_iY_{h,u}'^{(i)}$. The preceding inequality
implies $g_{h,u}\ge t-o(1)$ uniformly in $u$. On the other hand,
$\overline Y'_h\to t\mathbbm1_r/L$, and hence
\[
\frac1{M_h}\sum_{u=1}^{M_h}g_{h,u}
=
\sum_{i=1}^rc_i\overline Y_h'^{(i)}
=
t+o(1).
\]
It follows that there is a sequence $\rho_h\downarrow0$ such that
$g_{h,u}\le t+\rho_h$ for all but $o(M_h)$ indices $u$.

For each such index, choose $k$ with
$Y_{h,u}'^{(k)}\ge t/c_k-o(1)$. Since
$g_{h,u}\le t+\rho_h$, we also have
$Y_{h,u}'^{(k)}\le t/c_k+o(1)$ and
$\sum_{i\ne k}c_iY_{h,u}'^{(i)}=o(1)$. Therefore
\[
\min_{i\in[r]}
\left\|Y'_{h,u}-\frac{t}{c_i}e_i\right\|_\infty=o(1)
\]
for all but $o(M_h)$ indices, with the error uniform over those indices.

Choose $\tau_h\downarrow0$ such that all but $o(M_h)$ repaired vectors lie
within distance $\tau_h$ of one of the points $te_i/c_i$. For large $h$
these neighbourhoods are pairwise disjoint. Let $\cD_{h,i}$ be the set of
indices whose repaired vector lies within distance $\tau_h$ of
$te_i/c_i$. Then
\[
\left|[M_h]\setminus\bigcup_{i=1}^r\cD_{h,i}\right|=o(M_h).
\]
Since $\overline Y'_h\to t\mathbbm1_r/L$, for every $i\in[r]$,
\[
\overline Y_h'^{(i)}
=
\frac{t}{c_i}\frac{|\cD_{h,i}|}{M_h}+o(1),
\]
and hence
\[
|\cD_{h,i}|=\frac{c_i}{L}M_h+o(M_h).
\]

Finally, put $\cC_{h,i}:=\cD_{h,i}\setminus\cR_h$. Since
$|\cR_h|=o(M_h)$, the sets $\cC_{h,i}$ are pairwise disjoint,
\[
\left|[M_h]\setminus\bigcup_{i=1}^r\cC_{h,i}\right|=o(M_h),
\qquad
|\cC_{h,i}|=\frac{c_i}{L}M_h+o(M_h).
\]
For $u\in\cC_{h,i}$, no replacement was made, so
$\|Y'_{h,u}-Y_{h,u}\|_\infty\le\delta_h$. Thus
\[
\left\|Y_{h,u}-\frac{t}{c_i}e_i\right\|_\infty
\le\tau_h+\delta_h.
\]
After passing to a further subsequence, choose $\xi_h\downarrow0$ with
$\xi_h\ge\tau_h+\delta_h$. This proves the lemma.
\end{proof}

The next lemma converts the preceding vector structure into canonical edge
profiles. Its proof is the only use of regularity in the robust theorem.

\begin{lemma}\label{lem:critical-canonical}
Let $r\ge2$, let $m_h\to\infty$, and put $n_h=(t+r-1)m_h+O(1)$. Let
$\cG_h$ be a $t$-graph on $n_h$ vertices with
$\Delta(\overline{\cG_h})=o(m_h^{t-1})$, and let $\chi_h$ be an
$r$-edge-colouring satisfying, uniformly in $i\in[r]$,
$\nu_i(\chi_h)\le m_h+O(1)$. After passing to a subsequence and permuting
the colours, there are fixed positive integers $c_1,\ldots,c_r$ satisfying
$c_1+\cdots+c_r=t+r-1$ and pairwise disjoint sets
$A_{h,1},\ldots,A_{h,r}$ such that $|A_{h,i}|=c_im_h+o(m_h)$.
Moreover, if $p^{(i)}$ is defined by \eqref{eq:canonical-profile}, then
the family of profile-$p^{(i)}$ sets that are either missing from $\cG_h$
or are not coloured $i$ has size $o(m_h^t)$.
\end{lemma}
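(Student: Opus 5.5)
We apply the regularity machinery of Section~\ref{sec:main} to each $\cG_h$, feed the resulting dual vectors into Lemma~\ref{lem:critical-rigidity}, and then use regularity a second time to turn the vector structure into edge-level information.

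\textbf{Step 1: regularity and dual vectors.} First we check that $\cG_h$ is $(\eta,1,d,D)$-uniform, for instance with $d=1/2$, $D=2$ and any fixed $\eta$, once $h$ is large. This holds because $\Delta(\overline{\cG_h})=o(m_h^{t-1})$ implies $|\overline{\cG_h}|=o(n_h^t)$. Hence, for disjoint sets of size at least $\eta n_h$, the number of crossing edges is $(1-o(1))\prod|X_i|$.

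We choose a sequence $\varepsilon_h\downarrow0$ slowly enough that Lemma~\ref{hreg} applies to $\cG_h$ with parameter $\varepsilon_h$. This gives $M_h\to\infty$ clusters, and we set $k_h=\lfloor n_h/M_h\rfloor$. Lemma~\ref{lem:chitoskeleton_hygr} then gives $k_hZ_\ell-3\varepsilon_hn_h\le\nu_\ell(\chi_h)\le m_h+O(1)$. Dividing by $M_h$ and using $n_h/M_h=k_h+O(1)$, the dual vectors $Y_{h,u}:=S_u$ of Observation~\ref{lem:initialvalues} satisfy
\[
\frac1{M_h}\sum_uY_{h,u}^{(\ell)}=\frac{tZ_\ell}{M_h}\le\frac{t}{t+r-1}+o(1).
\]
Observation~\ref{lem:initialvalues}(i) supplies the covering hypothesis with $o(M_h^t)$ exceptions. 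Lemma~\ref{lem:critical-rigidity} therefore applies. After passing to a subsequence and permuting colours, it yields integers $c_i$ and cluster classes $\cC_{h,i}$ of relative size $c_i/L$, where $L=t+r-1$, whose vectors lie within $\xi_h$ of $\frac{t}{c_i}e_i$. We set $A_{h,i}:=\bigcup_{u\in\cC_{h,i}}V_u$. Then $|A_{h,i}|=\frac{c_i}{L}M_hk_h+o(n_h)=c_im_h+o(m_h)$, and the sets are pairwise disjoint.

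\textbf{Step 2: canonical profiles are regular and coloured $i$.} Take a $t$-set $Q$ of distinct clusters with $c_i$ clusters from $\cC_{h,i}$ and $c_j-1$ clusters from each $\cC_{h,j}$, $j\ne i$. Its colour-$\ell$ dual sum is at most $c_\ell\cdot(t/c_\ell+\xi_h)$ when $\ell=i$, giving about $t$. When $\ell\ne i$ the sum is about $(c_\ell-1)t/c_\ell+O(\xi_h)<t$ for large $h$.

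Now suppose additionally that $Q$ is non-exceptional and $Q\in\mathcal X_\ell$. The dual constraint for $\mathcal X_\ell$ requires the colour-$\ell$ sum to be at least $t$. Hence $\ell=i$, so $Q\notin\mathcal X_\ell$ for every $\ell\ne i$. In words: every non-exceptional canonical cluster tuple has colour-$\ell$ density below $d/r$ for each $\ell\ne i$.

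\textbf{Step 3: the main obstacle — from "density below $d/r$" to "$o$-fraction".} Low density below $d/r$ is not the same as a vanishing fraction, and the latter is what the lemma needs. I would handle this by running Step 1 along a sequence $d=d_s\downarrow 0$: uniformity holds for every $d<1$, and the $o(1)$ terms in Lemma~\ref{lem:critical-rigidity} are insensitive to $d$. A diagonal choice over $s$ and $h$ then gives, for each $\ell\ne i$, colour-$\ell$ density $o(1)$ on all but $o(M_h^t)$ canonical cluster tuples.

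Counting, the profile-$p^{(i)}$ $t$-sets inside $A_{h,1}\cup\cdots\cup A_{h,r}$ that meet two vertices of a single cluster, or lie in exceptional tuples, number $o(m_h^t)$. The sets that are missing from $\cG_h$ number $o(m_h^t)$ by the complement bound. Those coloured $\ell\ne i$ number $o(m_h^t)$ by the density statement just obtained.

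The delicate point is uniformity of the rigidity output as $d$ varies: the lower bound $\nu\ge k Z-3\varepsilon n$ must hold with constants independent of $d$. I expect this to work because $d$ enters only through the regularity counting and the requirement $\varepsilon\ll d$. If that fails, the fallback is to refine $\mathcal X_\ell$ directly to a density threshold $d_h\to0$ chosen with $\varepsilon_h\ll d_h$.
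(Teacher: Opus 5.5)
Your proposal is correct and follows the paper's proof: the paper builds your fallback in from the start. It chooses $d_h\downarrow0$, $\varepsilon_h=o(d_h)$ and $\eta_h$, with $\Delta(\overline{\cG_h})\le(1-d_h)\eta_h^{t-1}n_h^{t-1}$ after passing to a subsequence, so that $\cG_h$ is $(\eta_h,1,d_h,2)$-uniform; it then applies Lemma~\ref{lem:critical-rigidity} once and bounds the bad canonical sets by the same four terms $O(d_hn_h^t)$, $O(\varepsilon_hn_h^t)$, $O(n_h^t/M_h)$ and $e(\overline{\cG_h})$. The detour through a fixed $d=1/2$ is therefore unnecessary, and committing to one diagonal sequence $d_h$ (rather than separate runs for each $d_s$) removes your worry about the rigidity output, namely the integers $c_i$ and the classes, varying with $d$.
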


\begin{proof}
Choose $d_h\downarrow0$, $\varepsilon_h=o(d_h)$, and $M_{0,h}\to\infty$.
For each $h$, choose parameters
\[
0<\eta_h\ll1/Z_h\ll1/M_{0,h}\ll\varepsilon_h\ll d_h/r,1/r,1/t,
\]
and let $n_{0,h}$ be large enough for Lemmas~\ref{hreg} and
\ref{lem:chitoskeleton_hygr}. Passing recursively to a subsequence, assume
that
\[
n_h\ge\max\{n_{0,h},hZ_h\}
\quad\text{and}\quad
\frac{\Delta(\overline{\cG_h})}{n_h^{t-1}}
\le(1-d_h)\eta_h^{t-1}.
\]
If $X_1,\ldots,X_t$ are pairwise disjoint and $|X_j|\ge\eta_hn_h$, then
\[
e_{\overline{\cG_h}}(X_1,\ldots,X_t)
\le |X_1|\Delta(\overline{\cG_h})
\le(1-d_h)\prod_{j=1}^t|X_j|.
\]
Thus $\cG_h$ is $(\eta_h,1,d_h,2)$-uniform.

Apply Lemma~\ref{hreg}, followed by
Lemma~\ref{lem:chitoskeleton_hygr} and
Observation~\ref{lem:initialvalues}. We obtain an equitable partition into
$M_h$ clusters, where $M_{0,h}\le M_h\le Z_h$, and vectors
$S_{h,u}\in[0,t]^r$. Since $M_{0,h}\to\infty$ and
$M_h/n_h\le Z_h/n_h\le1/h$, we have $M_h\to\infty$ and $M_h=o(n_h)$.
The vectors satisfy the almost-covering hypothesis of
Lemma~\ref{lem:critical-rigidity}. If $k_h=\lfloor n_h/M_h\rfloor$, then
Lemma~\ref{lem:chitoskeleton_hygr} and
Observation~\ref{lem:initialvalues} give
\[
\frac1{M_h}\sum_{u=1}^{M_h}s_{h,u}^{(i)}
\le
\frac{t\bigl(m_h+O(1)+3\varepsilon_hn_h\bigr)}{k_hM_h}
=
\frac{t}{t+r-1}+o(1),
\]
uniformly in $i$. Lemma~\ref{lem:critical-rigidity} therefore gives the
integers $c_i$ and cluster classes $\cC_{h,i}$. Let $A_{h,i}$ be the union
of the clusters indexed by $\cC_{h,i}$. Equitability and $M_h=o(n_h)$ give
$|A_{h,i}|=c_im_h+o(m_h)$.

Fix $i$, and consider a profile-$p^{(i)}$ set using distinct clusters from
the classes $\cC_{h,j}$. Suppose its cluster tuple is regular in every
colour. If a wrong colour $j\ne i$ had density at least $d_h/r$, then the
dual constraint and the conclusion of Lemma~\ref{lem:critical-rigidity}
would give
\[
t\le\sum_{u\in Q}s_{h,u}^{(j)}
\le(c_j-1)\frac{t}{c_j}+o(1)<t,
\]
a contradiction. Hence the total wrong-colour density on every such tuple
is at most $d_h$. The regular tuples contribute $O_{r,t}(d_hn_h^t)$ wrong
sets, the irregular tuples contribute $O_{r,t}(\varepsilon_hn_h^t)$, and
sets using a cluster twice contribute $O_t(n_h^t/M_h)$. Finally,
\[
e(\overline{\cG_h})
\le\frac{n_h}{t}\Delta(\overline{\cG_h})
=o(m_h^t).
\]
All four terms are $o(m_h^t)$.
\end{proof}

We also need an exact switching statement. Let
$\mathbf a=(a_1,\ldots,a_s)$ be a non-negative integer vector with
$\sum_ja_j=t$. For disjoint sets $W_1,\ldots,W_s$ with $|W_j|=a_jq$,
let $\cK(\mathbf a;q)$ be the family of all $t$-sets $e$ with
$|e\cap W_j|=a_j$ for every $j$. For
$\cB\subseteq\cK(\mathbf a;q)$, write
$d_{\cB}(v)=|\{e\in\cB:v\in e\}|$ and
$\Delta(\cB)=\max_v d_{\cB}(v)$.

\begin{lemma}\label{lem:profile-switching}
If $q\to\infty$ and $\Delta(\cB)=o(q^{t-1})$, then
$\cK(\mathbf a;q)\setminus\cB$ contains a matching of size $q$ for all
sufficiently large $q$.
\end{lemma}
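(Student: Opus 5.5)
We need a perfect matching of $\cK(\mathbf a;q)$ that avoids the sparse bad family $\cB$. The plan is to start from an explicit perfect matching of the complete profile hypergraph and then repair the few bad edges by local switchings.

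\emph{Step 1: an explicit perfect matching.} Identify each $W_j$ with $[q]\times[a_j]$. Let $\cM_0$ consist of the $q$ edges $e_x=\{x\}\times\bigcup_j[a_j]$ for $x\in[q]$, with the coordinates understood inside each $W_j$. These edges are pairwise disjoint, each lies in $\cK(\mathbf a;q)$, and together they cover everything. More usefully, we will choose a uniformly random perfect matching: apply independent uniform random permutations within each $W_j$ to $\cM_0$. Each fixed edge of $\cK(\mathbf a;q)$ then appears with probability $\Theta(q^{-(t-1)})$. So the expected number of bad edges in the matching is
\[
|\cB|\cdot\Theta(q^{1-t})\le \frac{q\,\Delta(\cB)}{\min_j a_j}\cdot O(q^{1-t})=o(q).
\]
Hence we can fix a perfect matching $\cM$ with only $o(q)$ bad edges $f_1,\dots,f_b$.

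\emph{Step 2: a switching for one bad edge.} Take a bad edge $f\in\cM$. Choose $t-1$ further edges $g_1,\dots,g_{t-1}\in\cM$ that are good and not yet touched. On the $t^2$ vertices of $f,g_1,\dots,g_{t-1}$, which still have profile $t\cdot\mathbf a$, we re-partition into $t$ new edges of profile $\mathbf a$, all outside $\cB$. For this, view the vertices as a $t\times t$ array: row $k$ is the $k$-th old edge, and the columns are grouped by the slot structure $\mathbf a$. The new edges are the "diagonals": new edge $k$ takes, in column $c$, the vertex from row $k+c\bmod t$. Each new edge contains exactly one vertex of $f$, and has profile $\mathbf a$ because column positions respect the parts.

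We count failures when $g_1,\dots,g_{t-1}$ are chosen uniformly from $\Theta(q)$ available good edges. A new edge is a specific $t$-set whose vertex from $f$ is fixed and whose other $t-1$ vertices come from distinct random matching edges. It lies in $\cB$ only if those $t-1$ vertices complete a $\cB$-edge through the fixed vertex. There are at most $\Delta(\cB)=o(q^{t-1})$ such completions among $\Theta(q^{t-1})$ equally likely ordered choices. So the failure probability for each new edge is $o(1)$, and by a union bound over $t$ new edges a good choice exists.

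\emph{Step 3: iterate.} Process $f_1,\dots,f_b$ one at a time, each time choosing the helper edges from good edges of $\cM$ not used so far. Since $b=o(q)$, only $o(q)$ edges are ever consumed. The pool of untouched good edges therefore stays of size $q-o(q)$, and the counting in Step 2 still gives $o(1)$ failure probability. After all switchings, the matching has size $q$ and lies in $\cK(\mathbf a;q)\setminus\cB$.

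\emph{Main obstacle.} The delicate point is Step 2's counting. A new edge mixes one vertex of $f$ with vertices of the random $g$'s, and these are constrained to lie in matching edges rather than being free. This requires that, for a fixed vertex $v$, the number of $(t-1)$-sets completing $v$ to a $\cB$-edge and "transversal" to distinct $\cM$-edges in the prescribed slots is at most $\Delta(\cB)$. The bound holds since that number is at most $d_\cB(v)$, while the number of admissible transversal choices is $\Theta(q^{t-1})$. If this direct count turns out to be messy, the fallback is a cyclic switching of length $t$ along a single coordinate class, shifting one part at a time.
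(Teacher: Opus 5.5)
Your proposal is correct and is essentially the paper's argument: you use the same $\mathbb{Z}_t$-diagonal switching of a bad edge with $t-1$ other matching edges, in which at most $t\Delta(\cB)$ of the $\Theta(q^{t-1})$ ordered helper choices fail, because each vertex's matching edge and slot determine the row that edge must occupy (state this injectivity explicitly, since it is exactly what justifies your ``at most $\Delta(\cB)$ completions'' count). The paper instead takes a perfect matching with the fewest bad edges and shows one switching lowers that count, with helpers that need not even be good; this makes your random initial matching in Step~1 and the helper bookkeeping in Step~3 correct but unnecessary.
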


\begin{proof}
Partition each $W_j$ into $q$ labelled sets of size $a_j$ and unite sets
with the same label. This gives a matching of size $q$. Choose such a
matching $\cM$ with the fewest bad edges, and suppose that
$e_0\in\cM\cap\cB$.

Label the $t$ profile positions by $\mathbb Z_t$ and order the vertices in
every edge of $\cM$ accordingly. Choose an ordered $(t-1)$-tuple of
distinct edges $e_1,\ldots,e_{t-1}\in\cM\setminus\{e_0\}$. If $v_{a,b}$
is the vertex in position $b$ of $e_a$, put
$f_s=\{v_{a,b}:a+b=s\text{ in }\mathbb Z_t\}$. The edges
$f_0,\ldots,f_{t-1}$ are disjoint, have profile $\mathbf a$, and cover the
same vertices as $e_0,\ldots,e_{t-1}$.

Let $x_s$ be the unique vertex of $f_s\cap e_0$. A prescribed bad edge
$f\ni x_s$ is produced as $f_s$ by at most one ordered choice: each other
vertex of $f$ determines both its old matching edge and the row occupied by
that edge. Thus at most $d_{\cB}(x_s)$ choices make $f_s$ bad, and at most
$t\Delta(\cB)$ choices make some $f_s$ bad. The total number of ordered
choices is $(q-1)(q-2)\cdots(q-t+1)$. For large $q$ this exceeds
$t\Delta(\cB)$, so one switching decreases the number of bad edges,
contradicting the choice of $\cM$.
\end{proof}

We first prove the critical-order form of
Theorem~\ref{thm:robust-critical-AFL}.

\begin{lemma}\label{lem:critical-threshold}
Fix $r\ge1$ and $t\ge2$, and put $L=t+r-1$. Let $\cG_m$ be a $t$-graph
on $Lm-r+1$ vertices such that
$\Delta(\overline{\cG_m})=o(m^{t-1})$. Then, for all sufficiently large
$m$, every $r$-edge-colouring of $\cG_m$ contains a monochromatic matching
of size $m$.
\end{lemma}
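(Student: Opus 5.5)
We argue by contradiction. Suppose that along a sequence $m\to\infty$ there are $t$-graphs $\cG_m$ on $n_m=Lm-r+1$ vertices with $\Delta(\overline{\cG_m})=o(m^{t-1})$, each carrying an $r$-colouring $\chi_m$ in which every colour has matching number at most $m-1$.

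The case $r=1$ is immediate. Then $n_m=tm$, and Lemma~\ref{lem:profile-switching} with the single profile $\mathbf a=(t)$ and $\cB=E(\overline{\cG_m})$ gives a perfect matching of size $m$. So assume $r\ge2$. Lemma~\ref{lem:critical-canonical} then applies: after passing to a subsequence and permuting the colours, we obtain positive integers $c_1,\ldots,c_r$ with $\sum_ic_i=L$ and disjoint classes $A_{m,i}$ with $|A_{m,i}|=c_im+o(m)$. For each $i$, all but $o(m^t)$ profile-$p^{(i)}$ sets are edges of colour $i$.

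\textbf{Cleaning.} Call a set \emph{bad for $i$} if it has profile $p^{(i)}$ and is either missing from $\cG_m$ or not coloured $i$. Choose $\theta_m\downarrow0$ slowly. Move into an exceptional set $E$ every vertex of $A_{m,i}$ that lies in more than $\theta_mm^{t-1}$ sets bad for some colour. Since the bad families have size $o(m^t)$, this removes only $o(m)$ vertices. Together with the $o(m)$ uncovered vertices, we get $|E|=o(m)$. Let $A_i'$ be the cleaned classes and define the deficits $D_i:=c_im-|A_i'|$. Because $\sum_ic_im=Lm=n_m+r-1$, we get the exact identity
\[
\sum_{i=1}^rD_i=|E|+r-1 .
\]

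\textbf{Absorbing the exceptional vertices.} The key observation is that the shape obtained by deleting one class-$i$ vertex from $p^{(i)}$ is the same for every $i$, namely $(c_1-1,\ldots,c_r-1)$. Fix $v\in E$. Since $\Delta(\overline{\cG_m})=o(m^{t-1})$, almost all of the $\Theta(m^{t-1})$ sets consisting of $v$ together with $c_j-1$ vertices from each $A_j'$ are edges of $\cG_m$. Hence some colour $i(v)$ appears on at least a $1/(2r)$ fraction of them. Let $E_i=\{v\in E:i(v)=i\}$, so that $\sum_i|E_i|=|E|$.

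If $|E_i|\le D_i-1$ held for every $i$, summing would give $|E|\le |E|+r-1-r$, a contradiction. So there is a colour $i$ with $|E_i|\ge D_i$. If $D_i\le0$, simply trim $A_i'$ to exactly $c_im$ vertices. Otherwise, greedily pick $D_i$ vertices of $E_i$ and cover them by disjoint colour-$i$ edges of shape $v+(c_1-1,\ldots,c_r-1)$. This uses only $o(m)$ vertices, and each step still has $\Omega(m^{t-1})$ choices.

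After these edges are removed, $A_i'$ has exactly $c_im-D_i$ remaining vertices. Each $A_j'$ with $j\ne i$ still has $c_jm-o(m)\ge(c_j-1)(m-D_i)$ vertices, since the slack here is $m$. Set $q=m-D_i$, take $W_i\subseteq A_i'$ of size $c_iq$ and $W_j\subseteq A_j'$ of size $(c_j-1)q$. Let $\cB$ be the family of sets bad for $i$ inside $\cK(p^{(i)};q)$. By the cleaning step, $\Delta(\cB)\le\theta_mm^{t-1}=o(q^{t-1})$. Lemma~\ref{lem:profile-switching} then yields $q$ disjoint colour-$i$ edges, which together with the $D_i$ absorbing edges form a colour-$i$ matching of size $m$. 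This is the desired contradiction.

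\textbf{Main obstacle.} The delicate step is making the bookkeeping exact at this critical order. The deficit identity only barely forces some colour to have $|E_i|\ge D_i$, so every deleted or moved vertex must be recorded in $E$, including high-bad-degree vertices of the $A_j$. The absorbing edges must also not spoil the bounded bad degree needed by the switching lemma. To handle this, I would absorb using only vertices of the cleaned classes, and choose $\theta_m$ so that $|E|=o(m)$ while still having $\theta_m\to0$.
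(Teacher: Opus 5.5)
Your proposal is correct and follows the paper's proof essentially step by step: the same $r=1$ reduction, cleaning by bad degree, the deficit identity $\sum_i D_i=|E|+r-1$, the pigeonhole choice of a colour with $|E_i|\ge D_i$, absorbing edges of shape $v+(c_1-1,\ldots,c_r-1)$, and Lemma~\ref{lem:profile-switching} on the remaining profile-$p^{(i)}$ hypergraph (the paper fixes disjoint absorbing edges $e_x$ for every exceptional $x$ first and then counts their colours, whereas you assign a majority colour to each vertex and choose the edges afterwards, which is equivalent). One small slip: after the $D_i$ absorbing edges are removed, $A_i'$ retains $c_im-D_i-(c_i-1)D_i=c_i(m-D_i)=c_iq$ vertices, not $c_im-D_i$, and this is exactly the size you need for $W_i$.
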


\begin{proof}
If $r=1$, apply Lemma~\ref{lem:profile-switching} with one part of size
$tm$, profile $(t)$, and bad family $E(\overline{\cG_m})$. Hence assume
$r\ge2$.

Suppose the assertion fails along a sequence $m\to\infty$. Let $\chi$ be
an $r$-edge-colouring of $\cG=\cG_m$ with
$\nu_i(\chi)\le m-1$ for every $i$. Lemma~\ref{lem:critical-canonical}
gives fixed positive integers $c_1+\cdots+c_r=L$, disjoint sets
$A_1,\ldots,A_r$ with $|A_i|=c_im+o(m)$, and bad families $\cB_i$ of
size $o(m^t)$ for the profiles $p^{(i)}$ in
\eqref{eq:canonical-profile}. Here a profile edge is bad if it is missing
from $\cG$ or is not coloured $i$.

Put $\cB=\bigcup_i\cB_i$ and write $|\cB|=\beta m^t$, where
$\beta=o(1)$. Set $\theta=\sqrt\beta+m^{-1/2}$. Let $X$ contain all
vertices outside $A_1\cup\cdots\cup A_r$ and all vertices whose degree in
$\cB$ exceeds $\theta m^{t-1}$. Since
$\sum_v d_{\cB}(v)=t|\cB|$, we have $|X|=o(m)$. Put
$V_i=A_i\setminus X$ and write $|V_i|=c_im-d_i$, where $d_i=o(m)$. As
$X,V_1,\ldots,V_r$ partition the $Lm-r+1$ vertices,
\begin{equation}\label{eq:defect-identity}
\sum_{i=1}^r d_i=|X|+r-1.
\end{equation}
Every vertex outside $X$ has degree at most $\theta m^{t-1}$ in each
$\cB_i$.

For every $x\in X$, choose pairwise disjoint sets
$S_x\subseteq V_1\cup\cdots\cup V_r$ such that
$|S_x\cap V_j|=c_j-1$ for every $j$ and
$e_x:=\{x\}\cup S_x\in E(\cG)$. These are $t$-sets since
$1+\sum_j(c_j-1)=t$. They can be chosen greedily and pairwise disjoint.
After $o(m)$ previous choices every relevant part still has linear size,
so there are $\Omega(m^{t-1})$ candidates, while at most
$d_{\overline{\cG}}(x)=o(m^{t-1})$ are missing.

Let $k_i=|\{x\in X:\chi(e_x)=i\}|$. Then $\sum_i k_i=|X|$, and
\eqref{eq:defect-identity} implies that $k_i\ge d_i$ for some $i$.
Otherwise, integrality gives
$|X|=\sum_i k_i\le\sum_i(d_i-1)=|X|-1$. Fix this $i$, retain the $k_i$
colour-$i$ edges $e_x$, and put $q=m-k_i$. Then $q=(1-o(1))m$.

After deleting the core vertices used by these edges, at least
\[
c_im-d_i-k_i(c_i-1)
=
c_iq+(k_i-d_i)
\ge c_iq
\]
vertices remain in $V_i$, while for $j\ne i$ at least
\[
c_jm-d_j-k_i(c_j-1)
=
(c_j-1)q+(m-d_j)
\ge(c_j-1)q
\]
vertices remain in $V_j$. Choose subsets of these exact sizes. In the
resulting profile-$p^{(i)}$ hypergraph, the bad family is contained in
$\cB_i$ and has maximum degree at most
$\theta m^{t-1}=o(q^{t-1})$. Lemma~\ref{lem:profile-switching} gives $q$
disjoint colour-$i$ edges of $\cG$. Together with the retained edges they
form a colour-$i$ matching of size $m$, a contradiction.
\end{proof}

\begin{proof}[Proof of Theorem~\ref{thm:robust-critical-AFL}]
Put $L=t+r-1$. Suppose, for a contradiction, that the equivalent complement
formulation above fails. For every $h\in\mathbb N$, choose $n_h$
sufficiently large, a $t$-graph $\cG_h$ on $n_h$ vertices, and an
$r$-edge-colouring of $\cG_h$ with no monochromatic matching of size
$m_h:=\lfloor(n_h+r-1)/L\rfloor$, such that
$\Delta(\overline{\cG_h})\le h^{-1}n_h^{t-1}$. We may choose the $n_h$
recursively so that $m_h$ is strictly increasing.

Set $N_h=Lm_h-r+1$ and choose any $N_h$ vertices of $\cG_h$. Let $\cH_h$
be the induced $t$-graph with the inherited colouring. Since $n_h-N_h<L$,
we have $m_h=\Theta(n_h)$, and hence
\[
\Delta(\overline{\cH_h})
\le\Delta(\overline{\cG_h})
\le h^{-1}n_h^{t-1}
=o(m_h^{t-1}).
\]
The inherited colouring of $\cH_h$ has no monochromatic matching of size
$m_h$.

For $m\notin\{m_h:h\in\mathbb N\}$, let
$\cH_m=K_{Lm-r+1}^{(t)}$. Then
$\Delta(\overline{\cH_m})=o(m^{t-1})$, while for infinitely many
$m=m_h$ the graph $\cH_m$ admits an $r$-edge-colouring with no
monochromatic matching of size $m$. This contradicts
Lemma~\ref{lem:critical-threshold}.
\end{proof}

\subsection{Stable Kneser hypergraphs}
\label{subsec:critical-corollaries}

We prove Theorem~\ref{thm:stable}, and hence the value predicted by
Meunier's conjecture throughout the stated linear regime. Regard $[n]$ as cyclically ordered and call a $t$-set $s$-stable if every
two of its elements have cyclic distance at least $s$. Let
$\cS_{n,t,s}$ be the $t$-graph of $s$-stable $t$-sets.

\begin{proof}[Proof of Theorem~\ref{thm:stable}]
Let $r_0=\lceil2/\gamma\rceil$. For each $j\in[r_0]$, let
$c_j>0$ and $n_j$ be supplied by the equivalent complement
formulation of Theorem~\ref{thm:robust-critical-AFL} with $r=j$, and put
$
c_*:=\min_{j\in[r_0]}c_j$ and $
n_*:=\max_{j\in[r_0]}n_j.
$
Choose a constant $C_t>0$ such that
\[
2(s-1)\binom{n-2}{t-2}
+n(s-1)\binom{n-3}{t-3}
\le C_tsn^{t-2},
\]
where the second term is omitted when $t=2$, and choose
$c=c(t,\gamma)>0$ sufficiently small that
$
c\le1
\text{ and } \frac{C_tc}{t}\le c_*.
$

Set
$R=\left\lceil\frac{n-q(t-1)}{q-1}\right\rceil$.
For the upper bound, $(R-1)(q-1)<n$, so choose disjoint sets
$B_1,\ldots,B_{R-1}$ of size $q-1$, and let $B_R$ contain the remaining
vertices. The definition of
$R$ gives $|B_R|\le tq-1$. Colour an $s$-stable $t$-set by the least
$i<R$ that it meets, if such an $i$ exists, and by $R$ otherwise. A
colour-$i$ matching with $i<R$ has size at most $q-1$, while a colour-$R$
matching of size $q$ would require $tq$ vertices in $B_R$. Thus the
chromatic number is at most $R$.

For the lower bound, suppose that a proper colouring with $R-1$ colours
exists, and put $r:=R-1$. Since $q\ge\gamma n$, for all sufficiently large
$n$,
\[
r<
\frac{n-q(t-1)}{q-1}
\le\frac{n}{q-1}
\le\frac{2}{\gamma},
\]
so $r\le r_0$. Such a colouring is an $r$-edge-colouring of
$\cS_{n,t,s}$ with no monochromatic matching of size $q$.

We have
\[
\Delta(\overline{\cS_{n,t,s}})=O_t(sn^{t-2}).
\]
Indeed, for a fixed vertex $v$, a non-stable $t$-set containing $v$ either
contains a point within cyclic distance $s-1$ of $v$, or contains a close
pair not involving $v$. These cases contribute at most
\[
2(s-1)\binom{n-2}{t-2}
+n(s-1)\binom{n-3}{t-3},
\]
where the second term is omitted when $t=2$. Since
$s\le cq\le cn/t$,
\[
\Delta(\overline{\cS_{n,t,s}})
\le C_tsn^{t-2}
\le\frac{C_tc}{t}n^{t-1}
\le c_*n^{t-1}.
\]
For $n\ge n_*$, Theorem~\ref{thm:robust-critical-AFL} therefore gives a
monochromatic matching of size at least
$\lfloor(n+r-1)/(t+r-1)\rfloor$.

Finally, $r<\big(n-q(t-1)\big)/(q-1)$,
so $n+r>q(t+r-1)$. Both sides are integers, and hence
$n+r-1\ge q(t+r-1)$. The matching supplied by the robust theorem has size
at least $q$, a contradiction.
\end{proof}

\subsection{The transference AFL theorem}
\label{subsec:transAFL}

The robust theorem concerns almost-complete hosts. For sparse random hosts we use Theorem~\ref{thm:hypergraphstructure}
together with the strip-colouring bound in Lemma~\ref{lem:aaalower}. More precisely, we recover the following transference version of the AFL
theorem, first proved in~\cite{GGMS}.

\begin{theorem}[Gishboliner, Glock, Michaeli and Sgueglia]
\label{thm:randomAFL}
Let $t,r\ge2$ and $\varepsilon>0$, and let
$G\sim G^{(t)}(n,p)$. There exists $C>0$ such that, if
$p\ge Cn^{-t+1}$, then with high probability every $r$-edge-colouring
$\chi$ of $G$ satisfies
\[
\nu(\chi)\ge(1-\varepsilon)\frac{n}{r+t-1}.
\]
\end{theorem}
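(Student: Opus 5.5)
The plan is to combine the structural reduction, Theorem~\ref{thm:hypergraphstructure}, with the elementary strip bound, Lemma~\ref{lem:aaalower}. The only genuinely probabilistic input is that $G\sim G^{(t)}(n,p)$ is, with high probability, $(\eta,p,d,D)$-uniform for suitable constants. Concretely, I would take $d=1/2$ and $D=2$, let $\eta=\eta(r,t,\varepsilon',1/2,2)$ be the constant supplied by Theorem~\ref{thm:hypergraphstructure} with error parameter $\varepsilon':=\varepsilon/(2(r+t-1))$, and then choose $C$ large in terms of $\eta$, $t$ and $r$.

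\textbf{Step 1: uniformity of $G$.} Fix pairwise disjoint sets $X_1,\dots,X_t$ with $|X_i|\ge\eta n$. The number $e(X_1,\dots,X_t)$ is binomial with mean
\[
\mu=p\prod_i|X_i|\ge p(\eta n)^t\ge C\eta^t n.
\]
A Chernoff bound gives
\[
\Pr\bigl(|e-\mu|>\mu/2\bigr)\le 2\exp(-\mu/12)\le 2\exp(-C\eta^tn/12).
\]
There are at most $(t+1)^n$ choices of the tuple, since each vertex is assigned to some $X_i$ or to none. So if $C\eta^t/12>\log(t+1)+1$, a union bound shows that with high probability every such tuple has between $\tfrac12 p\prod|X_i|$ and $2p\prod|X_i|$ edges. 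Thus $G$ is $(\eta,p,1/2,2)$-uniform. The lower bound $p\ge Cn^{-t+1}$ is exactly what makes $\mu$ linear in $n$, which beats the exponential number of tuples; it also sits inside the range $n^{-t}\ll p$ required by Theorem~\ref{thm:hypergraphstructure}.

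\textbf{Step 2: deterministic conclusion.} On this event, fix any $r$-edge-colouring $\chi$ of $G$. Theorem~\ref{thm:hypergraphstructure} provides an $r$-strip $r$-edge-colouring $\psi$ of $K_n^{(t)}$ with $\nu(\chi)\ge\nu(\psi)-\varepsilon' n$. Lemma~\ref{lem:aaalower}, applied with $q\le r$ strip parts, gives
\[
\nu(\psi)\ge\lfloor n/(t+r-1)\rfloor .
\]
Hence
\[
\nu(\chi)\ge \frac{n}{r+t-1}-1-\varepsilon' n\ge(1-\varepsilon)\frac{n}{r+t-1}
\]
for large $n$, by the choice of $\varepsilon'$. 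Since the event from Step 1 does not depend on $\chi$, the bound holds simultaneously for all colourings. The same argument, stopping before the use of Lemma~\ref{lem:aaalower}, yields Theorem~\ref{thm:randomaaa0}.

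\textbf{Main obstacle.} The only delicate point is the quantifier order in Step 1: $\eta$ must be fixed by Theorem~\ref{thm:hypergraphstructure} before $C$ is chosen, so that the union bound over $(t+1)^n$ tuples is absorbed by the Chernoff exponent $C\eta^tn/12$. Everything else is a direct invocation of earlier results.
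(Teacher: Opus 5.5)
Your proposal is correct and follows the paper's proof: the paper applies Theorem~\ref{thm:randomaaa0} with error $\varepsilon/[2(r+t-1)]$ and then Lemma~\ref{lem:aaalower}, and Theorem~\ref{thm:randomaaa0} is itself just Theorem~\ref{thm:hypergraphstructure} applied on the uniformity event, exactly as you set it up. The only difference is that you prove $(\eta,p,1/2,2)$-uniformity yourself, via Chernoff and a union bound over $(t+1)^n$ tuples, whereas the paper cites this estimate from \cite{GGMS} as Lemma~\ref{lem:asdf} (with $D=3/2$).
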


We use the following uniformity estimate from~\cite{GGMS}.

\begin{lemma}\label{lem:asdf}
Let $1/C\ll\eta,1/t$, and let $G\sim G^{(t)}(n,p)$ with
$p\ge Cn^{-t+1}$. Then with high probability, for every collection of
pairwise disjoint sets $X_1,\ldots,X_t$ with $|X_i|\ge\eta n$,
\[
\frac p2
\le
\frac{e(X_1,\ldots,X_t)}{|X_1|\cdots|X_t|}
\le
\frac{3p}{2}.
\]
\end{lemma}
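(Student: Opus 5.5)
The natural approach is a first-moment argument: a Chernoff bound for each fixed collection $X_1,\ldots,X_t$, followed by a union bound over all collections. The point is that the hypothesis $p\ge Cn^{-t+1}$ makes the expected count linear in $n$, with a constant that grows with $C$. So the Chernoff tail can beat the merely exponential number of choices of sets.

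\emph{Fixed collection.} Fix pairwise disjoint $X_1,\ldots,X_t\subseteq[n]$ with $|X_i|\ge\eta n$. Because the $X_i$ are disjoint, the transversal $t$-sets (one vertex in each $X_i$) are $\prod_i|X_i|$ distinct potential edges of $G^{(t)}(n,p)$. Each is present independently with probability $p$. Hence $e(X_1,\ldots,X_t)$ is binomial with mean
\[
\mu=p\prod_{i=1}^t|X_i|\ge p\,\eta^tn^t\ge C\eta^t n .
\]
The multiplicative Chernoff bound with deviation $\mu/2$ gives
\[
\Pr\bigl(|e(X_1,\ldots,X_t)-\mu|>\mu/2\bigr)\le 2\exp(-\mu/12)\le 2\exp(-C\eta^tn/12).
\]
On the complementary event we have exactly $\tfrac p2\le e(X_1,\ldots,X_t)/\prod_i|X_i|\le\tfrac{3p}{2}$.

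\emph{Union bound.} A collection of pairwise disjoint sets $X_1,\ldots,X_t$ is determined by assigning each vertex of $[n]$ to one of $X_1,\ldots,X_t$ or to none. So there are at most $(t+1)^n$ collections. The probability that some admissible collection fails is therefore at most
\[
2(t+1)^n\exp(-C\eta^tn/12)=2\exp\bigl(n(\log(t+1)-C\eta^t/12)\bigr).
\]
Choosing $C$ with $C\eta^t>24\log(t+1)$, which is allowed by $1/C\ll\eta,1/t$, this is $\exp(-\Omega(n))=o(1)$. Hence the conclusion holds with high probability.

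I do not expect a genuine obstacle. The only points to check are the following.
\begin{itemize}
\item Disjointness of the $X_i$ makes the transversal $t$-sets distinct, so the count is a sum of independent indicators and really is binomial.
\item The mean is at least linear in $n$ with a constant proportional to $C$; this is precisely where the threshold $p\ge Cn^{-t+1}$ is used.
\item The entropy factor $(t+1)^n$ is only exponential in $n$ with a base depending on $t$ alone, so a large enough $C=C(\eta,t)$ absorbs it.
\end{itemize}
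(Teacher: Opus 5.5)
Your Chernoff-plus-union-bound argument is correct. The mean is at least $C\eta^t n$, so the two-sided Chernoff bound gives failure probability at most $2\exp(-\mu/12)$ for a fixed collection, and taking $C\eta^t>24\log(t+1)$ beats the $(t+1)^n$ choices of collection. The paper does not prove this lemma itself; it simply quotes it from Gishboliner, Glock, Michaeli and Sgueglia. Your first-moment argument is the standard proof of such a uniformity estimate, so there is nothing to add.
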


\begin{proof}[Proof of Theorem~\ref{thm:randomaaa0}]
Fix $\varepsilon>0$, and let $\eta>0$ be supplied by
Theorem~\ref{thm:hypergraphstructure} with $d=1/2$ and $D=3/2$. Choose
$C$ sufficiently large for Lemma~\ref{lem:asdf}. On the event in that
lemma, $G$ is $(\eta,p,1/2,3/2)$-uniform. Theorem~\ref{thm:hypergraphstructure}
therefore gives an $r$-strip colouring $\psi$ of $K_n^{(t)}$ satisfying
$\nu(\chi)\ge\nu(\psi)-\varepsilon n$.
\end{proof}

\begin{proof}[Proof of Theorem~\ref{thm:randomAFL}]
Apply Theorem~\ref{thm:randomaaa0} with
$\delta=\varepsilon/[2(r+t-1)]$. With high probability, every colouring
$\chi$ of $G$ admits an $r$-strip colouring $\psi$ such that
$\nu(\chi)\ge\nu(\psi)-\delta n$. By Lemma~\ref{lem:aaalower},
\[
\nu(\chi)
\ge
\left\lfloor\frac{n}{r+t-1}\right\rfloor-\delta n
\ge
(1-\varepsilon)\frac{n}{r+t-1}
\]
for all sufficiently large $n$.
\end{proof}

\section{Structural reduction for graph tilings}
\label{sec:graphstructure}

In this section, we prove Theorem~\ref{thm:graphstructure}. Throughout, \(H\) is a fixed graph with at least one edge and we write $h:=v(H)$.
\footnote{We believe that the argument can also be adapted to the setting in
which different colours have different target graphs, that is, to
Ramsey-tiling problems of the form $R(kH_1,kH_2,\ldots,kH_r)$. Indeed, the
method treats the colours separately: for each colour one considers the
corresponding family of reduced $H_i$-patterns and the associated dual linear
program. For the sake of a cleaner presentation, however, we restrict
throughout to a common target graph $H$ and do not pursue this extension here.}

\subsection{Preliminaries: reduced \(H\)-patterns}
We use the following standard consequence of the K{\L}R embedding theorem;
see~\cite{CGSS}. The only minor difference from the usual formulation is
that we allow \(H\) to have \(K_2\)-components and isolated vertices; these
components can be embedded greedily after applying the K{\L}R theorem to the
remaining part of \(H\).

\begin{lemma}[Partite K{\L}R embedding]
\label{lem:HpatternKLR}
Let \(H\) be a fixed graph with at least one edge. For every \(d>0\),
there exists \(\varepsilon_{\mathrm{KLR}}>0\) such that, for every
\(\eta>0\), there exists \(C_1>0\) with the following property.
If \(G\sim G(n,p)\) with
$p\ge C_1n^{-1/m_2^*(H)},$
then with high probability the following holds.

Let \(F\subseteq G\), and let
\(\{U_x:x\in V(H)\}\) be pairwise disjoint sets satisfying
$|U_x|\ge\eta n$
for every $x\in V(H).$
If, for every edge \(xy\in E(H)\), the pair \(F[U_x,U_y]\) is
\((\varepsilon_{\mathrm{KLR}},p)\)-regular
and has density at least \(dp\), then \(F\) contains a canonical copy of
\(H\), with each \(x\in V(H)\) embedded in \(U_x\).
\end{lemma}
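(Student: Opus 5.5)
The plan is to reduce to the standard K{\L}R embedding theorem for graphs $H'$ with $m_2(H')\le m_2^*(H)$, applied to the part of $H$ that is not a union of $K_2$-components and isolated vertices. Write $H=H_0\cup H_1\cup H_2$, where $H_1$ is the union of the $K_2$-components, $H_2$ is the set of isolated vertices, and $H_0$ is the union of the components with at least three vertices. Every subgraph $J\subseteq H_0$ with $v(J)\ge3$ is a subgraph of $H$ with $v(J)\ge 3$. Hence the usual $2$-density $m_2(H_0)=\max\{(e(J)-1)/(v(J)-2)\}$, taken over $J$ with $v(J)\ge3$, satisfies $m_2(H_0)\le m_2^*(H)$. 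Here we may restrict to subgraphs with at least one edge; single edges contribute $1/2$ in the usual convention, and the truncation at $1$ in $m_2^*$ only helps. The probabilistic K{\L}R theorem of~\cite{CGSS} then applies to $H_0$ at density $p\ge C_1n^{-1/m_2^*(H)}\ge C_1n^{-1/m_2(H_0)}$. In the regime $m_2^*(H)=1$, where $H_0$ is a forest or empty, one may instead use the elementary forest version, or note that $p\ge C_1n^{-1}$ still suffices for trees by the standard greedy argument in regular pairs. Either way, it gives $\varepsilon_{\mathrm{KLR}}(d)$ and, for each $\eta$, a constant $C_1$ such that whp every $F\subseteq G$ with the stated regular pairs on sets of size $\ge\eta n$ contains a canonical copy of $H_0$.

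Next, embed $H_1$ and $H_2$ greedily and disjointly from the copy of $H_0$. This is straightforward, because the clusters $U_x$ for distinct $x\in V(H)$ are pairwise disjoint and a canonical copy uses exactly one vertex from each $U_x$. Thus the copy of $H_0$ occupies only the sets $U_x$ with $x\in V(H_0)$, and the sets for $H_1\cup H_2$ are untouched. Each isolated vertex $x\in H_2$ is mapped to an arbitrary vertex of $U_x$. For each $K_2$-component $xy$, the pair $F[U_x,U_y]$ is $(\varepsilon_{\mathrm{KLR}},p)$-regular with density $\ge dp>0$. It therefore has at least one edge, since $|U_x||U_y|dp\ge \eta^2n^2dp>0$. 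Because different components use different pairs of sets, no conflicts arise.

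Finally, we handle the quantifiers. The constant $\varepsilon_{\mathrm{KLR}}$ is taken from the theorem for $H_0$; if $H_0$ is empty, any $\varepsilon<d$ works. The constant $C_1$ is then chosen for $H_0$ and $\eta$. The whp event is the one given by K{\L}R for $H_0$; the remaining embeddings are deterministic. The main obstacle, mild as it is, is checking that the cited K{\L}R statement applies with the density $m_2^*(H)$ rather than $m_2(H)$. This requires that the removed $K_2$-components and isolated vertices are exactly the parts that could make $m_2(H)$ differ from $m_2^*(H)$ or break the hypotheses of the theorem. It also requires the forest case ($m_2^*=1$), where I would appeal to the known fact that K{\L}R holds at $p\gg n^{-1}$ for forests, for instance via the sparse regularity inheritance argument.
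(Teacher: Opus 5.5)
Your proposal is correct and follows essentially the paper's own proof. You split off the $K_2$-components and isolated vertices, apply the K{\L}R theorem to the components with at least three vertices (using $m_2\le m_2^*(H)$), and then take an edge in each remaining dense pair and arbitrary vertices for the isolated ones, with the disjointness of the $U_x$ preventing conflicts. The only cosmetic difference is that you apply K{\L}R to $H_0$ in one shot, whereas the paper embeds its components one at a time (shrinking $\varepsilon_{\mathrm{KLR}}$ and enlarging $C_1$ over the finitely many components); this is equivalent because the classes are disjoint.
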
\begin{proof}
Let \(H_{\ge3}\) be the union of all components of \(H\) having at least
three vertices, and let \(H_2\) be the union of all \(K_2\)-components of
\(H\). The isolated vertices impose no edge constraints.

We first embed the components of \(H_{\ge3}\), one by one. For each such
component \(J\), the usual partite K{\L}R embedding theorem applies at the
threshold \(p\ge C_1n^{-1/m_2^*(H)}\), since \(m_2(J)\le m_2^*(H)\).
After embedding one component, only \(O_H(1)\) vertices have been used, and
the vertex classes for the remaining components still have linear size. By
increasing \(C_1\) and decreasing \(\varepsilon_{\mathrm{KLR}}\) if
necessary, the same K{\L}R argument applies successively to all components
of \(H_{\ge3}\).

It remains to embed the \(K_2\)-components and isolated vertices. For each
\(K_2\)-component \(xy\), the pair \(F[U_x,U_y]\) has density at least
\(dp\). Since \(|U_x|,|U_y|\ge\eta n\) and
\(p\ge C_1n^{-1/m_2^*(H)}\ge C_1n^{-1}\), this pair contains an edge for
all sufficiently large \(n\). We choose one such edge for every
\(K_2\)-component. Finally, choose arbitrary vertices from the prescribed
sets for the isolated vertices. Since the sets \(U_x\), \(x\in V(H)\), are
pairwise disjoint, these choices together form a canonical copy of \(H\).
\end{proof}

\smallskip
Let \((V_1,\ldots,V_M)\) be an
\((\varepsilon_{\mathrm{reg}},p)\)-regular
equipartition of an \(r\)-edge-coloured graph \(G\). For each colour
\(\ell\in[r]\), let
$\mathcal X_\ell\subseteq\binom{[M]}2$
be the collection of pairs \(\{i,j\}\) for which
\(G_\ell[V_i,V_j]\) is
\((\varepsilon_{\mathrm{reg}},p)\)-regular
and has density at least \(p/(2r)\).

A map
$\phi:V(H)\to[M]$
is called a \emph{colour-\(\ell\) reduced \(H\)-pattern} if, for every
\(xy\in E(H)\), we have \(\phi(x)\ne\phi(y)\) and
$
\{\phi(x),\phi(y)\}\in\mathcal X_\ell.
$
Non-adjacent vertices of \(H\) may be mapped to the same cluster. We write
\(\Phi_\ell\) for the family of all colour-\(\ell\) reduced \(H\)-patterns.
For $\phi\in\Phi_\ell$ and $i\in[M]$, let $c_i(\phi):=|\phi^{-1}(i)|$ be the number of vertices of $H$ mapped to cluster $i$ by $\phi$.

The next lemma lifts a feasible fractional packing of reduced
\(H\)-patterns to an actual \(H\)-tiling.

\begin{lemma}
\label{lem:embeddinggraph}
Fix \(r\ge2\), \(Z\in\mathbb N\), and \(\gamma>0\). There exist
\(\varepsilon_0>0\) and \(C>0\) such that, for
\(G\sim G(n,p)\) with
$p\ge Cn^{-1/m_2^*(H)},$
the following holds with high probability.

Let \(F\subseteq G\), and let
$V(F)=V_1\cup\cdots\cup V_M,$
$M\le Z,$
be an equipartition, where \(k=\lfloor n/M\rfloor\). Let
\(\mathcal X\subseteq\binom{[M]}2\) be a collection of pairs such that
\(F[V_i,V_j]\) is \((\varepsilon_0,p)\)-regular with density at least
\(p/(2r)\) for every \(\{i,j\}\in\mathcal X\). Let \(\Phi\) be a family
of maps \(\phi:V(H)\to[M]\) such that
$\{\phi(x),\phi(y)\}\in\mathcal X$
for every $\phi\in\Phi$
and every $xy\in E(H).$
If non-negative weights \(\{x_\phi\}_{\phi\in\Phi}\) satisfy
\[
\sum_{\phi\in\Phi}c_i(\phi)x_\phi\le1-\gamma
\qquad\text{for every }i\in[M],
\]
then \(F\) contains an \(H\)-tiling of size at least
\[
k\sum_{\phi\in\Phi}x_\phi-O_{H,Z}(1).
\]
\end{lemma}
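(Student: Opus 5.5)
We follow the proof of Lemma~\ref{lem:distribution1_hygr}, replacing the single-edge embedding step with the partite K{\L}R embedding of Lemma~\ref{lem:HpatternKLR}. Fix $d:=1/(4r)$. Let $\varepsilon_0$ be at most the constant $\varepsilon_{\mathrm{KLR}}$ that Lemma~\ref{lem:HpatternKLR} supplies for this $d$, and also small compared with $\gamma/h$. Apply Lemma~\ref{lem:HpatternKLR} with $\eta:=\gamma/(4hZ)$, and take $C\ge C_1$. We work on the high-probability event of that lemma. For each $\phi\in\Phi$ set $m_\phi:=\lfloor kx_\phi\rfloor$, and process the patterns in an arbitrary order. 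We then embed $m_\phi$ vertex-disjoint copies of $H$ canonically along $\phi$, so each vertex $x$ of $H$ goes into the cluster $V_{\phi(x)}$. Since $x_\phi\le 1$ and $M\le Z$ are bounded, the number of patterns in $\Phi$ with $x_\phi>0$ and nonzero contribution is $O_{H,Z}(1)$. Hence rounding loses only $|\Phi|\le Z^h=O_{H,Z}(1)$ copies, which gives the claimed bound $k\sum_\phi x_\phi-O_{H,Z}(1)$.

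The capacity condition controls the vertices we consume. Let $D_i:=\sum_\phi c_i(\phi)m_\phi$. Then
\[
D_i\le k\sum_\phi c_i(\phi)x_\phi\le(1-\gamma)k.
\]
Each embedded copy for $\phi$ uses exactly $c_i(\phi)$ vertices of $V_i$. So at every moment of the process, every cluster still has at least $|V_i|-D_i\ge\gamma k$ unused vertices. This room persists until all quotas are met.

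Now consider one pattern $\phi$ and its quota $m_\phi$, and embed the copies greedily one at a time. Suppose a copy is to be placed and $U_i\subseteq V_i$ is the current set of unused vertices, with $|U_i|\ge\gamma k$. For each cluster $i$ in the image of $\phi$, split $U_i$ into $c_i(\phi)$ disjoint pieces of size at least $\gamma k/h\ge\eta n$. Assign a distinct piece $U_x$ to each $x\in\phi^{-1}(i)$. The sets $U_x$, $x\in V(H)$, are then pairwise disjoint and have size at least $\eta n$.

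For $xy\in E(H)$ we have $\phi(x)\neq\phi(y)$ and $\{\phi(x),\phi(y)\}\in\mathcal X$. The pieces have size at least $(\gamma/h)|V_i|\cdot(1-o(1))$, which is at least $\varepsilon_0|V_i|$, and regularity is inherited by such large subsets. Hence $F[U_x,U_y]$ is $(\varepsilon',p)$-regular, with $\varepsilon'$ controlled by $\varepsilon_0h/\gamma$, and has density at least $p/(2r)-\varepsilon_0p\ge dp$. Choosing $\varepsilon_0$ small enough that $\varepsilon'\le\varepsilon_{\mathrm{KLR}}$, Lemma~\ref{lem:HpatternKLR} yields a canonical copy of $H$ inside $\bigcup_x U_x$. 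This copy is disjoint from all previous ones. Repeating the step produces all $\sum_\phi m_\phi$ copies.

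The main obstacle is the inheritance of regularity: we need to ensure the subpairs stay $\varepsilon_{\mathrm{KLR}}$-regular at the sizes used. We handle it by ordering the constants as $\varepsilon_{\mathrm{KLR}}$ first, then $\varepsilon_0\ll\varepsilon_{\mathrm{KLR}}\gamma/h$, then $\eta$, then $C$. A second point is that the high-probability event of Lemma~\ref{lem:HpatternKLR} is uniform over all $F$ and all choices of sets. It therefore applies simultaneously to every step of the greedy process, so no union bound over steps is needed.
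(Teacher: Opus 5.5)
Your proposal is correct and follows essentially the same route as the paper's proof. Both fix $d=1/(4r)$ in the K{\L}R lemma, take $\varepsilon_0$ small enough for the slicing lemma at relative size about $\gamma/h$, and use the capacity bound $\sum_\phi c_i(\phi)m_\phi\le(1-\gamma)k$ to keep $\gamma k$ unused vertices per cluster. Both then split these into $c_i(\phi)$ linear-sized pieces, embed one canonical copy at a time, and lose at most $|\Phi|\le Z^h$ copies to rounding.
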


\begin{proof}
Apply Lemma~\ref{lem:HpatternKLR} with \(d=1/(4r)\), and let
\(\varepsilon_{\mathrm{KLR}}>0\) be the resulting constant. Choose
\(\varepsilon_0>0\) sufficiently small that every restriction of an
\((\varepsilon_0,p)\)-regular pair to subsets of relative size at least
\(\gamma/(3h)\) is
\((\varepsilon_{\mathrm{KLR}},p)\)-regular and has density at least
\(p/(4r)\). This follows from the slicing lemma. Set
$\eta_0:=\frac{\gamma}{10hZ},$
and condition on the high-probability event in
Lemma~\ref{lem:HpatternKLR} with \(\eta=\eta_0\).

For each \(\phi\in\Phi\), let
$m_\phi:=\lfloor kx_\phi\rfloor.$
We process the patterns one by one and greedily embed \(m_\phi\) canonical
copies of type \(\phi\). The total number of vertices requested from a
cluster \(V_i\) is at most
\[
\sum_{\phi\in\Phi}c_i(\phi)m_\phi
\le
k\sum_{\phi\in\Phi}c_i(\phi)x_\phi
\le
(1-\gamma)k.
\]
Hence, before any prescribed copy is embedded, every cluster contains at
least \(\gamma k\) unused vertices.

Fix a pattern \(\phi\) at one step of the greedy procedure. For each
\(i\in[M]\), split the unused vertices of \(V_i\) into
\(|\phi^{-1}(i)|\) disjoint sets, one for each vertex of \(H\) mapped to
\(i\). Since \(|\phi^{-1}(i)|\le h\), each such set may be chosen to have
size at least
$\frac{\gamma k}{h}-1
\ge
\frac{\gamma k}{2h}
\ge
\eta_0 n$
for all sufficiently large \(n\). Moreover, each such set has relative
size at least \(\gamma/(3h)\) inside its cluster. For every edge
\(xy\in E(H)\), the corresponding pair is therefore
\((\varepsilon_{\mathrm{KLR}},p)\)-regular and has density at least
\(p/(4r)\). Lemma~\ref{lem:HpatternKLR} supplies the required canonical
copy of \(H\). Thus the greedy procedure succeeds.

Finally,
\[
\sum_{\phi\in\Phi}m_\phi
\ge
k\sum_{\phi\in\Phi}x_\phi-|\Phi|
\ge
k\sum_{\phi\in\Phi}x_\phi-O_{H,Z}(1),
\]
because \(|\Phi|\le M^h\le Z^h\).
\end{proof}

We apply Lemma~\ref{lem:embeddinggraph} with
\(F=G_\ell\), \(\mathcal X=\mathcal X_\ell\), and
\(\Phi=\Phi_\ell\).

\subsection{Step I: Reducing random graphs to linear programs}
\label{sec:Skeletonframeworkgraphs}

\begin{lemma}\label{lem:chitoskeleton_gr}
Fix \(H\), \(r\), and \(\xi>0\). There exists
\(\varepsilon_{\mathrm{reg}}>0\) such that, for every \(Z\), there is
\(C>0\) for which the following holds with high probability whenever
\(G\sim G(n,p)\) and
$p\ge Cn^{-1/m_2^*(H)}.$
Let $\chi$ be any $r$-edge-colouring of $G$, and let
$V(G)=V_1\cup\cdots\cup V_M$ be an equipartition with $M\le Z$. Put
$k=\lfloor n/M\rfloor$. Define $\mathcal X_\ell$ and $\Phi_\ell$ using
$(\varepsilon_{\mathrm{reg}},p)$-regular pairs as above. Then, for every
$\ell\in[r]$,
\[
\nu_H(G_\ell)\ge kZ_\ell-\xi n,
\]
where \(Z_\ell\) is the optimal value of the following linear program.

\begin{align}
\tag{LP}\label{eq:LP}
\text{minimize}\quad
& \sum_{i=1}^M z_i,\\
\text{subject to}\quad
& \sum_{x\in V(H)}z_{\phi(x)}\ge1
\qquad\text{for every }\phi\in\Phi_\ell,\notag\\
& z_i\ge0
\qquad\text{for every }i\in[M].\notag
\end{align}
\end{lemma}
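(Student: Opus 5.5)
We will mirror the proof of Lemma~\ref{lem:chitoskeleton_hygr}, using LP duality to pass from the covering program \eqref{eq:LP} to a fractional packing of reduced $H$-patterns. Then Lemma~\ref{lem:embeddinggraph} lifts that packing to an actual $H$-tiling.

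\emph{Choice of constants.} Given $H$, $r$ and $\xi$, set $\gamma:=\xi/(2h)$. The point needing care is the order of quantifiers: $\varepsilon_{\mathrm{reg}}$ must be chosen before $Z$. Lemma~\ref{lem:embeddinggraph} as stated lets $\varepsilon_0$ depend on $Z$. However, its proof shows that $\varepsilon_0$ depends only on $r$, $H$ and $\gamma$, through the slicing lemma and $\varepsilon_{\mathrm{KLR}}$ for $d=1/(4r)$. Only $\eta_0=\gamma/(10hZ)$, and hence $C$, depend on $Z$. So we take $\varepsilon_{\mathrm{reg}}:=\varepsilon_0(r,H,\gamma)$. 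Then, for the given $Z$, we take $C$ large enough for Lemma~\ref{lem:embeddinggraph} with $\eta_0$, and condition on its high-probability event. This single event serves every $F\subseteq G$, every colouring and every equipartition with $M\le Z$ simultaneously.

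\emph{Duality.} Fix $\ell$. The dual of \eqref{eq:LP} is the packing program
\[
\text{maximize}\ \sum_{\phi\in\Phi_\ell}x_\phi\quad\text{subject to}\quad \sum_{\phi\in\Phi_\ell}c_i(\phi)x_\phi\le1\ (i\in[M]),\qquad x_\phi\ge0.
\]
Indeed, the coefficient of $z_i$ in the constraint for $\phi$ is $c_i(\phi)=|\phi^{-1}(i)|$. Both programs are feasible, since $z\equiv1$ and $x\equiv0$ work, and bounded. By strong duality their common optimum is $Z_\ell$. If $\Phi_\ell=\emptyset$, then $Z_\ell=0$ and there is nothing to prove. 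Let $\{x_\phi\}$ be an optimal packing. The scaled weights $(1-\gamma)x_\phi$ satisfy the capacity hypothesis of Lemma~\ref{lem:embeddinggraph}, applied with $F=G_\ell$, $\mathcal X=\mathcal X_\ell$ and $\Phi=\Phi_\ell$. Every pair in $\mathcal X_\ell$ is $(\varepsilon_{\mathrm{reg}},p)$-regular in $G_\ell$ with density at least $p/(2r)$, and every $\phi\in\Phi_\ell$ sends edges of $H$ to pairs in $\mathcal X_\ell$. The lemma therefore yields a colour-$\ell$ $H$-tiling of size at least $(1-\gamma)kZ_\ell-O_{H,Z}(1)$.

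\emph{Error bound.} Every $z_i$ in an optimal solution may be taken at most $1$, so $Z_\ell\le M$ and $\gamma kZ_\ell\le\gamma n\le\xi n/2$. For $n$ large, the $O_{H,Z}(1)$ term is also below $\xi n/2$. Together these give $\nu_H(G_\ell)\ge kZ_\ell-\xi n$.

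The only step I expect to be delicate is the quantifier ordering for $\varepsilon_{\mathrm{reg}}$: we must check that the slicing-lemma constant in Lemma~\ref{lem:embeddinggraph} is independent of $Z$. The rest is routine.
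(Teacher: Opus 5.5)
Your proposal is correct and follows the paper's proof essentially step for step. You use LP duality to pass to the pattern-packing program, scale an optimal packing by $(1-\gamma)$, and lift it via Lemma~\ref{lem:embeddinggraph}; the $\gamma kZ_\ell\le\gamma n$ term and the $O_{H,Z}(1)$ loss are then absorbed into $\xi n$. You also observe that $\varepsilon_0$ in Lemma~\ref{lem:embeddinggraph} depends only on $r,H,\gamma$, with only $\eta_0$ and $C$ depending on $Z$, which is accurate and makes explicit a quantifier point the paper covers only with ``take $\varepsilon_{\mathrm{reg}}$ small enough for Lemma~\ref{lem:embeddinggraph}''.
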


\begin{proof}
The primal linear program dual to \eqref{eq:LP} is
\[
\begin{aligned}
\text{maximize}\quad
& \sum_{\phi\in\Phi_\ell}x_\phi,\\
\text{subject to}\quad
& \sum_{\phi\in\Phi_\ell}c_i(\phi)x_\phi\le1
\qquad\text{for every }i\in[M],\\
& x_\phi\ge0
\qquad\text{for every }\phi\in\Phi_\ell.
\end{aligned}
\]
By LP duality, its optimal value is \(Z_\ell\).

Choose \(\gamma>0\) sufficiently small in terms of \(\xi\), and take
\(\varepsilon_{\mathrm{reg}}\) small enough for
Lemma~\ref{lem:embeddinggraph}. Let \(\{x_\phi\}\) be an optimal primal
solution. The scaled solution
\(\{(1-\gamma)x_\phi\}\) satisfies
\[
\sum_{\phi\in\Phi_\ell}
c_i(\phi)(1-\gamma)x_\phi
\le1-\gamma
\qquad (i\in[M]).
\]
Therefore Lemma~\ref{lem:embeddinggraph} gives
\[
\nu_H(G_\ell)
\ge
k(1-\gamma)Z_\ell-O_{H,Z}(1).
\]
The constant vector \(z_i=1\) is feasible in \eqref{eq:LP}, so
\(Z_\ell\le M\). Hence
$\gamma kZ_\ell\le\gamma kM\le\gamma n.$
Taking \(\gamma\ll\xi\) and then \(n\) sufficiently large yields
$\nu_H(G_\ell)\ge kZ_\ell-\xi n.$\end{proof}

\subsection{Step II: Repair, direct compression, and Ramsey extraction}

For each colour $\ell\in[r]$, let
$\{s_i^{(\ell)}\}_{i\in[M]}$ be an optimal solution to
\eqref{eq:LP}, and put
\[
S_i:=\bigl(s_i^{(1)},\ldots,s_i^{(r)}\bigr),
\qquad
Z:=(Z_1,\ldots,Z_r).
\]
We may assume that $S_i\in[0,1]^r$, since replacing a coordinate larger
than $1$ by $1$ preserves every constraint. In particular,
$\sum_iS_i=Z$.

Let $R_0=R_0(r,h)$ be sufficiently large that the following multipartite
Ramsey statement holds. If $q\le r$ and $B_1,\ldots,B_q$ are disjoint sets
of size $R_0$, then every $r$-colouring of the pairs in their union contains
sets $A_j\subseteq B_j$, $|A_j|=h$, such that all pairs inside each $A_i$
and all pairs between each $A_i,A_j$ have fixed colours.

\begin{lemma}\label{lem:vertorrefinegraph}
For every $\xi>0$, there are $m_0\in\mathbb N$ and $\zeta>0$ such that the
following holds whenever $M\ge m_0$ and
\[
\left|\binom{[M]}2\setminus\bigcup_{\ell\in[r]}\mathcal X_\ell\right|
\le\zeta M^2.
\]
There exist $q\le r$, distinct vectors $R_1,\ldots,R_q\in[0,2]^r$,
positive numbers $\alpha_1,\ldots,\alpha_q$ summing to one, colours
$\lambda(i,j)\in[r]$ for $1\le i\le j\le q$, pairwise disjoint sets
$A_1,\ldots,A_q\subseteq[M]$ of size $h$, and the $q$-strip colouring $\psi$
of $K_n$ with template $\lambda$ and part sizes $\alpha_jn+O(1)$ such that:
\begin{enumerate}
\item[(G1)]
$\nu_H(\psi)\le \frac nM\|Z\|_\infty+\xi n+2r.$
\item[(G2)] For every $1\le i\le j\le q$, every pair of distinct indices
inside $A_i$ when $i=j$, and every pair with one endpoint in $A_i$ and the
other in $A_j$ when $i<j$, belongs to
$\mathcal X_{\lambda(i,j)}$.
\item[(G3)] If $\ell\in[r]$ and $f:V(H)\to[q]$ satisfies
$\lambda(f(x),f(y))=\ell$ for every $xy\in E(H)$, then
\[
\sum_{x\in V(H)}R_{f(x)}^{(\ell)}\ge1.
\]
\end{enumerate}
\end{lemma}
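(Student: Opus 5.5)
The plan is to run the hypergraph pipeline of Lemmas~\ref{lem:op1} and~\ref{lem:direct-profile-compression}, with $t$ replaced by $h$, $t$-sets replaced by maps $V(H)\to[M]$, and an added Ramsey extraction. The covering property of Observation~\ref{lem:initialvalues}(i) is replaced by the following statement.

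\emph{Covering.} For every pattern $\phi:V(H)\to[M]$ with injective image whose edge-pairs all lie in $\bigcup_\ell\mathcal X_\ell$, and which is colour-$\ell$ for some $\ell$, we have $\sum_x s^{(\ell)}_{\phi(x)}\ge1$.

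\textbf{Step 1 (repair).} Round each $S_i$ up to a $\delta$-grid and replace rare rounded values by frequent ones, exactly as in Lemma~\ref{lem:op1}. Let $L=\zeta^{1/4}M$, say, and call a value frequent if it has at least $L$ representatives. The average then increases by only $\delta+O(\delta^{-r}L/M)$ per coordinate, which we make at most $\xi/2$ by taking $\delta$ small and then $\zeta$ small. Frequent values now have at least $L\gg R_0$ representatives.

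Unlike the hypergraph case, I will not try to repair covering for every $h$-set of indices. Colours are now determined by pairs, so covering only needs to hold on the sets we later select. Those sets avoid exceptional pairs, and for them covering follows from $S\le\widetilde S$ together with the reduced LP constraint.

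\textbf{Step 2 (compression).} Let $P$ be the convex hull of the repaired values and $\overline S$ their average. Take the boundary point $J=\overline S-\alpha\mathbbm1_r$ and apply Theorem~\ref{thm:Cara} to obtain distinct frequent values $R_1,\dots,R_q$, $q\le r$, and weights $\alpha_j>0$ with $J=\sum_j\alpha_jR_j$.

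\textbf{Step 3 (Ramsey extraction).} For each $j$, choose $R_0$ representatives $B_j$ of $R_j$, all among non-replaced indices, so that no pair inside $\bigcup_jB_j$ is exceptional, i.e.\ every such pair lies in some $\mathcal X_\ell$. This is the main obstacle. The exceptional pairs number at most $\zeta M^2$, while each class has at least $L=\zeta^{1/4}M$ members, so the exceptional graph restricted to the union of the classes has density $O(\zeta^{1/2})$ relative to the class sizes. I would pick the $qR_0$ indices uniformly at random from their classes: the expected number of exceptional pairs is $O(R_0^2\zeta^{1/2})<1$, so some choice has none. Alternatively a greedy choice works, first discarding indices of high exceptional degree.

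Now colour each pair in $\bigcup_j B_j$ by some $\ell$ with the pair in $\mathcal X_\ell$, and apply the multipartite Ramsey statement with constant $R_0$. This yields $A_j\subseteq B_j$ of size $h$, and colours $\lambda(i,j)$ are defined as the resulting fixed colours, which gives (G2).

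\textbf{Step 4 (verifying (G3) and (G1)).} For (G3), let $f$ be as in the statement and map distinct $x$ with $f(x)=j$ to distinct elements of $A_j$; this is possible since $|A_j|=h$. By (G2) the resulting $\phi$ is a colour-$\ell$ reduced pattern, so the LP constraint gives $\sum_x s^{(\ell)}_{\phi(x)}\ge1$. Since the representatives are non-replaced and rounding only increases coordinates, $\sum_x R^{(\ell)}_{f(x)}\ge1$.

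For (G1), consider any colour-$\ell$ $H$-tiling of $\psi$. Each copy corresponds to a map $f$ as in (G3), so it uses at least $1$ unit of $\ell$-weight $\sum_x R^{(\ell)}_{f(x)}$. Summing over disjoint copies, the tiling size is at most $\sum_j|U_j|R_j^{(\ell)}\le nJ_\ell+2r\le n\overline S_\ell+2r$. Finally, $\overline S_\ell\le Z_\ell/M+\xi$, which gives (G1).
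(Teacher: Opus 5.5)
Your proposal is correct and follows essentially the same route as the paper: $\delta$-rounding with frequency repair, boundary Carath\'eodory compression, random $R_0$-subsets of the non-replaced classes $T_j=\{u:\widetilde S_u=R_j\}$ chosen to avoid exceptional pairs, the multipartite Ramsey extraction, (G3) from the LP constraint via $S_u\le\widetilde S_u=R_j$, and (G1) by weight counting. The only difference is cosmetic: you tie the frequency threshold to $\zeta$ through $L=\zeta^{1/4}M$ (and need $m_0\ge R_0\zeta^{-1/4}$), whereas the paper fixes a threshold $\theta M$ depending only on $\xi$ and then takes $\zeta<\theta^2/(4R_0^2)$.
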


\begin{proof}
Choose $\delta,\theta>0$ so that, with
$N_0=(\lceil1/\delta\rceil+2)^r$, we have
$N_0\theta<1$ and $\delta+2N_0\theta<\xi$. Choose
$m_0\ge r$ so that $\theta m_0\ge2R_0$, and then choose
$\zeta<\theta^2/(4R_0^2)$.

Round every coordinate of $S_i$ upwards to the nearest multiple of
$\delta$, obtaining $\widetilde S_i\in[0,2]^r$. There are at most $N_0$
rounded values. Call a value frequent if it occurs at least $\theta M$
times. Since $N_0\theta<1$, some frequent value exists. Replace every
non-frequent value by an arbitrary frequent one, and denote the resulting
vectors by $S'_1,\ldots,S'_M$. Fewer than $N_0\theta M$ indices are changed,
so
\[
\left\|\sum_iS'_i\right\|_\infty
\le \|Z\|_\infty+(\delta+2N_0\theta)M.
\]

Let $P=\operatorname{conv}\{S'_1,\ldots,S'_M\}$ and
$\overline S=M^{-1}\sum_iS'_i$. Move from $\overline S$ in the negative
all-ones direction as far as possible while remaining in $P$, and call the
resulting point $J$. Then $J\in\partial P$ and
$\|J\|_\infty\le\|\overline S\|_\infty$. By
Theorem~\ref{thm:Cara}, after discarding zero coefficients, there are
vertices $R_1,\ldots,R_q$ of $P$, where $q\le r$, and positive numbers
$\alpha_1,\ldots,\alpha_q$ summing to one such that
$J=\sum_j\alpha_jR_j$. Hence
\[
\|J\|_\infty
\le \frac1M\|Z\|_\infty+\delta+2N_0\theta.
\]

Every $R_j$ is a frequent rounded value. Let
$T_j=\{u\in[M]:\widetilde S_u=R_j\}$. The sets $T_j$ are pairwise
disjoint, $|T_j|\ge\theta M$, and $S_u\le R_j$ coordinatewise for every
$u\in T_j$.

Put
$\mathcal B=\binom{[M]}2\setminus\bigcup_{\ell\in[r]}\mathcal X_\ell$.
Choose independently and uniformly an $R_0$-set $B_j\subseteq T_j$. For a
fixed bad pair, the probability that both endpoints are selected is at most
$2R_0^2/(\theta^2M^2)$. Thus the expected number of selected bad pairs is
at most
\[
|\mathcal B|\frac{2R_0^2}{\theta^2M^2}
\le\frac{2\zeta R_0^2}{\theta^2}<1.
\]
We may therefore choose the sets $B_j$ so that no bad pair occurs inside or
between them. Colour every pair in their union by one colour $c(u,v)$ for
which $\{u,v\}\in\mathcal X_{c(u,v)}$. The choice of $R_0$ gives sets
$A_j\subseteq B_j$, $|A_j|=h$, and colours $\lambda(i,j)$ satisfying
\textnormal{(G2)}.

To prove \textnormal{(G3)}, let $f:V(H)\to[q]$ be monochromatic of colour
$\ell$ in the template. Choose distinct indices
$\phi(x)\in A_{f(x)}$ for $x\in V(H)$. Then $\phi\in\Phi_\ell$, and the
dual constraint gives
$\sum_xs_{\phi(x)}^{(\ell)}\ge1$. Since
$\phi(x)\in T_{f(x)}$, we have $S_{\phi(x)}\le R_{f(x)}$, proving
\textnormal{(G3)}.

Finally, partition $V(K_n)$ into $U_1,\ldots,U_q$ so that
$||U_j|-\alpha_jn|\le1$, and colour the pairs inside and between the parts
according to $\lambda$. Let $\mathcal M$ be a colour-$\ell$ $H$-tiling in
the resulting strip colouring. Property \textnormal{(G3)}, applied to every
copy in $\mathcal M$, gives
\[
|\mathcal M|
\le\sum_{j=1}^q|U_j|R_j^{(\ell)}
\le nJ_\ell+2r.
\]
Taking the maximum over $\ell$ and using the bound on $J$ proves
\textnormal{(G1)}.
\end{proof}

\begin{proof}[Proof of Theorem~\ref{thm:graphstructure}]
Fix $\varepsilon>0$ and put $\xi=\varepsilon/4$. Apply
Lemma~\ref{lem:vertorrefinegraph} with error $\xi$, obtaining $m_0$ and
$\zeta$. Choose $\varepsilon_{\mathrm{reg}}>0$ so that
$\varepsilon_{\mathrm{reg}}\le\zeta$ and
Lemma~\ref{lem:chitoskeleton_gr} applies with error $\xi$. Apply
Lemma~\ref{hreg}, with $t=2$, $d=1/2$, $D=3/2$, lower bound $m_0$, and
regularity parameter $\varepsilon_{\mathrm{reg}}$. Let $\eta>0$ and $M_1$
be the resulting constants.

Choose $C$ sufficiently large that, with high probability, $G\sim G(n,p)$
is $(\eta,p,1/2,3/2)$-uniform and the universal embedding event in
Lemma~\ref{lem:chitoskeleton_gr} holds for every partition with at most
$M_1$ clusters. Condition on these events and let $\chi$ be an arbitrary
$r$-edge-colouring of $G$. Lemma~\ref{hreg} gives an equipartition into
$m_0\le M\le M_1$ clusters such that all but at most
$\varepsilon_{\mathrm{reg}}M^2$ pairs are regular in every colour. Every
non-exceptional pair has total density at least $p/2$, so it belongs to
$\mathcal X_\ell$ for some $\ell\in[r]$. Hence the hypothesis of
Lemma~\ref{lem:vertorrefinegraph} holds.

Let $Z=(Z_1,\ldots,Z_r)$ be the vector of LP optima. By
Lemma~\ref{lem:chitoskeleton_gr},
$\nu_H(\chi)\ge k\|Z\|_\infty-\xi n.$
Lemma~\ref{lem:vertorrefinegraph} gives an $r$-strip colouring $\psi$ with
\[
\nu_H(\psi)
\le\frac nM\|Z\|_\infty+\xi n+2r
\le k\|Z\|_\infty+\xi n+M+2r,
\]
since $0\le n/M-k<1$ and $\|Z\|_\infty\le M$. Therefore
$\nu_H(\chi)\ge\nu_H(\psi)-2\xi n-M-2r$. For sufficiently large $n$,
$M_1+2r\le\xi n$, and the right-hand side is at least
$\nu_H(\psi)-\varepsilon n$.
\end{proof}

\section{Ramsey graph tilings: finite optimization and explicit values}
\label{sec:graphapp}

Recall that
\[
Rt_r(H;G):=\min_{\chi:E(G)\to[r]}\nu_H(\chi).
\]
For an integer $m\ge1$, let $R_r(mH)$ be the least $n$ such that every
$r$-edge-colouring of $K_n$ contains a monochromatic copy of $mH$. Thus
\[
R_r(mH)=\min\{n:Rt_r(H;K_n)\ge m\}.
\]
We first prove Theorem~\ref{thm:finite-optimization}, which reduces the
complete-host problem to finitely many strip-template linear programs, and
then determine the resulting constant for several graph families.

\subsection{Finite optimization for the asymptotic constant}
\label{subsec:finite-optimization}

For complete hosts, the following statement is a direct consequence of
Theorem~\ref{thm:graphstructure}.

\begin{lemma}
\label{lem:complete-host-graphstructure}
Let $r\ge2$, let $H$ be any fixed graph with at least one edge, and let
$\varepsilon>0$. For all sufficiently large $n$, every
$r$-edge-colouring $\chi$ of $K_n$ admits an $r$-strip colouring $\psi$ of
$K_n$ such that
\[
\nu_H(\chi)\ge\nu_H(\psi)-\varepsilon n.
\]
\end{lemma}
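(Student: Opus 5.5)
The plan is to deduce the statement from Theorem~\ref{thm:graphstructure} with $p=1$. That theorem concerns the random graph $G\sim G(n,p)$, and when $p=1$ this graph is deterministically $K_n$. Two checks are needed. First, $p=1$ must lie in the allowed range $p\ge Cn^{-1/m_2^*(H)}$. Since $m_2^*(H)\ge1$, we have $Cn^{-1/m_2^*(H)}\le Cn^{-1/\dots}\to0$, so $p=1$ qualifies for all sufficiently large $n$. Second, the conclusion of that theorem holds with high probability, meaning with probability $1-o(1)$. For $p=1$ the probability space is a single point $K_n$. An event of probability tending to $1$ on a one-point space must eventually have probability exactly $1$, so for large $n$ the conclusion holds for $K_n$ itself and for every $r$-edge-colouring $\chi$ of $K_n$.

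To avoid relying on how ``with high probability'' is quantified, I would instead re-run the proof of Theorem~\ref{thm:graphstructure} directly with $G=K_n$. The random graph entered that proof in only two places.

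The first is uniformity, which the proof needs in the form of $(\eta,1,1/2,3/2)$-uniformity. This holds trivially for $K_n$, since every $t$-partite density equals $1$.

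The second is the embedding event behind Lemma~\ref{lem:HpatternKLR}, and hence behind Lemmas~\ref{lem:embeddinggraph} and~\ref{lem:chitoskeleton_gr}. In the dense case $p=1$ this is the classical deterministic counting/embedding lemma for regular pairs. That lemma holds for every $F\subseteq K_n$ once the sets $U_x$ have linear size, with no exceptional event. The same greedy argument then handles the $K_2$-components and isolated vertices.

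With these two inputs, Lemma~\ref{hreg} (with $t=2$), Lemma~\ref{lem:chitoskeleton_gr} and Lemma~\ref{lem:vertorrefinegraph} apply verbatim to an arbitrary $r$-edge-colouring $\chi$ of $K_n$. They produce an $r$-strip colouring $\psi$ with $\nu_H(\chi)\ge\nu_H(\psi)-\varepsilon n$ for all $n\ge n_0(r,H,\varepsilon)$.

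The only point needing care, and the step I expect to be the main obstacle, is the quantifier bookkeeping. Theorem~\ref{thm:graphstructure} is phrased probabilistically, and we must ensure that its threshold on $n$ depends only on $r$, $H$ and $\varepsilon$, not on $\chi$. This holds because the high-probability events are universal over all colourings: the proof conditions on them first and then takes $\chi$ arbitrary. So a single $n_0$ works for every colouring, which gives the uniform statement required.
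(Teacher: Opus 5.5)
Your proposal is correct and follows the paper's proof: the paper also applies Theorem~\ref{thm:graphstructure} with $p=1$, using that $G(n,1)=K_n$ deterministically. Your check that a high-probability event on a one-point space must eventually hold with certainty, and your fallback of re-running the dense regularity and embedding argument, only make explicit what the paper's one-line proof leaves implicit.
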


\begin{proof}
Apply Theorem~\ref{thm:graphstructure} with $p=1$, so that
$G(n,1)=K_n$ deterministically.
\end{proof}

Fix $q\ge1$. An $r$-colour $q$-strip template is a map
\[
\lambda:\{(i,j):1\le i\le j\le q\}\to[r].
\]
For $\ell\in[r]$, let $\Phi_\ell(\lambda)$ be the family of maps
$f:V(H)\to[q]$ satisfying
$\lambda(f(u),f(v))=\ell$ for every $uv\in E(H)$, and put
$a_i(f):=|f^{-1}(i)|$. For
$x=(x_1,\ldots,x_q)\in\mathbb R_{\ge0}^q$, let
$\tau_\ell(\lambda,x)$ be the optimum of
\[
\begin{aligned}
\text{maximize}\quad &\sum_{f\in\Phi_\ell(\lambda)}w_f,\\
\text{subject to}\quad
&\sum_{f\in\Phi_\ell(\lambda)}a_i(f)w_f\le x_i
\qquad(i\in[q]),\\
&w_f\ge0\qquad(f\in\Phi_\ell(\lambda)).
\end{aligned}
\]
The function $\tau_\ell(\lambda,\cdot)$ is positively homogeneous. Write
\[
\Delta_q:=\left\{x\in\mathbb R_{\ge0}^q:\sum_i x_i=1\right\}
\]
and define
\[
\beta_{r,H}:=
\min_{1\le q\le r}\ \min_\lambda\ \min_{x\in\Delta_q}
\ \max_{\ell\in[r]}\tau_\ell(\lambda,x).
\]
For fixed $q,\lambda,\ell$, LP duality expresses
$\tau_\ell(\lambda,x)$ as the minimum of $x\cdot z$ over a fixed compact
polytope in $[0,1]^q$. Hence $\tau_\ell(\lambda,\cdot)$ is continuous on
$\Delta_q$, so the minimum above is attained. Moreover, if
$x_i=\max_jx_j$ and $\ell=\lambda(i,i)$, then the pattern mapping every
vertex of $H$ to $i$ gives
\[
\tau_\ell(\lambda,x)\ge\frac{x_i}{v(H)}
\ge\frac{1}{rv(H)}.
\]
Thus $\beta_{r,H}\ge1/[rv(H)]>0$.

If the parts of an integral blow-up of $\lambda$ have sizes
$n_1,\ldots,n_q$, put $N:=\sum_i n_i$ and $x_i:=n_i/N$, and write
$\nu_H^{(\ell)}(\lambda;n_1,\ldots,n_q)$ for its maximum
colour-$\ell$ tiling size. Then
\begin{equation}\label{eq:integral-strip-tiling}
\nu_H^{(\ell)}(\lambda;n_1,\ldots,n_q)
=N\tau_\ell(\lambda,x)+O_{q,H}(1).
\end{equation}
Indeed, every integral tiling gives a feasible fractional solution after
division by $N$. Conversely, taking $\lfloor Nw_f\rfloor$ copies of every
pattern in an optimal solution respects all part capacities and loses only
$O_{q,H}(1)$ copies.

\begin{proof}[Proof of Theorem~\ref{thm:finite-optimization}]
Choosing a minimizing template and a minimizing vector, and rounding the
part sizes, gives
$Rt_r(H;K_n)\le(\beta_{r,H}+o(1))n$ by
\eqref{eq:integral-strip-tiling}. Conversely, let $\chi$ be an arbitrary
$r$-colouring of $K_n$. Lemma~\ref{lem:complete-host-graphstructure},
with an error tending to zero, gives an $r$-strip colouring $\psi$ such that
$\nu_H(\chi)\ge\nu_H(\psi)-o(n)$. If $\lambda$ and $x$ are the template
and part proportions of $\psi$, then
\[
\nu_H(\psi)
=n\max_{\ell\in[r]}\tau_\ell(\lambda,x)+O_{r,H}(1)
\ge\beta_{r,H}n-O_{r,H}(1).
\]
Hence $Rt_r(H;K_n)=(\beta_{r,H}+o(1))n$.

It remains to verify finite computability. For fixed
$(q,\lambda,\ell)$, LP duality gives
\[
\tau_\ell(\lambda,x)=\min_{z\in P_\ell}x\cdot z,
\qquad
P_\ell:=\left\{z\in[0,1]^q:
\sum_i a_i(f)z_i\ge1\text{ for every }f\in\Phi_\ell(\lambda)
\right\}.
\]
The upper bounds $z_i\le1$ are harmless. Let $E_\ell$ be the finite set
of vertices of $P_\ell$. For every choice
$(z_1,\ldots,z_r)\in E_1\times\cdots\times E_r$, restrict to the rational
polytope of $x\in\Delta_q$ on which each $z_\ell$ is optimal, and minimize
$y$ subject to $y\ge x\cdot z_\ell$ for every $\ell$. Enumerating these
finitely many rational linear programs, and then all $q\le r$ and all
templates $\lambda$, computes $\beta_{r,H}$.

Finally, the relation
$R_r(kH)=\min\{n:Rt_r(H;K_n)\ge k\}$ and the already proved asymptotic
formula imply
\begin{align*}
R_r(kH)=\left(\frac1{\beta_{r,H}}+o(1)\right)k.\tag*{\qedhere}  
\end{align*}
\end{proof}

\subsection{Explicit values for \(Rt_r(H;K_n)\)}
\label{subsec:application:3colour}
We now extract closed formulas from the finite optimization in several
natural cases.

\subsubsection{Non-bipartite tilings}

The following result is due to Gy\'arf\'as, S\'ark\"ozy and
Selkow~\cite[Theorem~1.11]{GSS15}. We include its short proof for completeness.

\begin{proposition}[\cite{GSS15}]
\label{prop:nonbipartite}
Let \(r\ge3\) and let \(H\) be a connected non-bipartite graph. Then
\[
Rt_r(H;K_n)=\frac{n}{r\,v(H)}+O_{r,H}(1).
\]
\end{proposition}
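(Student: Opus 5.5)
We need both a lower bound (greedy) and an upper bound (a construction).

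\emph{Lower bound.} Fix an arbitrary $r$-colouring of $K_n$. By pigeonhole, some colour $\ell$ has at least $\binom n2/r$ edges. We then extract disjoint copies of $H$ greedily inside colour $\ell$. Density alone is not enough, since $H$ is non-bipartite and a bipartite colour class contains no copy of $H$. So instead we run a global greedy argument, which is the elementary greedy bound quoted in the paper.

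Let $h=v(H)$ and $R=R_r(H)$, the ordinary $r$-colour Ramsey number. While at least $R$ vertices remain, the remaining clique contains a monochromatic copy of $H$; remove it. This produces about $(n-R)/h$ vertex-disjoint monochromatic copies of $H$. By pigeonhole, some colour contains at least $(n-R)/(rh)$ of them. Hence $\nu_H(\chi)\ge n/(rh)-O_{r,H}(1)$.

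\emph{Upper bound.} We need a colouring in which every colour class admits only about $n/(rh)$ disjoint copies of $H$. Use $r\ge 3$ as follows.

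1. Partition $V(K_n)$ into a small set $B$ and a large set $W$.
2. Colour $K[W]$ with colours $1$ and $2$ so that both colour classes are bipartite: split $W=W_1\cup W_2\cup W_3\cup W_4$, give colour $1$ to the edges of $K[W_1\cup W_2,W_3\cup W_4]$ and colour $2$ to the remaining edges. Colour $2$ is then bipartite only if the edges inside each $W_i$ are avoided, so this step needs a recursive refinement.
3. Since $H$ is connected and non-bipartite, a connected copy of $H$ in a bipartite colour class is impossible.

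The refinement of step 2 is the natural recursive one. Split $V$ into parts $V_1,\dots,V_{r}$. For $i\le r-1$, give every edge meeting $V_i$ whose other endpoint lies in $V_i\cup\dots\cup V_r$ colour $i$. This would make colour $i$ a ``star-like'' class: every copy of $H$ in colour $i$ must use at least one vertex of $V_i$. A fractional-cover count then bounds colour $i$ by $|V_i|$ copies. This bound is too weak, because one vertex of $V_i$ suffices per copy.

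Sharpening this uses the structure of colour $i$. Let colour $i$ consist of the edges inside $V_i$ together with the edges between $V_i$ and later parts, and arrange the edges from $V_i$ to later parts to be bipartite-compatible. Then every odd cycle of $H$ forces at least two vertices in $V_i$, and more generally any copy of $H$ needs at least $h-\alpha$ vertices in $V_i$, where $\alpha=\alpha(H)$. The final part $V_r$ is coloured internally with colour $r$, so $\nu_r\le |V_r|/h$. Choosing the part sizes to balance the counts $|V_i|/(h-\alpha)$ against $|V_r|/h$ does not give $n/(rh)$ in general.

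So the intended extremal colouring must exploit $r\ge3$ differently. The simplest candidate is $r$ equal parts of size $n/r$, where each part is coloured internally by one dedicated colour and the cross edges use colours chosen so that every colour's cross edges form a bipartite graph together with the structure needed to keep odd cycles inside single parts. Concretely, I would label the parts by vectors in $\mathbb F_2^{s}$ and colour each cross edge by a coordinate in which the labels differ; two colours already make every cross class bipartite. I would then verify that each colour-$\ell$ odd cycle must lie inside the part $V_\ell$, and that this forces every connected non-bipartite copy of $H$ to live entirely in one part. That would give $\nu_H\le n/(rh)+O(1)$.

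This verification is the main obstacle. A colour class consisting of $K[V_\ell]$ together with a bipartite cross graph can contain copies of $H$ whose odd cycles sit in $V_\ell$ and whose remaining vertices lie outside $V_\ell$. Ruling this out, or charging such copies correctly, is exactly where I expect the argument of Gy\'arf\'as, S\'ark\"ozy and Selkow to be needed, and I would follow their construction there.
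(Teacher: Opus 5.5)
Your lower bound is exactly the paper's argument and is complete. It removes monochromatic copies of $H$ greedily until fewer than $R_r(H)$ vertices remain, then applies pigeonhole over the colours.

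\textbf{The gap is the upper bound.} You end by deferring the construction to Gy\'arf\'as, S\'ark\"ozy and Selkow, and the candidate you sketch fails as stated. If $V_\ell$ carries colour $\ell$ internally and some colour-$\ell$ cross edges are incident with $V_\ell$, take $H$ to be a triangle with a pendant edge. Its triangle can sit in $V_\ell$ and its pendant vertex outside. A colour-$\ell$ tiling then uses fewer than $v(H)$ vertices of $V_\ell$ per copy and beats $|V_\ell|/v(H)$. So the step ``odd cycles lie in $V_\ell$, hence the whole copy lies in $V_\ell$'' is false, as you yourself note. The $\mathbb F_2^s$ labelling does not repair this, because nothing in it stops the colour dedicated to $V_\ell$ from appearing on cross pairs at $V_\ell$.

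\textbf{The missing idea} is to make the clique on $V_\ell$ an entire connected component of colour $\ell$. This requires two properties of the cross colouring:
\begin{enumerate}
\item colour $\ell$ never appears on a cross pair incident with $V_\ell$;
\item the colour-$\ell$ cross pairs form a forest on the remaining parts.
\end{enumerate}
Each other part is independent in colour $\ell$, so the colour-$\ell$ graph is the clique on $V_\ell$ disjoint from a blow-up of a forest, which is bipartite. Since $H$ is connected and non-bipartite, every colour-$\ell$ copy lies inside $V_\ell$, and no charging argument is needed.

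\textbf{The paper's construction.} Take $r$ equal parts.
\begin{itemize}
\item For odd $r$, index parts and colours by $\mathbb Z_r$, and colour the edges between $V_i$ and $V_j$ with $(i+j)2^{-1}$. For fixed $c$, these pairs form a perfect matching of $\mathbb Z_r\setminus\{c\}$.
\item For even $r$, run the odd construction on $r-1$ parts. Add a part $V_\infty$ with internal colour $\infty$, and colour the edges between $V_\infty$ and $V_i$ with $i+1$. The colour-$c$ cross pairs are then a matching together with the extra pair $\{\infty,c-1\}$. This is a forest avoiding $c$.
\end{itemize}
This is also where $r\ge3$ enters. With only two parts, the single cross pair would need a colour different from both internal colours, which is impossible.
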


\begin{observation}\label{obs:lownonbipar}
For every \(r\)-edge-colouring \(\chi\) of \(K_n\),
\[
\nu_H(\chi)\ge\frac{n}{r\,v(H)}-O_{r,H}(1).
\]
\end{observation}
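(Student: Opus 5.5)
The plan is to get the bound greedily, colour by colour, with no regularity at all. The key input is the classical fact that the Ramsey number $R_r(H)$ is finite. The lower bound follows from a standard greedy argument: in any $r$-edge-colouring of $K_N$ with $N\ge R_r(H)$ there is a monochromatic copy of $H$. Starting from $K_n$, we repeatedly find a monochromatic copy of $H$ among the vertices not yet used, record it together with its colour, and delete its $v(H)$ vertices. The process continues as long as at least $R_r(H)$ vertices remain.

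Let $T$ be the number of copies found when the process stops. Then fewer than $R_r(H)$ vertices are left, so
\[
T\,v(H)>n-R_r(H),
\qquad\text{that is,}\qquad
T\ge \frac{n-R_r(H)}{v(H)}.
\]
The copies we recorded are pairwise vertex-disjoint, since every copy uses only vertices that were still unused when it was chosen. By pigeonhole, some colour $\ell$ accounts for at least $T/r$ of these copies. Those copies form a colour-$\ell$ $H$-tiling, so
\[
\nu_H(\chi)\ge \frac{n}{r\,v(H)}-\frac{R_r(H)}{r\,v(H)}.
\]
Since $R_r(H)$ depends only on $r$ and $H$, the error term is $O_{r,H}(1)$, which is exactly the statement of Observation~\ref{obs:lownonbipar}.

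No step here is a real obstacle. The only point needing care is that connectivity and non-bipartiteness play no role in this lower bound; they are used only for the matching upper construction in Proposition~\ref{prop:nonbipartite}. The finiteness of $R_r(H)$ should be quoted as the classical multicolour Ramsey theorem rather than derived from earlier results in the paper. We also do not need the regularity lemma here, even though it would give the same bound up to $o(n)$.
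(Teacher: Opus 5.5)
Your proposal is correct and is essentially the paper's own proof: greedily extract vertex-disjoint monochromatic copies of $H$ until fewer than $R_r(H)$ vertices remain, which gives at least $(n-R_r(H))/v(H)$ copies, and then apply pigeonhole over the $r$ colours. Your remark that connectivity and non-bipartiteness play no role in this lower bound, and are needed only for the matching construction, is also accurate.
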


\begin{proof}
Greedily remove monochromatic copies of \(H\) until fewer than
\(R_r(H)\) vertices remain. This produces at least
\((n-R_r(H))/v(H)\) disjoint copies in total, so one colour contains at
least \(n/(r\,v(H))-O_{r,H}(1)\) of them.
\end{proof}

\begin{lemma}\label{lem:uppernonbipar}
There exists an \(r\)-edge-colouring \(\chi\) of \(K_n\) such that
\[
\nu_H(\chi)\le\frac{n}{r\,v(H)}+O_{r,H}(1).
\]
\end{lemma}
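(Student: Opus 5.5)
We need an r-colouring of K_n whose monochromatic H-tilings have size at most n/(r v(H)) + O(1), where r ≥ 3 and H is connected and non-bipartite. The plan is a strip construction with r parts U_1,…,U_r. Colour 1 will be bipartite, so it contains no copy of H at all. Every other colour will be forced to use many vertices from a part of size about n/r, which caps its tiling.

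\textbf{The colouring.} Partition V(K_n) into U_1,…,U_r, with |U_1| of order 2n/r and |U_i| ≈ n/r for i ≥ 2, adjusted so the sizes sum to n.
\begin{itemize}
\item Colour 1 is every edge between U_1 and U_2. It is bipartite, so it has no copy of the non-bipartite graph H.
\item For i ≥ 2, colour i is every edge with at least one endpoint in U_{i+1}, excluding the U_1–U_2 edges already coloured; indices run cyclically on {2,…,r}.
\item The edges inside U_2 and the edges between U_1 and U_1 ∪ U_2 other than those in colour 1 receive the remaining colours, chosen so that each colour class stays "star-like" around one part.
\end{itemize}

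\textbf{The cap.} Suppose colour i is designed so that every edge of colour i meets a fixed part W_i. Then W_i is a vertex cover of colour i. Hence every copy of H in colour i satisfies |V(H') ∩ W_i| ≥ τ(H) ≥ 1, where τ(H) is the vertex cover number. This gives only about n/(r τ(H)), not the required n/(r v(H)), so a vertex-cover cap is too weak.

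\textbf{The correct mechanism.} Use connectivity to force every monochromatic copy to lie entirely inside one part. Take the Turán-type colouring with one "cheap" bipartite colour:
\begin{itemize}
\item Split V into r-1 "clique parts" Q_2,…,Q_r of size about n/r each, and one remaining part of size about n/r, which I call Q_1.
\item Colour each edge inside Q_i with colour i.
\item Colour edges between different parts so that every cross-part colour class is bipartite, or at least makes every colour-i component sit inside Q_i.
\end{itemize}
If colour i restricted to cross edges is bipartite and vertex-disjoint from the colour-i clique, then each colour-i component is either inside Q_i or bipartite. Since H is connected and non-bipartite, every colour-i copy of H then lies in Q_i, so ν ≤ |Q_i|/v(H) ≈ n/(r v(H)).

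\textbf{The main obstacle.} We must colour all cross edges between r parts with r colours so that each colour's cross edges form a bipartite graph avoiding Q_i. The cross-part structure is a complete graph K_r on the parts, and its edges must be split so that colour i uses only parts other than i and is bipartite. Colour the part-pair {a,b} with a colour c ∉ {a,b}. Then colour c's cross edges are a blow-up of a graph on the parts other than c, and we need that graph to be bipartite.

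I would try c(a,b) = a+b mod r, adjusted to avoid {a,b}. The check is then: colour c is used on pairs {a, c-a}, which form a matching on the parts and are therefore bipartite. The difficulties are that a matching pair may hit c itself (when a = c-a or a = c), and that parity issues arise when r is even. Verifying this assignment for every r ≥ 3, possibly with slightly unequal part sizes and an O(1) correction, is the step I expect to need care.

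Once the colouring is in place, the bound follows: each colour-i component containing an odd cycle lies in Q_i, giving n/(r v(H)) + O(1). Combined with Observation~\ref{obs:lownonbipar}, this proves Proposition~\ref{prop:nonbipartite}.
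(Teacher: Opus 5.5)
Your final framework is the right one and is the paper's: an equipartition $V_1,\dots,V_r$, colour $i$ inside $V_i$, and cross pairs coloured so that for each colour $c$ the colour-$c$ cross edges form a blow-up of a bipartite graph on the parts other than $c$. Connectivity and non-bipartiteness of $H$ then confine every colour-$c$ copy to $V_c$. The gap is that the only step with real content is left open. That step is to exhibit a map $\rho:\binom{[r]}{2}\to[r]$ with $\rho(\{a,b\})\notin\{a,b\}$ and every fibre $\rho^{-1}(c)$ bipartite. The candidate you offer does not work. With $\rho(\{a,b\})=a+b \bmod r$, every pair $\{0,b\}$ receives colour $b$, so avoidance fails for $r-1$ pairs, and no shift $a+b+k$ repairs this. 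Moreover, for even $r$ no assignment whose colour classes are all matchings can exist. A matching on the $r-1$ vertices of $[r]\setminus\{c\}$ has at most $r/2-1$ edges, and $r(r/2-1)<\binom r2$. So the parity issue you flagged is not a detail: some colour classes must be bipartite graphs that are not matchings.

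The paper closes this as follows. For odd $r$, identify parts and colours with $\mathbb Z_r$ and put $\rho(\{a,b\})=(a+b)2^{-1}$, halving rather than adding. Then $\rho(\{a,b\})=a$ forces $b=a$, and colour class $c$ is the fixed-point-free involution $a\mapsto 2c-a$ on $\mathbb Z_r\setminus\{c\}$, i.e.\ a perfect matching avoiding $c$. For even $r$, index the parts by $\mathbb Z_{r-1}\cup\{\infty\}$. Apply the odd rule inside $\mathbb Z_{r-1}$, colour the pairs $\{\infty,i\}$ with $i+1$, and colour the edges inside $V_\infty$ with $\infty$. Each colour $c\in\mathbb Z_{r-1}$ then has cross structure equal to a perfect matching of $\mathbb Z_{r-1}\setminus\{c\}$ plus the single pendant edge $\{\infty,c-1\}$. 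This is a forest avoiding $c$, and colour $\infty$ has no cross pairs at all. With this $\rho$ in place, the rest of your argument goes through unchanged, with exact equipartition and no $O(1)$ size adjustments needed. The earlier vertex-cover discussion and the bipartite colour on a part of size about $2n/r$ play no role and should be dropped.
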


\begin{proof}
Let \(V(K_n)=V_1\cup\cdots\cup V_r\) be an equipartition. Suppose first that \(r\) is odd. Identify the indices and colours with
\(\mathbb Z_r\). Colour the edges inside \(V_i\) with colour \(i\), and
for \(i\ne j\), colour all edges between \(V_i\) and \(V_j\) with
\((i+j)2^{-1}\in\mathbb Z_r\). For a fixed colour \(c\), the
colour-\(c\) pairs between distinct parts form a matching on
\(\mathbb Z_r\setminus\{c\}\), while no such pair is incident with \(V_c\).
Hence the colour-\(c\) graph is the disjoint union of the clique on \(V_c\)
and complete bipartite graphs. Since \(H\) is connected and non-bipartite,
every colour-\(c\) copy of \(H\) lies inside \(V_c\).

Now suppose that \(r\) is even. Index \(V_1,\ldots,V_{r-1}\) and the first
\(r-1\) colours by \(\mathbb Z_{r-1}\), and denote the final part and colour
by \(\infty\). Use the preceding odd construction on the first \(r-1\)
parts, colour the edges inside \(V_\infty\) with \(\infty\), and colour the
edges between \(V_\infty\) and \(V_i\) with \(i+1\pmod{r-1}\). For a fixed
\(c\in\mathbb Z_{r-1}\), the colour-\(c\) pairs between parts form a
matching together with at most one additional adjacent edge, and hence form
a forest; moreover, \(V_c\) is isolated from all colour-\(c\) cross-pairs.
Thus the colour-\(c\) graph outside \(V_c\) is bipartite, while colour
\(\infty\) occurs only inside \(V_\infty\). Again, every monochromatic copy
of \(H\) lies inside one designated part.

Therefore, every monochromatic \(H\)-tiling has at most
\(\max_i\lfloor |V_i|/v(H)\rfloor=n/(r\,v(H))+O_H(1)\) copies.
\end{proof}

Proposition~\ref{prop:nonbipartite} now follows from
Observation~\ref{obs:lownonbipar} and Lemma~\ref{lem:uppernonbipar}.

\subsubsection{Hall-type bipartite tilings for \(r=3\)}

Let \(H\) be a connected bipartite graph with a fixed bipartition
\(A\cup B\), where \(|A|=a\le b=|B|\). We call \(H\) \emph{Hall-type} if
it contains a matching saturating \(A\). Equivalently, by K\"onig's theorem,
the vertex-cover number of \(H\) is \(a\).

\begin{theorem}\label{thm:biparr=3}
Let \(b\ge a\ge1\), and let \(H\) be a connected Hall-type bipartite graph
with bipartition sizes \(a\) and \(b\). Then
\[
Rt_3(H;K_n)=
\begin{cases}
\displaystyle
\left(\frac{1}{3a+b}+o(1)\right)n,
& b\le3a,\\[2ex]
\displaystyle
\left(\frac{2}{3(a+b)}+o(1)\right)n,
& b\ge3a.
\end{cases}
\]
\end{theorem}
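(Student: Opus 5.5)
By Theorem~\ref{thm:finite-optimization}, $Rt_3(H;K_n)=(\beta_{3,H}+o(1))n$. So it suffices to show that $\beta_{3,H}=\min\{\frac1{3a+b},\frac{2}{3(a+b)}\}$, which equals $\frac1{3a+b}$ exactly when $b\le 3a$. The plan has two halves. First, exhibit two explicit templates, with proportions, attaining these two values; these give the upper bounds. Second, show that every $3$-colour $q$-strip template ($q\le3$), with every $x\in\Delta_q$, has $\max_\ell\tau_\ell(\lambda,x)\ge\min\{\frac1{3a+b},\frac2{3(a+b)}\}$. Throughout we use two facts about $H$ coming from the Hall-type hypothesis and K\"onig's theorem. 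First, every copy of $H$ meets every vertex cover of itself in at least $a$ vertices. Second, if a colour class is a clique on $U$ joined completely to a set $W$ that is independent in that colour, then every copy uses at least $a$ vertices of $U$. Moreover, because $H$ has a matching saturating $A$, the Hall property lets copies realize any split between these extremes. This makes the LPs $\tau_\ell$ computable by hand.

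\emph{Upper bounds.} For $b\le 3a$ I would use the nested ``split'' template with parts $V_1,V_2,V_3$ of sizes $(a+b)m$, $am$, $am$, so $n=(3a+b)m$:
\begin{itemize}
\item every edge meeting $V_2$ gets colour $2$;
\item every remaining edge meeting $V_3$ gets colour $3$;
\item edges inside $V_1$ get colour $1$.
\end{itemize}
Colour $2$ is a clique on $V_2$ joined to an independent set, so each copy uses at least $a$ vertices of $V_2$; hence $\tau_2\le am/a=m$. Colour $3$ is the same with $V_3$ in place of $V_2$, and colour $1$ lives inside $V_1$, giving tiling size at most $m$. This yields $n/(3a+b)$. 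This construction is valid for all $a,b$.

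For $b\ge3a$ I need a template worth $\frac{2n}{3(a+b)}$. I would first try parts $V_1,V_2$ of equal size $y$ together with a small part $W$:
\begin{itemize}
\item colour $i$ inside $V_i$ for $i=1,2$, and colour $3$ on $V_1$--$V_2$;
\item the edges at $W$ coloured so that each colour is ``clique on one part plus a bipartite graph whose small side has $\Theta(w)$ vertices.''
\end{itemize}
I would then solve the three $\tau_\ell$ LPs explicitly and balance $w$ against $y$ so that all three equal $\frac{2n}{3(a+b)}$. If this family does not reach that value, I would instead search the finitely many $q\le3$ templates with the LP description, guided by the requirement that at $b=3a$ the two formulas coincide.

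\emph{Lower bound.} Fix $q\le3$, a template $\lambda$ and $x\in\Delta_q$. I would bound each $\tau_\ell$ from below by exhibiting feasible fractional packings. These use two kinds of patterns:
\begin{itemize}
\item monochromatic ``clique'' patterns inside a single part $i$ of colour $\lambda(i,i)$, contributing $x_i/(a+b)$;
\item ``cross'' patterns between parts $i\ne j$ of colour $\lambda(i,j)$, sending $A$ to one part and $B$ to the other, or mixing when $\lambda(i,i)=\lambda(i,j)$. Their value is $\min\{x_i/a,\ (x_i+x_j)/(a+b)\}$-type expressions.
\end{itemize}
I would then split into cases by $q$ and by the pattern of diagonal colours $\lambda(i,i)$. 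If two parts share a diagonal colour, or a diagonal colour agrees with a cross colour, that colour gains a combined packing and becomes large. In the remaining cases, where all diagonal colours are distinct and the cross colours are constrained, I would write the resulting piecewise-linear inequalities in $x$. I would then show their maximum is at least the claimed minimum, for instance by taking a suitable convex combination of the three colour bounds, with weights chosen from the extremal templates above, whose sum is at least $1$.

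The main obstacle is this lower-bound case analysis: ensuring that no template among the $3$-colour, $\le3$-part ones beats both constructions. This is especially delicate for the mixed templates where one colour is both a diagonal colour and a cross colour, because there the Hall-type property must be used to certify that cross patterns can be combined with in-part copies. I would handle these by writing the dual certificate explicitly, a vector $z\in P_\ell$ for each colour, and checking the averaging inequality case by case.
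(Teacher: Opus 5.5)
Your reduction to computing $\beta_{3,H}$ via Theorem~\ref{thm:finite-optimization} is correct, and so is your template for $b\le 3a$ (parts $(a+b)m,am,am$ with the vertex-cover argument); both match the paper. The gap is that the other two ingredients are only announced, not supplied. For $b\ge 3a$ you never specify a template, and the family you sketch cannot work as described. Colour $3$ on the complete bipartite graph between $V_1$ and $V_2$ already contains $2|V_1|/(a+b)-O(1)$ disjoint copies of $H$, by Lemma~\ref{lem:Kab-bipartite-packing} and $H\subseteq K_{a,b}$. This forces $|V_1|+|V_2|\le 2n/3$, so $W$ cannot be small.

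The idea you need is a template in which every colour class is supported on at most $2n/3$ vertices. Then $\nu_H\le 2n/(3(a+b))$ follows from vertex counting alone, with no LP computation. The paper uses three parts of size $n/3$, with colour $i$ inside $V_i$ and between $V_i,V_{i+1}$ (indices mod $3$). Your family does reach such a template, but only with $|V_1|=|V_2|=|W|=n/3$: put colour $1$ on the whole clique $V_1\cup W$ and colour $2$ on $V_2W$. You did not identify this.

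For the lower bound you only outline a case analysis over diagonal and cross colours and flag it as the main obstacle, so this half is unproved. The missing observation is that no colour casework is needed. Pad to three parts with $n_1\ge n_2\ge n_3$. The clique on the largest part and the complete bipartite graph between the two largest parts are each monochromatic, possibly in different colours. Lemma~\ref{lem:Kab-bipartite-packing} and $H\subseteq K_{a,b}$ then give a monochromatic $H$-tiling of size at least $\max\{n_1/(a+b),\ \min\{(n_1+n_2)/(a+b),\,n_2/a\}\}-O(1)$.

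Now suppose $n_1$ is below $2n/3$ (for $b\ge 3a$), respectively below $(a+b)n/(3a+b)$ (for $b\le 3a$). Then $n\le n_1+2n_2$ gives $n_2>n/6$, respectively $n_2>an/(3a+b)$, while $n_1+n_2\ge 2n/3$. In either range this quantity is at least the claimed value. In particular, the Hall-type hypothesis enters only the upper bound. It is not needed in the lower bound to combine cross patterns with in-part copies, as you anticipated.
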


\begin{lemma}\label{lem:upper3colour}
Let \(H\) be as in Theorem~\ref{thm:biparr=3}.
\begin{enumerate}
\item If \(b\ge3a\), then there exists a \(3\)-colouring \(\chi\) of
\(K_n\) such that
\[
\nu_H(\chi)\le\frac{2}{3(a+b)}n+O_{a,b}(1).
\]

\item If \(b\le3a\), then there exists a \(3\)-colouring \(\chi\) of
\(K_n\) such that
\[
\nu_H(\chi)\le\frac{1}{3a+b}n+O_{a,b}(1).
\]
\end{enumerate}
\end{lemma}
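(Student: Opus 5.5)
Both bounds will come from explicit strip colourings of $K_n$, which is enough for the upper half of Theorem~\ref{thm:biparr=3}. For each colour we bound the tiling number by one of two elementary counts. First, vertex counting: every copy of $H$ has $a+b$ vertices, so a colour class supported on a vertex set $W$ has $H$-tiling number at most $|W|/(a+b)$. Second, vertex-cover counting: if every colour-$c$ edge meets a set $U$, then every colour-$c$ copy of $H$ contains a vertex cover of $H$ inside $U$. Since $H$ is Hall-type, its vertex-cover number is $a$, so the tiling number is at most $|U|/a$.

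For part (2), we use the AFL-type lexicographic colouring. Put $m=\lfloor n/(3a+b)\rfloor$ and partition $V(K_n)=V_1\cup V_2\cup V_3\cup V_4$ with $|V_1|=|V_2|=am$, $|V_3|=am$, and $|V_4|=bm+O_{a,b}(1)$. Give an edge colour $1$ if it meets $V_1$. Otherwise give it colour $2$ if it meets $V_2$. Give all remaining edges colour $3$; these are the edges inside $V_3\cup V_4$.
\begin{itemize}
\item Colours $1$ and $2$ are covered by $V_1$ and $V_2$ respectively, each of size $am$, so each gives at most $m$ copies by vertex-cover counting.
\item Colour $3$ is supported on $V_3\cup V_4$, which has $(a+b)m+O(1)$ vertices, so it also gives at most $m+O(1)$ copies by vertex counting.
\end{itemize}
Hence $\nu_H(\chi)\le n/(3a+b)+O_{a,b}(1)$. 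This construction is valid for all $b\ge a$; only in the range $b\le3a$ is it the better of the two bounds.

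For part (1), we use a cyclic three-part colouring in which each colour lives on two of the three parts. Take an equipartition $V_1\cup V_2\cup V_3$ with parts of size $n/3+O(1)$. Colour the edges between $V_j$ and $V_k$ with the third index $i\notin\{j,k\}$, and colour the edges inside $V_1,V_2,V_3$ with colours $2,3,1$ respectively.
\begin{itemize}
\item The colour-$2$ graph is then $K(V_1,V_3)$ together with the clique on $V_1$, so it is supported on $V_1\cup V_3$.
\item Similarly, colour $3$ is supported on $V_1\cup V_2$, and colour $1$ is supported on $V_2\cup V_3$.
\end{itemize}
Each support has $2n/3+O(1)$ vertices, so vertex counting gives $\nu_H(\chi)\le 2n/(3(a+b))+O_{a,b}(1)$.

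The only step requiring care is checking that the colour assignment in part (1) really confines each colour to two parts. This needs the internal colour of $V_i$ to be one of the two cross colours incident to $V_i$, with the choices arranged cyclically so that no colour receives a third part. Once the assignment is checked, both bounds reduce to counting vertices or cover vertices, with floors absorbed into the $O_{a,b}(1)$ terms. The comparison $2/(3(a+b))\le 1/(3a+b)\iff b\ge3a$ explains why each construction is used in its respective range.
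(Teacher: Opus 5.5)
Your proposal is correct and is essentially the paper's proof: for $b\ge3a$ you use the same cyclic equipartition, up to relabelling, in which each colour lives on two parts. For $b\le3a$ you use the same colouring by the first small part an edge meets, with two parts of size about $an/(3a+b)$ and one of size about $(a+b)n/(3a+b)$ (your split of the large part into $V_3\cup V_4$ is cosmetic, since colour $3$ is constant there), and you bound each colour by vertex counting or by the vertex-cover number $a$ exactly as the paper does.
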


\begin{proof}
If \(b\ge3a\), partition \(V(K_n)\) into three parts of size
\(n/3+O(1)\). Colour the edges inside \(V_i\) and between \(V_i,V_{i+1}\)
with colour \(i\), with indices modulo \(3\). Each colour is incident with
at most \(2n/3+O(1)\) vertices, and hence supports at most
\(2n/[3(a+b)]+O_{a,b}(1)\) disjoint copies of \(H\).

Now assume \(b\le3a\). Take
\(|V_2|=|V_3|=\lfloor an/(3a+b)\rfloor\), and let \(V_1\) contain the
remaining vertices. Colour all edges inside \(V_3\) and between \(V_3\) and
\(V_1\cup V_2\) with colour \(3\); colour all edges inside \(V_2\) and
between \(V_2,V_1\) with colour \(2\); and colour all edges inside \(V_1\)
with colour \(1\).

In the colour-\(i\) graph, for \(i\in\{2,3\}\), the vertices outside
\(V_i\) form an independent set. Hence the vertices of any colour-\(i\)
copy of \(H\) lying in \(V_i\) form a vertex cover of \(H\), and therefore
there are at least \(a\) of them. Such a tiling has at most
\(|V_i|/a\le n/(3a+b)+O(1)\) copies. Colour \(1\) occurs only inside
\(V_1\), so a colour-\(1\) tiling has at most
\(|V_1|/(a+b)\le n/(3a+b)+O(1)\) copies.
\end{proof}

\begin{figure}[htbp]
\centering
\begin{tikzpicture}[scale=0.75]

\definecolor{EdgeLR}{RGB}{235,140,135}
\definecolor{EdgeTR}{RGB}{140,165,230}
\definecolor{EdgeTL}{RGB}{140,205,165}

\begin{scope}[xshift=-3.36cm]
 \coordinate (T2) at (0,1.76);
 \coordinate (L2) at (-1.6,-0.8);
 \coordinate (R2) at ( 1.6,-0.8);

 \tikzset{
  partBig/.style={
   circle, draw=black, line width=1.2pt,
   minimum size=24mm, inner sep=0pt, font=\small
  },
  partSmall/.style={
   circle, draw=black, line width=1.2pt,
   minimum size=12.8mm, inner sep=0pt, font=\small
  }
 }

 \draw[line width=12.8pt, line cap=round, color=EdgeTL] (R2) -- (L2);

 \foreach \i in {0,...,29} {
  \pgfmathsetmacro{\t}{\i/30}
  \pgfmathsetmacro{\tt}{(\i+1)/30}
  \pgfmathsetmacro{\w}{35.2 - (35.2-9.6)*\i/29}
  \draw[line cap=round, color=EdgeTL, line width=\w pt]
   ($(T2)!\t!(R2)$) -- ($(T2)!\tt!(R2)$);
 }

 \foreach \i in {0,...,29} {
  \pgfmathsetmacro{\t}{\i/30}
  \pgfmathsetmacro{\tt}{(\i+1)/30}
  \pgfmathsetmacro{\w}{35.2 - (35.2-9.6)*\i/29}
  \draw[line cap=round, color=EdgeTR, line width=\w pt]
   ($(T2)!\t!(L2)$) -- ($(T2)!\tt!(L2)$);
 }

 \node[partBig,  fill=EdgeLR] at (T2) {$V_1$};
 \node[partSmall, fill=EdgeTR] at (L2) {$V_2$};
 \node[partSmall, fill=EdgeTL] at (R2) {$V_3$};

 \node[font=\small] at (0,-1.96) {$b\le 3a$};
\end{scope}

\begin{scope}[xshift=3.36cm]
 \coordinate (T1) at (0,1.76);
 \coordinate (L1) at (-1.6,-0.8);
 \coordinate (R1) at ( 1.6,-0.8);

 \tikzset{
  part/.style={
   circle, draw=black, line width=1.2pt,
   minimum size=19.2mm, inner sep=0pt, font=\small
  }
 }

 \draw[line width=22.4pt, line cap=round, color=EdgeLR] (T1) -- (L1);
 \draw[line width=22.4pt, line cap=round, color=EdgeTR] (L1) -- (R1);
 \draw[line width=22.4pt, line cap=round, color=EdgeTL] (R1) -- (T1);

 \node[part, fill=EdgeLR] at (T1) {$V_1$};
 \node[part, fill=EdgeTR] at (L1) {$V_2$};
 \node[part, fill=EdgeTL] at (R1) {$V_3$};

 \node[font=\small] at (0,-1.96) {$b\ge 3a$};
\end{scope}

\end{tikzpicture}
\caption{\footnotesize Extremal \(3\)-colourings for Lemma~\ref{lem:upper3colour}.}
\label{fig:3strip}
\end{figure}

\begin{lemma}\label{lem:Kab-bipartite-packing}
Let \(b\ge a\ge1\) and \(N_1\ge N_2\). The complete bipartite graph
\(K_{N_1,N_2}\) contains a \(K_{a,b}\)-tiling of size at least
\[
\min\left\{
\frac{N_1+N_2}{a+b},
\frac{N_2}{a}
\right\}
-O_{a,b}(1).
\]
\end{lemma}

\begin{proof}
If \(a=b\), the assertion follows by greedily taking
\(\lfloor N_2/a\rfloor\) copies. Assume \(b>a\). If
\(aN_1>bN_2\), orient every copy with \(b\) vertices in the first class
and \(a\) in the second; this gives \(\lfloor N_2/a\rfloor\) copies.

Suppose instead that \(aN_1\le bN_2\). Set
\[
x:=\frac{bN_1-aN_2}{b^2-a^2},
\qquad
y:=\frac{bN_2-aN_1}{b^2-a^2}.
\]
Both quantities are non-negative. Taking \(\lfloor x\rfloor\) copies with
\(b\) vertices in the first class and \(\lfloor y\rfloor\) copies with
the opposite orientation respects both vertex capacities and gives
\[
x+y-O(1)=\frac{N_1+N_2}{a+b}-O(1)
\]
copies.
\end{proof}

\begin{lemma}\label{lem:lower3colour}
Let \(b\ge a\ge1\), and let \(\chi\) be a \(3\)-strip \(3\)-colouring of
\(K_n\). Then
\[
\nu_{K_{a,b}}(\chi)\ge
\begin{cases}
\displaystyle
\frac{1}{3a+b}n-O_{a,b}(1),
& b\le3a,\\[2ex]
\displaystyle
\frac{2}{3(a+b)}n-O_{a,b}(1),
& b\ge3a.
\end{cases}
\]
\end{lemma}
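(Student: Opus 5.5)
The plan is to reduce to a finite case analysis over templates and, in each case, exhibit explicit monochromatic tilings built from cliques and complete bipartite graphs. Let $U_1,\dots,U_q$ ($q\le3$) be the strip parts with $|U_i|=x_in$, ordered so that $x_1\ge x_2\ge x_3$ (set $x_j=0$ for missing parts). Let $\lambda(i,j)$ be the template colours.

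\textbf{Packing tools.} For each colour $\ell$, the colour-$\ell$ graph contains a clique on every $U_i$ with $\lambda(i,i)=\ell$, and a complete bipartite graph between $U_i,U_j$ whenever $\lambda(i,j)=\ell$. This gives three lower bounds for $\nu_{K_{a,b}}$, each up to $O_{a,b}(1)$:
\begin{enumerate}
\item[(a)] $x_in/(a+b)$ from a clique part.
\item[(b)] $\min\{(x_i+x_j)n/(a+b),\ \min(x_i,x_j)n/a\}$ from a complete bipartite pair, by Lemma~\ref{lem:Kab-bipartite-packing}. More generally, several parts joined completely to a common set give $K_{N_1,N_2}$ with $N_2$ the smaller side.
\item[(c)] $\min\{(x_i+x_j)n/(a+b),\ x_in/a\}$ when $\lambda(i,i)=\lambda(i,j)=\ell$.
\end{enumerate}
For (c), place the $a$-side of each copy inside the clique $U_i$ and distribute the $b$-side between $U_j$ and $U_i$. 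This is a two-variable fractional packing, rounded exactly as in Lemma~\ref{lem:Kab-bipartite-packing}. It suffices to show that the maximum of the applicable bounds is at least the target $\beta:=1/(3a+b)$ when $b\le3a$, respectively $\beta:=2/(3(a+b))$ when $b\ge3a$.

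\textbf{Case analysis.} The cases $q\le2$ are quick. For $q=1$, (a) gives $1/(a+b)\ge\beta$. For $q=2$, (a) gives $x_1/(a+b)\ge 1/(2(a+b))$, and this already exceeds both targets unless $b$ is close to $a$. In that remaining range, use (b) or (c) on the cross pair, whose colour either equals an internal colour or yields $K_{x_1n,x_2n}$.

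For $q=3$, let colour $1:=\lambda(1,1)$. If $\lambda(1,j)=1$ for some $j$, then (c) gives
\[
\min\Bigl\{\frac{x_1+x_j}{a+b},\frac{x_1}{a}\Bigr\},
\]
and $x_1\ge1/3$ makes this exceed both targets (note $1/(3a)\ge\beta$). Otherwise $\lambda(1,2),\lambda(1,3)\in\{2,3\}$.

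If these two colours coincide, say both equal $2$, then colour $2$ contains $K_{x_1n,(x_2+x_3)n}$. Lemma~\ref{lem:Kab-bipartite-packing} then gives
\[
\min\Bigl\{\frac1{a+b},\ \frac{\min(x_1,x_2+x_3)}{a}\Bigr\},
\]
and this fails to reach $\beta$ only when $x_2+x_3<a\beta$. In that subcase $x_1>1-a\beta$, and (a) for $U_1$ gives $(1-a\beta)/(a+b)$, which is at least $\beta$ exactly because $\beta\le 1/(2a+b)$. Both targets satisfy this, since $b\le 3a$ gives $1/(3a+b)\le 1/(2a+b)$ and $b\ge 3a$ gives $2/(3(a+b))\le 1/(2a+b)$.

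The remaining subcase is $\lambda(1,2)=2$ and $\lambda(1,3)=3$ (up to swapping). Then colour $2$ contains $U_1$ joined to $U_2$, and colour $3$ contains $U_1$ joined to $U_3$. The colours of $U_2$, $U_3$ and the pair $(2,3)$ decide whether these bipartite graphs extend to (c)-type structures. For example, if $\lambda(2,2)=2$, colour $2$ contains a clique on $U_2$ completely joined to $U_1$, giving
\[
\min\Bigl\{\frac{x_1+x_2}{a+b},\frac{x_2}{a}\Bigr\}.
\]
Each of the at most $3^3$ sub-templates yields a small system of linear minimax inequalities in $(x_1,x_2,x_3)\in\Delta_3$. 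I would solve each by exhibiting a convex combination of the available bounds that is at least $\beta$.

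\textbf{Main obstacle.} I expect the hardest part to be the two extremal configurations of Lemma~\ref{lem:upper3colour}. The first is the cyclic template, where each colour is one clique plus one cross pair, whose optimum is $2/(3(a+b))$. The second is the nested one, $\lambda(1,1)=1$, $U_2$ and $U_3$ cliques of colours $2$ and $3$ joined to $U_1$, and $\lambda(2,3)=3$.

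For the cyclic case, summing the three (c)-bounds $\min\{(x_i+x_{i+1})/(a+b),\,x_i/a\}$ and comparing with $2/(3(a+b))$ uses $b\ge3a$ precisely at the point where the second term can be binding. For the nested case, the balance $x_1/(a+b)=x_2/a=x_3/a$ forces the value $1/(3a+b)$. Here I would first try the weighted average with weights proportional to $(a+b,a,a)$ of the bounds (a) for $U_1$ and (c) for the colours $2$ and $3$.
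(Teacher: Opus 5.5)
\textbf{There is a genuine gap: the decisive case is only planned, and two of the steps you carry out are false as written.}

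\emph{The $q=2$ step.} Bound (a) on the larger part gives only $x_1/(a+b)\ge 1/(2(a+b))$. This is below $2/(3(a+b))$ always, and below $1/(3a+b)$ whenever $b>a$. So it suffices only when $b=a$, which is the opposite of your claim.

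\emph{The $q=3$ step with $\lambda(1,3)=1$.} The bound $\min\{(x_1+x_3)/(a+b),x_1/a\}$ need not reach $\beta$, because $x_1+x_3=1-x_2$ can be close to $1/2$. For example, $(a,b)=(1,3)$ and $x=(9/20,9/20,1/10)$ give $(x_1+x_3)/4=11/80<1/6=\beta$. Your argument is fine only for $j=2$.

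\emph{The main subcase.} The case $\lambda(1,2)=2$, $\lambda(1,3)=3$ contains both extremal templates, and you only describe it (``I would solve each''). So the lemma is not actually proved.

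\textbf{The missing observation is that the colours play no role.} In every strip colouring, the clique on the largest part $U_1$ is monochromatic, and so is the complete bipartite graph between $U_1$ and $U_2$. These two structures alone give the bound, and this is exactly the paper's proof.
\begin{itemize}
\item If $x_1\ge(a+b)\beta$, the clique gives $\beta n-O(1)$ copies.
\item Otherwise, $x_1+2x_2\ge1$ gives $x_2>(1-(a+b)\beta)/2\ge a\beta$, using $(3a+b)\beta\le1$.
\item In that case, $x_1+x_2=1-x_3\ge2/3$ also gives $(x_1+x_2)/(a+b)\ge2/(3(a+b))\ge\beta$.
\item Lemma~\ref{lem:Kab-bipartite-packing} applied to $(U_1,U_2)$ then yields $\beta n-O(1)$ copies.
\end{itemize}
The two inequalities $(3a+b)\beta\le1$ and $\beta\le 2/(3(a+b))$ hold for $\beta=1/(3a+b)$ when $b\le3a$. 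They also hold for $\beta=2/(3(a+b))$ when $b\ge3a$.

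The cyclic and nested templates you flag as the main obstacle therefore need no special treatment. Your subcase $\lambda(1,2)=\lambda(1,3)$ already used this same clique-versus-bipartite dichotomy, just with a larger bipartite graph than necessary.
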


\begin{proof}
Pad the strip partition with empty parts if necessary, and write
\(V(K_n)=V_1\cup V_2\cup V_3\), where
\(n_1:=|V_1|\ge n_2:=|V_2|\ge n_3:=|V_3|\). Each graph induced by one
part, and each bipartite graph between two parts, is monochromatic.

Suppose first that \(b\ge3a\). If \(n_1\ge2n/3\), the monochromatic clique
on \(V_1\) contains \(2n/[3(a+b)]-O(1)\) disjoint copies of
\(K_{a,b}\). Otherwise \(n_1<2n/3\). Since
\(n=n_1+n_2+n_3\le n_1+2n_2\), we have \(n_2>n/6\). By
Lemma~\ref{lem:Kab-bipartite-packing}, the monochromatic graph between
\(V_1,V_2\) contains at least
\[
\min\left\{
\frac{n_1+n_2}{a+b},
\frac{n_2}{a}
\right\}
-O(1)
\]
copies. Here \(n_1+n_2\ge2n/3\), and
\(n_2/a>n/(6a)\ge2n/[3(a+b)]\), using \(b\ge3a\). This proves the first
case.

Now suppose that \(b\le3a\). If
\(n_1\ge(a+b)n/(3a+b)\), the clique on \(V_1\) gives
\(n/(3a+b)-O(1)\) copies. Otherwise the inequality
\(n\le n_1+2n_2\) implies \(n_2\ge an/(3a+b)\). Again applying
Lemma~\ref{lem:Kab-bipartite-packing}, the graph between \(V_1,V_2\)
contains at least the minimum of
\((n_1+n_2)/(a+b)\) and \(n_2/a\), up to \(O(1)\).
The second term is at least \(n/(3a+b)\), while
\(n_1+n_2\ge2n/3\) and \(b\le3a\) imply
\[
\frac{n_1+n_2}{a+b}
\ge
\frac{2n}{3(a+b)}
\ge
\frac{n}{3a+b}.
\]
This proves the second case.
\end{proof}

\begin{proof}[Proof of Theorem~\ref{thm:biparr=3}]
Lemma~\ref{lem:upper3colour} gives the matching upper bounds for
$\beta_{3,H}$. Since $H\subseteq K_{a,b}$, Lemma~\ref{lem:lower3colour}
gives the reverse bounds for every strip template. Hence $\beta_{3,H}$ has
the displayed value, and the result follows from
Theorem~\ref{thm:finite-optimization}.
\end{proof}

\subsubsection{A non-Hall bipartite example}
\label{subsec:D23-main}

The Hall-type condition in Theorem~\ref{thm:biparr=3} leads to a uniform
formula, but it is not necessary for an explicit evaluation. Let \(D_{2,3}\)
be the double star whose two centres are adjacent, with two leaves attached
to one centre and three leaves attached to the other. Its bipartition classes
have sizes \(3\) and \(4\), but its maximum matching has size \(2\).
Consequently, \(D_{2,3}\) does not satisfy the Hall-type condition.

Nevertheless, a direct analysis of its three-part strip templates determines
$\beta_{3,D_{2,3}}$ exactly.

\begin{theorem}\label{thm:D23}
\[
Rt_3(D_{2,3};K_n)=\left(\frac{7}{78}+o(1)\right)n.
\]
\end{theorem}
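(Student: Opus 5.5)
By Theorem~\ref{thm:finite-optimization} it suffices to show that $\beta_{3,D_{2,3}}=7/78$. I would prove the two inequalities separately, both inside the finite optimization over templates $\lambda$ with $q\le3$ parts and proportions $x\in\Delta_q$.

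\emph{Upper bound.} I would exhibit one explicit $3$-part template and proportions $x$ with $\max_\ell\tau_\ell(\lambda,x)\le 7/78$; then \eqref{eq:integral-strip-tiling} applied to integral blow-ups gives $Rt_3(D_{2,3};K_n)\le(7/78+o(1))n$. The denominator $78=6\cdot13$ and the numerator $7=v(D_{2,3})$ suggest parts with proportions in thirteenths. As a starting point I would modify the $b\le3a$ construction of Lemma~\ref{lem:upper3colour}: a large part $V_1$ coloured $1$ internally, and two smaller parts $V_2,V_3$ whose colours dominate the cross pairs. Because the vertex-cover number of $D_{2,3}$ is $2$ rather than $3$, the cheapest colour-$i$ copies may use only the two centres inside $V_i$ and all five leaves outside. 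This breaks the Hall-type computation. So for each colour I would write the pattern-packing LP explicitly. For $D_{2,3}$ a pattern is determined by where the two centres go and by how the two leaf groups are distributed among the parts allowed by $\lambda$. I would then tune $x$ so that all three LP optima equal $7/78$, and certify each optimum by an explicit dual vector $z$ with $x\cdot z=7/78$.

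\emph{Lower bound.} For every template $\lambda$ on at most three parts and every $x\in\Delta_3$, I need $\max_\ell\tau_\ell(\lambda,x)\ge 7/78$. Following Lemma~\ref{lem:lower3colour}, I would order $x_1\ge x_2\ge x_3$ and exhibit good primal packings in a few colour classes. The main tools are:
\begin{itemize}
\item the monochromatic clique on $V_1$, which gives $x_1/7$;
\item bipartite graphs between two parts, packed with both orientations of the bipartition $(3,4)$, in analogy with Lemma~\ref{lem:Kab-bipartite-packing};
\item mixed patterns that place the centres in one part and the leaves in a part joined to it in the same colour, which exploit vertex covers of size $2$.
\end{itemize}
Up to symmetry, the case analysis is over the colour pattern on the three cross pairs (all equal, two equal, or all distinct) together with the internal colours. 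In each case I would reduce to a two- or three-variable inequality in $x$, and prove it by taking an appropriate convex combination of the available packings.

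\emph{Main obstacle.} I expect the lower-bound case in which the three cross pairs receive three distinct colours to be the hardest. There the optimum is attained only at the extremal proportions, so the relevant LP optima must be balanced exactly against each other. My first attempt would be to guess the extremal $x$ from the upper-bound construction. I would then write, for each colour, the dual certificate at that point, and show that on the region of $\Delta_3$ where each candidate primal pattern is optimal, the maximum over colours is minimized exactly there. This amounts to a small piecewise-linear minimax that can be checked by hand.
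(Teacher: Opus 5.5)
Your architecture matches the paper's: reduce to $\beta_{3,D_{2,3}}=7/78$ via Theorem~\ref{thm:finite-optimization}, exhibit a template for the upper bound, and do a strip case analysis for the lower bound. However, both halves stop where the actual content begins, and your one concrete prediction about where the difficulty lies is wrong.

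\textbf{Upper bound.} You never produce the template, and tuning the proportions of the Hall-type pattern cannot work. In the $b\le3a$ template of Lemma~\ref{lem:upper3colour}, colours $2$ and $3$ are cliques joined to independent sets. So a copy needs only its two centres in $V_2$ or $V_3$, and the best proportions, $(7,2,2)/11$, give only $1/11>7/78$. The missing idea is to make the dominating colour bipartite by giving the dominating part a \emph{different} interior colour, and to let another colour absorb that interior as a separate clique component. The paper's template (Lemma~\ref{lem:D23upper}) has parts of proportions $8/78$, $49/78$, $21/78$. Green is used only inside the $49/78$ part. Blue is used between the $21/78$ part and everything else. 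Red is used inside the $8/78$ part, between it and the $49/78$ part, and inside the $21/78$ part. Blue is bipartite, so each blue copy uses at least $3$ vertices of the $21/78$ part. Red consists of a graph all of whose edges meet the $8/78$ part (cover number $2$), plus a disjoint clique. Writing $v_1,v_2,v_3$ for the part sizes, one needs $v_2/7=v_3/3=v_1/2+v_3/7=\alpha$, i.e.\ $\alpha(7+3+8/7)=1$, which is where $78$ comes from.

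\textbf{Lower bound.} Brute-force enumeration of templates is possible in principle, but you are missing the reduction that makes it short. Moreover, the all-distinct cross pattern you single out as the main obstacle is not tight at all.

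Suppose every colour is below $\alpha=7/78$. The clique on the largest part gives $x<7\alpha$, and the bipartite pair of the two largest parts gives $y<3\alpha$ (as $(x+y)/7\ge 2/21>\alpha$). Hence $x>1-6\alpha$ and $z>1-10\alpha$. These bounds force the interior colour $c_0$ of the largest part to appear nowhere else in the template. Each of the following beats $\alpha$ by a positive margin: a second $c_0$-clique; a $c_0$ split graph, worth at least $\min\{x/2,(x+z)/7\}$; or a $c_0$-clique plus the disjoint opposite pair, worth at least $x/7+\min\{(y+z)/7,z/3\}$. So three distinct cross colours cannot occur. Reading off the extremal point from the construction for that case is impossible anyway, since the construction has only two colours on its cross pairs.

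Next, the two cross pairs at the largest part $A$ must differ. Otherwise $A$ against the other two parts is a monochromatic $K_{N_1,N_2}$ with both sides above $3\alpha$. This leaves one cross pattern ($AU,UW$ blue, $AW$ red) and four choices of red/blue interiors for $U,W$. These cases require the exact red and blue pattern-LP values of Lemma~\ref{lem:D23-threepart-LP}. The genuinely tight case is both interiors red, i.e.\ the extremal template itself. There the contradiction $1=x+u+v<7\alpha+u+(2\alpha-2u/7)<9\alpha+15\alpha/7=1$ has zero slack.

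Without the construction and this pruning, the proposal only restates that the finite optimization has value $7/78$.
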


\begin{figure}[htbp]
\centering
\begin{tikzpicture}[scale=0.75]

\definecolor{EdgeOne}{RGB}{235,140,135}
\definecolor{EdgeTwo}{RGB}{140,165,230}
\definecolor{EdgeThree}{RGB}{140,205,165}

\begin{scope}[xshift=0cm]
 \coordinate (T) at (0,2.35);
 \coordinate (L) at (-2.25,-1.0);
 \coordinate (R) at ( 2.25,-1.0);

 \tikzset{
  partBig/.style={
   circle, draw=black, line width=1.2pt,
   minimum size=33mm, inner sep=0pt, font=\small, align=center
  },
  partMid/.style={
   circle, draw=black, line width=1.2pt,
   minimum size=24mm, inner sep=0pt, font=\small, align=center
  },
  partSmall/.style={
   circle, draw=black, line width=1.2pt,
   minimum size=17mm, inner sep=0pt, font=\small, align=center
  }
 }

 \foreach \i in {0,...,29} {
  \pgfmathsetmacro{\t}{\i/30}
  \pgfmathsetmacro{\tt}{(\i+1)/30}
  \pgfmathsetmacro{\w}{44 - (44-12)*\i/29}
  \draw[line cap=round, color=EdgeOne, line width=\w pt]
   ($(T)!\t!(L)$) -- ($(T)!\tt!(L)$);
 }

 \foreach \i in {0,...,29} {
  \pgfmathsetmacro{\t}{\i/30}
  \pgfmathsetmacro{\tt}{(\i+1)/30}
  \pgfmathsetmacro{\w}{44 - (44-26)*\i/29}
  \draw[line cap=round, color=EdgeTwo, line width=\w pt]
   ($(T)!\t!(R)$) -- ($(T)!\tt!(R)$);
 }

 \foreach \i in {0,...,29} {
  \pgfmathsetmacro{\t}{\i/30}
  \pgfmathsetmacro{\tt}{(\i+1)/30}
  \pgfmathsetmacro{\w}{26 - (26-12)*\i/29}
  \draw[line cap=round, color=EdgeTwo, line width=\w pt]
   ($(R)!\t!(L)$) -- ($(R)!\tt!(L)$);
 }

 \node[partSmall, fill=EdgeOne] at (L)
  {$V_1$\\[-1mm]\scriptsize \(4n/39\)};

 \node[partBig, fill=EdgeThree] at (T)
  {$V_2$\\[-1mm]\scriptsize \(49n/78\)};

 \node[partMid, fill=EdgeOne] at (R)
  {$V_3$\\[-1mm]\scriptsize \(7n/26\)};

\end{scope}

\end{tikzpicture}
\caption{\footnotesize The \(D_{2,3}\) construction.}
\label{fig:D23upper}
\end{figure}

For comparison, the complete bipartite graph \(K_{3,4}\), which has the same
bipartition sizes, has three-colour constant \(1/13\) by
Theorem~\ref{thm:biparr=3}. Thus the tiling constant is not determined by the
bipartition sizes alone once the Hall-type condition is dropped. The
construction and strip calculation in Appendix~\ref{app:D23} show that
$\beta_{3,D_{2,3}}=7/78$; Theorem~\ref{thm:finite-optimization} then
gives the displayed asymptotic formula.

\subsubsection{Complete bipartite tilings with four and five colours}

The four-colour case is the first in which the optimal value has more than
two regimes. It also illustrates how the finite strip-template optimization
can be solved without enumerating all templates.

\begin{theorem}\label{thm:Kab-four}
Let \(b\ge a\ge1\). Then
\[
Rt_4(K_{a,b};K_n)=
\begin{cases}
\displaystyle
\left(\frac{1}{4a+b}+o(1)\right)n,
& b\le3a,\\[2ex]
\displaystyle
\left(\frac{2}{5a+3b}+o(1)\right)n,
& 3a\le b\le5a,\\[2ex]
\displaystyle
\left(\frac{3}{5(a+b)}+o(1)\right)n,
& b\ge5a.
\end{cases}
\]
\end{theorem}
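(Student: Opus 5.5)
By Theorem~\ref{thm:finite-optimization} it suffices to show that $\beta_{4,K_{a,b}}$ equals the displayed constant. As in the three-colour case (Theorem~\ref{thm:biparr=3}), the proof splits into an explicit $4$-strip construction for the upper bound and a lower bound valid for every $4$-strip $4$-colouring. For the lower bound we pad to exactly four parts $V_1,\dots,V_4$ with sizes $n_1\ge n_2\ge n_3\ge n_4$. Each part induces a monochromatic clique and each pair of parts spans a monochromatic complete bipartite graph. We then use only three sources of tilings:
\begin{itemize}
\item a clique on $V_i$, which gives $n_i/(a+b)$ copies;
\item a single bipartite pair, which by Lemma~\ref{lem:Kab-bipartite-packing} gives $\min\{(n_i+n_j)/(a+b),\,n_j/a\}$ copies;
\item a monochromatic clique or bipartite structure in one colour spanning several parts (for instance a colour-$\ell$ star of pairs centred at $V_i$), in which every copy meets a cover part in at least $a$ vertices.
\end{itemize}

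For the upper bounds I would generalise Lemma~\ref{lem:upper3colour}.
\begin{itemize}
\item \textbf{Regime $b\le 3a$.} Take three small parts of size $an/(4a+b)$ and one large part $V_1$. Part $V_i$ ($i=2,3,4$) owns colour $i$ on its inside and on all its edges to earlier parts, and $V_1$ is coloured $1$ inside. Every colour-$i$ copy then has a vertex cover inside $V_i$, so it uses at least $a$ vertices there, and colour $1$ gives $|V_1|/(a+b)$ copies. Balancing yields $n/(4a+b)$.
\item \textbf{Regime $b\ge 5a$.} Use five equal parts indexed so that each colour class lives on three fifths of the vertices, which gives $3n/(5(a+b))$. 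This is the analogue of the cyclic three-part construction; concretely I would take $4$ parts, allow unequal sizes, and search a cyclic-type template in which each colour touches a $3/5$ fraction.
\item \textbf{Regime $3a\le b\le 5a$.} Interpolate by mixing the two mechanisms: two "cover" parts of one size $x$ and two parts of another size $y$. Solve the linear balancing equations so that the cover bound $|V_i|/a$ and the incidence bound (vertices touched)$/(a+b)$ coincide at $2n/(5a+3b)$.
\end{itemize}
Checking that the three formulas agree at $b=3a$ and $b=5a$ guides the choice of template.

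For the lower bound I would fix an arbitrary template $\lambda$ and sizes $x\in\Delta_4$, and prove $\max_\ell\tau_\ell(\lambda,x)\ge$ the target via weak LP duality. Equivalently, exhibit for each $\ell$ a feasible packing. The key reduction is the following case analysis.
\begin{itemize}
\item If $n_1$ is large, the clique on $V_1$ suffices.
\item Otherwise, the pairs $V_1V_2$, $V_1V_3$, $V_2V_3$ and the cliques force a colour class touching many vertices, with its vertex cover within few parts. For that colour, Lemma~\ref{lem:Kab-bipartite-packing}, extended to a colour whose graph is a union of complete bipartite and clique pieces, gives $\min\{(\text{touched})/(a+b),\,(\text{cover})/a\}$.
\end{itemize}
Since four colours on six cross pairs plus four cliques force a repeated colour among pairs, pigeonhole says some colour owns at least two cross pairs. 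If these pairs share a part (a star), they form a complete bipartite graph $K_{n_i,\,n_j+n_k}$ and Lemma~\ref{lem:Kab-bipartite-packing} applies directly. If they are disjoint, we sum over the two pairs.

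The main obstacle is this lower bound in the middle regime $3a\le b\le5a$, where several templates are simultaneously near-extremal and a clean pigeonhole may not suffice. There I would set up the fractional LP for the few candidate template shapes (up to symmetry) and verify the bound by writing explicit dual certificates. Concretely, I would find nonnegative multipliers $\mu_\ell$ with $\sum_\ell\mu_\ell\tau_\ell(\lambda,x)\ge\sum_\ell\mu_\ell\cdot$ target, which is linear in $x$ within each regime, and check them on the vertices of the region of $\Delta_4$ where that template is considered.
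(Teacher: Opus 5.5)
Your overall route is the paper's: reduce to $4$-strip templates via Theorem~\ref{thm:finite-optimization}, give explicit templates for the upper bounds, and for the lower bound pigeonhole a repeated colour among the six cross-pairs. Your $b\le3a$ construction is exactly the paper's $\Lambda_1$. The upper bound in the other two regimes is not established, however.

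\textbf{Middle regime.} The shape you propose there (two cover parts of size $x$, two parts of size $y$) provably cannot reach $\lambda=2/(5a+3b)$. Write $A=(a+b)\lambda$ and $B=a\lambda$, and let the sizes be $s,s,u,u$ with $s\ge u$ and $2s+2u=1$.
\begin{enumerate}
\item The pair of the two $s$-parts has ratio $1<b/a$, so by Lemma~\ref{lem:Kab-bipartite-packing} it already gives $2sn/(a+b)$ copies. Hence $s\le A/2$.
\item Consequently $u\ge(3a+b)/(2(5a+3b))>B$, since $b>a$.
\item All size ratios are at most $2(a+b)/(3a+b)<b/a$.
\item Every three parts have total at least $1-A/2>A$.
\end{enumerate}
So whichever colour repeats among the cross-pairs gives one of two things. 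Either it is a complete bipartite graph with both sides above $B$ and total above $A$, or it is two disjoint pairs, each tileable almost completely. In both cases you get a tiling of size $(\lambda+c)n$ with $c>0$.

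The extremal template is a $3+1$ shape instead. Take three parts of size $(a+b)/(5a+3b)$, and use colour $i\in\{1,2,3\}$ inside $V_i$ and on $V_iV_{i+1}$ (indices mod $3$). Add a part of size $2a/(5a+3b)$ carrying colour $4$ on every edge meeting it.

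\textbf{Regime $b\ge5a$.} Here you only announce a search. A working template has sizes $(2,1,1,1)/5$, with:
\begin{itemize}
\item colour $1$ inside $V_1$ and on $V_1V_2$;
\item colours $2$ and $3$ on $V_1V_3$ and $V_1V_4$;
\item colour $4$ on everything inside $V_2\cup V_3\cup V_4$.
\end{itemize}
Your five-equal-parts idea is this template with $V_1$ split in two, but it has to be exhibited. The extremal part sizes also jump at $b=3a$ and $b=5a$, so agreement of the formulas at the breakpoints does not guide the construction.

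\textbf{Lower bound.} The bare pigeonhole does not close. A star $K_{n_i,n_j+n_k}$ is worthless if $V_i$ is tiny. Two disjoint same-coloured pairs do not tile all their vertices if one of them has size ratio at least $b/a$. What is missing is a preliminary size reduction, which the paper packages as one forcing lemma with hypotheses $A+3B\le1$, $1-\tfrac32A\ge B$ and $A\le\tfrac35$. Order the normalized sizes $x_1\ge x_2\ge x_3\ge x_4$.
\begin{enumerate}
\item If $x_1\ge(b/a)x_2$, the clique on $V_1$ or the pair $V_1V_2$ works; otherwise $1\le x_1+3x_2<A+3B$, a contradiction.
\item Otherwise $V_1V_2$ works unless $x_1+x_2<A$. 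In that case $x_3\le(x_1+x_2)/2$ gives $x_4>1-\tfrac32A\ge B$.
\item The pair $V_1V_4$ works unless $x_1<(b/a)x_4$, so all ratios are below $b/a$.
\item Finally $2x_1+1=3x_1+x_2+x_3+x_4\le3(x_1+x_2)<3A$, so every three parts have total above $\tfrac32(1-A)\ge A$.
\end{enumerate}
Only now does your pigeonhole finish the proof, and it does so uniformly. The three hypotheses hold for $\lambda=1/(4a+b)$, $2/(5a+3b)$ and $3/(5(a+b))$ in the respective ranges; in the middle regime $1-\tfrac32A=B$ exactly. So the middle regime is no obstacle. Your fallback of dual certificates over all template shapes is unnecessary, and as written it is not carried out.

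Two smaller points on that fallback. To bound $\max_\ell\tau_\ell$ from below you need a feasible packing in a single colour, not in each colour. And weak duality bounds $\tau_\ell$ from above, not below.
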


The upper bounds are given by the following three strip templates:
\[
\Lambda_1=
\begin{pmatrix}
1&2&3&4\\
2&2&3&4\\
3&3&3&4\\
4&4&4&4
\end{pmatrix},
\qquad
\Lambda_2=
\begin{pmatrix}
1&1&3&4\\
1&2&2&4\\
3&2&3&4\\
4&4&4&4
\end{pmatrix},
\qquad
\Lambda_3=
\begin{pmatrix}
1&1&2&3\\
1&4&4&4\\
2&4&4&4\\
3&4&4&4
\end{pmatrix}.
\]
Here the corresponding normalized part-size vectors are
\[
\frac{(a+b,a,a,a)}{4a+b},
\qquad
\frac{(a+b,a+b,a+b,2a)}{5a+3b},
\qquad
\frac{(2,1,1,1)}5,
\]
respectively.

\begin{figure}[htbp]
\centering
\begin{tikzpicture}[
 scale=0.72,
 transform shape,
 line cap=round,
 line join=round
]

\definecolor{KabRed}{RGB}{235,140,135}
\definecolor{KabBlue}{RGB}{140,165,230}
\definecolor{KabGreen}{RGB}{140,205,165}
\definecolor{KabPurple}{RGB}{190,120,230}

\tikzset{
 KabBig/.style={
  circle, draw=black, line width=1.2pt,
  minimum size=28mm, inner sep=0pt,
  font=\small, align=center
 },
 KabMedium/.style={
  circle, draw=black, line width=1.2pt,
  minimum size=21mm, inner sep=0pt,
  font=\small, align=center
 },
 KabSmall/.style={
  circle, draw=black, line width=1.2pt,
  minimum size=14mm, inner sep=0pt,
  font=\small, align=center
 }
}

\def\KabTaper#1#2#3#4#5{%
 \foreach \i in {0,...,19} {
  \pgfmathsetmacro{\KabStart}{\i/20}
  \pgfmathsetmacro{\KabEnd}{(\i+1)/20}
  \pgfmathsetmacro{\KabWidth}{#4-(#4-#5)*\i/19}
  \draw[color=#3,line width=\KabWidth pt]
   ($(#1)!\KabStart!(#2)$) --
   ($(#1)!\KabEnd!(#2)$);
 }
}

\begin{scope}[xshift=-7.1cm]
 \coordinate (A1) at (0,2.15);
 \coordinate (A2) at (-2.25,-0.75);
 \coordinate (A3) at (0,-0.75);
 \coordinate (A4) at (2.25,-0.75);

 \draw[color=KabGreen,line width=11pt] (A2)--(A3);
 \draw[color=KabPurple,line width=11pt] (A3)--(A4);
 \draw[color=KabPurple,line width=11pt]
  (A2) to[bend right=55] (A4);

 \KabTaper{A1}{A2}{KabBlue}{28}{8}
 \KabTaper{A1}{A3}{KabGreen}{30}{8}
 \KabTaper{A1}{A4}{KabPurple}{28}{8}

 \node[KabBig,fill=KabRed] at (A1) {$V_1$};
 \node[KabSmall,fill=KabBlue] at (A2) {$V_2$};
 \node[KabSmall,fill=KabGreen] at (A3) {$V_3$};
 \node[KabSmall,fill=KabPurple] at (A4) {$V_4$};

 \node[align=center] at (0,-2.75)
 {$b\le3a$\\[-1mm]
  \scriptsize $(a+b,a,a,a)/(4a+b)$};
\end{scope}

\begin{scope}[xshift=0cm,yshift=0.25cm]
 \coordinate (B1) at (0,2.05);
 \coordinate (B2) at (-1.75,0);
 \coordinate (B3) at (1.75,0);
 \coordinate (B4) at (0,-1.65);

 \draw[color=KabRed,line width=14pt] (B1)--(B2);
 \draw[color=KabGreen,line width=14pt] (B1)--(B3);
 \draw[color=KabBlue,line width=14pt] (B2)--(B3);

 \KabTaper{B1}{B4}{KabPurple}{20}{8}
 \KabTaper{B2}{B4}{KabPurple}{20}{8}
 \KabTaper{B3}{B4}{KabPurple}{20}{8}

 \node[KabMedium,fill=KabRed] at (B1) {$V_1$};
 \node[KabMedium,fill=KabBlue] at (B2) {$V_2$};
 \node[KabMedium,fill=KabGreen] at (B3) {$V_3$};
 \node[KabSmall,fill=KabPurple] at (B4) {$V_4$};

 \node[align=center] at (0,-3.1)
 {$3a\le b\le5a$\\[-1mm]
  \scriptsize $(a+b,a+b,a+b,2a)/(5a+3b)$};
\end{scope}

\begin{scope}[xshift=7.1cm]
 \coordinate (C1) at (0,2.15);
 \coordinate (C2) at (-2.25,-0.75);
 \coordinate (C3) at (0,-0.75);
 \coordinate (C4) at (2.25,-0.75);

 \draw[color=KabPurple,line width=11pt] (C2)--(C3);
 \draw[color=KabPurple,line width=11pt] (C3)--(C4);
 \draw[color=KabPurple,line width=11pt]
  (C2) to[bend right=55] (C4);

 \KabTaper{C1}{C2}{KabRed}{28}{8}
 \KabTaper{C1}{C3}{KabBlue}{30}{8}
 \KabTaper{C1}{C4}{KabGreen}{28}{8}

 \node[KabBig,fill=KabRed] at (C1) {$V_1$};
 \node[KabSmall,fill=KabPurple] at (C2) {$V_2$};
 \node[KabSmall,fill=KabPurple] at (C3) {$V_3$};
 \node[KabSmall,fill=KabPurple] at (C4) {$V_4$};

 \node[align=center] at (0,-2.75)
 {$b\ge5a$\\[-1mm]
  \scriptsize $(2,1,1,1)/5$};
\end{scope}

\end{tikzpicture}
\caption{\footnotesize Extremal four-colour strip templates for
\(K_{a,b}\)-tilings.}
\label{fig:Kab4-extremal}
\end{figure}

For \(\Lambda_1\), colour \(1\) occurs only inside the first part, while
every edge of colour \(j\in\{2,3,4\}\) meets the \(j\)-th part. For
\(\Lambda_2\), each of colours \(1,2,3\) is supported on two of the three
large parts, while every colour-\(4\) edge meets the fourth part. For
\(\Lambda_3\), every colour is supported on parts of total normalized size
at most \(3/5\). These observations give the three upper bounds.

We now prove the matching lower bounds simultaneously.

\begin{lemma}\label{lem:Kab-four-forcing}
Let \(\lambda>0\), and put
$A:=(a+b)\lambda,$
$B:=a\lambda.$
Suppose that
$A+3B\le1,$
$1-\frac{3A}{2}\ge B,$
and 
$A\le\frac35.$
Then every strip colouring with at most four parts and at most four colours
on its cross-pairs contains a monochromatic \(K_{a,b}\)-tiling of size
$\lambda n-O_{a,b}(1).$
The colours inside the individual parts may be arbitrary.
\end{lemma}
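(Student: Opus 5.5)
Throughout, write \(n_i=|V_i|=x_in\) with \(x_1\ge x_2\ge x_3\ge x_4\ge0\), padding with empty parts if necessary. The plan is to find, in every such strip colouring, one colour whose \emph{cross} pairs (together possibly with the clique on \(V_1\)) already support a fractional \(K_{a,b}\)-packing of value \(\lambda n\). Since each part induces a monochromatic clique and each pair of parts induces a monochromatic complete bipartite graph, everything reduces to packing estimates on the reduced \(K_4\) with coloured edges. The inner colours are only ever used through the clique on \(V_1\), which is why they may be arbitrary.

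\textbf{Step 1 (large part).} If \(x_1\ge A=(a+b)\lambda\), greedily tile the clique on \(V_1\) to get \(x_1n/(a+b)-O(1)\ge\lambda n-O(1)\) copies. From now on assume \(x_1<A\le 3/5\).

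\textbf{Step 2 (single-pair and merged-pair criteria).} Record the criteria we will use, all taken from Lemma~\ref{lem:Kab-bipartite-packing}.
\begin{itemize}
\item[(i)] A single monochromatic pair \((V_i,V_j)\) with \(x_i\ge x_j\) gives \(\lambda n-O(1)\) copies whenever \(x_i+x_j\ge A\) and \(x_j\ge B\).
\item[(ii)] Two same-coloured pairs sharing a part \(V_j\), say \((V_j,V_i)\) and \((V_j,V_k)\), contain the complete bipartite graph between \(V_j\) and \(V_i\cup V_k\). The same lemma then applies with sides \(x_j\) and \(x_i+x_k\). So it suffices that the three sizes sum to at least \(A\) and that the smaller side is at least \(B\).
\item[(iii)] Two same-coloured disjoint pairs tile independently, so their tiling sizes add.
\end{itemize}
A monochromatic triangle or a colour class of three pairs always contains such a star, so (ii) covers it.

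\textbf{Step 3 (pigeonhole over the six cross pairs).} There are six cross pairs and at most four colours. Hence either some colour has at least three pairs, or at least two colours each have two pairs. I will case-split on \(x_4\) and \(x_3\) relative to \(B\).

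If \(x_4\ge B\), every pair satisfies the second condition of (i). Moreover \(x_1<A\) forces \(x_2+x_3+x_4>1-A\ge 2/5\), using \(A\le 3/5\). Any star from (ii) centred anywhere then has total size at least \(x_2+x_3+x_4\) or \(x_1+x_i+x_j\). Combining this with \(A\le 3/5\) and \(A+3B\le1\) should give the first condition. If no colour repeats on a star, the repeated colours form two perfect matchings of \(K_4\), and (iii) sums two pairs covering all of \([n]\), giving at least \((1/(a+b))n\)-type bounds after balancing. Here \(A\le 1\) is what is needed.

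If \(x_4<B\), then \(x_1+x_2+x_3>1-B\). The three pairs among \(V_1,V_2,V_3\), or stars into \(V_4\) of total size \(x_4+\ldots\), must be handled. The hypothesis \(1-\tfrac{3A}{2}\ge B\) is exactly what guarantees that two of \(x_1,x_2,x_3\) sum to at least \(A\) with the smaller at least \(B\), via \(x_2+x_3\ge 1-B-x_1\). This will be checked along the lines of Lemma~\ref{lem:lower3colour}.

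The main obstacle I expect is the bookkeeping of Step 3. I must verify that in every colour pattern on \(K_4\) at least one of (i)--(iii) applies, using only the three stated inequalities. The matching case (iii) with two disjoint pairs of very unequal sizes looks hardest. There I would first try writing the packing value of the union as \(\min\{(x_i+x_j)/(a+b),x_j/a\}+\min\{(x_k+x_l)/(a+b),x_l/a\}\), and show it is at least \(\lambda\) by a short linear-programming inequality. This would exploit that the three perfect matchings of \(K_4\) cannot all be split among colours without creating a star.
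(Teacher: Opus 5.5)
Your toolkit is the paper's: the clique on $V_1$, a single pair via Lemma~\ref{lem:Kab-bipartite-packing}, two adjacent same-coloured pairs merged into one complete bipartite graph, and two disjoint same-coloured pairs. Your case $x_4<B$ is also essentially right. The pair $V_1V_2$ works there, since $\tfrac32(x_1+x_2)>1-B\ge\tfrac32A$.

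\textbf{The gap is in the case $x_4\ge B$.} You assert that every star has total size at least $A$, using only $x_1<A$, $A\le\tfrac35$ and $A+3B\le1$. These give only $x_2+x_3+x_4>1-A$, which is below $A$ whenever $A>\tfrac12$. This is not a marginal case: $A=\tfrac35$ is exactly the regime $\lambda=3/(5(a+b))$ used in Theorem~\ref{thm:Kab-four}.

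Concretely, take $a=1$, $b=5$, $\lambda=\tfrac1{10}$, so that $A=\tfrac35$ and $B=\tfrac1{10}$ satisfy all hypotheses. Take normalized sizes $(0.55,0.15,0.15,0.15)$. Colour $V_1V_2,V_1V_3,V_1V_4$ with three distinct colours and the triangle on $V_2,V_3,V_4$ with the fourth colour. The only repeated colour lives on $0.45n<An$ vertices, so neither (ii) nor (iii) yields anything. Only the single pair $V_1V_2$ gives the tiling, and your sketch never invokes it in this case. Your matching case, which you flag as the main obstacle, is also left unresolved.

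\textbf{The missing step is to apply (i) to $V_1V_2$ before any pigeonholing.} After Step~1 you always have $x_2>(1-A)/3\ge B$. This comes from $A+3B\le1$, not from $1-\tfrac{3A}2\ge B$; it is also what supplies the smaller-side condition in your $x_4<B$ case. So either $V_1V_2$ already wins or $x_1+x_2<A$.

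The latter gives $2x_1+1\le 3(x_1+x_2)<3A$, hence $x_1<\tfrac{3A-1}{2}\le 1-A$. The last inequality holds precisely because $A\le\tfrac35$, and it makes every three parts total more than $\tfrac32(1-A)\ge A$. The same bound gives $x_4>1-\tfrac{3A}{2}\ge B$, so your case $x_4<B$ disappears altogether.

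The disjoint-pair case is then immediate. Every part exceeds $B$, so any pair with size ratio at least $b/a$ wins on its own via $x_j/a\ge\lambda$; the paper checks $V_1V_4$ to get ratio $<b/a$ for all pairs. Otherwise both pairs tile almost perfectly, giving $n/(a+b)\ge\lambda n$ because $A\le1$.

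With this ordering, a single repeated colour among the six cross-pairs suffices, and the argument becomes the paper's proof.
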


\begin{proof}
Pad the strip partition with empty parts if necessary, and let its normalized
part sizes be
$x_1\ge x_2\ge x_3\ge x_4,$
$x_1+x_2+x_3+x_4=1.$

Suppose first that \(x_1\ge(b/a)x_2\). If \(x_1\ge A\), the monochromatic
clique on the first part gives the required tiling. If \(x_2\ge B\), the
complete bipartite graph between the first two parts gives it. Otherwise,
$1\le x_1+3x_2<A+3B\le1,$
a contradiction.

We may therefore assume that \(x_1<(b/a)x_2\). By
Lemma~\ref{lem:Kab-bipartite-packing}, the first two parts give the required
tiling unless
$x_1+x_2<A.$
Since \(x_3\le x_2\le(x_1+x_2)/2\), we have
\[
x_4
=
1-x_1-x_2-x_3
>
1-\frac{3A}{2}
\ge B.
\]
If \(x_1\ge(b/a)x_4\), the first and fourth parts give the required
tiling. Hence we may assume
$x_1<\frac ba x_4.$
Thus every pair of parts has size ratio smaller than \(b/a\).

Moreover,
$2x_1+1
=
3x_1+x_2+x_3+x_4
\le
3(x_1+x_2)
<
3A.$
It follows that every three parts have total normalized size greater than
\[
1-\frac{3A-1}{2}
=
\frac32(1-A)
\ge A.
\]

Among the six cross-pairs, some colour occurs at least twice. If two
same-coloured pairs are adjacent in the auxiliary \(K_4\), the common part
and the union of the other two parts form a monochromatic complete bipartite
graph. Both sides have size at least \(B\), and their total size is at least
\(A\), so Lemma~\ref{lem:Kab-bipartite-packing} gives the required tiling.

If the two pairs are disjoint, each pair can be tiled using all its vertices
up to \(O_{a,b}(1)\), because its part-size ratio is smaller than \(b/a\).
The two tilings have the same colour and together contain
$\frac{n}{a+b}-O_{a,b}(1)
\ge
\lambda n-O_{a,b}(1)$
copies.
\end{proof}

\begin{proof}[Proof of Theorem~\ref{thm:Kab-four}]
The three templates above give the upper bounds for
$\beta_{4,K_{a,b}}$. Apply Lemma~\ref{lem:Kab-four-forcing} with
\[
\lambda=\frac1{4a+b},\qquad
\lambda=\frac2{5a+3b},\qquad
\lambda=\frac3{5(a+b)}
\]
in the three respective ranges. Direct substitution shows that its
hypotheses hold, so
the lemma gives the matching lower bounds for the strip optimum. Hence
$\beta_{4,K_{a,b}}$ has the displayed value, and
Theorem~\ref{thm:finite-optimization} gives the result.
\end{proof}

\begin{theorem}\label{thm:Kab-five}
Let \(b\ge a\ge1\). Then
\[
Rt_5(K_{a,b};K_n)=
\begin{cases}
\displaystyle
\left(\frac{1}{5a+b}+o(1)\right)n,
& b\le3a,\\[2ex]
\displaystyle
\left(\frac{2}{7a+3b}+o(1)\right)n,
& 3a\le b\le5a,\\[2ex]
\displaystyle
\left(\frac{3}{8a+5b}+o(1)\right)n,
& 5a\le b\le\frac{13}{2}a,\\[2ex]
\displaystyle
\left(\frac{5}{9(a+b)}+o(1)\right)n,
& b\ge\frac{13}{2}a.
\end{cases}
\]
\end{theorem}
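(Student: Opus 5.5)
By Theorem~\ref{thm:finite-optimization} it suffices to show that $\beta_{5,K_{a,b}}$ equals the displayed constant in each of the four ranges. I will follow the four-colour argument: first exhibit explicit five-part strip templates for the upper bounds, then prove a single forcing lemma for the lower bounds.

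\textbf{Upper bounds.} I will use four templates with normalized part vectors
\[
\frac{(a+b,a,a,a,a)}{5a+b},\qquad
\frac{(a+b,a+b,a+b,2a,2a)}{7a+3b},\qquad
\frac{(\ast)}{8a+5b},\qquad
\frac{(2,1,1,1,1)\text{-type}}{9}.
\]
\begin{itemize}
\item First template: extend $\Lambda_1$ by one more ``vertex-cover'' colour. Colour~1 lives only inside $V_1$, and every colour-$j$ edge, $j\ge2$, meets $V_j$. Each colour-$j$ copy then spends at least $a$ vertices of $V_j$, so every colour has at most $n/(5a+b)$ copies.
\item Second template: extend $\Lambda_2$. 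The three large parts carry colours $1,2,3$ on their internal and mutual pairs, exactly as in $\Lambda_2$. The two small parts $V_4,V_5$ are hit by colours $4,5$, and the pair $V_4V_5$ gets colour $4$ or $5$, so each of colours $4,5$ has a cover of normalized size $2a/(7a+3b)$.
\item Fourth template: copy the idea of $\Lambda_3$, so that every colour is supported on parts of total normalized size at most $5/9$. Since $5/9\cdot 1/(a+b)$ is the target, this suffices. I will try to realize it with several parts of size $1/9$ or $2/9$ and check that a $5$-edge-colouring of the pairs of parts exists in which each colour class spans at most $5/9$.
\item Third template: I expect a hybrid of the second and fourth, with part sizes mixing $a+b$ and $2a$ weights. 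The coefficients $8a+5b$ and the breakpoints $b=5a$ and $b=13a/2$ should come out of equating adjacent regimes, and I will solve a small linear system to find the sizes.
\end{itemize}

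\textbf{Lower bounds.} Set $A=(a+b)\lambda$ and $B=a\lambda$. I will prove an analogue of Lemma~\ref{lem:Kab-four-forcing} for at most five parts with sorted sizes $x_1\ge\cdots\ge x_5$. Its hypotheses are linear inequalities in $A$ and $B$, and I will check them by substitution in each range. The steps are:
\begin{enumerate}
\item If $x_1\ge A$, the clique on $V_1$ wins.
\item If $x_1\ge(b/a)x_j$ and $x_j\ge B$ for some $j$, Lemma~\ref{lem:Kab-bipartite-packing} wins on the pair $(1,j)$.
\item Otherwise, derive that the small parts have size at least $B$ and that all size ratios are below $b/a$. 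With these ratios, any monochromatic complete bipartite union of parts with sides at least $B$ and total at least $A$ wins.
\item Any colour containing two disjoint cross-pairs of parts wins, because both pairs can be tiled almost perfectly.
\item There are ten cross-pairs and five colours. So either some colour class in the auxiliary $K_5$ contains two disjoint edges or a star of size at least two, or every colour class is a single edge. Stars and disjoint edges are handled as in steps 3 and 4.
\end{enumerate}

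The main obstacle is the pigeonhole structure in step 5. In the auxiliary $K_5$ a colour class can now be a triangle or a star, and a $K_{1,2}$ star only gives a bipartite graph whose smaller side is one part. Handling the triangle case, and deciding which of the inequalities on $A,B$ is binding, is where the new regime $5a\le b\le 13a/2$ must appear. I will first try to show that some colour contains either a star $K_{1,3}$ or a matching of size two, and then do a careful case check of the remaining colour patterns. This check is likely where the matching template for the third range emerges.
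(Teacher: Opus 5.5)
\textbf{Upper bounds.} Your reduction to $\beta_{5,K_{a,b}}$ is correct, and your first two templates are fine: they are the paper's $\Lambda_1^{(5)}$ and $\Lambda_2^{(5)}$. The upper bounds in the last two ranges, however, are not supplied, and the shapes you propose do not work.

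\emph{Last range.} In the range $b\ge\frac{13}{2}a$, five parts with normalized sizes in $\{1/9,2/9\}$ summing to one must be $(2,2,2,2,1)/9$; the vector you wrote does not even sum to one. In that shape, a colour whose parts have total size at most $5/9$ contains at most one of the six cross-pairs among the four parts of size $2/9$. So five colours cannot cover those six pairs. The paper uses sizes $(3,2,2,1,1)/9$ with the explicit template $\Lambda_4^{(5)}$, whose colour supports are $\{1,2\},\{2,3,4\},\{1,3\},\{1,4,5\},\{2,3,5\}$.

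\emph{Third range.} The third template is not a hybrid of the second and fourth. It is the four-colour template $\Lambda_3$ on parts of sizes $(2,1,1,1)(a+b)$, plus a fifth part of size $3a$ that meets every colour-$5$ edge. In fact each of the first three templates is a four-colour extremal template scaled by $1-B$ together with a vertex-cover part of size $B=a\lambda$. This is why $\lambda=c_4/(1+ac_4)$ in those ranges, with $c_4$ the four-colour constant, and why the breakpoints $3a$ and $5a$ are inherited from the four-colour theorem.

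\textbf{Lower bound.} The decisive gap is your step 3: you cannot derive that every part has size at least $B$, nor that $x_1<(b/a)x_5$. The computation from the four-colour proof now yields only $x_4+x_5>1-\frac32A$. Under the hypothesis $(1-\frac32A)/2\ge B$ this gives $x_4>B$, but it says nothing about $x_5$. For instance, $x_5=0$ with four equal parts and $b>a$ survives your steps 1 and 2. A small fifth part is exactly where the first three extremal templates sit ($x_5=B$).

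The missing idea is the recursion to four colours. If $x_5<B$ and the six cross-pairs among the four largest parts use at most four colours, apply Lemma~\ref{lem:Kab-four-forcing} to those $(1-x_5)n$ vertices. This gives $c_4(a,b)(1-B)n$ copies, so it needs $c_4(a,b)(1-B)\ge\lambda$. That condition holds with equality in the first three ranges and is equivalent to $2b\ge13a$ in the last. So the breakpoint $\frac{13}{2}a$ comes from here, not from a triangle case. If instead those six pairs use all five colours, the colour of the pair $15$ must reappear on some pair $ij\subseteq\{2,3,4\}$, and a separate count is needed (the paper's hypothesis \eqref{eq:five-force-5}).

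Your step 5 also fails as stated.
\begin{itemize}
\item The hoped-for claim that some colour contains a $K_{1,3}$ or two disjoint pairs is false: take colour classes $\{12,13,23\},\{14,15,45\},\{24,25\},\{34,35\}$.
\item An adjacent same-coloured pair of cross-pairs on three small parts need not reach total size $A$. For example, three parts of size $a/(5a+b)$ have total less than $A$ once $b>2a$.
\end{itemize}
The paper gets around this by first forcing $x_2+x_3+x_4<A$. This gives $x_5>(4-7A)/3$ and $x_1+x_4+x_5>A$, so every three-part set containing part $1$ is large, and the pairs $12,13,14$ must receive distinct colours.
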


\begin{figure}[htbp]
\centering
\begin{tikzpicture}[
 scale=0.57,
 transform shape,
 line cap=round,
 line join=round
]

\definecolor{KabRed}{RGB}{235,140,135}
\definecolor{KabBlue}{RGB}{140,165,230}
\definecolor{KabGreen}{RGB}{140,205,165}
\definecolor{KabPurple}{RGB}{190,120,230}
\definecolor{KabOrange}{RGB}{245,190,140}

\tikzset{
 KabFiveLarge/.style={
  circle, draw=black, line width=1.2pt,
  minimum size=27mm, inner sep=0pt,
  font=\large, align=center
 },
 KabFiveMedium/.style={
  circle, draw=black, line width=1.2pt,
  minimum size=20mm, inner sep=0pt,
  font=\large, align=center
 },
 KabFiveSmall/.style={
  circle, draw=black, line width=1.2pt,
  minimum size=13mm, inner sep=0pt,
  font=\large, align=center
 }
}

\def\KabFiveTaper#1#2#3#4#5{%
 \foreach \i in {0,...,19} {
  \pgfmathsetmacro{\KabFiveStart}{\i/20}
  \pgfmathsetmacro{\KabFiveEnd}{(\i+1)/20}
  \pgfmathsetmacro{\KabFiveWidth}{#4-(#4-#5)*\i/19}
  \draw[color=#3,line width=\KabFiveWidth pt]
   ($(#1)!\KabFiveStart!(#2)$) --
   ($(#1)!\KabFiveEnd!(#2)$);
 }
}

\begin{scope}[xshift=-10.6cm]
 \coordinate (A1) at (0,2.2);
 \coordinate (A2) at (-3.05,-0.6);
 \coordinate (A3) at (-1.02,-0.6);
 \coordinate (A4) at (1.02,-0.6);
 \coordinate (A5) at (3.05,-0.6);

 \draw[color=KabGreen,line width=9pt] (A2)--(A3);
 \draw[color=KabPurple,line width=9pt] (A3)--(A4);
 \draw[color=KabPurple,line width=9pt]
  (A2) to[bend right=52] (A4);

 \KabFiveTaper{A1}{A2}{KabBlue}{23}{7}
 \KabFiveTaper{A1}{A3}{KabGreen}{25}{7}
 \KabFiveTaper{A1}{A4}{KabPurple}{23}{7}
 \KabFiveTaper{A1}{A5}{KabOrange}{23}{7}

 \draw[color=KabOrange,line width=9pt] (A4)--(A5);
 \draw[color=KabOrange,line width=9pt]
  (A3) to[bend right=50] (A5);
 \draw[color=KabOrange,line width=9pt]
  (A2) to[bend right=58] (A5);

 \node[KabFiveLarge,fill=KabRed] at (A1) {$V_1$};
 \node[KabFiveSmall,fill=KabBlue] at (A2) {$V_2$};
 \node[KabFiveSmall,fill=KabGreen] at (A3) {$V_3$};
 \node[KabFiveSmall,fill=KabPurple] at (A4) {$V_4$};
 \node[KabFiveSmall,fill=KabOrange] at (A5) {$V_5$};

 \node[align=center] at (0,-3)
 {$b\le3a$\\[-1mm]
  \scriptsize $(a+b,a,a,a,a)/(5a+b)$};
\end{scope}

\begin{scope}[xshift=-3.3cm,yshift=0.25cm]
 \coordinate (B1) at (0,2.15);
 \coordinate (B2) at (-2.0,0);
 \coordinate (B3) at (2.0,0);
 \coordinate (B4) at (-0.9,-2.15);
 \coordinate (B5) at (0.9,-2.15);

 \draw[color=KabRed,line width=11pt] (B1)--(B2);
 \draw[color=KabGreen,line width=11pt] (B1)--(B3);
 \draw[color=KabBlue,line width=11pt] (B2)--(B3);

 \KabFiveTaper{B1}{B4}{KabPurple}{18}{7}
 \KabFiveTaper{B2}{B4}{KabPurple}{18}{7}
 \KabFiveTaper{B3}{B4}{KabPurple}{18}{7}

 \KabFiveTaper{B1}{B5}{KabOrange}{18}{7}
 \KabFiveTaper{B2}{B5}{KabOrange}{18}{7}
 \KabFiveTaper{B3}{B5}{KabOrange}{18}{7}

 \draw[color=KabOrange,line width=9pt] (B4)--(B5);

 \node[KabFiveMedium,fill=KabRed] at (B1) {$V_1$};
 \node[KabFiveMedium,fill=KabBlue] at (B2) {$V_2$};
 \node[KabFiveMedium,fill=KabGreen] at (B3) {$V_3$};
 \node[KabFiveSmall,fill=KabPurple] at (B4) {$V_4$};
 \node[KabFiveSmall,fill=KabOrange] at (B5) {$V_5$};

 \node[align=center] at (0,-3.25)
 {$3a\le b\le5a$\\[-1mm]
  \scriptsize $(a+b,a+b,a+b,2a,2a)/(7a+3b)$};
\end{scope}

\begin{scope}[xshift=3.65cm]
 \coordinate (C1) at (-1,2.35);
 \coordinate (C2) at (-2.35,-0.85);
 \coordinate (C3) at (0,-0.85);
 \coordinate (C4) at (2.35,-0.85);
 \coordinate (C5) at (1.75,1.75);

 \draw[color=KabPurple,line width=9pt] (C2)--(C3);
 \draw[color=KabPurple,line width=9pt] (C3)--(C4);
 \draw[color=KabPurple,line width=9pt]
  (C2) to[bend right=52] (C4);

 \KabFiveTaper{C1}{C2}{KabRed}{23}{7}
 \KabFiveTaper{C1}{C3}{KabBlue}{25}{7}
 \KabFiveTaper{C1}{C4}{KabGreen}{23}{7}

 \KabFiveTaper{C1}{C5}{KabOrange}{24}{7}
 \KabFiveTaper{C2}{C5}{KabOrange}{18}{7}
 \KabFiveTaper{C3}{C5}{KabOrange}{18}{7}
 \KabFiveTaper{C4}{C5}{KabOrange}{18}{7}

 \node[KabFiveLarge,fill=KabRed] at (C1) {$V_1$};
 \node[KabFiveMedium,fill=KabPurple] at (C2) {$V_2$};
 \node[KabFiveMedium,fill=KabPurple] at (C3) {$V_3$};
 \node[KabFiveMedium,fill=KabPurple] at (C4) {$V_4$};
 \node[KabFiveSmall,fill=KabOrange] at (C5) {$V_5$};

 \node[align=center] at (0,-2.85)
 {$5a\le b\le13a/2$\\[-1mm]
  \scriptsize
  $(2(a+b),a+b,a+b,a+b,3a)/(8a+5b)$};
\end{scope}

\begin{scope}[xshift=10.35cm]
 \coordinate (D1) at (0,2.2);
 \coordinate (D2) at (-2,0);
 \coordinate (D3) at (2,0);
 \coordinate (D4) at (-0.9,-2);
 \coordinate (D5) at (0.9,-2);

 \KabFiveTaper{D1}{D2}{KabRed}{26}{8}
 \KabFiveTaper{D1}{D3}{KabGreen}{26}{8}

 \draw[color=KabBlue,line width=11pt] (D2)--(D3);

 \KabFiveTaper{D1}{D4}{KabPurple}{22}{7}
 \KabFiveTaper{D2}{D4}{KabBlue}{17}{7}
 \KabFiveTaper{D3}{D4}{KabBlue}{17}{7}

 \KabFiveTaper{D1}{D5}{KabPurple}{22}{7}
 \KabFiveTaper{D2}{D5}{KabOrange}{17}{7}
 \KabFiveTaper{D3}{D5}{KabOrange}{17}{7}

 \draw[color=KabPurple,line width=9pt] (D4)--(D5);

 \node[KabFiveLarge,fill=KabRed] at (D1) {$V_1$};
 \node[KabFiveMedium,fill=KabBlue] at (D2) {$V_2$};
 \node[KabFiveMedium,fill=KabGreen] at (D3) {$V_3$};
 \node[KabFiveSmall,fill=KabBlue] at (D4) {$V_4$};
 \node[KabFiveSmall,fill=KabOrange] at (D5) {$V_5$};

 \node[align=center] at (0,-3.05)
 {$b\ge13a/2$\\[-1mm]
  \scriptsize $(3,2,2,1,1)/9$};
\end{scope}

\end{tikzpicture}
\caption{\footnotesize Extremal five-colour strip templates for
\(K_{a,b}\)-tilings.}
\label{fig:Kab5-extremal}
\end{figure}

The five-colour optimization has one additional regime and uses
Theorem~\ref{thm:Kab-four} recursively inside a four-part subtemplate. The
calculation in Appendix~\ref{subsec:5colour} identifies the displayed value
of $\beta_{5,K_{a,b}}$; Theorem~\ref{thm:finite-optimization} then gives the displayed
asymptotic formula.

These formulas suggest that, although the finite optimization applies to
every fixed number of colours, extracting a uniform closed formula for
\(K_{a,b}\) requires a more conceptual understanding of the extremal strip
templates.

\subsubsection{Hall-type balanced bipartite tilings}

\begin{theorem}\label{thm:Kaa}
Let \(r\ge1\), and let \(H\) be a connected bipartite graph whose two
bipartition classes both have size \(a\), and which contains a matching of
size \(a\). Then
\[
Rt_r(H;K_n)=\left(\frac{1}{a(r+1)}+o(1)\right)n.
\]
\end{theorem}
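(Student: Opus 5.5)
The plan is to compute $\beta_{r,H}=1/(a(r+1))$ and then invoke Theorem~\ref{thm:finite-optimization}. This mirrors the proof of Theorem~\ref{thm:biparr=3}: an explicit strip template gives the upper bound, and a short case analysis over all strip colourings with at most $r$ parts gives the lower bound. The case $r=1$ lies outside Theorem~\ref{thm:finite-optimization}, so I will treat it directly: $K_n$ contains $\lfloor n/(2a)\rfloor$ disjoint copies of $K_{a,a}\supseteq H$, and no tiling can exceed $n/(2a)$. From now on assume $r\ge2$.

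\emph{Upper bound.} I will use the analogue of the template $\Lambda_1$. Take parts $V_1,\ldots,V_r$ with $|V_1|=2n/(r+1)+O(1)$ and $|V_j|=n/(r+1)+O(1)$ for $j\ge2$. Colour the edges inside $V_1$ with colour $1$. For $j\ge2$, give colour $j$ to every edge inside $V_j$ and to every edge between $V_i$ and $V_j$ with $i<j$.

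Then every colour-$j$ edge with $j\ge2$ meets $V_j$, so $V_j$ is a vertex cover of the colour-$j$ graph. Hence any colour-$j$ copy of $H$ meets $V_j$ in a vertex cover of $H$. Because $H$ is Hall-type, König's theorem gives vertex-cover number $a$, so such a copy has at least $a$ vertices in $V_j$. Consequently a colour-$j$ tiling has at most $|V_j|/a=n/(a(r+1))+O(1)$ copies. Colour $1$ lives only inside $V_1$, so its tilings have at most $|V_1|/(2a)=n/(a(r+1))+O(1)$ copies.

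\emph{Lower bound.} Let $\psi$ be any strip colouring with $q\le r$ parts. Pad it with empty parts to have exactly $r$ parts, with normalized sizes $x_1\ge\cdots\ge x_r$.

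If $x_1\ge 2/(r+1)$, the monochromatic clique on $V_1$ contains $x_1n/(2a)-O(1)\ge n/(a(r+1))-O(1)$ disjoint copies of $K_{a,a}$, hence of $H$.

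Otherwise $x_1<2/(r+1)$, so the remaining $r-1$ parts have total size greater than $(r-1)/(r+1)$. The largest of them therefore satisfies $x_2>1/(r+1)$. The complete bipartite graph between $V_1$ and $V_2$ is monochromatic. Since $N_1\ge N_2$, Lemma~\ref{lem:Kab-bipartite-packing} with $a=b$ gives a $K_{a,a}$-tiling of size $\min\{(N_1+N_2)/(2a),N_2/a\}-O(1)=N_2/a-O(1)\ge n/(a(r+1))-O(1)$. Since $H$ is a spanning subgraph of $K_{a,a}$, this yields an $H$-tiling of the same size.

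Combining the two bounds gives $\beta_{r,H}=1/(a(r+1))$, and Theorem~\ref{thm:finite-optimization} then yields the stated asymptotic formula. I expect no serious obstacle. The only points needing care are the use of the Hall-type hypothesis through König's theorem in the upper bound, and the fact that colours inside parts are unrestricted in the lower bound, which the argument handles because it uses only the monochromatic clique on $V_1$ and the monochromatic pair $(V_1,V_2)$.
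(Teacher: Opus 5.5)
Your proposal is correct and follows the paper's proof essentially step for step. The paper uses the same template up to relabelling: it puts the double-size part last and colours each cross-pair by the smaller index. It uses the same vertex-cover bound coming from the matching of size $a$, and the same clique-or-pair dichotomy for strip colourings before invoking Theorem~\ref{thm:finite-optimization}, with $r=1$ handled directly. The only difference is cosmetic. You find the second large part by pigeonhole on $x_2$, whereas the paper uses an integrality argument on $\sum_i\lfloor (r+1)|U_i|/n\rfloor$ to get the exact bound $\lfloor n/(a(r+1))\rfloor$. Your $O(1)$ loss is harmless for the asymptotic statement.
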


In particular, this applies to the \(d\)-dimensional hypercube.

\begin{corollary}\label{cor:Qd}
For every \(r\ge1\) and \(d\ge2\),
\[
Rt_r(Q_d;K_n)=\left(\frac{1}{2^{d-1}(r+1)}+o(1)\right)n.
\]
\end{corollary}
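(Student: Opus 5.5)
The statement to prove is Corollary~\ref{cor:Qd}. The plan is to deduce it directly from Theorem~\ref{thm:Kaa} with $a=2^{d-1}$. To do this, we verify that $Q_d$ satisfies the hypotheses of that theorem: it must be connected, bipartite with both classes of size $2^{d-1}$, and contain a matching of size $2^{d-1}$. Then the displayed formula follows by substitution, since $a(r+1)=2^{d-1}(r+1)$.

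The verification runs as follows. Identify $V(Q_d)$ with $\{0,1\}^d$, where two vectors are adjacent iff they differ in exactly one coordinate.
\begin{itemize}
\item \emph{Connectivity.} Any vertex can be transformed into any other by flipping differing coordinates one at a time, so $Q_d$ is connected.
\item \emph{Bipartition.} The parity of the coordinate sum changes along every edge. Hence the even-weight and odd-weight vectors form a bipartition.
\item \emph{Class sizes.} Flipping the first coordinate is a bijection between even-weight and odd-weight vectors. So both classes have size $2^{d-1}$.
\item \emph{Perfect matching.} The same bijection gives the matching: the edges $\{x,x+e_1\}$ are pairwise disjoint and cover all $2^d$ vertices. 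This is a matching of size $2^{d-1}=a$ saturating both classes, so $Q_d$ is Hall-type.
\end{itemize}
The hypothesis $d\ge2$ is not needed for these facts; it simply excludes the trivial case $Q_1=K_2$, for which the theorem would also apply with $a=1$.

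There is no real obstacle here. The only point requiring care is that the bipartition in Theorem~\ref{thm:Kaa} is the one with equal classes. This holds because a connected bipartite graph has a unique bipartition, so the parity bipartition above is the one the theorem refers to.
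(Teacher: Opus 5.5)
Your proposal is correct and follows the paper's proof: apply Theorem~\ref{thm:Kaa} with $a=2^{d-1}$, after checking that $Q_d$ has two bipartition classes of size $2^{d-1}$ and a perfect matching. You also verify connectivity and the uniqueness of the bipartition, which the paper leaves implicit. These are harmless, welcome additions.
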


\begin{proof}
The two bipartition classes of \(Q_d\) both have size \(2^{d-1}\), and
\(Q_d\) has a perfect matching.
\end{proof}

\begin{lemma}\label{lem:upperKaa}
Let \(r\ge1\) and \(H\) be as in Theorem~\ref{thm:Kaa}. There is
an \(r\)-colouring \(\chi\) of \(K_n\) such that
\[
\nu_H(\chi)\le\frac{1}{a(r+1)}n+O_a(1).
\]
\end{lemma}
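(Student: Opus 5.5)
We need a construction: an $r$-colouring of $K_n$ with every monochromatic $H$-tiling having at most $n/(a(r+1))+O(1)$ copies. We use a $\Lambda_1$-type template, generalising the $b\le 3a$ constructions in Lemma~\ref{lem:upper3colour} and the four-colour template $\Lambda_1$ (whose proportions are $(a+b,a,\dots,a)/(ra+b)$, here with $b=a$).

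\textbf{Construction.} Partition $V(K_n)$ into $V_1,\dots,V_r$ with $|V_1|=\lfloor 2n/(r+1)\rfloor$ and $|V_j|=n/(r+1)+O(1)$ for $2\le j\le r$. Colour each edge inside $V_1$ with colour $1$. For an edge $uv$ with $u\in V_i$, $v\in V_j$ and $\max\{i,j\}=j\ge2$, use colour $j$; this covers both edges inside $V_j$ and edges to lower-indexed parts. Then colour $1$ lives only inside $V_1$, and for $j\ge2$ every colour-$j$ edge meets $V_j$. The case $r=1$ is just $K_n$ with $n/(2a)$ copies, which agrees with the bound.

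\textbf{Bounding the tilings.} A colour-$1$ tiling uses only vertices of $V_1$, and each copy has $2a$ vertices, so it has at most $|V_1|/(2a)=n/(a(r+1))+O(1)$ copies. For colour $j\ge2$, the set $V(H')\setminus V_j$ is independent in any colour-$j$ copy $H'$, so $V(H')\cap V_j$ is a vertex cover of $H'\cong H$. Since $H$ has a perfect matching of size $a$, König's theorem gives vertex cover number $a$ (only the lower bound $\ge a$ from the matching is needed). Hence every colour-$j$ copy uses at least $a$ vertices of $V_j$, and the tiling has at most $|V_j|/a=n/(a(r+1))+O(1)$ copies. Taking the maximum over colours gives the lemma.

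\textbf{Main obstacle.} The only point needing care is the vertex-cover step. It works because $H$ has a matching of size $a$, which forces $a$ vertices into $V_j$ per copy. Connectivity of $H$ is not needed here; it matters only for the matching lower bound elsewhere in Theorem~\ref{thm:Kaa}. Beyond that, the proof is bookkeeping of rounding errors, all absorbed in $O_a(1)$.
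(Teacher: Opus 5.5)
Your proof is correct and is the paper's construction with the part indices reversed. The paper takes the double-size part to be $V_r$ and colours each cross pair by the smaller index, so colour $r$ lives only inside $V_r$ and every colour-$i$ edge with $i<r$ meets $V_i$; the resulting bounds $|V_r|/(2a)$ and $|V_i|/a$, via the same vertex-cover argument, match your $|V_1|/(2a)$ and $|V_j|/a$.
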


\begin{proof}
Partition \(V(K_n)\) into \(V_1,\ldots,V_r\), with
\(|V_i|=n/(r+1)+O(1)\) for \(i<r\) and
\(|V_r|=2n/(r+1)+O(1)\). Colour the edges inside \(V_i\) with colour
\(i\), and colour all edges between \(V_i,V_j\), where \(i<j\), with
colour \(i\).

For \(i<r\), the vertices outside \(V_i\) form an independent set in
colour \(i\). Hence every colour-\(i\) copy of \(H\) uses a vertex cover
of \(H\) inside \(V_i\). Since \(H\) has a matching of size \(a\) and a
bipartition class of size \(a\), its vertex-cover number is exactly \(a\).
Thus a colour-\(i\) tiling has at most \(|V_i|/a\) copies. Colour \(r\)
occurs only inside \(V_r\), so a colour-\(r\) tiling has at most
\(|V_r|/(2a)\) copies. Both bounds are
\(n/[a(r+1)]+O_a(1)\).
\end{proof}

\begin{lemma}\label{lem:lowerKaa}
Let \(r\ge1\) and \(a\ge1\). Every \(r\)-strip colouring of
\(K_n\) contains at least
$\left\lfloor\frac{n}{a(r+1)}\right\rfloor$
disjoint monochromatic copies of \(K_{a,a}\).
\end{lemma}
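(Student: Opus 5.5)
Set $m:=\lfloor n/(a(r+1))\rfloor$; the goal is $m$ disjoint monochromatic copies of $K_{a,a}$ in a given $r$-strip colouring with parts $V_1,\ldots,V_r$ (empty parts allowed). We argue by induction on $r$, handling one colour per step, in the spirit of Lemma~\ref{lem:aaalower}. The base case $r=1$ is immediate: the whole $K_n$ is monochromatic and $n\ge 2am$, so greedily taking vertex-disjoint $2a$-sets gives $m$ copies.

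For the inductive step, we first look for a part that already supplies the bound. If some $|V_j|\ge 2am$, the monochromatic clique on $V_j$ contains $m$ disjoint copies. Likewise, if two parts $V_i,V_j$ both satisfy $|V_i|,|V_j|\ge am$, the monochromatic complete bipartite graph between them contains $m$ disjoint copies of $K_{a,a}$ (each copy uses $a$ vertices on each side). So assume neither happens. Then at most one part, say $V_1$, has size $\ge am$, and all others have size $<am$.

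Next we absorb a small part into the large one. Take the colour $c=\lambda(1,j)$ of the pairs between $V_1$ and some small part $V_j$. We want either a colour-$c$ tiling of size $m$ directly, or a way to delete that colour and some vertices and pass to an $(r-1)$-strip instance on at least $a r\, m$ vertices. Counting gives $|V_1|>n-(r-1)am\ge 2am$ roughly, since $n\ge a(r+1)m$. This contradicts the assumption $|V_1|<2am$.

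Concretely, if $|V_1|<2am$ and every other part has size $<am$, then
\[
n<2am+(r-1)am=a(r+1)m\le n,
\]
which is a contradiction. So one of the two cases above must occur, and the proof closes without any genuine induction. Because of this, the step I expected to be the main obstacle, the inductive absorption, disappears entirely. What is left to check is the exact integer bookkeeping. Every part size is an integer, and the strict inequalities come from failing $\ge 2am$ or $\ge am$. That gives $|V_1|\le 2am-1$ and $|V_j|\le am-1$, so the sum is $<a(r+1)m\le n$ exactly, with no $O(1)$ loss. The degenerate case $m=0$ is trivial. This exactness matters because the lemma claims the floor with no error term, so I would verify the case split carefully, including $r=1$.
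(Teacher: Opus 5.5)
Your proof is correct and is essentially the paper's argument. Either some part has size at least $2am$, giving a monochromatic clique, or two parts have size at least $am$, giving a monochromatic complete bipartite graph; otherwise the part sizes sum to less than $a(r+1)m\le n$. The paper phrases this last count through the fractional parts of $x_i=(r+1)|U_i|/n$ rather than your direct integer bound, and, as you observe yourself, the induction/absorption framing in your second and third paragraphs is not needed.
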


\begin{proof}
The case \(r=1\) is immediate, so assume \(r\ge2\).

Let \(U_1,\ldots,U_q\), where \(q\le r\), be the strip parts, and set
\(x_i:=(r+1)|U_i|/n\). If some \(x_i\ge2\), the monochromatic clique on
\(U_i\) contains at least \(\lfloor |U_i|/(2a)\rfloor\ge
\lfloor n/[a(r+1)]\rfloor\) disjoint copies of \(K_{a,a}\).

We may therefore assume \(x_i<2\) for every \(i\). Since
\(\sum_i x_i=r+1\), the quantity
$\sum_{i=1}^q\bigl(x_i-\lfloor x_i\rfloor\bigr)
=
r+1-\sum_{i=1}^q\lfloor x_i\rfloor$
is an integer smaller than \(q\le r\). Hence
\(\sum_i\lfloor x_i\rfloor\ge2\). There are therefore two parts,
say \(U_1,U_2\), each of size at least \(n/(r+1)\). All edges between
them have one colour, so greedily taking \(a\) vertices from each part
produces at least
$\min\left\{
\left\lfloor\frac{|U_1|}{a}\right\rfloor,
\left\lfloor\frac{|U_2|}{a}\right\rfloor
\right\}
\ge
\left\lfloor\frac{n}{a(r+1)}\right\rfloor$
disjoint monochromatic copies of \(K_{a,a}\).
\end{proof}

\begin{proof}[Proof of Theorem~\ref{thm:Kaa}]
The case $r=1$ is immediate, so assume $r\ge2$. Lemma~\ref{lem:upperKaa}
gives $\beta_{r,H}\le1/[a(r+1)]$. Since $H\subseteq K_{a,a}$,
Lemma~\ref{lem:lowerKaa} gives the reverse inequality for every strip
template. Thus $\beta_{r,H}=1/[a(r+1)]$, and the result follows from
Theorem~\ref{thm:finite-optimization}.
\end{proof}

\section{Concluding remarks and further directions}\label{sec:conclusion}

We introduced a strip-colouring framework which reduces arbitrary
edge-colourings to finite templates while preserving large monochromatic
matchings or graph tilings. The asymptotic reduction applies to pseudorandom
hosts. At the critical matching scale, a rigidity, absorption, and switching
argument gives the exact AFL bound for locally almost-complete hosts and
settles the stable Kneser problem in the stated linear regime. For graph
tilings, the finite template optimization determines the asymptotic
coefficient for every fixed number of colours and every fixed target graph.
We conclude with several directions suggested by these results.

\subsection{Quantitative and growing-uniformity questions}

Theorem~\ref{thm:robust-critical-AFL} is an eventual result: the proof uses
regularity, a hierarchy of parameters, and a vanishing-error stability
argument, and gives no useful estimate for the threshold at which the exact
formula begins. It
would be interesting to obtain quantitative bounds, or a finitary stability
theorem describing every colouring whose largest monochromatic matching is
close to the critical value.

Our arguments also keep $t$ fixed. Both the weak regularity input and the
critical rigidity argument become ineffective when the uniformity grows
with $n$. Is there a topology-free proof of AFL, exact or asymptotic, which
remains effective for a nontrivial range of $t=t(n)$?

\subsection{Stable Kneser hypergraphs}

Theorem~\ref{thm:stable} proves Meunier's predicted formula for every fixed
$t\ge2$ and $\gamma>0$ when
$
\gamma n\le q, tq\le n,$ and $ s\le cq,
$
for a small constant $c=c(t,\gamma)$.
The case $q=o(n)$ is not covered by our argument, since the number of
colours required in the corresponding matching formulation is then
unbounded, whereas Theorem~\ref{thm:robust-critical-AFL} is proved for each
fixed number of colours. It would also be interesting to understand the
regime $s=\Theta(q)$ without the smallness assumption on the implied
constant, where the local density of the complement may exceed the
robustness threshold supplied by Theorem~\ref{thm:robust-critical-AFL}.

\subsection{Hypergraph tilings beyond matchings}

The matching problem treated in the hypergraph part of the paper is the
special case of tiling by a single \(t\)-edge. The graph part of the paper shows that, when \(t=2\), the same
LP-duality philosophy can handle much more general tiling problems. It is
therefore natural to ask how far this can be extended for \(t\)-uniform
hypergraphs with \(t\ge3\).

\begin{problem}\label{prob:hypergraph-tiling-strip}
Let \(r,t\ge3\). For which $t$-uniform hypergraphs $\mathcal H$ does there exist a finite
strip-template optimization determining
$
Rt_r(\mathcal H;K_n^{(t)})
=(\beta_{r,\mathcal H}+o(1))n?
$
\end{problem}

We believe that our method should also handle the case where $\mathcal H$ is the balanced $t$-partite $t$-graph $K^{(t)}_{a,\ldots,a}$, since $K_n^{(t)}$ has sufficient edge density to allow the use of a Turán-type theorem to embed $\mathcal H$.
In the case $r\ge3$ and $\cH=K_p^{(t)}$ for any $p>t$, the elementary
greedy lower bound is asymptotically sharp up to an additive constant.
We provide the proof in the Appendix.

\begin{proposition}\label{prop:clique-tiling-upper}
Let $t\ge 3$, $r\ge 3$, and $p\ge t+1$. Then
$Rt_r\left(K_p^{(t)};K_n^{(t)}\right)
=
\frac{n}{rp}+O_{r,t,p}(1).$
\end{proposition}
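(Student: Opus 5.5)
Both bounds follow the pattern of Proposition~\ref{prop:nonbipartite}: a greedy lower bound and a strip construction for the upper bound.

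\emph{Lower bound.} Let $R:=R_r(K_p^{(t)})$ be the $r$-colour hypergraph Ramsey number, a constant depending on $r,t,p$. Greedily remove monochromatic copies of $K_p^{(t)}$ until fewer than $R$ vertices remain. This gives at least $(n-R)/p$ disjoint monochromatic copies. By pigeonhole, one colour receives at least $n/(rp)-O_{r,t,p}(1)$ of them.

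\emph{Upper bound.} I would take an equipartition $V_1,\ldots,V_r$ and give every edge inside $V_i$ colour $i$. Every crossing edge (one meeting at least two parts) must be coloured so that each colour-$c$ copy of $K_p^{(t)}$ lies inside $V_c$. Then a colour-$c$ tiling has at most $|V_c|/p=n/(rp)+O(1)$ copies.

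The key constraint is this: a copy of $K_p^{(t)}$ on a vertex set $S$ meeting two or more parts needs all $\binom{p}{t}$ of its $t$-subsets to have one colour. If $S$ meets exactly two parts $V_i,V_j$ with $|S\cap V_i|\ge t$, then some $t$-subsets of $S$ lie inside $V_i$ and have colour $i$. So we need either some crossing $t$-subset of $S$ not coloured $i$, or a subset with no colour $i$ at all.

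Planned rule: colour a crossing edge $e$ according to its intersection profile $(|e\cap V_1|,\ldots,|e\cap V_r|)$, choosing a colour $c(e)$ of a part that $e$ does \emph{not} meet whenever such a part exists. If $r\ge t+1$, every $t$-edge misses some part, so we can take $c(e)$ to be, say, the least index of a part missed by $e$.

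Now suppose a colour-$c$ copy on $S$ meets some part other than $V_c$. If $S\subseteq V_c$ fails, then $S$ contains a crossing edge, and that edge avoids $V_c$ by the rule. Also $S$ cannot contain an edge inside any $V_i$ with $i\neq c$, since such an edge has colour $i$. It remains to exclude configurations where $S$ meets $V_c$ in fewer than $t$ vertices and also meets several other parts. I would choose the missed-part rule so that the colour of $e$ depends on $e$'s support. Since $p\ge t+1$, $S$ contains two $t$-subsets whose supports differ: drop different vertices, using that some part contains at least two vertices or that $S$ meets at least $t+1$ parts. If the rule is an injective-ish function of the support, these two subsets get different colours, a contradiction.

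\emph{Main obstacle.} The difficulty is making "different supports give different colours" hold with only $r$ colours, including the case $3\le r\le t$, where some edges meet all parts.

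For small $r$, I would adapt the $\mathbb Z_r$ construction from Lemma~\ref{lem:uppernonbipar}: colour a crossing edge by a function of its profile modulo $r$, for instance $\sum_j j\,|e\cap V_j|$ times a suitable inverse. Replacing one vertex of $e$ by a vertex from another part shifts this quantity by a nonzero difference $j-j'$. So in any $S$ that meets at least two parts and has $p\ge t+1$ vertices, two $t$-subsets differing in one vertex from different parts receive different colours.

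The subtle point is that this map must give colour $i$ to edges inside $V_i$. This requires $ti\equiv i$-type consistency, so I would shift the map by a fixed affine correction. I would use $r\ge3$ to keep the differences nonzero modulo $r$, exactly as the odd/even split in Lemma~\ref{lem:uppernonbipar}. I expect this bookkeeping to be the main work; the remaining estimates are routine.
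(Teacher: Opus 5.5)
Your lower bound is the paper's argument. Your reduction for the upper bound is also the right one: colour each $t$-edge by a function $\gamma$ of the multiset of its part labels, with $\gamma(i^t)=i$, and ensure that no $(t+1)$-set meeting two parts has all its $t$-subsets of one colour. Since $p\ge t+1$, this suffices. The gap is that neither of your proposed choices of $\gamma$ achieves this in general.

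\textbf{Support-based rules fail.} Any rule depending only on the support of $e$ fails. Your claim that $S$ has two $t$-subsets with different supports is false when every part meeting $S$ contains at least two vertices of $S$. For example, take $t=3$, $r=4$ and the ``least missed part'' rule. Every triple of a $4$-set with two vertices in $V_1$ and two in $V_2$ gets colour $3$. So colour $3$ contains about $n/8>n/16=n/(rp)$ disjoint copies of $K_4^{(3)}$ inside $V_1\cup V_2$.

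\textbf{The sum rule works only when $\gcd(t,r)=1$.} The rule $\chi(e)=t^{-1}\sum_{v\in e}\ell(v)\bmod r$ does work when $\gcd(t,r)=1$. Deleting different labels from a $(t+1)$-multiset changes the sum by different amounts, so this is a tidy generalization of $(i+j)/2$. But when $\gcd(t,r)>1$, the map $i\mapsto ti$ is not injective modulo $r$. Then no affine correction, indeed no function $g$ of the sum, gives colour $i$ to the edges inside $V_i$; for $t=r=3$ every internal edge has sum $0$. Special-casing the constant profiles does not help either. For $t=r=3$, a $4$-set with labels $i,i,i,j$ forces $g(2i+j)\ne i$. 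Since $\{2i+j:j\ne i\}=\{1,2\}$ in $\mathbb{Z}_3$ for every $i$, no colour is available for $g(1)$. So your upper bound is unproved for every pair with $\gcd(t,r)>1$, for example $t=r=3$.

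\textbf{The missing idea} is to track parities of multiplicities rather than supports or sums. Let $O(M)$ be the set of labels with odd multiplicity in $M$. Then $O(A-x)=O(A)\triangle\{x\}$, so deleting different labels always gives different odd-sets.
\begin{itemize}
\item For odd $t$, $O(M)\ne\varnothing$, and $\gamma(M)=\min O(M)$ works for every $r$.
\item For even $t$, the paper sets $\gamma(M)=\min\operatorname{supp}(M)$ if $O(M)=\varnothing$, $\gamma(M)=\rho(O(M))$ if $|O(M)|=2$, and $\gamma(M)=\min O(M)$ if $|O(M)|\ge4$.
\end{itemize}
Here $\rho$ is a pair-colouring of $[r]$ with $\rho(\{i,j\})\notin\{i,j\}$ and no monochromatic triangle. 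It is built from the $\mathbb{Z}_r$ and $\mathbb{Z}_{r-1}\cup\{\infty\}$ constructions, and this is where $r\ge3$ is needed.
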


There is a further distinction between complete hosts and sparse pseudorandom
hosts. For matchings, weak regularity is sufficient. For general
\(\mathcal H\)-tilings, however, if the host hypergraph is sparse, weak regularity does not provide counting
lemmas for non-linear hypergraphs, that is, hypergraphs containing two edges
with more than one common vertex. This obstruction already appears for simple
examples such as \(K_{2,2,2}^{(3)}\). It would be interesting to develop a structural reduction for monochromatic \(\mathcal H\)-tilings in
sparse \(t\)-graphs, for fixed \(t\ge3\), which retains a finite
optimization description of the asymptotic tiling size.

\subsection{Explicit bipartite tiling constants}

Theorem~\ref{thm:finite-optimization} reduces the evaluation of
\(\beta_{r,H}\) to a finite optimization for every fixed \(r\) and \(H\),
but extracting a transparent closed formula remains a separate extremal
problem. For complete bipartite graphs, the two-colour formula follows from
the theorem of Burr, Erd\H{o}s and Spencer, while
Theorems~\ref{thm:biparr=3},~\ref{thm:Kab-four} and~\ref{thm:Kab-five} determine the cases
\(r=3,4,5\).

Already these first cases display increasing complexity. As the number of
colours increases from \(3\) to \(5\), the number of parameter regimes in the
ratio \(b/a\) increases from two to three and then four. Each breakpoint
corresponds to a change in the optimal strip template. Although the finite
algorithm applies for every fixed \(r\), enumerating all templates for larger
\(r\) becomes unwieldy and does not reveal why particular templates are
extremal.

\begin{problem}\label{prob:Kab-explicit}
For fixed \(r\ge6\), determine \(a\,\beta_{r,K_{a,b}}\) as a function of
the ratio \(b/a\). In particular, how many breakpoints can occur as a
function of \(r\), and can the extremal strip templates be characterized
without exhaustive case analysis?
\end{problem}

The double-star result shows that explicit evaluation is not confined to
Hall-type graphs. The graph \(D_{2,3}\) has bipartition classes of sizes
\(3\) and \(4\), but its maximum matching has size \(2\), so it lies outside
the scope of Theorem~\ref{thm:biparr=3}. Nevertheless,
Theorem~\ref{thm:D23} gives
$\beta_{3,D_{2,3}}=\frac{7}{78}.$
Since \(K_{3,4}\) has the same bipartition sizes but a different tiling
constant, the value of \(\beta_{3,H}\) for non-Hall bipartite graphs depends
on finer structural properties of \(H\).

\begin{problem}\label{prob:nonHall-bipartite}
Determine \(\beta_{3,H}\) for connected bipartite graphs \(H\) which do not
contain a matching saturating a smallest bipartition class. Which structural
parameters of \(H\), beyond its bipartition sizes and independence number,
govern the answer?
\end{problem}

\section*{Acknowledgement}
We thank Rob Morris for helpful discussions. This project was carried out during the IBS ECOPRO 2025 Autumn Student Research Experience Workshop. We acknowledge the use of AI for the optimizations done in Appendices~\ref{subsec:5colour} and~\ref{app:D23}.
HL, LW and ZY are supported by the Institute for Basic Science (IBS-R029-C4), and LW is also supported by National Key R\&D Program of China under grant number 2024YFA1013900, NSFC under grant number 12471327 and by China Scholarship Council.


\bibliographystyle{abbrv}
\bibliography{multicolourtiling}

\appendix

\section{Complete bipartite tilings in five colours}
\label{subsec:5colour}

We prove Theorem~\ref{thm:Kab-five}. We repeatedly use
Lemma~\ref{lem:Kab-bipartite-packing}, as well as the fact that tilings
supported on disjoint same-coloured pairs may be combined.

\subsection{Extremal templates}

The four upper-bound templates are
\[
\Lambda_1^{(5)}=
\begin{pmatrix}
1&2&3&4&5\\
2&2&3&4&5\\
3&3&3&4&5\\
4&4&4&4&5\\
5&5&5&5&5
\end{pmatrix},
\qquad
\Lambda_2^{(5)}=
\begin{pmatrix}
1&1&3&4&5\\
1&2&2&4&5\\
3&2&3&4&5\\
4&4&4&4&5\\
5&5&5&5&5
\end{pmatrix},
\]
\[
\Lambda_3^{(5)}=
\begin{pmatrix}
1&1&2&3&5\\
1&4&4&4&5\\
2&4&4&4&5\\
3&4&4&4&5\\
5&5&5&5&5
\end{pmatrix},
\qquad
\Lambda_4^{(5)}=
\begin{pmatrix}
1&1&3&4&4\\
1&2&2&2&5\\
3&2&3&2&5\\
4&2&2&4&4\\
4&5&5&4&5
\end{pmatrix}.
\]
Their corresponding unnormalized part-size vectors are
\[
(a+b,a,a,a,a),
\quad
(a+b,a+b,a+b,2a,2a),
\quad
(2(a+b),a+b,a+b,a+b,3a),
\quad
(3,2,2,1,1),
\]
respectively. See Figure~\ref{fig:Kab5-extremal}.

For \(\Lambda_1^{(5)}\), colour \(1\) is supported only on the first part,
while each later colour has a designated part meeting all its edges. For
\(\Lambda_2^{(5)}\), each of the first three colours is supported on two
large parts, while colours \(4\) and \(5\) have designated vertex-cover
parts. For \(\Lambda_3^{(5)}\), each of colours \(1,2,3,4\) is supported
on parts of total normalized size \(3(a+b)/(8a+5b)\), while every
colour-\(5\) copy uses at least \(a\) vertices of the fifth part. Finally,
in \(\Lambda_4^{(5)}\), every colour is supported on parts of total
normalized size at most \(5/9\). These observations give all four upper
bounds in Theorem~\ref{thm:Kab-five}.

\subsection{The five-part forcing lemma}

Define
\[
c_4(a,b):=
\begin{cases}
\displaystyle\frac1{4a+b},
& b\le3a,\\[1ex]
\displaystyle\frac2{5a+3b},
& 3a\le b\le5a,\\[1ex]
\displaystyle\frac3{5(a+b)},
& b\ge5a.
\end{cases}
\]

\begin{lemma}\label{lem:Kab-five-forcing}
Let \(\lambda>0\), and put
$A:=(a+b)\lambda,$ $B:=a\lambda$.
Suppose that
\begin{align}
A+4B&\le1,\label{eq:five-force-1}\\
\frac{1-\frac32A}{2}&\ge B,\label{eq:five-force-2}\\
A&\le\frac59,\label{eq:five-force-3}\\
c_4(a,b)(1-B)&\ge\lambda,\label{eq:five-force-4}\\
\frac ba\cdot\frac{4-7A}{3}&\ge2A-1.\label{eq:five-force-5}
\end{align}
Then every strip colouring with at most five parts and at most five colours
on its cross-pairs contains a monochromatic \(K_{a,b}\)-tiling of size
$\lambda n-O_{a,b}(1).$
\end{lemma}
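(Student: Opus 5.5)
Order the normalized part sizes as $x_1\ge x_2\ge\cdots\ge x_5$, padding with empty parts if necessary, and follow the structure of the proof of Lemma~\ref{lem:Kab-four-forcing}. Each part and each cross-pair is monochromatic. We must find, in one colour, a $K_{a,b}$-tiling of size $\lambda n-O(1)$. The tools are the clique on one part, Lemma~\ref{lem:Kab-bipartite-packing} on a single cross-pair or on a monochromatic star of pairs, and the union of tilings on disjoint same-coloured pairs.

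\emph{Case 1: $x_1\ge (b/a)x_2$.} If $x_1\ge A$, use the clique on $V_1$. If $x_2\ge B$, use the pair $(V_1,V_2)$. Otherwise $1\le x_1+4x_2<A+4B\le1$ by \eqref{eq:five-force-1}, which is impossible.

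\emph{Case 2: $x_1<(b/a)x_2$.} The pair $(V_1,V_2)$ succeeds unless $x_1+x_2<A$. In that case $x_3\le (x_1+x_2)/2<A/2$, so $x_4+x_5>1-\tfrac32A$ and hence $x_4\ge(1-\tfrac32A)/2\ge B$ by \eqref{eq:five-force-2}. So every part except possibly $V_5$ has size at least $B$, and pairs $(V_1,V_j)$ with $x_1\ge(b/a)x_j$ and $j\le4$ succeed. We may therefore assume that all ratios among $V_1,\dots,V_4$ are below $b/a$.

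\emph{Subcase 2a.} Suppose some colour of the cross-pairs among $V_1,\dots,V_4$ does not appear on any pair meeting $V_5$. Then the four-part subtemplate is a colouring of its cross-pairs with at most four colours. Apply Lemma~\ref{lem:Kab-four-forcing}, or rather the value $c_4(a,b)$ from Theorem~\ref{thm:Kab-four} in rescaled form, to the four parts of total size $1-x_5$. Since $x_5$ is the smallest part we have $x_5\le$ something, and this is where \eqref{eq:five-force-4} enters. I expect it to give a tiling of size $c_4(a,b)(1-B)n\ge\lambda n$, after first disposing of the case $x_5\ge B$ separately via pairs involving $V_5$.

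\emph{Subcase 2b.} Otherwise, count colour repetitions among the ten cross-pairs with five colours and argue as in Lemma~\ref{lem:Kab-four-forcing}. Two adjacent same-coloured pairs give a monochromatic complete bipartite graph with a common part, to which Lemma~\ref{lem:Kab-bipartite-packing} applies. Two disjoint same-coloured pairs are each tiled almost perfectly when their ratio is below $b/a$. Conditions \eqref{eq:five-force-3} and \eqref{eq:five-force-5} should ensure the needed totals:
\begin{itemize}
\item \eqref{eq:five-force-3} should guarantee that unions of three parts (or two disjoint pairs) have size at least $A$, as $\frac32(1-A)\ge A$ did before;
\item \eqref{eq:five-force-5} should handle pairs involving the possibly tiny part $V_5$, where the ratio constraint binds. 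Roughly, $x_5\ge (a/b)\cdot$(the other side), with the other side bounded via $x_1\le(3A-1)/2$-type estimates, giving $(4-7A)/3$.
\end{itemize}

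\emph{Main obstacle.} The hardest step is the case analysis in Subcase 2b when $V_5$ is small, say $x_5<B$. In that case pairs through $V_5$ are only useful as stars: $V_5$ together with the union of its same-coloured neighbours, and the side-size constraint $x_5\ge B$ can fail. I would first try to show that if $x_5<B$, then some colour is absent from all pairs meeting $V_5$, or that $V_5$ can be deleted outright. That reduces to Subcase 2a with loss factor $1-B$, which is exactly the shape of \eqref{eq:five-force-4}. Only the case where every pair through $V_5$ has a distinct colour that also appears among $V_1,\dots,V_4$ would then need the pigeonhole argument with \eqref{eq:five-force-5}.
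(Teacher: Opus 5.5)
Your Case~1 and the first half of Case~2 (reducing to $x_1+x_2<A$, $x_4>B$, and all ratios among $V_1,\dots,V_4$ below $b/a$) agree with the paper. From Subcase~2a on, however, the argument has genuine gaps.

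The hypothesis of Subcase~2a is a non sequitur. Whether some colour is absent from the pairs meeting $V_5$ says nothing about how many colours the six cross-pairs among $V_1,\dots,V_4$ use, and that count is what Lemma~\ref{lem:Kab-four-forcing} needs. The correct split, once $x_5<B$, is whether those six pairs use at most four colours or all five. In the first case, Lemma~\ref{lem:Kab-four-forcing} with $\lambda=c_4(a,b)$ on $(1-x_5)n>(1-B)n$ vertices, together with \eqref{eq:five-force-4}, finishes. In the second case, \emph{deleting $V_5$ outright} cannot work, because this is exactly where the pair $V_1V_5$ is needed.

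Subcase~2b also fails as stated. \emph{Arguing as in Lemma~\ref{lem:Kab-four-forcing}} uses that every three-part union has size at least $A$. Here you can, and by pigeonhole on the six cross-pairs among $V_1,\dots,V_4$ must, reduce to $T:=x_2+x_3+x_4<A$. After that reduction, every three-part union avoiding $V_1$ is smaller than $A$, so two adjacent same-coloured pairs inside $\{V_2,\dots,V_5\}$ give nothing. Your claim that \eqref{eq:five-force-3} makes all three-part unions large is therefore false. It holds only for unions containing $V_1$, via $T<A\Rightarrow x_2+x_3<2A-\tfrac23\Rightarrow x_1+x_4+x_5>\tfrac53-2A\ge A$.

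The missing idea is a colour-forcing argument built on this. Because unions through $V_1$ are large, $V_1V_2$, $V_1V_3$, $V_1V_4$ must receive three distinct colours.

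\emph{Case $x_5\ge B$.} If $x_1\ge(b/a)x_5$, the pair $V_1V_5$ wins directly. Otherwise $V_1V_5$ must get a fourth colour. Then every pair among $V_2,\dots,V_5$ is forced into the single remaining colour, since any other colour creates an adjacent or disjoint same-coloured configuration through $V_1$. One of these parts against the union of the other three, of total $S=1-x_1>\tfrac43(1-A)\ge A$, then wins.

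\emph{Case $x_5<B$ with all five colours used on $V_1,\dots,V_4$.} The colour of $V_1V_5$ must reappear on some $V_iV_j$ with $i,j\in\{2,3,4\}$. If $x_1<(b/a)x_5$, both disjoint pairs tile almost perfectly and cover at least $S>A$. If $x_1\ge(b/a)x_5$, they give $x_5/a+(x_i+x_j)/(a+b)$. If this were below $\lambda$, you would get $x_1+x_2>1-A+(b/a)x_5$. Combined with $x_5=S-T>(4-7A)/3$ and \eqref{eq:five-force-5}, this contradicts $x_1+x_2<A$.

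So \eqref{eq:five-force-5} enters through a lower bound on $x_5$ that comes from $T<A$, not through a ratio estimate of the kind you guess. Without the reduction to $T<A$ and this forcing argument, both the case $x_5\ge B$ and your \emph{main obstacle} remain unproved.
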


\begin{proof}
Let the normalized part sizes be
$x_1\ge x_2\ge x_3\ge x_4\ge x_5,$
$\sum_{i=1}^5x_i=1.$
As in the proof of Lemma~\ref{lem:Kab-four-forcing},
\eqref{eq:five-force-1} implies that we may assume
$x_1<\frac ba x_2$ and $x_1+x_2<A.$
Indeed, otherwise either the clique on the first part, the pair formed by the
first two parts, or the inequality
\(1\le x_1+4x_2<A+4B\) gives the result.

Since \(x_3\le x_2\le(x_1+x_2)/2\),
$x_4+x_5
=
1-x_1-x_2-x_3
>
1-\frac{3A}{2}.$
Thus \(x_4>B\) by \eqref{eq:five-force-2}. We may further assume
$x_1<\frac ba x_4,$
since otherwise the pair formed by the first and fourth parts gives the
required tiling.

Put
$T:=x_2+x_3+x_4.$
If \(T\ge A\), two of the six cross-pairs among the first four parts have the
same colour. Adjacent pairs yield a same-coloured complete bipartite graph
on three parts, while disjoint pairs yield two compatible tilings on four
parts. Since all first four parts have size at least \(B\) and size ratio
smaller than \(b/a\), either configuration gives the required tiling.
We may therefore assume
$T<A.$

Let
$S:=x_2+x_3+x_4+x_5.$
Since \(x_2\ge(x_3+x_4+x_5)/3\),
\[
S>\frac{4(1-A)}3
\qquad\text{and hence}\qquad
x_5=S-T>\frac{4-7A}{3}.
\]
Furthermore, the inequalities
\(x_1+x_2<A\), \(x_2+x_3+x_4<A\), and \(x_5\le x_4\) imply
$x_2+x_3<2A-\frac23.$
Consequently,
\[
x_1+x_4+x_5
=
1-x_2-x_3
>
\frac53-2A
\ge A
\]
by \eqref{eq:five-force-3}. Thus every three-part set containing the first
part has total normalized size at least \(A\).

The cross-pairs \(12,13,14\) must now have distinct colours; otherwise two
same-coloured adjacent pairs already give the desired tiling. Denote these
colours by \(c_2,c_3,c_4\).

Suppose first that \(x_5\ge B\). If \(x_1\ge(b/a)x_5\), the pair \(15\)
gives the desired tiling. Otherwise all five parts have pairwise size ratio
smaller than \(b/a\). The colour \(c_5\) of \(15\) must be distinct from
\(c_2,c_3,c_4\), since a repeated colour would give a same-coloured
three-part configuration of total size at least \(A\).

Let \(c_0\) be the only possible remaining colour. Every edge \(ij\) among
\(2,3,4,5\) must have colour \(c_0\): any colour \(c_i\) or \(c_j\) gives
two adjacent same-coloured pairs, while a colour \(c_k\) with
\(k\notin\{i,j\}\) gives two disjoint same-coloured pairs. Hence all
cross-pairs among the final four parts have colour \(c_0\). Taking one of
these parts against the union of the other three, and using \(S>A\), gives
the required tiling.

It remains to consider \(x_5<B\). If the six cross-pairs among the first
four parts use at most four colours, then
Lemma~\ref{lem:Kab-four-forcing}, applied on their
\((1-x_5)n\) vertices, gives at least
\[
c_4(a,b)(1-x_5)n-O(1)
>
c_4(a,b)(1-B)n-O(1)
\ge
\lambda n-O(1)
\]
copies, by \eqref{eq:five-force-4}.

We may therefore assume that those six pairs use all five colours. Let
\(c_5\) be the colour of \(15\). If \(c_5\in\{c_2,c_3,c_4\}\), two
same-coloured adjacent pairs again give the required tiling. Hence \(c_5\)
is distinct from \(c_2,c_3,c_4\). Since all five colours occur among the
first four parts, some edge \(ij\), with \(i,j\in\{2,3,4\}\), has colour
\(c_5\). Thus \(15\) and \(ij\) are disjoint same-coloured pairs.

If \(x_1<(b/a)x_5\), both pairs can tile all their vertices up to
\(O_{a,b}(1)\), and the four parts involved have total size at least
\(S>A\). We may therefore assume \(x_1\ge(b/a)x_5\). The two pairs
give at least
\[
\frac{x_5}{a}n
+
\frac{x_i+x_j}{a+b}n
-O(1)
\]
copies. If this were smaller than \(\lambda n-O(1)\), then
\[
x_i+x_j
<
A-\frac{a+b}{a}x_5.
\]
Since \(x_3+x_4\le x_i+x_j\), we would obtain
\[
x_1+x_2
>
1-A+\frac ba x_5
>
1-A+\frac ba\cdot\frac{4-7A}{3}
\ge A
\]
by \eqref{eq:five-force-5}. But \(x_1<(b/a)x_2\), so the pair \(12\)
would then give the desired tiling, a contradiction.
\end{proof}

\begin{proof}[Proof of Theorem~\ref{thm:Kab-five}]
The four templates above give the upper bounds.

For the lower bounds, apply Lemma~\ref{lem:Kab-five-forcing} with
\[
\lambda=
\frac1{5a+b},
\qquad
\frac2{7a+3b},
\qquad
\frac3{8a+5b},
\qquad
\frac5{9(a+b)}
\]
in the four respective ranges.

Conditions \eqref{eq:five-force-1}--\eqref{eq:five-force-3} follow directly
from the stated bounds on \(b/a\). In the first three ranges,
\[
c_4(a,b)(1-B)=\lambda,
\]
while in the fourth range condition \eqref{eq:five-force-4} is equivalent
to \(2b\ge13a\).

Writing \(z=b/a\), then \eqref{eq:five-force-5} is automatic in the
first range and reduces in the remaining ranges to
\[
2z(7-z)\ge3(z-3),
\qquad
z(11-z)\ge3(z-2),
\qquad
z\ge3,
\]
respectively. These inequalities hold throughout the indicated intervals.

Thus every five-strip colouring contains a monochromatic
\(K_{a,b}\)-tiling of the asserted size. The complete-host structural
reduction transfers the result to arbitrary five-colourings of \(K_n\).
\end{proof}

\section{The double star \(D_{2,3}\)}
\label{app:D23}

In this section, we prove Theorem~\ref{thm:D23}. Write
\(D:=D_{2,3}\). Thus \(v(D)=7\), its bipartition classes have sizes \(3\)
and \(4\), and its vertex-cover number is \(2\).

We begin with the extremal construction.

\begin{lemma}\label{lem:D23upper}
There exists a \(3\)-colouring \(\chi\) of \(E(K_n)\) such that
$\nu_D(\chi)\le \frac{7}{78}n+O(1).$
\end{lemma}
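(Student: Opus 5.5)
The construction is the one in Figure~\ref{fig:D23upper}: a $3$-strip colouring with three parts whose proportions are read off from the figure. Partition $V(K_n)=V_1\cup V_2\cup V_3$ with
\[
|V_1|=\tfrac{8}{78}n+O(1),\qquad |V_2|=\tfrac{49}{78}n+O(1),\qquad |V_3|=\tfrac{21}{78}n+O(1),
\]
which are the sizes $4n/39$, $49n/78$, $7n/26$ in the figure. The colouring is as follows. Edges inside $V_1$, inside $V_3$, and between $V_1$ and $V_2$ get colour $1$. Edges between $V_3$ and $V_1\cup V_2$ get colour $2$. Edges inside $V_2$ get colour $3$. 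The plan is to bound each colour separately by $\frac{7}{78}n+O(1)$, using only that $v(D)=7$, that the bipartition classes of $D$ have sizes $3$ and $4$, and that the vertex-cover number of $D$ is $2$.

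\emph{Colour $3$.} This colour lives only inside $V_2$, so any colour-$3$ tiling has at most
\[
\frac{|V_2|}{7}=\frac{49n}{7\cdot 78}+O(1)=\frac{7n}{78}+O(1)
\]
copies.

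\emph{Colour $2$.} The colour-$2$ graph is the complete bipartite graph between $V_3$ and $V_1\cup V_2$. Both sides are colour-$2$ independent, since $V_1$, $V_3$ and $V_1$–$V_2$ edges are colour $1$ and $V_2$ is colour $3$. Since $D$ is connected, every colour-$2$ copy places one whole bipartition class of $D$ inside $V_3$. Each class has size at least $3$, so the tiling has at most
\[
\frac{|V_3|}{3}=\frac{21n}{234}+O(1)=\frac{7n}{78}+O(1)
\]
copies.

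\emph{Colour $1$.} This is the only step needing a little care, because two contributions must be added. The colour-$1$ graph has no edges between $V_3$ and $V_1\cup V_2$, so it splits into two components.
\begin{itemize}
\item The clique on $V_3$ contributes at most $|V_3|/7=\frac{3n}{78}+O(1)$ copies.
\item On $V_1\cup V_2$, the set $V_2$ is colour-$1$ independent. Hence the vertices of any colour-$1$ copy lying in $V_1$ form a vertex cover of $D$, so each copy uses at least $2$ vertices of $V_1$. This gives at most $|V_1|/2=\frac{4n}{78}+O(1)$ copies.
\end{itemize}
Since $D$ is connected, every copy lies in one component, so the total is at most $\frac{7n}{78}+O(1)$.

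Taking the maximum over the three colours gives $\nu_D(\chi)\le\frac{7}{78}n+O(1)$. The only genuine obstacle is choosing the proportions so that all three bounds balance; they are fixed by the figure, and the verification is the arithmetic above.
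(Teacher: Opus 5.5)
Your proposal is correct and follows the paper's proof essentially verbatim. It uses the same three-part construction with proportions $8/78$, $49/78$, $21/78$ and the same colour-by-colour bounds: $|V_2|/7$ for the clique colour, a full bipartition class of size at least $3$ inside $V_3$ for the bipartite colour, and $|V_1|/2+|V_3|/7$ via vertex-cover number $2$ for the remaining colour; your explicit appeal to the connectivity of $D$ just makes precise what the paper uses implicitly.
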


\begin{proof}
Let \(V(K_n)=V_1\cup V_2\cup V_3\), where
\[
|V_1|=\left\lfloor\frac{4n}{39}\right\rfloor,\qquad
|V_3|=\left\lfloor\frac{7n}{26}\right\rfloor,\qquad
|V_2|=n-|V_1|-|V_3|=\frac{49n}{78}+O(1).
\]

Colour the edges inside \(V_1\) and \(V_3\), and the edges between
\(V_1\) and \(V_2\), red. Colour the edges between \(V_3\) and
\(V_1\cup V_2\) blue, and colour the edges inside \(V_2\) green; see
Figure~\ref{fig:D23upper}.

The red graph has two components: the clique on \(V_3\), and the graph
on \(V_1\cup V_2\) in which every edge meets \(V_1\). Since \(D\) has
vertex-cover number \(2\), every red copy in \(V_1\cup V_2\) uses at least
two vertices of \(V_1\), while every red copy in \(V_3\) uses seven vertices
there. Hence a red tiling has at most
$\frac{|V_1|}{2}+\frac{|V_3|}{7}
=
\frac{7n}{78}+O(1)$
copies.

The blue graph is complete bipartite with classes \(V_3\) and
\(V_1\cup V_2\). Since the bipartition classes of \(D\) have sizes \(3\)
and \(4\), every blue copy uses at least three vertices of \(V_3\).
Thus a blue tiling has at most
$\frac{|V_3|}{3}
=
\frac{7n}{78}+O(1)$
copies. Finally, green occurs only inside \(V_2\), so a green tiling has at
most
$\frac{|V_2|}{7}
=
\frac{7n}{78}+O(1)$
copies.
\end{proof}

We next prove the matching lower bound for strip colourings. We use the
finite pattern-packing LP from the proof of Theorem~\ref{thm:finite-optimization}. For fixed
part proportions, its integral optimum differs from its fractional optimum by
\(O_D(1)\).

\begin{lemma}[Three-part packing calculation]
\label{lem:D23-threepart-LP}
Let \(A,U,W\) be three strip parts of normalized sizes \(x,u,v\),
respectively. Suppose that
$AU,\ UW$ are blue,
$AW$ is red,
and that the edges inside \(A\) have a third colour. According to the colours
inside \(U\) and \(W\), the normalized fractional red and blue
\(D\)-tiling numbers are given by the following table:
\[
\begin{array}{c|c|c}
\text{colours inside }U,W
&
\tau_{\mathrm{red}}
&
\tau_{\mathrm{blue}}
\\ \hline
\mathrm{R},\mathrm{R}
&
\displaystyle
\min\left\{\frac{u}{7}+\frac{v}{2},\frac17\right\}
&
\displaystyle
\min\left\{\frac{u}{3},\frac{x+v}{3},\frac17\right\}
\\[3ex]
\mathrm{R},\mathrm{B}
&
\displaystyle
\min\left\{\frac{u}{7}+\frac{v}{3},
      \frac{x}{3}+\frac{u}{7},
      \frac17\right\}
&
\displaystyle
\min\left\{\frac{4u+v}{7},
      \frac{u}{3}+\frac{2v}{9},
      \frac{x}{3}+\frac{v}{2},
      \frac17\right\}
\\[3ex]
\mathrm{B},\mathrm{R}
&
\displaystyle
\min\left\{\frac{v}{2},\frac{x+v}{7}\right\}
&
\displaystyle
\min\left\{\frac{u}{2},\frac17\right\}
\\[3ex]
\mathrm{B},\mathrm{B}
&
\displaystyle
\min\left\{\frac{v}{3},\frac{x}{3},\frac{x+v}{7}\right\}
&
\displaystyle
\min\left\{\frac{4u+v}{7},
      \frac{u}{2}+\frac{v}{6},
      \frac17\right\}.
\end{array}
\]
\end{lemma}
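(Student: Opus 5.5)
Each entry is the optimum of the finite pattern-packing LP $\tau_\ell(\lambda,x)$ for the three-part template with parts $A,U,W$. I will compute it by exhibiting a primal packing and a dual cover of the same value, in the form $\tau_\ell=\min_{z\in P_\ell}(x,u,v)\cdot z$ from the proof of Theorem~\ref{thm:finite-optimization}. The first step, for each of the four colourings inside $U,W$ and each colour $\ell\in\{\text{red},\text{blue}\}$, is to list the colour-$\ell$ graph on the parts. Red always contains the bipartite pair $AW$, plus the cliques on $U$ and/or $W$ when these are red. Blue always contains $AU$ and $UW$, plus the cliques on $U$ and/or $W$ when these are blue; the clique on $A$ is never red or blue.

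The second step classifies the maps $f:V(D)\to\{A,U,W\}$ that are colour-$\ell$ patterns, recording only their occupancy vectors $(a_A,a_U,a_W)$. Write $D$ as centres $c_1,c_2$ with leaves $L_1$ ($|L_1|=2$) and $L_2$ ($|L_2|=3$). A pattern is determined by placing the two centres and then putting each leaf in a part adjacent, in colour $\ell$, to its centre's part. Only the Pareto-minimal occupancy vectors matter for the dual, which are those using as few vertices of the scarce parts as possible. For example, for red in case R,R, the useful patterns are $(0,7,0)$ inside $U$, $(0,0,7)$ inside $W$, and the vertex-cover pattern with both centres in $W$ and all leaves in $A$, giving $(5,0,2)$, together with its variants that move leaves into $W$. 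Since every red edge between parts meets $W$, the centres lie in $W$ or in a red clique. The constraint families then give dual vertices $z$. For red R,R these are $z=(0,1/7,1/2)$, which gives $u/7+v/2$, and $z=(1/7,1/7,1/7)$, which gives $1/7$.

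The third step verifies each claimed formula by two inequalities. For the upper bound, each listed linear form $\alpha x+\beta u+\gamma v$ must satisfy $\sum_y z_{f(y)}\ge1$ for every pattern $f$, i.e.\ $\alpha a_A+\beta a_U+\gamma a_W\ge1$ for all occupancy vectors found. This is a finite check over the minimal patterns. For the lower bound, I partition the simplex of $(x,u,v)$ according to which term attains the minimum and give an explicit convex combination of at most three patterns achieving that term. This is equivalent to complementary slackness: the primal support uses exactly the patterns tight for the active dual vertex. Typical mixtures are, for red R,R, the cover patterns $(5,0,2)$ together with $(0,0,7)$ and $(0,7,0)$, balanced so that $A$, or the total $1/7$, is exhausted.

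The main obstacle is completeness of the dual side in the mixed cases (R,B and B,B blue), where terms like $\frac{u}{3}+\frac{2v}{9}$ and $\frac{u}{2}+\frac{v}{6}$ arise from nonobvious patterns. For example, in blue B,B a centre in $W$ can take all its leaves in $W$ or $U$ while the other centre is in $U$. Here I would enumerate all centre placements systematically, i.e.\ $3^2$ choices times leaf choices, reduce to minimal occupancy vectors, and then compute the vertices of $P_\ell$ directly. Checking that no further vertex exists, and hence that no extra term is missing from each minimum, is the delicate part; the primal constructions are then routine.
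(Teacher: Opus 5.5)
Your proposal is correct and follows the paper's proof. Both enumerate the colour-$\ell$ patterns by centre placement and then leaf placement, pass to the three-variable dual, list its minimal vertices, and conclude by strong duality; your red R,R vertices $(0,\tfrac17,\tfrac12)$ and $(\tfrac17,\tfrac17,\tfrac17)$ are exactly the paper's.

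Two small remarks. First, your aside that red centres must lie in $W$ or in a red clique is not literally true: $c_1\in A$ with $c_2$ and both leaves of $c_1$ in $W$ is a red pattern. Such patterns have occupancy $(k,0,7-k)$, which is a convex combination of $(5,0,2)$ and $(0,0,7)$, so they add no dual constraint and are harmless. Second, once you supply explicit primal packings matching each minimum, weak duality already gives equality, so the ``delicate'' check that no further dual vertex exists becomes unnecessary.
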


\begin{proof}
For a fixed colour, the pattern LP has one capacity constraint for each of
\(A,U,W\). A copy of \(D\) contributes its numbers of vertices in these
three parts. Enumerating the possible locations of the two centres, and then
of their five leaves, gives the finite primal LP.

The coordinatewise-minimal vertices of the corresponding three-variable
dual LP are as follows:
\[
\begin{array}{c|c|c}
\text{colours inside }U,W
&
\text{red dual vertices}
&
\text{blue dual vertices}
\\ \hline
\mathrm{R},\mathrm{R}
&
(0,\tfrac17,\tfrac12),\
(\tfrac17,\tfrac17,\tfrac17)
&
(0,\tfrac13,0),\
(\tfrac13,0,\tfrac13),\
(\tfrac17,\tfrac17,\tfrac17)
\\[1ex]
\mathrm{R},\mathrm{B}
&
(0,\tfrac17,\tfrac13),\
(\tfrac13,\tfrac17,0),\
(\tfrac17,\tfrac17,\tfrac17)
&
(0,\tfrac47,\tfrac17),\
(0,\tfrac13,\tfrac29),\
(\tfrac13,0,\tfrac12),\
(\tfrac17,\tfrac17,\tfrac17)
\\[1ex]
\mathrm{B},\mathrm{R}
&
(0,0,\tfrac12),\
(\tfrac17,0,\tfrac17)
&
(0,\tfrac12,0),\
(\tfrac17,\tfrac17,\tfrac17)
\\[1ex]
\mathrm{B},\mathrm{B}
&
(0,0,\tfrac13),\
(\tfrac13,0,0),\
(\tfrac17,0,\tfrac17)
&
(0,\tfrac47,\tfrac17),\
(0,\tfrac12,\tfrac16),\
(\tfrac17,\tfrac17,\tfrac17).
\end{array}
\]
Here the coordinates are ordered as \(A,U,W\). Taking the scalar product
with \((x,u,v)\), and applying strong duality, gives the table.
\end{proof}

\begin{lemma}\label{lem:D23lower}
Every \(3\)-strip \(3\)-colouring \(\psi\) of \(K_n\) satisfies
$\nu_D(\psi)\ge \frac{7}{78}n-O(1).$
\end{lemma}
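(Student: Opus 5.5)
Let $\lambda:=7/78$. We will show that for every choice of three strip parts with normalized sizes $x_1\ge x_2\ge x_3$ (padding with empty parts), and every colouring of the six blocks (three cliques, three cross-pairs), some colour admits a fractional $D$-packing of value at least $\lambda$. By \eqref{eq:integral-strip-tiling} this gives $\nu_D(\psi)\ge\lambda n-O(1)$. The tools are simple single-block bounds together with Lemma~\ref{lem:D23-threepart-LP} for the harder configurations. A monochromatic clique on a part of size $x$ gives $x/7$. A monochromatic complete bipartite pair of sizes $y\ge z$ gives $\min\{(y+z)/7,z/3\}$, since $D$ has bipartition classes of sizes $3,4$ and the analogue of Lemma~\ref{lem:Kab-bipartite-packing} applies to $D\subseteq K_{3,4}$. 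A pair together with a same-coloured clique on its smaller side gives more, because then the smaller side only needs to host a vertex cover of size $2$. We also use that disjoint same-coloured blocks add.

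The first step is to dispose of the easy cases. If $x_1\ge 7\lambda=49/78$, the clique on $V_1$ already suffices. Otherwise $x_2+x_3>29/78$. If the cross-pair $V_1V_2$ (or $V_1V_3$) has sizes with $\min\{(x_1+x_2)/7,x_2/3\}\ge\lambda$, we are done. Next we split by the colours of the three cross-pairs. If two of them share a colour, that colour contains a complete bipartite graph with one part on one side and the union of the other two on the other side. Here the threshold $\min\{1/7,\text{(small side)}/3\}$ handles everything except the case where the common part is small. Then, since cross-pairs are bipartite, we bring in the clique colours and the third cross-pair, using the table in Lemma~\ref{lem:D23-threepart-LP}. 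In particular, its rows ``$AU,UW$ blue, $AW$ red'' cover exactly the situation where two cross-pairs share blue and the third is red, and the clique colours inside $U,W$ vary over R/B. If all three cross-pairs have distinct colours, every colour is a single cross-pair plus possibly cliques. We then argue that, among the three cliques, some colour appears on a cross-pair incident to it, which gives a ``pair plus clique'' bound. The other possibility is that each clique has the colour of the opposite pair, in which case the disjoint blocks of each colour add.

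The main step, and the expected obstacle, is the configuration realised by the extremal construction of Lemma~\ref{lem:D23upper}. There, the minimum over part sizes of the maximum over colours of the LP values from the table must be shown to be exactly $7/78$. For each of the four rows we will solve the small optimization $\min_{x+u+v=1}\max\{\tau_{\mathrm{red}},\tau_{\mathrm{blue}},\text{third colour on }A\text{: }x/7\}$. Each $\tau$ is a minimum of linear forms, so this is a finite collection of 2-dimensional LPs. We will carry this out by guessing the active constraints, starting from the construction's proportions $(4/39,49/78,7/26)$ up to relabelling, in row R,R: $u/7+v/2=v/3=u/7=7/78$ with the third colour giving $x/7$. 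We then verify optimality by exhibiting a convex combination of the active inequalities (LP dual multipliers) summing to $1\le 78\lambda/7\cdot(\cdots)$, and check that the other rows give values at least $7/78$. The delicate part is making sure that every assignment of clique colours and every permutation of which part plays $A,U,W$ is covered, including degenerate cases where a part is empty and fewer than three colours occur. For those we fall back on the two-part bounds above, which give at least $\min\{1/7,\ldots\}$ and require a short check that they exceed $7/78$.
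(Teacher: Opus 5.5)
Your outline follows the paper's: one-block estimates, then the configuration of Lemma~\ref{lem:D23-threepart-LP}, then a row-by-row two-dimensional LP. That final LP step does go through, even over the whole simplex without ordering constraints. The gap is the reduction to the hypotheses of that lemma. The table requires the clique inside $A$ to carry the third colour, and its rows only allow R or B inside $U$ and $W$. Your sketch never establishes either; it defers ``every assignment of clique colours'' to unspecified two-part fallbacks.

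The one fallback you do name fails. In the all-distinct case, ``pair plus clique'' with the clique on a small part is too weak. After your easy cases, $x_1$ can be close to $49/78$ and $x_2$ close to $21/78$. Then a clique on $V_3$ sharing the colour of $V_1V_3$ gives $\min\{x_3/2,(x_1+x_3)/7\}$, which is close to $28/546$. A clique on $V_2$ sharing the colour of $V_2V_3$ gives at most $(x_2+x_3)/7=(1-x_1)/7$, which is close to $29/546$. Both are below $\lambda=49/546$, and your dichotomy does not say which clique to use.

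The missing idea is the paper's claim that the colour $c_0$ inside the largest part appears in no other block. Your easy cases give $x_1<7\lambda$ and $x_2<3\lambda$, hence $x_1>1-6\lambda$ and $x_3>1-10\lambda$. There are three cases.
\begin{enumerate}
\item If $c_0$ occurs inside another part, the two cliques have total size at least $1-x_2>7\lambda$.
\item If $c_0$ occurs on $V_1V_j$, the split graph with its clique on the large side gives $\min\{x_1/2,(x_1+x_j)/7\}>\lambda$.
\item If $c_0$ occurs on $V_2V_3$, the disjoint blocks give $x_1/7+\min\{(x_2+x_3)/7,x_3/3\}>\lambda$.
\end{enumerate}
This single claim disposes of the all-distinct case, puts the third colour inside $A=V_1$, and keeps it out of $U$ and $W$. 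Next, $V_1V_2$ and $V_1V_3$ must have different colours; otherwise $V_1$ against $V_2\cup V_3$ is a monochromatic complete bipartite graph with both sides above $3\lambda$. This yields exactly the table's configuration.

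Separately, your guessed active set is wrong. At $(x,u,v)=(49/78,21/78,8/78)$ in row R,R, one has $v/3=8/234$ and $u/7=3/78$, neither equal to $\lambda$. The binding forms are $x/7$, $u/7+v/2$ and $u/3$. If all three were below $\lambda$, then
\[
1=x+u+v=7\cdot\frac x7+2\Bigl(\frac u7+\frac v2\Bigr)+\frac{15}{7}\cdot\frac u3<\Bigl(7+2+\frac{15}{7}\Bigr)\lambda=1 ,
\]
a contradiction.
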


\begin{proof}
Put
$\alpha:=\frac{7}{78}.$
Since the number of strip parts and copy types is fixed, it is enough to
prove that the fractional monochromatic tiling number is at least \(\alpha\).

Pad the strip partition by empty parts if necessary, and let its normalized
part sizes be
\[
x\ge y\ge z,
\qquad
x+y+z=1.
\]
Suppose, for a contradiction, that every colour has fractional
\(D\)-tiling number smaller than \(\alpha\).

The monochromatic clique on the largest part gives \(x/7<\alpha\), so
$x<7\alpha.$
The pair formed by the two largest parts is a monochromatic complete
bipartite graph. Its \(D\)-tiling number is at least
$\min\left\{\frac{x+y}{7},\frac{y}{3}\right\}.$
Since \(x+y=1-z\ge2/3\), the first term is larger than \(\alpha\).
Consequently,
$y<3\alpha.$
It follows that
\[
z=1-x-y>1-10\alpha=\frac8{78}
\qquad\text{and}\qquad
x\ge1-2y>1-6\alpha.
\]

Let \(c_0\) be the colour inside the largest part. We claim that \(c_0\)
occurs on no other pair of strip parts, including the two remaining diagonal
pairs.

If \(c_0\) also occurs inside another part, the two monochromatic cliques
have total size at least \(x+z=1-y>7\alpha\). If \(c_0\) occurs between the
largest part and another part, the resulting monochromatic split graph
contains a \(D\)-tiling of normalized size at least
\[
\min\left\{\frac{x}{2},\frac{x+z}{7}\right\}>\alpha.
\]
Finally, if \(c_0\) occurs between the two smaller parts, we may combine a
tiling inside the largest part with a tiling in the resulting complete
bipartite graph. Its normalized size is at least
\[
\frac{x}{7}
+
\min\left\{\frac{y+z}{7},\frac{z}{3}\right\}
>
\alpha,
\]
using \(x>1-6\alpha\) and \(z>1-10\alpha\). This proves the claim.

Thus all five remaining entries of the strip template use only the other two
colours, which we call red and blue. The two cross-pairs incident with the
largest part must have different colours. Otherwise they form a monochromatic
complete bipartite graph between the largest part and the union of the other
two parts; both sides have size greater than \(3\alpha\), so it contains an
\(\alpha n-O(1)\)-sized \(D\)-tiling.

The third cross-pair agrees in colour with one of these two pairs. Relabel the
two smaller parts as \(U,W\) so that
$AU,\ UW$ are blue,
$AW$ is red,
where \(A\) is the largest part. Write the corresponding normalized sizes as
\(x,u,v\). We have
$x<7\alpha,$ $u,v<3\alpha,$ and $x>1-6\alpha.$

The edges inside \(U\) and \(W\) are each red or blue. We consider the four
possibilities in Lemma~\ref{lem:D23-threepart-LP}.

\smallskip

\noindent\textbf{Case 1: the two diagonal colours are
\(\mathrm{R},\mathrm{R}\).}
The blue tiling bound gives \(u<3\alpha\), while the red bound gives
$\frac{u}{7}+\frac{v}{2}<\alpha.$
Therefore
\[
1=x+u+v
<
7\alpha+u+2\alpha-\frac{2u}{7}
<
9\alpha+\frac{15\alpha}{7}
=
1,
\]
a contradiction.

\smallskip

\noindent\textbf{Case 2: the diagonal colours are
\(\mathrm{B},\mathrm{R}\).}
The blue and red bounds imply, respectively,
\(u<2\alpha\) and \(v<2\alpha\). Hence
\[
1=x+u+v<7\alpha+4\alpha=11\alpha<1,
\]
a contradiction.

\smallskip

\noindent\textbf{Case 3: the diagonal colours are
\(\mathrm{B},\mathrm{B}\).}
The red bound gives \(v<3\alpha\). From the blue bound, either
$4u+v<7\alpha$
 or 
$3u+v<6\alpha.$
In the first case,
\[
u+v
=
\frac{4u+v}{4}+\frac{3v}{4}
<
\frac{7\alpha}{4}+\frac{9\alpha}{4}
=
4\alpha.
\]
The same conclusion follows in the second case from
\[
u+v
=
\frac{3u+v}{3}+\frac{2v}{3}
<
2\alpha+2\alpha.
\]
Thus \(1=x+u+v<11\alpha<1\), again a contradiction.

\smallskip

\noindent\textbf{Case 4: the diagonal colours are
\(\mathrm{R},\mathrm{B}\).}
The red bound gives
$3u+7v<21\alpha.$
The blue bound gives either \(4u+v<7\alpha\) or
\(3u+2v<9\alpha\). In the first case,
\[
u+v
=
\frac4{25}(4u+v)+\frac3{25}(3u+7v)
<
\frac{91}{25}\alpha
<
4\alpha.
\]
In the second case,
\[
u+v
=
\frac4{15}(3u+2v)+\frac1{15}(3u+7v)
<
\frac{19}{5}\alpha
<
4\alpha.
\]
Once more, \(1=x+u+v<11\alpha<1\), a contradiction.

All possibilities lead to a contradiction, proving that some colour has
fractional \(D\)-tiling number at least \(\alpha\). Rounding the fixed
pattern LP loses only \(O(1)\) copies.
\end{proof}

\begin{proof}[Proof of Theorem~\ref{thm:D23}]
Lemma~\ref{lem:D23upper} and Lemma~\ref{lem:D23lower} give
$
\beta_{3,D_{2,3}}=\frac{7}{78}.
$
The result now follows from
Theorem~\ref{thm:finite-optimization}.
\end{proof}

\section{Clique tilings in \(t\)-graphs}
\label{subsec:3-clique}

We next prove Proposition~\ref{prop:clique-tiling-upper}. That is, for clique tilings in \(t\)-graphs, with at least
three colours and clique order strictly larger than the uniformity, the
elementary greedy bound is asymptotically sharp up to an additive constant.

\begin{lemma}\label{lem:pair-colouring}
For every \(r\ge3\), there exists a map
$\rho:\binom{[r]}2\to[r]$
such that:
\begin{enumerate}
\item
\(\rho(\{i,j\})\notin\{i,j\}\) for all distinct \(i,j\in[r]\);

\item
for every three distinct \(a,b,c\in[r]\), the values
$\rho(\{a,b\})$, $\rho(\{a,c\})$,
 $\rho(\{b,c\})$
are not all equal.
\end{enumerate}
\end{lemma}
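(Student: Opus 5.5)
We must construct $\rho:\binom{[r]}2\to[r]$ with $\rho(\{i,j\})\notin\{i,j\}$ and no triangle $\{a,b,c\}$ on which $\rho$ is constant. Equivalently, we colour the edges of $K_r$ with colours from $[r]$ so that no edge uses a colour equal to one of its endpoints, and no colour class contains a triangle. The plan is to split according to the parity of $r$, mirroring the construction in Lemma~\ref{lem:uppernonbipar}.

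For odd $r$, identify $[r]$ with $\mathbb Z_r$ and set $\rho(\{i,j\}):=(i+j)2^{-1}$. Since $2$ is invertible modulo $r$, we get $(i+j)2^{-1}=i$ if and only if $j=i$, which establishes (1). For a fixed colour $c$, the edges $\{i,j\}$ with $i+j=2c$ form a matching on $\mathbb Z_r\setminus\{c\}$, because $j$ is determined by $i$. A matching contains no two edges sharing a vertex, so it certainly contains no triangle, which gives (2).

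For even $r\ge4$, use the same construction on $\mathbb Z_{r-1}$ (note that $r-1\ge3$ is odd), and add an extra element $\infty$, setting $\rho(\{\infty,i\}):=i+1 \pmod{r-1}$. Property (1) holds for the new pairs, since $i+1\ne i$ and $i+1\ne\infty$. For a colour $c\in\mathbb Z_{r-1}$, the colour class consists of the matching $\{\{i,j\}:i+j=2c\}$ together with the single edge $\{\infty,c-1\}$. A triangle in this class would need at least two edges incident to one vertex. The only vertex of degree possibly $2$ is $c-1$, and a triangle through it would be $\{\infty,c-1,j\}$. This requires the edge $\{\infty,j\}$ to have colour $c$, that is, $j=c-1$, which is impossible. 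The colour $\infty$ is never used. So each colour class is a forest of maximum degree at most $2$ containing no triangle, which gives (2).

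The only step needing care is checking the even case. As shown above, each colour class there is a matching plus one pendant edge, so it is acyclic. No further obstacle is expected.
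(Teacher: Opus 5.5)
Your proof is correct and uses the same construction as the paper: $\rho(\{i,j\})=(i+j)/2$ on $\mathbb Z_r$ for odd $r$, and for even $r$ the same map on $\mathbb Z_{r-1}$ together with $\rho(\{\infty,i\})=i+1$. The only difference is in the even-case verification. The paper checks a triangle $\{\infty,a,b\}$ directly, noting that its two edges at $\infty$ receive the distinct colours $a+1\ne b+1$. You instead observe that each colour class is a matching plus one pendant edge, which is equally valid.
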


\begin{proof}
If \(r\) is odd, identify \([r]\) with \(\mathbb Z_r\) and define
$\rho(\{i,j\})=\frac{i+j}{2}\pmod r.$
Since \(2\) is invertible in \(\mathbb Z_r\), this is well-defined and
\(\rho(\{i,j\})\notin\{i,j\}\). For a fixed colour \(c\), each vertex
\(a\) has at most one neighbour \(j\) satisfying
\((a+j)/2=c\). Thus every colour class is a matching, proving the second
property.

Now suppose that \(r\) is even. Identify the colour set with
\(\mathbb Z_{r-1}\cup\{\infty\}\). For distinct
\(i,j\in\mathbb Z_{r-1}\), define
$\rho(\{i,j\})=\frac{i+j}{2}\pmod{r-1},$
and define
$\rho(\{\infty,i\})=i+1\pmod{r-1}.$
The first property is immediate. A triangle contained in
\(\mathbb Z_{r-1}\) is not monochromatic by the odd case, while a triangle
\(\{\infty,a,b\}\) is not monochromatic because its two edges incident with
\(\infty\) have the distinct colours \(a+1\) and \(b+1\).
\end{proof}

\begin{lemma}\label{lem:shadow-separating-colouring}
Let \(t\ge3\) and \(r\ge3\). There exists a map \(\gamma\) from the family
of size-\(t\) multisets on \([r]\) to \([r]\) such that:
\begin{enumerate}
\item
\(\gamma(i^t)=i\) for every \(i\in[r]\), where \(i^t\) denotes the
multiset consisting of \(t\) copies of \(i\);

\item
if \(A\) is a size-\((t+1)\) multiset which is not of the form
\(i^{t+1}\), then the values
\[
\gamma(A-x),
\qquad x\in A,
\]
are not all equal, where \(A-x\) means that one occurrence of \(x\) is
deleted.
\end{enumerate}
\end{lemma}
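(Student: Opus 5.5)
The plan is to define $\gamma$ separately according to the support size of the $t$-multiset $B$, writing $s(B)$ for the set of distinct elements of $B$ and identifying $[r]$ with $\mathbb Z_r$. If $|s(B)|=1$, say $B=i^t$, put $\gamma(B)=i$, as property (1) requires. If $|s(B)|\ge3$, put $\gamma(B)=\sum_{y\in B}y\pmod r$, where the sum counts multiplicities.

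If $|s(B)|=2$, write $B=i^ak^{b}$ with $a+b=t$ and $a,b\ge1$. If $t$ is odd, exactly one of $a,b$ is odd, and $\gamma(B)$ is the element with odd multiplicity. If $t$ is even and $a,b$ are both even, put $\gamma(B)=\rho(\{i,k\})$, with $\rho$ from Lemma~\ref{lem:pair-colouring}; then $\rho(\{i,k\})\notin\{i,k\}$. If $t$ is even and $a,b$ are both odd, let $\gamma(B)$ be the element of smaller multiplicity, breaking the tie $a=b$ by taking the smaller label; in particular $\gamma(B)\in\{i,k\}$.

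The key feature is that along the path $i^t,i^{t-1}k,\dots,ik^{t-1},k^t$ consecutive values differ. When $t$ is odd, the parity of $a$ flips at each step. When $t$ is even, the even–even and odd–odd multisets alternate, with values $\rho(\{i,k\})$ and an element of $\{i,k\}$ respectively. Moreover, $\gamma(i^{t-1}k)=k$: for $t$ odd, $k$ is the odd-multiplicity element, and for $t$ even, $k$ is the minority (here we use $t\ge3$, so $t-1>1$). Consequently the end value $i=\gamma(i^t)$ also differs from its neighbour on the path.

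Next we verify property (2) for a $(t+1)$-multiset $A$ with at least two distinct elements. The distinct deletions are $A-x$ with $x\in s(A)$; it suffices to find two of them with different values.

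If $|s(A)|=2$, say $A=i^ak^b$ with $a+b=t+1$, then $A-i$ and $A-k$ are consecutive on the path above, possibly an endpoint. They therefore receive different values.

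If $|s(A)|\ge3$, suppose two distinct $x,y\in s(A)$ have deletions of support size at least $3$. Then their values $S-x$ and $S-y$ differ, where $S=\sum_{z\in A}z$, because $x\ne y$ in $\mathbb Z_r$. This is automatic when $|s(A)|\ge4$, or when $|s(A)|=3$ and two elements have multiplicity at least $2$.

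The remaining case, which I expect to be the only delicate point, is $A=i^{t-1}jk$ with $i,j,k$ distinct; here $t-1\ge2$. Then $A-k=i^{t-1}j$ and $A-j=i^{t-1}k$ have support size $2$. By the observation above, their values are $j$ and $k$ respectively, which differ. This is exactly why the support-$2$ rule was arranged so that $\gamma(i^{t-1}k)=k$.

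No deletion of a non-constant $A$ with $|s(A)|\ge3$ is constant, so these cases are exhaustive, and both properties hold.
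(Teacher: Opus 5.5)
Your construction is correct, and it takes a different route from the paper's.

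\textbf{The paper's route.} The paper works with the set $O(M)$ of labels of odd multiplicity. For odd $t$ it sets $\gamma(M)=\min O(M)$. For even $t$ it takes $\min\operatorname{supp}(M)$, $\rho(O(M))$ or $\min O(M)$ according as $|O(M)|$ is $0$, $2$, or at least $4$. The verification then splits by $|O(A)|$. In the case $|O(A)|=3$, the three deletions produce the values $\rho(\{b,c\}),\rho(\{a,c\}),\rho(\{a,b\})$. So the paper genuinely needs the no-monochromatic-triangle property (2) of Lemma~\ref{lem:pair-colouring}.

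\textbf{Your route.} You split by support size instead. On supports of size at least $3$, the sum modulo $r$ is injective in the deleted label. This separates any two deletions that keep support at least $3$. Everything then reduces to two cases:
\begin{enumerate}
\item two-letter multisets, handled by the alternation along the path $i^t,\dots,k^t$;
\item the single configuration $i^{t-1}jk$, handled by arranging $\gamma(i^{t-1}j)=j$.
\end{enumerate}

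\textbf{Comparison.} Your approach only needs $\rho(\{i,k\})\notin\{i,k\}$, that is, the existence of a third colour. So property (2) of Lemma~\ref{lem:pair-colouring}, and with it the nontrivial part of that lemma, becomes unnecessary. The price is that you use $t\ge3$ essentially. For $t=2$ your tie rule gives $\gamma(ik)=\min\{i,k\}$, which for $i<k$ coincides with $\gamma(i^2)$ when deleting from $A=i^2k$. Also, $A=ijk$ would again require the triangle property. The paper's parity rule, by contrast, is uniform in $t$ and also works for $t=2$, where it reduces to Lemma~\ref{lem:pair-colouring} itself.
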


\begin{proof}
Fix the usual order on \([r]\). For a multiset \(M\), let
\[
O(M):=
\{i\in[r]:\text{the multiplicity of \(i\) in \(M\) is odd}\}.
\]

Suppose first that \(t\) is odd. Then \(O(M)\ne\varnothing\) for every
size-\(t\) multiset \(M\), so define
\(\gamma(M):=\min O(M)\). Clearly \(\gamma(i^t)=i\).

Let \(A\) be a nonconstant size-\((t+1)\) multiset. If \(O(A)=\varnothing\),
choose distinct \(x,y\in\operatorname{supp}(A)\). Then
\(\gamma(A-x)=x\ne y=\gamma(A-y)\). If \(O(A)\ne\varnothing\), its
cardinality is even and at least two. Let \(x<y\) be its two smallest
elements. Then
\(\gamma(A-x)=y\ne x=\gamma(A-y)\).

Now suppose that \(t\) is even, and let \(\rho\) be given by
Lemma~\ref{lem:pair-colouring}. Define
\[
\gamma(M)=
\begin{cases}
\min\operatorname{supp}(M),
& O(M)=\varnothing,\\
\rho(O(M)),
& |O(M)|=2,\\
\min O(M),
& |O(M)|\ge4.
\end{cases}
\]
Since \(|O(M)|\) is even, this is well-defined, and
\(\gamma(i^t)=i\).

Let \(A\) be a nonconstant size-\((t+1)\) multiset. Then \(|O(A)|\) is
odd. If \(O(A)=\{u\}\), and put
\(c:=\gamma(A-u)=\min\operatorname{supp}(A-u)\). 
If \(c\ne u\), then
$\gamma(A-c)=\rho(\{u,c\})\ne c=\gamma(A-u).$
If \(c=u\), choose \(y\in\operatorname{supp}(A)\setminus\{u\}\); then
$\gamma(A-y)=\rho(\{u,y\})\ne u=\gamma(A-u).$

If \(|O(A)|=3\), say \(O(A)=\{a,b,c\}\), deleting \(a,b,c\) produces the
three colours
$\rho(\{b,c\})$, 
$\rho(\{a,c\})$, 
$\rho(\{a,b\})$,
which are not all equal by Lemma~\ref{lem:pair-colouring}. Finally, if
\(|O(A)|\ge5\), let \(x<y\) be its two smallest elements. Then
\(\gamma(A-x)=y\ne x=\gamma(A-y)\).
\end{proof}

\begin{proof}[Proof of Proposition~\ref{prop:clique-tiling-upper}]
Let \(R:=R_r(K_p^{(t)})\). Greedily remove monochromatic copies of
\(K_p^{(t)}\) until fewer than \(R\) vertices remain. This produces at least
\((n-R)/p\) vertex-disjoint copies in total, so one colour contains at least
$\frac{n}{rp}-O_{r,t,p}(1)$
of them. This proves the lower bound.

For the upper bound, let \(\gamma\) be given by
Lemma~\ref{lem:shadow-separating-colouring}, and partition
$V(K_n^{(t)})=V_1\cup\cdots\cup V_r$
as evenly as possible. For a \(t\)-edge \(e\), let \(M(e)\) be the multiset
of its part labels, and define
$
\chi(e):=\gamma(M(e)).
$

We claim that no monochromatic \(K_{t+1}^{(t)}\) meets more than one part.
Indeed, let \(S\) be a \((t+1)\)-set meeting at least two parts, and let
\(A(S)\) be the multiset of its part labels. The colours of the
\(t\)-subsets of \(S\) are precisely
$\gamma(A(S)-x),$
as one occurrence \(x\) ranges over \(A(S)\). These colours are not all
equal by Lemma~\ref{lem:shadow-separating-colouring}.

Since \(p\ge t+1\), every set of \(p\) vertices meeting at least two parts
contains a \((t+1)\)-subset meeting at least two parts. Consequently, every
monochromatic \(K_p^{(t)}\) is contained in one \(V_i\). Moreover, all
\(t\)-edges inside \(V_i\) have colour
\(\gamma(i^t)=i\), so every monochromatic \(K_p^{(t)}\)-tiling of colour
\(i\) is contained in \(V_i\). It therefore has at most
$\left\lfloor\frac{|V_i|}{p}\right\rfloor
\le
\frac{n}{rp}+O_{r,p}(1)$
copies. This proves the upper bound.
\end{proof}
\end{document}